\def\E{\ifmmode{\mathbb E}\else{$\mathbb E$}\fi} 
\def\N{\ifmmode{\mathbb N}\else{$\mathbb N$}\fi} 
\def\R{\ifmmode{\mathbb R}\else{$\mathbb R$}\fi} 
\def\Q{\ifmmode{\mathbb Q}\else{$\mathbb Q$}\fi} 
\def\C{\ifmmode{\mathbb C}\else{$\mathbb C$}\fi} 
\def\H{\ifmmode{\mathbb H}\else{$\mathbb H$}\fi} 
\def\Z{\ifmmode{\mathbb Z}\else{$\mathbb Z$}\fi} 
\def\P{\ifmmode{\mathbb P}\else{$\mathbb P$}\fi} 
\def\T{\ifmmode{\mathbb T}\else{$\mathbb T$}\fi} 
\def\SS{\ifmmode{\mathbb S}\else{$\mathbb S$}\fi} 
\def\DD{\ifmmode{\mathbb D}\else{$\mathbb D$}\fi} 
\def\K{\ifmmode{\mathbb K}\else{$\mathbb K$}\fi}
\newcommand{\rank}{\operatorname{rank}}
\theoremstyle{theorem}
\newtheorem{thm}{Theorem}[section]
\newtheorem{cor}[thm]{Corollary}
\newtheorem{lem}[thm]{Lemma}
\newtheorem{prop}[thm]{Proposition}
\theoremstyle{definition}
\newtheorem{defn}[thm]{Definition}
\newtheorem{rem}[thm]{Remark}
\newtheorem{conds}[thm]{Condition}
\newtheorem{situ}[thm]{Situation}
\numberwithin{equation}{section}
\begin{document}
\title[Floer homology of 3-manifolds with boundary]{$SO(3)$-Floer homology of 3-manifolds with boundary 1}
\author{Kenji Fukaya}

\address{Simons Center for Geometry and Physics,
State University of New York, Stony Brook, NY 11794-3636 U.S.A.
\& Center for Geometry and Physics, Institute for Basic Sciences (IBS), Pohang, Korea} \email{kfukaya@scgp.stonybrook.edu}

\begin{abstract}
In this paper the author discuss the relation between Lagrangian 
Floer homology and Gauge-theory (Donaldson theory) Floer homology.
It can be regarded as a version of Atiyah-Floer type conjecture in the case 
of $SO(3)$-bundle with non-trivial second Stiefel-Whitney class.
This is a first of a series of papers, where we describe the main results and 
geometric and algebraic parts of the proof. The half of analytic detail 
was in \cite{fu3} which was published in 1998. The other half will appear 
in subsequent papers.
\end{abstract}

\maketitle
\tableofcontents

\section{Introduction }

Let $M$ be an oriented 3 manifold with boundary $\Sigma$ and $\mathcal E_M$ an 
$SO(3)$ bundle on $M$ such that the restriction of $\mathcal E_M$ to each of the 
connected components of $\Sigma$ is a nontrivial bundle.
(Namely $w_2(\mathcal E_M)$ is the fundamental class of $\Sigma$.)
We denote by $\mathcal E_{\Sigma}$ the restriction of $\mathcal E_M$ to $\Sigma$.
Note $\Sigma$ is necessary disconnected.
\par
Let $R(M,\mathcal E_M)$ (resp. $R(\Sigma,\mathcal E_{\Sigma})$) 
be the set of all gauge equivalence classes of flat connections of $\mathcal E_M$ on $M$ (resp. 
on $\Sigma$). Restriction of connections define a map $R(M,\mathcal E_M) 
\to R(\Sigma,\mathcal E_{\Sigma})$. Typically this map is a Lagrangian immersion.
More precisely we can perturb $R(M,\mathcal E_M)$ so that $R(M,\mathcal E_M) \to R(\Sigma,\mathcal E_{\Sigma})$ becomes a 
Lagrangian immersion in a way similar to 
\cite[1(b)]{fl1}, \cite[Section 2(b)]{Don2}.
(See also \cite{He}.)
Hereafter we write $R(M)$ and $R(\Sigma)$ in place of 
$R(M,\mathcal E_M)$ and $R(\Sigma,\mathcal E_{\Sigma})$ usually for simplicity.
\par
In \cite{AJ} Akaho-Joyce associated a filtered $A_{\infty}$ algebra to a (relatively spin) 
immersed Lagrangian submanifold $L$ of a symplectic manifold $X$. 
In this article we use $\Z_2$ coefficient.
Then its underlying 
module can be taken as 
$$
CF(L) = H(L;\Lambda^{\Z_2}_{0}) \oplus \bigoplus_{p} (\Lambda_0^{\Z_2})^2[p] 
$$ 
where the direct sum is taken over all the self-intersection points of $L$.
Here $\Lambda^{\Z_2}_0$ is the universal Novikov ring .
(See (\ref{Novikovring}) and \cite{fooobook}.)

Our main result is the following.
\begin{thm}\label{mainthm}
\begin{enumerate}
\item
The immersed Lagrangian submanifold $R(M)$ is unobstructed in 
the sense of \cite{fooobook}, \cite{AJ}.
Namely there exists a bounding cochain $b_M$ of the filtered 
$A_{\infty}$-algebra $CF(R(M))$.
\par
Furthermore there is a canonical choice of $b_M$.
Namely the gauge equivalence class of $b_M$ is an invariant of the pair $(M,\mathcal E_M)$.
\item
Let $(M_1,\mathcal E_1)$ and $(M_2,\mathcal E_2)$ be pairs of 3 manifolds 
and bundles with common boundary 
$(\Sigma,\mathcal E) = \partial(M_1,\mathcal E_1) = \partial (M_2,\mathcal E_2)$, 
such that $w_2(\mathcal E_i\vert_{\partial M_i}) = [\partial M_i]$, for $i=1,2$.
Let $(M,\mathcal E)$ be the pair obtained by gluing $(M_1,\mathcal E_1)$ and $(-M_2,\mathcal E_2)$ along 
their boundaries. (Here $-M_2$ denote $M_2$ with orientation reversed.)
Then we have a canonical isomorphism
\begin{equation}\label{glueiso}
HF(M,\mathcal E;\Lambda_0^{\Z_2})  \cong HF((R(M_1),b_{M_1}),(R(M_2),b_{M_2})).
\end{equation}
Here the group $HF(M,\mathcal E;\Lambda_2^{\Z_2})$  is the gauge theory Floer homology 
of the closed 3 manifold with (nontrivial) $SO(3)$ bundle $\mathcal E$.
It is defined in \cite{fl2}, \cite{BD1}.
See Definition \ref{defHFgauge} for the version with Novikov ring coefficient.
\item
If $R(M) \to R(\Sigma)$ is an embedding then $b_{M} = 0$.
\end{enumerate}
\end{thm}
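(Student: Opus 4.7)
The plan is to use the fact that when $R(M) \to R(\Sigma)$ is an embedding, the Akaho--Joyce filtered $A_\infty$-algebra $CF(R(M))$ collapses to the ordinary Fukaya $A_\infty$-algebra of a smooth embedded Lagrangian: the summand $\bigoplus_p (\Lambda_0^{\Z_2})^2[p]$ is vacuous, so $CF(R(M)) = H(R(M); \Lambda_0^{\Z_2})$. Any bounding cochain $b_M$ must therefore lie in the positive-filtration odd-degree part of $H(R(M); \Lambda_+^{\Z_2})$, and the goal is to show that the canonical choice produced in part (1) is in fact $0$.

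First, I would unpack the gauge-theoretic construction of $b_M$ from the proof of part (1): the canonical $b_M$ should be assembled from $\Z_2$-counts of ASD instantons on $M \times [0,\infty)$ (or an equivalent half-tube model) with prescribed asymptotic flat connection at infinity and Lagrangian-type boundary conditions along $\Sigma$, each contribution weighted by its energy $T^E$. Second, I would analyze the compactification of the corresponding one-dimensional moduli space. Its codimension-one boundary decomposes into two types of strata: (a) splittings into two instanton pieces glued along a point of $R(M)$, which encode the $A_\infty$-relation $\sum_k m_k(b_M,\dots,b_M) = 0$; and (b) degenerations through a self-intersection of the immersion $R(M) \to R(\Sigma)$, which are simply absent in the embedded case. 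An induction on the energy $E$ then forces each coefficient of $b_M$ to vanish: the lowest-energy obstruction $m_0$ is a boundary contribution of type (b), which is empty, and inductively each higher-energy coefficient is determined by $m_0$ together with lower-energy coefficients of $b_M$, all already shown to vanish.

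The main obstacle is making the induction rigorous within the Kuranishi-structure framework used to define $b_M$ canonically in part (1). One must verify that the chosen perturbation data is compatible with the boundary stratification described above, and that no extra contributions arise from interior bubbling phenomena such as instanton bubbling on $M$ or sphere bubbling in $R(\Sigma)$. The bundle condition $w_2(\mathcal{E}_M)\vert_\Sigma \neq 0$ rules out reducible connections, which simplifies the gauge-theoretic side considerably. Once this compatibility is in place, the conclusion $b_M = 0$ is formal: without the self-intersection generators $[p]$ feeding into the $A_\infty$-relations, the canonical $b_M$ has no source of non-triviality.
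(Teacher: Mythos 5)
Your proposal addresses only part (3), taking parts (1) and (2) as given; and even for part (3) the core mechanism is not the right one. The paper deduces $b_M = 0$ in the embedded case in two steps. By the algebraic Proposition \ref{thm35} and Lemma \ref{lem38}, $b_M$ is the \emph{unique} $G$-gapped element of $CF(R(M)) \otimes \Lambda_+^{\Z_2}$ with $d^{b_M}({\bf 1}_M) = 0$, so it suffices to prove $\frak n_0({\bf 1}_M) = 0$ (Proposition \ref{prop310}). That vanishing rests on two facts: first, $\mathcal M((M,\mathcal E),L;R_i,R_j;E)$ is empty when $R_i \neq R_j$ are distinct components of $R(M)$; second, and crucially, Lemma \ref{lem311}, a relative Pontryagin-number and index-sum computation, shows that for $E > 0$ the virtual dimension of $\mathcal M((M,\mathcal E),L;R_i,R_i;E)$ strictly exceeds that of the energy-zero moduli space, so positive-energy moduli contribute nothing to the zero-dimensional count defining $\frak n_{0,E}$. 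This monotonicity/index input is the heart of the proof, and your proposal does not contain it.

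The substitute reasoning you offer does not close the gap. The claim that ``without the self-intersection generators $[p]$ feeding into the $A_\infty$-relations, the canonical $b_M$ has no source of non-triviality'' is not a proof: a positive-energy ASD solution on $M \times \R$ whose two asymptotic flat connections both lie in the \emph{same} connected component of the embedded $R(M)$ involves no switching component at all, so nothing in your picture prevents it from contributing to $\frak n_0({\bf 1}_M)$ and hence forcing $b_M \neq 0$ --- only the dimension jump of Lemma \ref{lem311} rules this out. You also conflate $\frak m_0$ of $CF(R(M))$, which is a holomorphic-disk count in $R(\Sigma)$ bounding $R(M)$ and is not automatically zero when $R(M)$ is embedded (its vanishing is a \emph{consequence} of $b_M = 0$ via the Maurer--Cartan equation, not an input), with the gauge-theoretic quantity $\frak n_0({\bf 1}_M)$; and your (a)/(b) boundary dichotomy misstates the codimension-one strata, which by Theorem \ref{thm217} are module splittings and disk bubbles on $R(M)$, both present regardless of whether $R(M)$ is embedded, while ``degenerations through a self-intersection'' do not form a separate codimension-one stratum.
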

\par
Statements (2) and (3) imply the next:
\begin{cor}\label{maincor}
Let  $(M,\mathcal E)$ be a pair of 3 manifold and $SO(3)$ bundle on it which is obtained from $(M_1,\mathcal E_1)$ and $-(M_2,\mathcal E_2)$
as in Theorem \ref{mainthm} (2). We assume $R(M_1)$, $R(M_2)$
are embedded in $R(\Sigma)$.  Then:
\begin{equation}\label{4}
HF(M,\mathcal E) \cong HF(R(M_1),R(M_2)).
\end{equation}
Here the right  hand side is the Floer homology of a pair of monotone Lagrangian submanifolds 
defined by Oh \cite{Oh}.
The isomorphism is one between $\Z_4$ periodic $\Z_2$ vector spaces.
\end{cor}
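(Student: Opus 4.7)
The plan is to chain Theorem \ref{mainthm}(2) and (3) with a reduction from Novikov-ring Floer homology to Oh's $\mathbb{Z}_2$-coefficient monotone Floer homology. The first step is immediate: since $R(M_1) \to R(\Sigma)$ and $R(M_2) \to R(\Sigma)$ are assumed to be embeddings, Theorem \ref{mainthm}(3) yields $b_{M_1} = 0$ and $b_{M_2} = 0$. Plugging these into the gluing isomorphism (\ref{glueiso}) from Theorem \ref{mainthm}(2) gives
\begin{equation*}
HF(M,\mathcal E;\Lambda_0^{\Z_2}) \cong HF((R(M_1),0),(R(M_2),0)).
\end{equation*}
So the corollary is reduced to comparing the right-hand side with Oh's Floer homology $HF(R(M_1),R(M_2))$ over $\Z_2$.

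The next step is to invoke the monotonicity of the representation variety $R(\Sigma)$ together with its Lagrangian submanifolds $R(M_i)$. The symplectic form on $R(\Sigma)$ (the Atiyah-Bott/Goldman form) and the Maslov class on $R(M_i)$ are proportional with a fixed positive constant, so each $R(M_i)$ is a monotone Lagrangian in the sense of Oh. With this in hand, standard area/energy-index arguments show that all $J$-holomorphic strips between two fixed intersection points with fixed relative Maslov index carry the same symplectic area; hence the Novikov weight $T^E$ contributing to a given matrix coefficient of the $\Lambda_0^{\Z_2}$-differential is a single monomial for each relative class. The specialization $T \mapsto 1$ then identifies the $(R(M_1),0),(R(M_2),0)$ Floer complex with Oh's monotone complex, and gives the isomorphism (\ref{4}) as $\Z_2$-vector spaces.

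For the $\Z_4$-periodicity of the identification, I would match the two gradings intrinsically. On the gauge side, the $SO(3)$-instanton Floer homology with $w_2(\mathcal E) \neq 0$ carries a canonical $\Z/8$-grading, which collapses to $\Z/4$ after passing to $\Z_2$ coefficients and accounting for the mod-$2$ reduction of Floer's spectral flow (this is exactly the grading under which (\ref{glueiso}) is natural). On the Lagrangian side, Oh's monotone complex for $(R(M_1),R(M_2))$ inside $R(\Sigma)$ is graded by the Maslov index modulo twice the minimal Maslov number of the pair, which for this moduli space computation gives $\Z/4$. The gluing isomorphism in Theorem \ref{mainthm}(2) is grading-preserving, and the $T \mapsto 1$ specialization above preserves grading tautologically, so the composite is $\Z/4$-equivariant.

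The main obstacle in this plan is the rigorous passage from the $\Lambda_0^{\Z_2}$-valued Akaho--Joyce differential (with $b=0$) to Oh's $\Z_2$-coefficient monotone differential. One must check that even though the $A_\infty$-algebra $CF(R(M_i))$ a priori mixes strips with disk bubbles of all energies, monotonicity together with $b_{M_i}=0$ kills disk-bubble contributions at the chain level (the first obstruction $\mathfrak{m}_0$ is already zero by embeddedness and monotonicity for the minimal Maslov class), so that only genuine index-$1$ strips survive in the $T \to 1$ limit. Verifying this compatibility carefully — and checking that the canonical choice of $b_{M_i}$ provided by Theorem \ref{mainthm}(1) really does vanish in the embedded case in a way compatible with Oh's construction — is where the real work sits; once it is done, the isomorphism (\ref{4}) is formal.
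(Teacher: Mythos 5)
Your first two steps are correct and match the paper's reduction: Theorem~\ref{mainthm}(3) gives $b_{M_1}=b_{M_2}=0$ since $R(M_i)\to R(\Sigma)$ is an embedding, and substituting into~(\ref{glueiso}) gives the isomorphism of $\Lambda_0^{\Z_2}$-modules
$HF(M,\mathcal E;\Lambda_0^{\Z_2})\cong HF((R(M_1),0),(R(M_2),0))$.
You are also right that this is not yet the statement of Corollary~\ref{maincor}: both sides of~(\ref{4}) are $\Z_2$-vector spaces, and the remark following Definition~\ref{defHFgauge} explicitly warns that $HF(M,\mathcal E;\Lambda_0^{\Z_2})\neq HF(M,\mathcal E)\otimes_{\Z_2}\Lambda_0^{\Z_2}$, so some passage from Novikov coefficients to $\Z_2$ coefficients is genuinely needed.

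Where your argument has a gap is precisely the step you flag as ``the real work.'' The map ``$T\mapsto 1$'' is not a ring homomorphism on $\Lambda_0^{\Z_2}$: its elements are infinite series $\sum a_iT^{\lambda_i}$ with $\lambda_i\uparrow\infty$, so setting $T=1$ produces a divergent sum in $\Z_2$. The honest version of your monotonicity argument is that for a fixed pair of generators all rigid index-$1$ strips (resp.\ instantons) lie in the same energy level, so the Novikov differential is obtained from the $\Z_2$-differential by a diagonal rescaling by powers of $T$; to conclude you then need that this rescaling is conjugate to the identity, i.e.\ that the energies $E(p,q)$ are a coboundary $E(p,q)=f(q)-f(p)$ of a single-valued ``action'' $f$ on the generators. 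That is exactly the point where care (and monotonicity) is required, and it is not automatic from the statements in the paper as you use them. The route the paper actually intends, as signalled in Remark~\ref{rem2188} (``if we restrict ourselves to the proof of Corollary~\ref{maincor}, we can avoid using virtual technique. The reason is that the situation $\dots$ is monotone'') and in the forward reference to~\cite{fu7}, is not to reduce from the $\Lambda_0^{\Z_2}$ statement, but to run the gluing argument of Section~\ref{sec:glue} directly over $\Z_2$ in the monotone embedded setting. In that setting $b_{M_i}=0$ kills all the $b$-insertions, so the chain map $\Phi$ reduces to the single term $\Phi_{0,0}$, and monotonicity provides the compactness and transversality needed to make the count of $\Phi_{0,0}$ and of the Floer differentials well-defined over $\Z_2$ without the Novikov bookkeeping; one then shows $\Phi_{0,0}$ is a chain map congruent to the identity as in Section~\ref{sec:glue}, now working over $\Z_2$ from the start. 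Your plan ends up proving the right thing if the missing ``Novikov-to-$\Z_2$'' lemma is supplied, but the paper's intended proof bypasses that lemma entirely, and your formulation via specialization of $T$ cannot be accepted as written. The $\Z_4$-grading claim is likewise stated but not verified; it is standard in the $SO(3)$-with-$w_2\neq0$ context, but your paragraph gives only a heuristic matching.
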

We may regard Corollary \ref{maincor} as a version of $SO(3)$ analogue of 
Atiyah-Floer conjecture \cite{At}.
\par
Note in case $M_1 = M_2 = \Sigma \times [0,1]$ 
the isomorphism (\ref{4}) is proved by S. Dostoglou and D.A. Salamon
in \cite{DS}.
 \par
Theorem \ref{mainthm} together with other results 
we will explain below
realize the project the author proposed in \cite{fu0}, \cite{fu1}, \cite{fu2}.
We like to mention that there were various proposals such as 
\cite{LLW}, \cite{Sa}, \cite{Yo}
 around the same time (early 1990's).
\par
To put Theorem \ref{mainthm} to its natural perspective we 
first state some other results which are more closely related to the 
project in  \cite{fu1}.
We consider the filtered $A_{\infty}$ category
$\mathscr{FUK}(R(\Sigma))$  whose object is 
$(L,b)$ where $L$ is an immersed Lagrangian submanifold
and $b$ is the bounding cochain of
Akaho-Joyce's filtered $A_{\infty}$ algebra associated to 
$L$.
In the current situation it is one over $\Lambda^{\Z_2}_{0}$.
\par
In case we consider only embedded Lagrangian submanifolds as an object 
such a category $\mathscr{FUK}(R(\Sigma))$  is constructed in 
\cite{fu4} and \cite{ancher}.
Based on the ideas of \cite{AJ} we can enhance it to the 
version including immersed Lagrangian submanifolds.
\begin{thm}\label{functordef}
For each $(M,\mathcal E_M)$ which bounds $(\Sigma,\mathcal E_{\Sigma})$
there exists a filtered $A_{\infty}$ functor
$\mathscr{FUK}(R(\Sigma)) \to {\mathscr{CH}}$.
Here ${\mathscr{CH}}$ is the differential graded category 
(in the sense of \cite{bondkap}) whose object 
is a chain complex.
\par
The homotopy equivalence classes of this filtered $A_{\infty}$ 
functor is an 
invariant of $(M,\mathcal E_M)$.
\end{thm}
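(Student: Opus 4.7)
The plan is to construct the functor as the Yoneda-type functor associated to the canonical object $(R(M), b_M) \in \mathscr{FUK}(R(\Sigma))$ produced by Theorem \ref{mainthm}(1), and then use the canonicity of $b_M$ to obtain the invariance statement.

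On objects, I would send $(L, b)$ to the Lagrangian Floer cochain complex
\[
\mathcal F_{(M,\mathcal E_M)}(L,b) \;=\; CF((L,b),(R(M),b_M)),
\]
with differential deformed by the pair of bounding cochains $(b,b_M)$ in the sense of Akaho-Joyce. Since $b$ is a bounding cochain for $CF(L)$ and $b_M$ is one for $CF(R(M))$, the square of the deformed Floer differential vanishes; hence the right-hand side is a well-defined chain complex over $\Lambda_0^{\Z_2}$, i.e.\ an object of $\mathscr{CH}$. On higher morphisms, given a composable chain of objects $(L_0,b_0),\ldots,(L_k,b_k)$ and elements $x_i \in CF((L_{i-1},b_{i-1}),(L_i,b_i))$, I would define the $(k{+}1)$-st structure map of the functor by counting pseudoholomorphic $(k{+}2)$-gons with boundary on $L_0,L_1,\ldots,L_k,R(M)$ (taking values in the appropriate Hom complex in $\mathscr{CH}$), with arbitrarily many insertions of the bounding cochains $b_0,\ldots,b_k,b_M$ distributed along the corresponding boundary arcs, together with prescribed branch jump data at self-intersection points of each $L_i$ and of $R(M)$.

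The $A_\infty$ functor relations would then follow from the standard codimension-one boundary analysis of the compactifications of these polygon moduli spaces: bubbling at an interior input reproduces a term involving the $A_\infty$ operations of $\mathscr{FUK}(R(\Sigma))$ twisted by the $b_i$'s, while breaking at the output reproduces a term involving the differential and composition on the target $\mathscr{CH}$; the unobstructedness equations for $b$ and $b_M$ absorb the remaining boundary contributions coming from disk bubbles on $L_i$ or on $R(M)$ respectively, exactly as in the Akaho-Joyce formalism used to construct $\mathscr{FUK}(R(\Sigma))$. This is the technical heart of the argument, and the main obstacle: it requires the immersed-Lagrangian moduli theory of \cite{AJ}, combined with the analytic setup of \cite{fu3}, to be pushed to the functor level with multiple boundary components carrying independent bounding cochains and self-intersection markings.

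Finally, to establish that the homotopy class of the functor is an invariant of $(M,\mathcal E_M)$, I would verify two independences. First, independence of the auxiliary geometric data (metric on $M$, almost complex structure on $R(\Sigma)$, abstract perturbations in Kuranishi/Akaho-Joyce sense) follows by the usual continuation/cobordism argument: varying the data produces a one-parameter family of functors whose total construction gives a homotopy between them. Second, by Theorem \ref{mainthm}(1), the pair $(R(M),b_M)$ is canonically defined up to gauge equivalence of bounding cochains; since gauge equivalent bounding cochains produce quasi-isomorphic Floer complexes and compatible higher operations, the Yoneda functor associated to $(R(M),b_M)$ is well-defined up to homotopy equivalence of filtered $A_\infty$ functors, depending only on $(M,\mathcal E_M)$.
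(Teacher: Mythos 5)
Your proposal constructs the functor as the Yoneda functor represented by the object $(R(M),b_M)$, and this is emphatically \emph{not} the construction the paper gives for Theorem \ref{functordef}. The paper's $\mathcal{HF}_{(M,\mathcal E_M)}$ is a gauge-theoretic object: its structure maps $\frak n_k$ are defined by counting solutions of the mixed ASD/pseudoholomorphic equation on $M\times\R$ equipped with a metric that degenerates along the boundary cylinder $\Sigma\times(0,1]\times\R$ (the moduli spaces $\mathcal M_k((M,\mathcal E),L;R_-,R_+;E)$ of Definitions \ref{defn2929}, \ref{defn216}, \ref{defn23}, via the analytic framework of \cite{fu3}), and the Hom complex assigned to $(L,b)$ is $C(R(M)\times_{R(\Sigma)}\tilde L;\Lambda_0^{\Z_2})$ with differential $d^b$ from (\ref{form28}). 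No instanton appears anywhere in your construction; you have instead produced (a priori) a different functor.

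This is not merely a stylistic difference, for two reasons. First, there is a circularity: you invoke Theorem \ref{mainthm}(1) to obtain $b_M$, but the paper's proof of Theorem \ref{mainthm}(1) (Section \ref{sec:unbolt}) applies Proposition \ref{thm35} to the right filtered $A_\infty$ module $(CF(M,R(M)),\{\frak n_k\})$, which is precisely the $L=R(M)$ instance of the gauge-theoretic module built in Section \ref{HF3bdfunctor} as the core of Theorem \ref{functordef}. So the input you take for granted already embeds the output you are trying to reconstruct. Second, and more to the point, defining $\mathcal{HF}_{(M,\mathcal E_M)}$ to \emph{be} the Yoneda functor of $(R(M),b_M)$ collapses Theorem \ref{represent} into a tautology, whereas Theorem \ref{represent} is a substantive assertion — that the gauge-theoretically defined functor is representable by a bounding cochain on the space of flat connections — proved in Section \ref{sec:represent} via a nontrivial interpolation moduli space (the domain $X$ of Figure 4.2). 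The statement you are asked to prove is the existence and invariance of the \emph{gauge-theoretic} functor, and that requires constructing the degenerate-metric moduli spaces, proving their compactness/Hausdorffness (Theorem \ref{thm215}), assembling the boundary description (Theorem \ref{thm217}), and deriving the $A_\infty$ module and functor relations from Stokes-type formulas for the virtual fundamental chains. None of that is present in your write-up; what you have instead is a correct but essentially formal statement that Yoneda embeddings exist in filtered $A_\infty$ categories with unobstructed objects.

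The invariance argument has the same flaw: you attribute it to the uniqueness of $b_M$ up to gauge equivalence, which is again part of Theorem \ref{mainthm}(1). In the paper, the invariance claim in Theorem \ref{functordef} is established by a cobordism argument over the auxiliary choices (metric on $M$, perturbations of (\ref{ASDeq})--(\ref{ASDprod}), chain models) applied directly to the gauge-theoretic moduli spaces — logically prior to, and independent of, the existence of $b_M$.
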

We denote this filtered $A_{\infty}$ functor by $\mathcal{HF}_{(M,\mathcal E_{M})} : 
\mathscr{FUK}(R(\Sigma)) \to {\mathscr{CH}}$.
The construction of such functor is explained in \cite{fu0}, \cite{fu1}, \cite{fu2}.
(See in particular \cite[Section 4]{fu2}.)
A part of such idea is  realized by  Wehrheim \cite{We} and 
Salamon-Wehrheim \cite{Sawe}
based on a similar but a slightly different analytic setting.
We will explain the proof of Theorem \ref{functordef} in Section \ref{HF3bdfunctor}
based on the analytic setting in \cite{fu3}.
(Its detail will appear in \cite{fu8}.)

\begin{thm}\label{represent}
The filtered $A_{\infty}$ functor $\mathcal{HF}_{(M,\mathcal E_{M})}$ is homotopy equivalent to the 
filtered $A_{\infty}$ functor represented by $(R(M),b_M)$.
\end{thm}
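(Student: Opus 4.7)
The plan is to construct an explicit $A_\infty$ natural transformation from $\mathcal{HF}_{(M,\mathcal E_M)}$ to the representable functor $\mathrm{Hom}_{\mathscr{FUK}(R(\Sigma))}((R(M),b_M),-)$ and to verify that it is a homotopy equivalence. Recall that on an object $(L,b)$ the representable functor returns the Akaho--Joyce Floer complex $CF((R(M),b_M),(L,b))$ deformed by both bounding cochains, while by Theorem \ref{functordef} the complex $\mathcal{HF}_{(M,\mathcal E_M)}(L,b)$ is built from instantons on $M \times \R$ with the Lagrangian boundary condition $L \subset R(\Sigma)$ imposed along $\Sigma \times \R$.

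First I would define the zeroth component
$$
\Psi_{(L,b)} : \mathcal{HF}_{(M,\mathcal E_M)}(L,b) \longrightarrow CF((R(M),b_M),(L,b))
$$
by counting hybrid configurations on $M$ with an attached half-infinite collar $\Sigma \times [0,\infty)$: instantonic on $M$ and asymptotic along the collar to a pseudoholomorphic strip in $R(\Sigma)$ from a point of $R(M)$ to a point of $L$, with $b_M$ and $b$ inserted at self-intersection punctures in the Akaho--Joyce formalism. The compactness, transversality and gluing needed for this count should follow from the analytic package of \cite{fu3} (and its sequel \cite{fu8}) already deployed for Theorem \ref{functordef}, combined with a Dostoglou--Salamon style adiabatic limit that identifies the neck-stretched limit of an instanton with the pair (bounded-energy instanton on $M$, holomorphic strip in $R(\Sigma)$). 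The higher components $\Psi^k$ are defined by analogous counts with additional Lagrangian boundary punctures labeled by morphisms of $\mathscr{FUK}(R(\Sigma))$; the $A_\infty$-naturality relations then follow from codimension-one boundary analysis of the associated one-parameter families.

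To promote $\Psi$ to a homotopy equivalence I would first verify it on the class of test objects $(R(M_2),b_{M_2})$, where $M_2$ ranges over 3-manifolds bounding $\Sigma$ with the prescribed $w_2$. On the target side, Theorem \ref{mainthm}(2) identifies $HF((R(M),b_M),(R(M_2),b_{M_2}))$ with the gauge-theoretic $HF(M \cup_\Sigma (-M_2),\mathcal E)$. On the source side, the gluing/TQFT structure built into Theorem \ref{functordef} identifies $\mathcal{HF}_{(M,\mathcal E_M)}(R(M_2),b_{M_2})$ with the same instanton homology. The core content of the argument is that $\Psi_{(R(M_2),b_{M_2})}$ intertwines these two identifications. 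Once established for all such $M_2$, a split-generation argument in $\mathscr{FUK}(R(\Sigma))$, after passing to twisted complexes, upgrades the quasi-isomorphism to arbitrary $(L,b)$.

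The hardest part will be the analytic control of the hybrid moduli spaces in the adiabatic limit, especially near the immersion locus of $R(M)$: one must combine the Fredholm theory of anti-self-dual equations with Lagrangian boundary conditions from \cite{fu3} with strip-bubbling analysis in $R(\Sigma)$, while simultaneously accommodating $b_M$ and $b$ insertions at self-intersection points, and show that no boundary strata appear beyond those already encoded by the Akaho--Joyce formalism. Precisely this compatibility is what forces $b_M$ to be the canonical bounding cochain produced in Theorem \ref{mainthm}(1), and it is what upgrades $\Psi$ from a graded module map to a genuine chain map intertwining the two Floer differentials.
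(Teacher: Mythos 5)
The paper's proof (Theorem \ref{them42} and Corollary \ref{cor423}) takes a noticeably different and more economical route than what you propose, and your version contains a gap that the paper's argument is specifically designed to avoid.

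The paper constructs a single strict filtered $A_\infty$ right module homomorphism
$\hat\varphi : (CF((R(M),b_M),L),\{{}^{b_M}\frak n_k\}) \to (CF(M,L),\{\frak n_k\})$
(note the direction is opposite to your $\Psi$, but that is immaterial since it is an isomorphism). The components $\varphi_{k_1}$ are defined by the moduli spaces ${\mathcal M}_{k_1,k_3}((X,\mathcal E_X),L;R_1,R_2,R_3;E)$ on a fixed $4$-manifold $X$ built from a planar domain $W$ with three ends (one $M \times \R$ end, one $\Sigma \times [-1,1] \times \R$ end, and one $M \times \R$ end), and then specializing: one inserts $y_2 = {\bf 1}_M$ (the fundamental class of $R(M)$, the cyclic element of Definition \ref{def37}) at one asymptotic end and $b_M$ at all the marked points on the corresponding boundary component. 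The decisive algebraic inputs are the cyclic-element equation $d^{b_M}({\bf 1}_M) = 0$ from Theorem \ref{thm39}, which kills one family of boundary contributions, and the Maurer--Cartan equation for $b_M$, which kills another. These two vanishings are exactly what convert the boundary relation of Proposition \ref{prop412} into the $A_\infty$-homomorphism relation (\ref{modulohomodesu}). Your write-up gestures at this (``this compatibility is what forces $b_M$ to be the canonical bounding cochain''), but gets the logic backwards: $b_M$ is the output of Proposition \ref{thm35} precisely because it satisfies $d^{b_M}({\bf 1}_M)=0$, and that equation is what makes the construction work. You would need to make this link precise, since without it the higher $A_\infty$-naturality you claim does not close up.

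The larger gap is your final step. You propose verifying the quasi-isomorphism on test objects $(R(M_2),b_{M_2})$ using Theorem \ref{mainthm}(2) and then invoking a split-generation argument. The paper does neither, and both moves are problematic here. Theorem \ref{mainthm}(2) is proved in Section \ref{sec:glue} by an argument of the same genre as, and logically parallel to, the present one, so using it as an input is at best an awkward reordering and at worst circular; moreover there is no split-generation statement available for $\mathscr{FUK}(R(\Sigma))$ by the objects $R(M_2)$ in the paper, and establishing one would be a nontrivial new theorem. The paper sidesteps all of this: the zero-energy part of ${\mathcal M}_{0,0}((X,\mathcal E_X),L;R_1,R_2,R_3;0)$ consists only of flat connections and constant maps, so $\varphi_1 \equiv \mathrm{id} \bmod \Lambda_+^{\Z_2}$ (formula (\ref{form44})), and a filtered $\Lambda_0^{\Z_2}$-module map congruent to the identity modulo $\Lambda_+^{\Z_2}$ is automatically an isomorphism. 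That observation works uniformly for every $(L,b)$ and completely removes the need for any generation statement. Finally, the paper explicitly does not pass through a Dostoglou--Salamon adiabatic limit (see Section \ref{other moduli}, ``Cobordism method and degeneration method''): it instead counts solutions of a mixed ASD/holomorphic-curve equation on a fixed domain with a built-in degenerate metric, whose compactness package is the one from \cite{fu3}. Your reliance on adiabatic neck-stretching re-imports analytic difficulties (strip shrinking, figure-eight bubbles in the quilted Floer analogue) that the paper's cobordism-style argument is designed to avoid.
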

Let us elaborate the statement of Theorem \ref{represent}.
Let $(L,b)$  be an object of $\mathscr{FUK}(R(\Sigma))$.
Theorem \ref{functordef} associates a group
$$
HF((M,\mathcal E_M),(L,b))
= 
\mathcal{HF}_{(M,\mathcal E_{M})}(L,b).
$$
More precisely the right hand side is a homology group of the 
chain complex 
associated to the object $(L,b)$ by the functor $\mathcal{HF}_{(M,\mathcal E_{M})}$
in Theorem \ref{functordef}.
Then Theorem \ref{represent} claims the existence of an isomorphism 
\begin{equation}\label{formula13}
HF((M,\mathcal E_M),(L,b)) \cong
HF((R(M,\mathcal E_{M}),b_M),(L,b)).
\end{equation}
\par
Moreover this isomorphism is functorial in the following sense.
Using the fact that $\mathcal{HF}_{(M,E_{M})}$ is a filtered $A_{\infty}$
functor we obtain a map
\begin{equation}
HF((M,\mathcal E_M),(L_1,b_1)) \otimes HF((L_1,b_1),(L_2,b_2)) 
\to HF((M,\mathcal E_M),(L_2,b_2)).
\end{equation}
Then the following diagram commutes.
$$
\begin{CD}
HF((M,\mathcal E_M),(L_1,b_1)) \otimes HF((L_1,b_1),(L_2,b_2))  @ >{}>>
HF((M,\mathcal E_M),(L_2,b_2))  \\
@ V{}VV @ VV{}V\\
HF((R(M),b_M),(L_1,b_1)) \otimes HF((L_1,b_1),(L_2,b_2)) @ > {} >> 
HF(R(M,\mathcal E_{M}),(L_2,b_2)) 
\end{CD}
$$
Here the horizontal arrow in the second line is the composition 
of the morphisms in the filtered $A_{\infty}$ category $\mathscr{FUK}(R(\Sigma))$.
The vertical arrows are induced by (\ref{formula13}).
\par
We remark that we need to show the commutativity of similar diagrams 
including higher multiplication operators 
to obtain an $A_{\infty}$ functor and prove Theorem \ref{represent}. See \cite[Definition 7.1]{fu4}
for the definition of the notion of filtered $A_{\infty}$ functor.
See \cite[Definition 7.31]{fu4} for the definition of the notion of representable 
filtered $A_{\infty}$ functor.
\par
The existence of the $A_{\infty}$ functor in Theorem \ref{functordef} is 
\cite[Theorem 4.8$^*$]{fu2}.
Here $^*$ was put to the number of theorems in \cite{fu2} according to the rule mentioned in 
\cite{fu2} page 8 second paragraph, that it,
`Since we postpone the analytic detail to subsequent papers, we put * to the statements which will be proved in subsequent papers.'
\par
It is the understanding of the author that an $A_{\infty}$ functor of a similar 
nature in a related context of Heegaard Floer theory of  
Ozsvath-Szabo is now established, based on more combinatorial method. (See for example \cite{LOT}.)
The author believes that there is a similar story as those in this paper 
in the case of Seiberg-Witten Floer theory, which implies, for example, 
the coincidence of Seiberg-Witten Floer homology and Heegard Floer homology.
\par
Note Theorems \ref{represent} and \ref{mainthm} (2) together with 
$A_{\infty}$ analogue of Yoneda's lemma (\cite[Theorem 8.4]{fu4}. See also 
\cite{Lef}.) implies the next corollary.
\begin{cor}\label{oldconj}
Let $(M_i,\mathcal E_i)$ be as in Theorem \ref{mainthm} (2).
Then we have
$$
HF(M,\mathcal E_M) \cong H(\mathscr{HOM}(\mathcal{HF}_{(M_1,\mathcal E_1)},
\mathcal{HF}_{(M_2,\mathcal E_2)})).
$$
Here the right hand side is the homology group of the chain complex 
consisting of all pre-natural transformations from the 
filtered $A_{\infty}$ functor  
$\mathcal{HF}_{(M_1,\mathcal E_1)}$ to $\mathcal{HF}_{(M_2,\mathcal E_2)}$,
which is defined in \cite[Definition 7.49]{fu4} and 
\cite[Definition 10.1]{fu2}.
\end{cor}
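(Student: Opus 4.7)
The strategy is to chain the three inputs just discussed, with the $A_{\infty}$-Yoneda lemma as the algebraic bridge. First, Theorem \ref{represent} lets us replace each functor $\mathcal{HF}_{(M_i,\mathcal E_i)}$ by the filtered $A_{\infty}$-functor represented by the object $(R(M_i),b_{M_i})$ of $\mathscr{FUK}(R(\Sigma))$. Since the complex of pre-natural transformations is a homotopy invariant in each of its arguments (this is built into the construction of the functor category in \cite[Definition 7.49]{fu4} and \cite[Definition 10.1]{fu2}), the right hand side of the asserted isomorphism is quasi-isomorphic to the pre-natural transformation complex between these two representable functors.

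The second step is to apply the filtered $A_{\infty}$-analogue of Yoneda's lemma, \cite[Theorem 8.4]{fu4}. For any functor $F$ and any object $X$, it identifies $\mathscr{HOM}(\mathrm{Yon}_{X},F)$ with $F(X)$ up to quasi-isomorphism. Taking $X=(R(M_1),b_{M_1})$ and $F=\mathrm{Yon}_{(R(M_2),b_{M_2})}$ and passing to homology produces the Floer group $HF((R(M_1),b_{M_1}),(R(M_2),b_{M_2}))$, up to the directional convention built into \cite[Definition 7.49]{fu4}. Finally, Theorem \ref{mainthm} (2) provides the canonical isomorphism (\ref{glueiso}) matching this Floer group with $HF(M,\mathcal E_M)$, which closes the chain.

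The principal obstacle is to verify that \cite[Theorem 8.4]{fu4} applies in the present setting. The version stated there is formulated in the Fukaya category of embedded Lagrangians, whereas here the objects are pairs $(L,b)$ with $L$ immersed and $b$ a bounding cochain in the Akaho--Joyce filtered $A_{\infty}$-algebra, and the coefficient ring is $\Lambda_{0}^{\Z_2}$. Since Yoneda is a formal statement about filtered $A_{\infty}$-categories, one expects no genuinely new algebraic difficulty, but the unit and counit natural transformations must be constructed explicitly from the $A_{\infty}$-structure maps of the enriched category and their homotopy identities checked component by component; this is where the commutative diagram displayed after Theorem \ref{represent}, together with its higher analogues, enters in an essential way. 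A secondary but unavoidable bookkeeping task is to track the orientation reversal $-M_2$ in the gluing of Theorem \ref{mainthm} (2), so that the ordering of the two Lagrangians produced by the Yoneda identification matches the one delivered by the gluing isomorphism (\ref{glueiso}).
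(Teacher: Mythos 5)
Your proof plan matches the paper's route exactly: the corollary is deduced by chaining Theorem~\ref{represent} (replace $\mathcal{HF}_{(M_i,\mathcal E_i)}$ by the functor represented by $(R(M_i),b_{M_i})$), the filtered $A_{\infty}$ Yoneda lemma of \cite[Theorem 8.4]{fu4} (to identify $\mathscr{HOM}$ between representable functors with Lagrangian Floer homology), and then the gluing isomorphism (\ref{glueiso}) of Theorem~\ref{mainthm} (2). Your flagged caveats about extending the Yoneda argument from embedded to immersed objects and about tracking the orientation reversal of $M_2$ are also the correct points to watch.

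The one technical point you omit, which the paper flags explicitly in a footnote, is that the filtered $A_{\infty}$ Yoneda lemma is established only after inverting the formal generator $T$ of the Novikov ring. Thus the quasi-isomorphism between a representable functor and the associated object, and hence the isomorphism asserted in Corollary~\ref{oldconj}, should be understood over the Novikov field $\Lambda$ (or after inverting $T$), not over $\Lambda_0^{\Z_2}$. Without this your second step — "passing to homology produces the Floer group $HF((R(M_1),b_{M_1}),(R(M_2),b_{M_2}))$" — is not quite right as stated over $\Lambda_0$, since torsion phenomena of the form $\Lambda_0/T^{\lambda}\Lambda_0$ are not controlled by the Yoneda argument alone. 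You should say explicitly that you invert $T$ before invoking Yoneda, which is harmless for the final statement since the gauge-theoretic side $HF(M,\mathcal E_M)$ is likewise being compared after base change to $\Lambda$.
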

Corollary \ref{oldconj} is 
\cite[Conjecture 5.24]{fu0}, \cite[Conjecture 3.3]{fu1} and \cite[Conjecture 8.9]{fu4}.
(More precisely it was conjectured there that a particular map 
defined there gives this isomorphism. We can show the statement 
in that form also.)\footnote{We  need to invert the 
formal generator $T$ of the Novikov ring to prove Corollary \ref{oldconj},
because $A_{\infty}$ Yoneda's lemma is proved only after inverting $T$.}
\par\smallskip
As we mentioned already $M = \Sigma \times [0,1]$
is an example where $b_M = 0$. 
In fact $R(M)$ is embedded in $R(\partial M) = R(\Sigma)^2$,
as a diagonal.
The author has no doubt that there are plenty of examples 
where $b_M \ne 0$.
A possible way to cook up such an example is as follows.
\par
We pretend that the result of this paper holds for $\Q$-coefficient in place 
of $\Z_2$-coefficient.
We take $M_1 = \Sigma \times [0,1]$. Let 
$M_2$ be the connected sum of Poincar\'e homology sphere and 
Poincar\'e homology sphere with orientation reversed.
It is proved in \cite[Proposition 5.5]{fu15} that 
$SU(2)$ Floer homology of $M_2$ vanishes over $\Q$.
So  it seems very likely that
$
\mathcal{HF}_{(M_1\#M_2,\mathcal E)}
$
is represented by the diagonal $R(\Sigma) \subset R(\Sigma)^2$
over $\Q$ coefficient.
Here $\mathcal E$ is the (unique) $SO(3)$ bundle such that $w_2(\mathcal E)$
is the generator of $H_2(M;\Z_2) = \Z_2$.
On the other hand, using the fact that the fundamental group of 
the Poincar\'e homology sphere has 2 irreducible representations
over $SO(3)$,
the space $R(M)$ has many connected components.
(4 connected components diffeomorphic to $SO(3) \times SO(3) 
\times R(\Sigma)$, 4 connected components diffeomorphic to $SO(3)  
\times R(\Sigma)$, and one connected components diffeomorphic to 
$R(\Sigma)$.) Because of $SO(3)$ factors, the space $R(M)$ is not of correct dimension.
(Namely its dimension is different from $\dim R(\partial M)/2$.)
After perturbation we have still many connected components 
corresponding to the generators of $H(SO(3)^2)$, $H(SO(3))$ etc..,
each of which can be taken to be diffeomorphic to $R(\Sigma)$.
The image of them in $R(\partial M)$ all can be taken to be the 
diagonal.
We can perturb so that those components intersect transversally 
to each other.
We thus end up with a complicated configuration of the 
embedded Lagrangian submanifolds whose 
connected components are perturbations of the diagonal
(and are embedded). 
The union of such Lagrangian submanifolds 
is regarded as an immersed Lagrangian submanifold.
Together with bounding cochain $b_M$
this immersed Lagrangian submanifold 
must be $\Q$-coefficient Floer theoretically equivalent to  the diagonal.
Therefore $b_M$ can not be $0$ over $\Q$.
\par
In this example there are ASD connections on $M_2 \times \R$,
which gives cancellation over $\Q$ of the generators in the Floer's chain 
complex of $M_2$. Those ASD connections are not visible 
from the space $R(M)$. The bounding cochain $b_M$ however 
`remember' those ASD connections.
\par\smallskip
The author studied the problem we discuss in this paper 
in 1990's. There are long blank before he restarted it, in this year 2015.
During this blank the author had not been working on this project.
In 1990's the present author was close to constructing an
$A_{\infty}$ functor,
$\mathcal{HF}_{(M,\mathcal E_{M})} : 
\mathscr{FUK}(R(\Sigma)) \to {\mathscr{CH}}$,
in a series of papers  such as \cite{fu0}, \cite{fu1}, \cite{fu2}, \cite{fu3}.
In this paper we slightly changed the formulation 
of the construction of this functor. 
However the changes are mostly due to the development of the 
Lagrangian Floer theory in last 20 years and are not 
due to one on the gauge theory part of the story. We apply the language of 
filtered $A_{\infty}$ algebra, a module over it, and filtered $A_{\infty}$
category, which we have developed, especially in 
\cite{fooobook}, \cite{fooobook2}, \cite{fu4}.
We also include immersed Lagrangian submanifolds into the story, 
based on the work by Akaho-Joyce \cite{AJ}. 
To include immersed case is essential to formulate Theorem \ref{mainthm} (1).
Especially it is also crucial for the representativity of the 
functor $\mathcal{HF}_{(M,\mathcal E_{M})}$.
\par
Around the time when \cite{fu3}  was written, the author was on the way 
in establishing and writing up the detailed construction of the moduli spaces 
which we use in this paper. In fact \cite{fu3} proved compactness and 
removable singularity theorem of those moduli spaces.
The most important piece of analytic results which was not included in \cite{fu3}
is Fredholm theory, that is, a construction of appropriate Fredholm 
complex which provides the linearization of the nonlinear equation we 
study. 
\footnote{The reason why the author stopped working on this project 
during 1998-2014 is {\it not} because he could not find the way to work
out the Fredholm theory.}
\par
The author stopped working on this project for a long time because 
he could not figure out the way to prove the most 
important part of the project, which consists of 
Theorem \ref{mainthm} of present paper. 
Especially he was unable to find a way to prove gluing result, that is, 
Theorem \ref{mainthm} (2).
(A gluing result similar to Theorem \ref{mainthm} (2) was stated as 
{\it Conjecture} 8.9 and not as Theorem 8.9$^*$ in \cite{fu2}.) 
In 1990's the author did not know, either, an appropriate condition under which 
the relative Floer homology 
functor $\mathcal{HF}_{(M,\mathcal E_M)}$ is determined only by the Lagrangian submanifold
$R(M)$. Theorem \ref{mainthm} (3) says that the relevant 
condition is that $R(M) \to R(\Sigma)$ is an {\it embedding}.
Since the author thought that those key issues were yet very hard to attack at that stage
he did not work on this project during 1999 - 2014 and 
concentrated in working out symplectic geometry side of the story. 
\par
Meanwhile Wehrheim's paper \cite{We} appeared. It provides 
analytic results on a similar moduli space. 
It is used by Salamon-Wehrheim \cite{Sawe} for a part of the construction of the functor 
$\mathcal{HF}_{(M,\mathcal E_{M})} : 
\mathscr{FUK}(R(\Sigma)) \to {\mathscr{CH}}$.
\par
The argument which we explain in Sections \ref{sec:unbolt},\ref{sec:represent},\ref{sec:glue} 
of present paper resolves the points the author could not go through in 1990's. 
So we can now prove the results claimed in this section, using the 
basic properties of the moduli space introduced in \cite{fu3}.
(We explain those moduli spaces in Sections \ref{HF3bdfunctor},\ref{sec:represent},\ref{sec:glue}.)
I like to mention that the idea used in Sections \ref{sec:represent},\ref{sec:glue} is  related to the work by
Y. Lekili and  M. Lipyanskiy \cite{LL},
which they used to study Wehrheim-Woodwards functoriality 
(\cite{WW}).
(See Remarks \ref{rem312}, \ref{rem415}, \ref{rem513} 
for the results we  prove on Wehrheim-Woodwards functoriality 
in a way parallel to the gauge theory results we are discussing here.)
\par
So the point which is the most novel in this paper is an algebraic lemma, Proposition 
\ref{thm35}, and 
an observation that it can be used to prove Theorem \ref{mainthm} (1).
\par
The author is aware that Lipyanskiy have been studying 
gauge theory Floer homology and Atiyah-Floer conjecture (\cite{Li}).
\par
Now it's the time to complete the project we started 20 years ago.

\section{Floer homology of 3 manifolds with boundary as a filtered $A_{\infty}$ 
functor: review with update}\label{HF3bdfunctor}
In this section, we explain the construction of the 
filtered $A_{\infty}$ functor,
$\mathcal{HF}_{(M,\mathcal E_{M})} : 
\mathscr{FUK}(R(\Sigma)) \to {\mathscr{CH}}$.
The construction is in principle the same as those in 
\cite{fu0},\cite{fu1},\cite{fu2}. 
(More precisely it is minor modification of \cite[Part 1: geometry]{fu2}. 
The other part, \cite[Part 2: algebra]{fu2} was
rewritten and was published as \cite[Chapter 2]{fu4}.) 
We however apply the language developed in the past decades systematically 
for the discussion in this section.

\begin{situ}\label{situ21}
$M$ is a 3 manifold with boundary $\Sigma$
and $\mathcal E_M$ is an $SO(3)$-bundle on $M$ such that 
$\mathcal E_{\Sigma} = \mathcal E_M\vert_{\Sigma}$ is nontrivial 
on each of its connected components.
\par
We assume $R(M,\mathcal E_M)$, the set of the gauge equivalence classes of 
flat connections of $\mathcal E_M$, is a smooth
manifold of dimension $\frac{1}{2}\dim R(\Sigma)$ and the restriction map $i_{R(M)} : R(M,\mathcal E_M) \to R(\Sigma,\mathcal E_{\Sigma})$
is a Lagrangian immersion with transversal self intersection.
(More precisely we assume that $H^1(M,d_A) = T_{[A]}R(M,\mathcal E_M)$.)
\end{situ}
Note we can relax the second half of the condition by perturbing 
appropriately. In this article we omit the argument to do so for 
simplicity.
\par
Let $L$ be another immersed Lagrangians submanifold of 
$R(\Sigma)$ with 
transversal self-intersection. 
We write $i_L : \tilde L \to R(\Sigma)$ 
the immersion with image $L$. We consider the module
\begin{equation}
CF(L) 
= C(\tilde L;\Lambda_{0}^{\Z_2}) \oplus
\bigoplus_{(p,q)} \Lambda_{0}^{\Z_2}[p,q]. 
\end{equation}
Here the direct sum runs on the pair of points $(p,q) \in \tilde L^2$
such that $p\ne q$ and $i_L(p) = i_L(q)$. 
In other words we associate two generators to each of the self intersection points, 
following \cite{Ak}, \cite{AJ}.
\par
Here and hereafter $C(\tilde L;\Lambda_{0}^{\Z_2})$ is a chain model of the 
(singular) cohomology of $\tilde L$.
There are various possible choices of the chain model. 
For example we can take 
$C(\tilde L;\Lambda_{0}^{\Z_2}) =  H(\tilde L;\Lambda_{0}^{\Z_2})$,
the (co)homology group itself.
(Here $CF$ stands for Floer chain complex.)
\par
We remark the following isomorphism:
\begin{equation}\label{fiberCFdef}
CF(L) 
\cong 
C(\tilde L \times_{R(\Sigma)} \tilde L;\Lambda_0^{\Z_2}).
\end{equation}
\begin{rem}
We can generalize the story of \cite{AJ} to the case when self-intersection is not 
necessary transversal but is clean, by taking (\ref{fiberCFdef}) as the definition 
of $CF(L)$.
\end{rem}
\par
We denote by $\Lambda_{0}^{\Z_2}$ a universal Novikov ring with $\Z_2$ coefficient.
(See \cite[(1.3)]{toric1}.)
Its element is a formal sum
\begin{equation}\label{Novikovring}
\sum_{i=0}^{\infty} a_i T^{\lambda_0}
\end{equation}
where $0 = \lambda_0 < \lambda_1 < \cdots \uparrow \infty$ and $a_i \in \Z_2$.
\par
Akaho-Joyce (generalizing \cite{fooobook}) defined a series of operators 
$$
\frak m_k : CF(L) ^{k\otimes} \to CF(L)
$$
using appropriate moduli spaces of pseudo-holomorphic polygons
(See Figure 2.1),
and showed that $(CF(L),\{\frak m_k\})$ becomes a filtered $A_{\infty}$
algebra. 
(See \cite[Definition 3.2.3]{fooobook} for the definition of filtered $A_{\infty}$
algebra.)
\par
\begin{center}
\includegraphics[scale=0.2]
{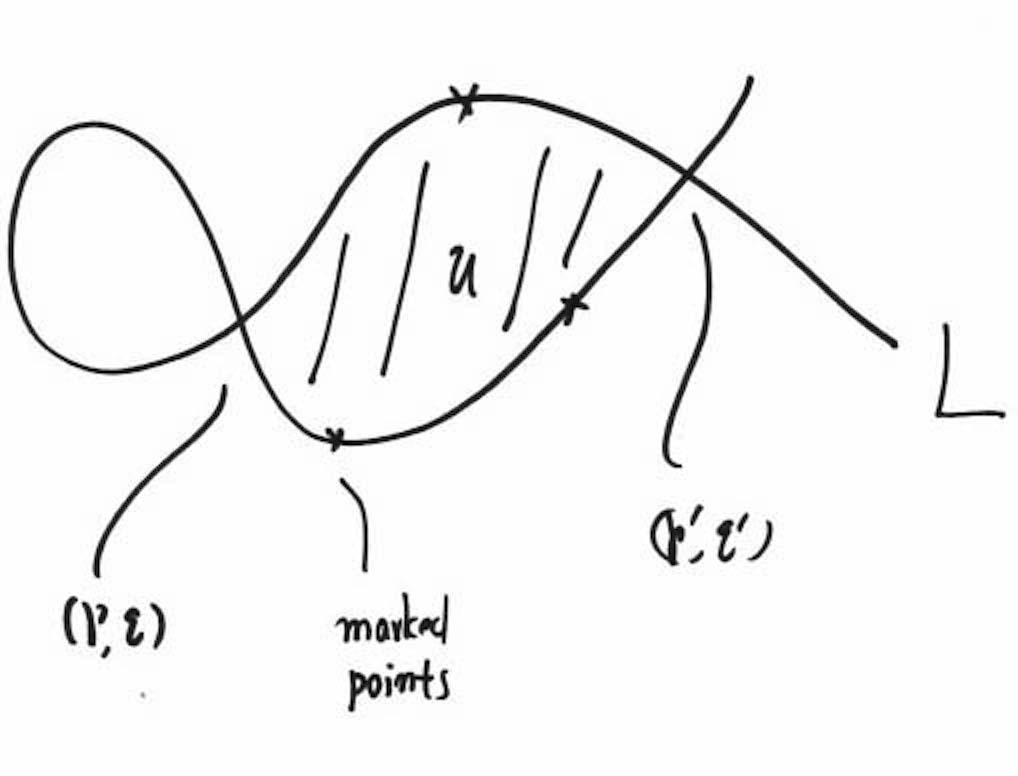}
\end{center}
\centerline{\bf Figure 2.1}
\par
Note $\frak m_0$ is included.  In other words,
our filtered $A_{\infty}$ algebra is curved, in general.
We remark that our symplectic manifold $R(\Sigma,\mathcal E_{\Sigma})$
is monotone. Therefore Lagrangian Floer theory works over $\Z_2$
coefficient by \cite{fooo:overZ}. Note in
\cite{fooo:overZ} we discussed the case of embedded Lagrangian submanifold. 
However we can easily combine its argument with \cite{AJ} to include the 
immersed case.
We also remark that in \cite{fooo:overZ} we assumed that the ambient symplectic 
manifold is spherically positive but did not require any condition for its Lagrangian 
submanifold. A montone symplectic manifold is spherically positive.
\par
We denote by $\mathcal M(L)$ the set of all the
bounding cochains of $(CF(L),\{\frak m_k\})$, that is, an element $b$ of $CF(L) \otimes_{\Lambda_0^{\Z_2}}
\Lambda_+^{\Z_2}$ such that
\begin{equation}\label{MCequ}
\sum_{k=0}^{\infty} \frak m_k(b,\dots,b) = 0.
\end{equation}
Here $\Lambda_+^{\Z_2}$ is the maximal ideal of $\Lambda_0^{\Z_2}$
consisting of formal sum (\ref{Novikovring}) with $\lambda_0 > 0$.

We put
$$
\frak m_k^b(x_1,\dots,x_k)
=
\sum_{\ell_0\ge 0,\ell_1\ge 0,\dots,\ell_k\ge 0}
\frak m_k(b^{\ell_0\otimes},x_1,b^{\ell_1\otimes},\dots,b^{\ell_{k-1}\otimes},x_k,b^{\ell_{k}\otimes}).
$$
$(CF(L),\{\frak m_k^b\})$ is again a filtered $A_{\infty}$ algebra and 
$\frak m_0^b =0$ is equivalent to (\ref{MCequ}).
\par
We refer the reader \cite[Chapter 4]{fooobook} for homological algebra
of bounding cochain.
\par
We assume that $L$ is of clean intersection with the Lagrangian immersion 
$i_{R(M)} : R(M) \to R(\Sigma)$.
Namely we assume that
$$
{\rm Im} (T_xR(M)) \cap {\rm Im} (T_y\tilde L) \subseteq T_{z}R(\Sigma)
$$
has locally constant rank for $(x,y) \in R(M) \times_{R(\Sigma)} \tilde L$ with 
$z = i_{R(M)}(x) = i_L(y)$ and that 
this rank coincides with the dimension of the submanifold 
$R(M) \times_{R(\Sigma)} \tilde L$ of $R(M) \times \tilde L$, which we 
assume to be smooth.
(Including the case when the intersection is clean but not transversal 
is important in the discussion of Section \ref{sec:unbolt}.)
\par
We consider the fiber product
$R(M) \times_{R(\Sigma)} \tilde L$, which is a
smooth manifold by assumption and put
\begin{equation}
CF((M,\mathcal E_M),L) = C(R(M) \times_{R(\Sigma)} L;\Lambda_{0}^{\Z_2}).
\end{equation}
Here the right hand side is a chain model of ordinary cohomology with $\Lambda_{0}^{\Z_2}$ 
coefficient.
We define a structure of filtered $A_{\infty}$ right module on the graded vector space 
$CF((M,\mathcal E_M),L)$ over the filtered $A_{\infty}$ algebra $(CF(L),\{\frak m_k\})$,
following and extending the idea of \cite{fu0}, \cite{fu1}, \cite{fu2}.
Note such a structure by definition assigns a series of operators
\begin{equation}\label{openn}
\frak n_k : CF((M,\mathcal E_M),L) \otimes CF(L) ^{k\otimes} \to CF((M,\mathcal E_M),L)
\end{equation}
which satisfies the relation
\begin{equation}\label{rightmodule}
\aligned
&\sum_{\ell=0}^{k}
\frak n_{k-\ell}(\frak n_{\ell}(y;x_1,\dots,x_{\ell});x_{\ell+1},\dots,x_k) \\
&+
\sum_{0\le \ell \le m\le k}
\frak n_{k-m+\ell+1}(y;x_1,\dots,\frak m_{m-\ell}(x_{\ell},\dots,x_{m-1}),\dots,x_k) = 0.
\endaligned
\end{equation}
(See \cite[Section 3.7.1]{fooobook}.
Note there it was discussed the case of $C_1$-$C_2$ filtered $A_{\infty}$ bimodule $D$. 
The case of right filtered $A_{\infty}$ module is its special case 
where $C_1$ is $\Lambda_0$.).
For $[b] \in \mathcal M(L)$  we define $d^b : CF((M,\mathcal E_M),L) \to CF((M,\mathcal E_M),L)$
by
\begin{equation}\label{form28}
d^b(y) = \sum_{k=0}^{\infty} \frak n_k(y;b,\dots,b).
\end{equation}
(\ref{MCequ}) and (\ref{rightmodule})  imply that
\begin{equation}
d^b \circ d^b = 0.
\end{equation}
(See \cite[Lemma 3.7.14]{fooobook}.)
We now define a $\Lambda_0^{\Z_2}$ module
\begin{equation}
HF((M,\mathcal E_M),(L,b)) = \frac{{\rm Ker} \,d^b}{{\rm Im} \,d^b}.
\end{equation}
In the rest of this section we explain the construction of the maps $\frak n_k$.
\par
We take a Riemannian metric on $M$ such that 
there exists a compact subset $M_0$ and an isometry
$$
M \setminus M_0 \cong (-1,1] \times \Sigma,
$$
where $\partial M = \Sigma$ corresponds to $\{1\} \times \Sigma$.
Here the metric of the right hand side is the direct product metric
with a K\"ahler metric $g_{\Sigma}$ on $\Sigma$.
We fix $g_{\Sigma}$ hereafter.
Note $g_{\Sigma}$ determines a complex structure of $R(\Sigma)$.
We take a smooth function $\chi : (-1,1] \to [0,1]$ such that
$\chi(s) \equiv 1$ on a neighborhood of $\{-1\}$ and 
$$
\{s \mid \chi(s)  = 0 \} = [0,1].
$$
We also assume
$$
\chi(s) = e^{1/s} \qquad \text{for $s \in (-\epsilon,0)$.}
$$
\par
We consider the `ASD-equation' of 4 manifolds $M \times \R$ with respect to the 
singular metric
\begin{equation}\label{form211}
{\bf g} = 
\begin{cases}
g_M + dt^2 &\text{on $M_0 \times \R$} \\
\chi(s)^2  g_{\Sigma} + ds^2 + dt^2  
&\text{on $(M \setminus M_0) \times \R$}.
\end{cases}
\end{equation}
Note we identify 
$$
M \setminus M_0 \cong \Sigma \times (-1,1] \times \R
$$
and use $s$ (resp. $t$) for the coordinate of the $(-1,1]$ (resp. $\R$) factor.
\par
Since the metric ${\bf g}$ is singular the ADS equation
\begin{equation}\label{ASDeq}
F_{\frak A} + *_{\bf g} F_{\frak A} = 0
\end{equation}
does not make sense. Here ${\frak A}$ is a connection of the $SO(3)$ bundle 
$\mathcal E_M \times \R$, $F_{\frak A}$ its curvature,
and $*{\bf g}$ is the Hodge $*$ operator of the `metric' ${\bf g}$.
However we can make sense of the equation (\ref{ASDeq}) as follows.
We write the restriction of ${\frak A}$ to 
$M \setminus M_0 \cong \Sigma \times (-1,1] \times \R$ as 
\begin{equation}\label{AnadfraA}
\frak A = A +\Phi ds + \Psi dt
\end{equation}
where $A = A(s,t)$ is a $(s,t) \in (-1,1] \times \R$ parametrized family 
of connections of $\mathcal E_{\Sigma}$ and $\Phi$, $\Psi$ are $(s,t) \in (-1,1] \times \R$ parametrized families of the sections of 
$so(3)$-bundle associated to $\mathcal E_{\Sigma}$ by the adjoint representation
of $SO(3)$ on $so(3)$.
Then on the domain $\Sigma \times (-1,0) \times \R$ where ${\bf g}$ is indeed a 
Riemannian metric and (\ref{ASDeq}) makes sense, we can rewrite Equation (\ref{ASDeq}) as follows.
\begin{equation}\label{ASDprod}
\aligned
&\frac{\partial A}{\partial  t} - d_A\Psi - 
*_{\Sigma}\left( \frac{\partial A}{\partial s} - d_A\Phi \right) = 0, \\
&\chi(s)^2 
\left(
\frac{\partial \Psi}{\partial s} - \frac{\partial \Phi}{\partial t} 
+ [\Phi,\Psi]
\right)
+ *_{\Sigma} F_A = 0.
\endaligned
\end{equation}
See \cite{DS}, \cite{fu3}.
We observe that Equation (\ref{ASDprod}) makes sense also when $\chi(s) = 0$.
In that case we may regard the solution of (\ref{ASDprod}) as a holomorphic map 
$[0,1]\times \R \to R(\Sigma)$ as follows.
When $\chi(s) = 0$, the second equation means that $F_A$, the curvature of the connection 
$A$ (of $\mathcal E_{\Sigma}$), is $0$. Namely $A(s,t)$ is a flat 
condition. Therefore $(s,t) \mapsto [A(s,t)]$ defines a map
$[0,1] \times \R \to R(\Sigma)$.
The first equation implies that this map is holomorphic as follows.
Note $\frac{\partial A}{\partial s} - d_A\Phi$ and $ \frac{\partial A}{\partial t} - d_A\Psi$
are $d_A$-closed forms since $A(s,t)$ is flat for $(s,t) \in (0,1] \times \R$. Therefore 
the first equation implies that $\frac{\partial A}{\partial s} - d_A\Phi$ and $ \frac{\partial A}{\partial t} - d_A\Psi$
are both harmonic. (Namely they are both $d_A$ and $d_A^*$ closed). They represent the $s$ (resp. $t$) derivative 
of our map $(s,t) \mapsto [A(s,t)]$.
The tangent space $T_{A(s,t)}R(\Sigma)$ is identified with the set of harmonic 
forms.  The complex structure of $R(\Sigma)$ is obtained by the Hodge $*_{\Sigma}$
on harmonic forms. 
(See, for example, \cite[Section 2]{fu3}.)
Thus the first equation implies that $(s,t) \mapsto [A(s,t)]$
is holomorphic.
\par
The operator (\ref{openn}) is obtained by `counting' the order of certain moduli space
of solutions of (\ref{ASDeq}), (\ref{ASDprod}) with appropriate boundary conditions, 
which we will describe below.
We first discuss the case when $L$ is an embedded Lagrangian submanifold and 
will explain its generalization to the immersed case later.
\par
Let $\frak A$ be a solution of (\ref{ASDeq}),(\ref{ASDprod}).
(Namely we require (\ref{ASDeq}) on $M_0 \times \R$ and 
(\ref{ASDprod}) on $(M\setminus M_0) \times \R$.)
We define the energy $\mathcal E(\frak A)$ as follows.
\begin{equation}\label{energy}
\mathcal E(\frak A)
=
\int_{(M\setminus (\Sigma \times [0,1))) \times \R}
\Vert F_{\frak A}\Vert^2 \Omega_{\bf g}
+
\int_{[0,1) \times \R} 
\left(
\left\Vert \frac{\partial u}{\partial s} \right\Vert^2
+
\left\Vert \frac{\partial u}{\partial t} \right\Vert^2
\right) dsdt.
\end{equation}
Here $\Omega_{\bf g}$ is the volume form of the metric ${\bf g}$
and we define $u$ by $u(s,t) = [A(s,t)] \in R(\Sigma)$.
The norm appearing in the second term of the right hand side 
is the norm of the vector. The norm is induced by the 
metric obtained by using $g_{\Sigma}$.
See \cite[Section 2]{fu3}.
\par
Hereafter we write
$$
\Vert \nabla u\Vert^2 = 
\left\Vert \frac{\partial u}{\partial s} \right\Vert^2
+
\left\Vert \frac{\partial u}{\partial t} \right\Vert^2.
$$
\par
We assume that $L$ is of clean intersection with $R(M)$.
Let $R_1$ and $R_2$ be connected components of the fiber product
$
R(M) \times_{R(\Sigma)} \tilde L.
$
We consider the connection $\frak A$ solving (\ref{ASDeq}), (\ref{ASDprod}).
As we explained above $(s,t) \mapsto [A(s,t)]$ defines a 
holomorphic map $[0,1] \times \R \to R(\Sigma)$.
We consider the following boundary condition:\footnote{
In case $L$ is immersed we need to set a boundary condition 
a bit more carefully. See Definition \ref{defn23} (6), (7).}
\begin{conds}\label{cond22}
There exists  a smooth map $\gamma : (-\infty,+\infty) \to L$
such that
\begin{equation}
[A(1,t)] = i_L(\gamma(t)) \in R(\Sigma).
\end{equation}
Here $i_L : L \to R(\Sigma)$ is the Lagrangian embedding .
\end{conds}
See Figure 2.2.
\par
\begin{center}
\includegraphics[scale=0.25]
{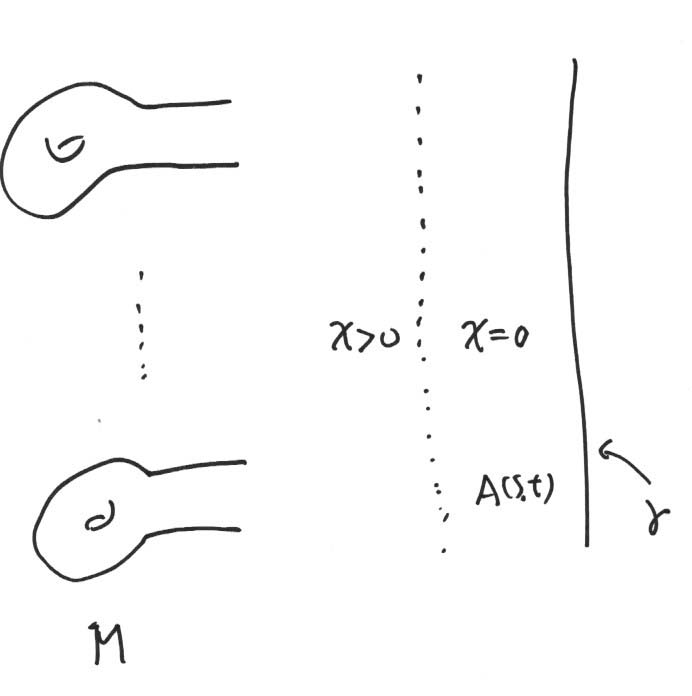}
\end{center}\centerline{\bf Figure 2.2}
\par

We have the following:
\begin{thm}\label{asymptotic}
Suppose $\frak A$ is a solution of (\ref{ASDeq}), (\ref{ASDprod}). We assume that 
its energy is finite. Then there exists a gauge transformation 
$g \in {\rm Aut}(\mathcal E_{M}\times \R)$ and $a_{-}, a_+ \in R(M)$ 
with the following properties.
\begin{enumerate}
\item
$
\lim_{t\to \infty} g^*\frak A\vert_{M\times \{t\}} = a_+,
$
and
$
\lim_{t\to -\infty} g^*\frak A\vert_{M\times \{t\}} = a_-.
$
\item
If $A(s,t)$ is obtained by the restriction of $\frak A$ to $\Sigma \times \{(s,t)\}$
then
$[A(1,t)] = \gamma(t)$.
\item
$\lim_{t\to -\infty}i_L(\gamma(t)) = [a_-\vert_{\Sigma}]$,
$\lim_{t\to +\infty}i_L(\gamma(t)) = [a_+\vert_{\Sigma}]$.
\end{enumerate}
\end{thm}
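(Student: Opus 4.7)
The plan is to mimic the classical Floer-theoretic asymptotic analysis for finite-energy ASD connections on cylindrical 4-manifolds (cf.\ \cite{fl1}, \cite{Don2}), adapted to the degenerate metric ${\bf g}$ and the mixed nature of the equations on $M_0 \times \R$ versus $\Sigma \times (-1,1] \times \R$. First I would exploit the finite-energy hypothesis: because $\mathcal E(\frak A) < \infty$, the energy of $\frak A$ on slabs $M \times [T, T+1]$ must tend to $0$ as $\vert T \vert \to \infty$. Pick any sequence $t_n \to +\infty$ and consider the translates $\frak A_n(x,t) := \frak A(x, t+t_n)$. On each fixed compact subset of $M_0 \times \R$ these translates have uniformly small Yang--Mills energy on long time windows, and on $\Sigma \times (-1,1] \times \R$ the associated map $u_n(s,t) = u(s, t+t_n)$ has vanishing Dirichlet energy on long time windows.

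Next I would pass to a limit. On $M_0 \times \R$, Uhlenbeck's weak compactness produces, after local gauge transformations $g_n$, a subsequential limit $\frak A_\infty$; the vanishing of the local Yang--Mills energy forces $F_{\frak A_\infty} = 0$ and $t$-independence, so $\frak A_\infty$ is pulled back from a flat connection $a_+$ on $M_0$. On $\Sigma \times (-1,1] \times \R$, the pseudo-holomorphic maps $u_n : [0,1] \times \R \to R(\Sigma)$, with boundary on the (clean) Lagrangian $L$, converge by a standard Gromov compactness argument (using that $R(\Sigma)$ is monotone K\"ahler and $L$ is closed) to a constant map $u_\infty$. To glue the two pictures I appeal to the removable-singularity and compactness theorems of \cite{fu3}: the degenerate neck at $s \in [0,1]$ collapses to the restriction map $R(M_0) \to R(\Sigma)$, so the limit flat connection $a_+$ restricts at $\Sigma$ to the single point $u_\infty$, and $\gamma(t+t_n)$ converges to the same point because $u(1, t+t_n) = i_L(\gamma(t+t_n))$. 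This establishes parts (1), (2), (3) along subsequences and identifies the limit as a flat connection on all of $M$, hence an element of $R(M)$ by Situation \ref{situ21}.

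Finally, to promote subsequential convergence to genuine $t \to +\infty$ convergence, I would use the hypothesis from Situation \ref{situ21} that $H^1(M, d_A) = T_{[A]} R(M)$ together with the clean intersection of $R(M)$ with $L$: these make the set of possible limits a nondegenerate critical manifold of the underlying Chern--Simons / symplectic action functional, and a standard \L ojasiewicz-type inequality (as used in the analogous Floer arguments) forces the limit to be unique; the same argument at $t \to -\infty$ produces $a_-$. The main obstacle will be handling the interface $s = 1$ cleanly: I must show that $(s,t) \mapsto [A(s,t)]$ extends continuously across the degenerate boundary so that its value at $s = 1$ genuinely equals the restriction of $a_\pm$ to $\Sigma$, without a bubble forming or regularity being lost there. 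This is exactly the point where the removable-singularity theorem of \cite{fu3} is indispensable, and matching the Lagrangian boundary condition through the limit (so that $i_L(\gamma(t)) \to [a_\pm\vert_\Sigma]$) is the most delicate step of the whole argument.
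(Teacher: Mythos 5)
The paper does not actually prove Theorem~\ref{asymptotic}: Remark~\ref{rem2525} explicitly defers the proof to the forthcoming papers \cite{fu7}, \cite{fu8}, and only lists the ingredients the author intends to use --- Floer's original cylindrical argument in \cite{fl1}, the Bott--Morse version \cite[Lemma 7.13]{fu15}, the more general but weaker result of \cite{MMR}, the removable-singularity and compactness theorems of \cite{fu3}, and the analogous \cite[Theorem 5.1]{Sawe}. Your sketch is consistent with that indicated strategy, so there is no real ``paper's proof'' to diverge from: the outline you give (finite energy forces vanishing slab energy; translate and extract limits on $M_0\times\R$ via Uhlenbeck and on the neck via the compactness of \cite{fu3}; then upgrade subsequential convergence using nondegeneracy of the limit set) is precisely the proof scheme that Remark~\ref{rem2525} says will be carried out by modifying \cite[Lemma 7.13]{fu15}.

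Two refinements are worth flagging. First, for the uniqueness-of-limit step you invoke ``a standard \L ojasiewicz-type inequality'': in the setting of Situation~\ref{situ21}, $R(M)$ is assumed to be a smooth Bott--Morse (nondegenerate) critical set and $L$ to intersect $R(M)$ cleanly, and in that case the cleaner tool is the spectral-gap / differential-inequality argument yielding exponential decay (\ref{expesti}), as in \cite[Lemma 7.13]{fu15}; the \L ojasiewicz route is what \cite{MMR} uses precisely \emph{without} the Bott--Morse hypothesis, and then one \emph{loses} the exponential rate, as the paper notes. So name the nondegeneracy, not \L ojasiewicz, as the mechanism for uniqueness and exponential decay. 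Second, be careful about which boundary is which: the Lagrangian condition from $L$ sits at $s=1$, but the genuinely delicate interface is $s=0$, the vanishing locus of $\chi$, where the ASD equation on $M_0\times\R$ must match the pseudo-holomorphic map $u:[0,1]\times\R\to R(\Sigma)$. On the strip the $s=0$ edge is a matching condition against the gauge-theory trace, not a Lagrangian boundary condition, so the compactness used there is not ``standard Gromov compactness for a Lagrangian boundary'' but precisely the mixed removable-singularity/compactness results of \cite{fu3}. Your closing paragraph gestures in this direction but places the difficulty at $s=1$; the real crux is the $\chi=0$ interface. With these adjustments your outline matches the intended argument.
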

Here $[*]$ stands for the gauge equivalence class of the connection $*$.
\par
We did not specify the ratio of the convergence 
in Theorem \ref{asymptotic} (1).
Actually we can prove that there exist positive numbers $c_k$, $C_k$ such that:
\begin{equation}\label{expesti}
\Vert g^*\frak A\vert_{M\times \{t\}} - a_{\pm}\Vert_{C^k} \le C_k e^{\mp c_k t}.
\end{equation}
We remark that (\ref{expesti}) implies that the convergence in item (3) is also of exponential 
order.
\begin{rem}\label{rem2525}
We postpone the proof of Theorem \ref{asymptotic} to \cite{fu7}, \cite{fu8}. 
\par
In the case when $M$ has no boundary and $R(M)$ is zero dimensional, 
Floer \cite{fl1}
proved  Theorem \ref{asymptotic} together with the estimate (\ref{expesti}).
In the case $M$ has no boundary and
$R(M)$ is of Bott-Morse type, that is, the critical points set $R(M)$ 
of the Chern-Simons functional is of Bott-Morse type, Theorem \ref{asymptotic} together with the estimate (\ref{expesti})
is proved in \cite[Lemma 7.13]{fu15}. 
See also \cite{MMR}, where (in the case $M$ has no boundary) Theorem 
\ref{asymptotic} is proved {\it without} assuming that $R(M)$ is clean in the Bott-Morse 
sense. (In that generality the estimate (\ref{expesti}) is false.)
We need certain modification of the proof  of \cite{fu15}  to 
prove Theorem \ref{asymptotic} and estimate (\ref{expesti}).
\par
A similar result is proved in \cite[Theorem 5.1]{Sawe} in a slightly different setting.
\end{rem}
We now define:
\begin{defn}\label{defn2626}
We define the moduli space $\overset{\circ}{\widetilde{\mathcal M}}((M,\mathcal E),L;R_-,R_+;E)$
as the set of all the gauge equivalence classes of $\frak A$ such that:
\begin{enumerate}
\item $\frak A$ satisfies (\ref{ASDeq}), (\ref{ASDprod}).
\item
There exists $\gamma : (-\infty,+\infty) \to \tilde L$ such that Condition \ref{cond22}
is satisfied.
\item
Let $a_{-}$ and $a_+$ be as in the conclusion of Theorem \ref{asymptotic}.
Then
\begin{equation}
a_- \in R_-,
\qquad 
a_+ \in R_+.
\end{equation}
\item
$\mathcal E(\frak A) = E$, where the left hand side is the energy defined by
(\ref{energy}).
\end{enumerate}
$\overset{\circ\circ}{\widetilde{\mathcal M}}((M,\mathcal E),L;R_-,R_+;E)$
has a natural $\R$ action defined by the translation on $\R$ direction.
We denote by $\overset{\circ\circ}{\mathcal M}((M,\mathcal E),L;R_-,R_+;E)$
the quotient of $\overset{\circ\circ}{\widetilde{\mathcal M}}((M,\mathcal E),L;R_-,R_+;E)$
by this $\R$ action.
\end{defn}
In the same way as \cite[1(b)]{fl2}, \cite[Section 2 (b)]{Don2}, we can perturb our equation (\ref{ASDeq}), (\ref{ASDprod})
on $M_0 \times \R$ 
(that is, at the gauge theory part) so that 
$\overset{\circ\circ}{{\mathcal M}}((M,\mathcal E),L;R_-,R_+;E)$
becomes a smooth manifold.
Hereafter in this paper we denote by 
$\overset{\circ\circ}{{\mathcal M}}((M,\mathcal E),L;R_-,R_+;E)$ this 
perturbed moduli space.
\par
We next describe a partial compactification of $\overset{\circ\circ}{{\mathcal M}}((M,\mathcal E),L;R_-,R_+;E)$.
Our compactification is a mixture of Uhlenbeck compactification in the 
gauge theory side and of stable map compactification in the (pseudo)holomorphic curve side.
\par
Let $\Omega$ be a bordered nodal curve such that:
\begin{conds}\label{conds27}
\begin{enumerate}
\item
$\Omega$ contains $(0,1] \times \R$ as its irreducible component.
\item
$\overline{\Omega \setminus ((0,1] \times \R)}$ is a finite 
union of trees of sphere components attached to $(0,1) \times \R$
and a finite 
union of trees of disk components attached to $\{1\} \times \R$.
\end{enumerate}
\end{conds}
For each tree of disk or sphere components, its root is by definition its intersection with 
$(0,1] \times \R$.
Note disk components in Condition \ref{conds27} (2) may contain 
a tree of sphere components attached to it.
\par\newpage
\begin{center}
\includegraphics[scale=0.25]
{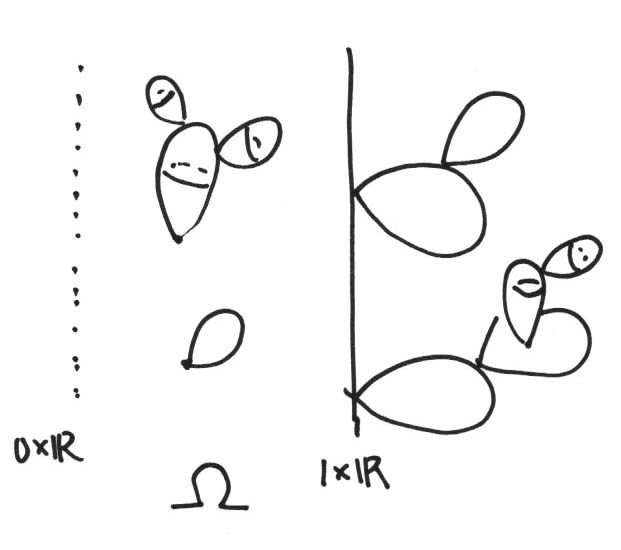}
\end{center}\centerline{\bf Figure 2.3}
\par\medskip
We consider a pair $(\Omega,u)$ where $\Omega$ satisfies 
Condition \ref{conds27} and $u$ is a map 
$: \Omega \to R(\Sigma)$ which satisfies the next condition.
\begin{conds}\label{conds28}
\begin{enumerate}
\item
There exists a continuous map
$\gamma : \partial \Omega \setminus \{\text{boundary nodes}\} 
\to L$ such that 
$u(z) = (i_{L} \circ \gamma)(z)$.
\item
$u$ is holomorphic on each of the irreducible components of $\Omega$ and is continuous
on $\Omega$.
\item
$(\Omega,u)$ is stable. Namely the set of all maps $v : \Omega \to \Omega$ 
satisfying the next three conditions is a finite set.
\begin{enumerate}
\item
$v$ is a homeomorphism and is holomorphic on each of the irreducible components.
\item
$v$ is the identity map on $(0,1] \times \R \subseteq \Omega$.
\item $u \circ v = u$.
\end{enumerate}
\end{enumerate}
\end{conds}
\begin{defn}\label{defn2929}
We define the set $\overset{\circ}{\widetilde{\mathcal M}}((M,\mathcal E_M),L;R_-,R_+;E)$
as the set of all equivalence classes of $(\frak A,{
\frak z},{\frak w},\Omega,u)$ satisfying the following conditions.
\begin{enumerate}
\item
$\frak A$ is a connection of $\mathcal E_{M} \times \R$ satisfying equations  
 (\ref{ASDeq}), (\ref{ASDprod}).
\item
$\frak z = (\frak z_1,\dots,\frak z_{m_1})$ is an {\it unordered} 
$m_1$-tuple of points of $M \setminus (\Sigma\times [0,1]) \times \R$.
We put $\Vert \frak z\Vert = m_1$.
We say the subset $\{\frak z_1,\dots,\frak z_{m_1}\}
\subset M \setminus (\Sigma\times [0,1]) \times \R$ the {\it support} of $\frak z$ 
and denote it by $\vert\frak z\vert$.
We define ${\rm multi} : \vert\frak z\vert \to \Z_{>0}$ by 
$
{\rm multi}(x) = \#\{i \mid z_i = x\}$
and call it the {\it multiplicity function}.
\item
$\frak w = (\frak w_1,\dots,\frak w_{m_2})$ is an {\it unordered} 
$m_2$-tuple of points of $\{1\} \times  \R$.
We put $\Vert \frak w\Vert = m_2$.
We say the subset $\{\frak w_1,\dots,\frak w_{m_2}\} \subset \{1\} \times  \R$ the {\it support} of $\frak w$.
We define ${\rm multi} : \vert\frak w\vert \to \Z_{>0}$ by 
$
{\rm multi}(x) = \#\{i \mid w_i = x\}$
and call it the {\it multiplicity function}.
\item
$\Omega$ satisfies Condition \ref{conds27}.
\item
$(\Omega,u)$ satisfies Condition \ref{conds28}.
\item
For $(s,t) \in (0,1] \times \R \subseteq \Omega$
we have
$$
[A(s,t)] = u(s,t). 
$$
Here $A(s,t)$ is obtained from $\frak A$ by  (\ref{AnadfraA}).
\item
The energy of $(\frak A,{
\frak z},{\frak w},\Omega,u)$
which we will define in Definition \ref{defnenergy2} below is $E$.
\item
Definition \ref{defn2626} (3) holds.
\end{enumerate}
The equivalence relation is defined in Definition \ref{equivlimiobj} below.
\end{defn}
\par
\begin{center}
\includegraphics[scale=0.25]
{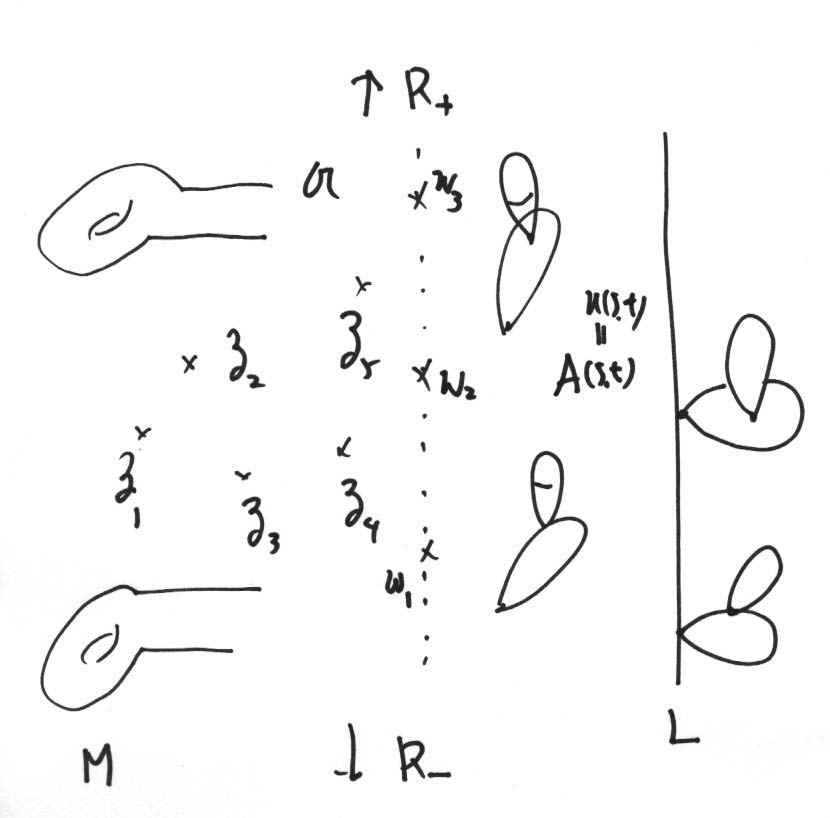}
\end{center}\centerline{\bf Figure 2.4}
\par
\begin{defn}\label{defnenergy2}
Suppose $(\frak A,{
\frak z},{\frak w},\Omega,u)$
satisfies (1)-(6) of Definition \ref{defn2929}. We define its 
{\em energy} $\mathcal E(\frak A,{
\frak z},{\frak w},\Omega,u)$ by the next formula:
\begin{equation}\label{energy2}
\aligned
\mathcal E(\frak A,{
\frak z},{\frak w},\Omega,u) 
=&
\int_{(M\setminus (\Sigma \times [0,1))) \times \R}
\Vert F_{\frak A}\Vert^2 \Omega_{\bf g}\\
&+
 \int_{\Sigma} 
\Vert\nabla u\Vert^2
 dsdt
 + 2\pi^2\Vert \frak z\Vert  + 2\pi^2\Vert \frak w\Vert .
\endaligned
\end{equation}
\end{defn}
\begin{defn}\label{equivlimiobj}
We say $(\frak A_1,{
\frak z}_1,{\frak w}_2,\Omega_1,u_1)$
is {\it equivalent} to 
$(\frak A_2,{
\frak z}_2,{\frak w}_2,\Omega_2,u_2)$
if the following holds.
\begin{enumerate}
\item
There exists a gauge transformation $g$ such that $g^*\frak A_1
= \frak A_2$.
\item
$\frak z_1 = \frak z_2$. $\frak w_1 = \frak w_2$.
\item
There exists a map $v : \Omega_1 \to \Omega_2$ 
such that:
\begin{enumerate}
\item
$v$ is a homeomorphism and is holomorphic on each of the irreducible components.
\item
$v$ is the identity map on $(0,1] \times \R \subseteq \Omega$.
\item $u_2 \circ v = u_1$.
\end{enumerate}
\end{enumerate}
\end{defn}
We define a topology on 
$\overset{\circ}{\widetilde{\mathcal M}}((M,\mathcal E),L;R_-,R_+;E)$
by combining the topology of Uhlenbeck compactification and 
stable map topology as follows.
\begin{defn}\label{defn1222}
Let 
 $[\frak A_n,{
\frak z}_n,{\frak w}_n,\Omega_n,u_n]$ be a sequence of elements of 
the set $\overset{\circ}{\widetilde{\mathcal M}}((M,\mathcal E),L;R_-,R_+;E)$
and 
$[\frak A,{
\frak z},{\frak w},\Omega,u] \in \overset{\circ}{\widetilde{\mathcal M}}((M,\mathcal E),L;R_-,R_+;E)$.
We say  $[\frak A_n,{
\frak z}_n,{\frak w}_n,\Omega_n,u_n]$
converges to $[\frak A,{
\frak z},{\frak w},\Omega,u] $ if the following holds.
\begin{enumerate}
\item
Let  $\vert\frak z\vert \subset (M \times \R) \setminus (\Sigma \times [0,1] \times \R)$
be the support of $\frak z$.
We require that there exists a sequence of gauge transformations $g_n$ such that 
$g_n^*\frak A_n$ converges to $\frak A$ in compact $C^{\infty}$
topology on 
$(M \times \R) \setminus (\Sigma \times [0,1] \times \R) \setminus \vert\frak z\vert$.
\item
For $\epsilon >0$ we denote by $\Omega_n(\epsilon) \subset \Omega_n$ the domain 
$\Sigma \times [\epsilon,1] \times \R$ together with all the 
trees of sphere and disc components of $\Omega_n$ whose 
roots are in $\Sigma \times [\epsilon,1] \times \R$.
We define $\Omega(\epsilon) \subset \Omega$ in the same way.
\par
Then, for any $\epsilon$ such that the root of sphere components 
of $(\Omega,u)$ are not on $\{\epsilon\} \times \R$, we require $(\Omega_n(\epsilon),u_n)$
converges to $(\Omega(\epsilon),u)$ in stable map topology, 
which is defined in the same way as \cite[Definition 10.3]{FO}.
\item
Let  $x \in \vert\frak z\vert$. Then for each sufficiently small  $\epsilon >0$
the next equality holds. Here ${\rm multi}$ is the multiplicity function
and $B_{\epsilon}(x)$ is the metric ball centered at $x$ in $M \times \R$.
\begin{equation}
\aligned
&2\pi^2{\rm multi}(x) + \int_{B_{\epsilon}(x)}\Vert F_{\frak A}\Vert^2 \Omega_{\bf g} \\
&=
\lim_{n\to \infty}
\left(
\sum_{y \in B_{\epsilon}(x) \cap \vert\frak z_n\vert}2\pi^2{\rm multi}(y) 
+
 \int_{B_{\epsilon}(x)}\Vert F_{\frak A_n}\Vert^2 \Omega_{\bf g}
\right).
\endaligned
\end{equation}
\item
Let $x \in \vert\frak w\vert \subset \{0\} \times \R$. 
We define $D_{\epsilon}(x,i)$ $i=1,2,3$ 
and $D_{\epsilon}(x,4;\Omega)$, $D_{\epsilon}(x,4;\Omega_n)$ as follows.
\begin{enumerate}
\item
$D_{\epsilon}(x,1) =  \Sigma \times (D_{\epsilon}(x) \cap ([-1,0) \times \R))$.
Here $D_{\epsilon}(x)$ is the metric ball centered at $x$ in $[-1,1] \times \R$.
\item
$D_{\epsilon}(x,2) = (\{0\} \times \R) \cap D_{\epsilon}(x)$.
\item
$D_{\epsilon}(x,3) = ((0,1] \times \R) \cap D_{\epsilon}(x)$.
\item
$D_{\epsilon}(x,4;\Omega)$ is a subset of $\Omega$ and  is the union of $D_{\epsilon}(x,3)$ and the 
trees of sphere components are rooted on $D_{\epsilon}(x,3)$.
The definition of $D_{\epsilon}(x,4;\Omega_n)$ is similar.
\end{enumerate}
We then require the next equality for sufficiently small positive numbers $\epsilon$.
\begin{equation}
\aligned
&2\pi^2{\rm multi}(x) + \int_{D_{\epsilon}(x;1)}\Vert F_{\frak A}\Vert^2 \Omega_{\bf g} 
+ 2 \int_{D_{\epsilon}(x,4;\Omega)} 
\Vert\nabla u\Vert^2
 dsdt
\\
&=
\lim_{n\to \infty}
\left(
\sum_{y \in D_{\epsilon}(x,1) \cap \vert\frak z_n\vert} 2\pi^2{\rm multi}(y) 
+
 \int_{D_{\epsilon}(x,1)}\Vert F_{\frak A_n}\Vert^2 \Omega_{\bf g}
\right. 
\\
&\qquad\qquad 
+\left.
 \int_{
D_{\epsilon}(x,4;\Omega_n)} 
\Vert\nabla u\Vert^2
 dsdt
+
\sum_{y \in D_{\epsilon}(x,2) \cap \vert\frak w_n\vert}2\pi^2{\rm multi}(y) 
\right).
\endaligned
\end{equation}
\end{enumerate}
\end{defn}
\par
\begin{center}
\includegraphics[scale=0.25]
{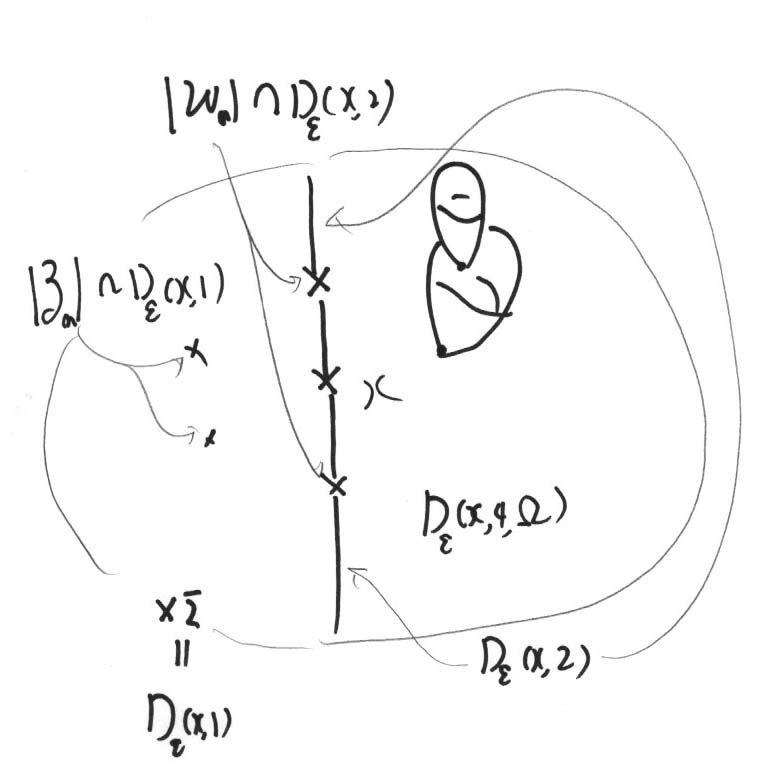}
\end{center}\centerline{\bf Figure 2.5}
\par
\begin{rem}
This topology is a combination of the topology of Uhlenbeck compactification 
defined in \cite[page 292]{Don15} and the stable map topology introduced in \cite[Definition 10.2]{FO}.
\par
The unordered finite set $\frak z$ plays the role to record the total mass of the 
bubble in the gauge theory side. It appeared in Uhlenbeck compactification
in gauge theory.
\par
The unordered set $\frak w$ plays the role to record the total mass of the 
bubble which occurs on the line $\{0\} \times \R$.
The bubble on this line is studied in detail in \cite{fu3}.
Note this is a subset of $\{0\} \times \R$ and is not a subset of 
$\Sigma \times \{0\} \times \R \subset M \times \R$. 
Actually we can not specify where bubble occurs on $\Sigma\times \{0\} \times \R \subset M \times \R$
by the following reason. 
Let $x \in \{0\} \times \R$. The sequence $\frak A_n$ determines a sequence of maps 
$u_n : B_x(\epsilon) \setminus \text{a finite set} \to R(\Sigma)$ which is 
holomorphic. (See \cite[Lemmata 5.5 and 4.22]{fu3}.)
Even in case $u_n$ is defined at $x$ the limit of $u_n$ may not be defined at $x$.
Namely the bubble in the symplectic geometry side may occur on the 
line $\{0\} \times \R$. In such a case the connection $\frak A_n$ 
may diverge everywhere on $\Sigma \times \{0\} \times \{t\}$.
\par
Item (4) also takes into account the following phenomenon.
There may be a sequence of trees of sphere components of $\Omega_n$ whose roots 
are $(s_n,t_n)$ where $s_n > 0$ converges to $0$ and $t_n$ converges to $t$.
Then in the limit this sequence of sphere components moves to $(0,t)$, 
which lies on the line $\{0\} \times \R$.
Therefore the limit is no longer contained in $\Omega$.
In this case, we take $(0,t)$ as a part of $\frak w$ and its multiplicity is the 
limit of the
symplectic area of those trees of the sphere components.
\par
It may also happen that some of the points of $\frak z_n$ converges to 
$\Sigma \times \{0\} \times \R$. In that case it will become one of 
the points of $\frak w$, forgetting the $\Sigma$ factor.
\end{rem}
\begin{defn}
$\overset{\circ}{\widetilde{\mathcal M}}((M,\mathcal E),L;R_-,R_+;E)$
has an $\R$ action by translation along the $\R$ factor.
We denote by 
$\overset{\circ}{{\mathcal M}}((M,\mathcal E),L;R_-,R_+;E)$
the quotient space with quotient topology.
\par
We define a continuous map
\begin{equation}\label{evpm}
{\rm ev}_{\pm} : \overset{\circ}{{\mathcal M}}((M,\mathcal E),L;R_-,R_+;E)
\to R_{\pm}
\end{equation}
by using Definition \ref{defn2929} (8).
\par
We define ${{\mathcal M}}((M,\mathcal E),L;R_-,R_+;E)$
as the disjoint union of fiber products:
\begin{equation}\label{fibercompactka}
\aligned
&\overset{\circ}{{\mathcal M}}((M,\mathcal E),L;R_-,R_1;E_0)
\times_{R_1} 
\overset{\circ}{{\mathcal M}}((M,\mathcal E),L;R_1,R_2;E_1)
\times_{R_2} \dots \\
& \dots  \times_{R_{\ell-1}}\overset{\circ}{{\mathcal M}}((M,\mathcal E),L;R_{\ell-1},R_\ell;E)
\times_{R_{\ell}} {{\mathcal M}}((M,\mathcal E),L;R_{\ell},R_+;E_{\ell})
\endaligned
\end{equation}
where
\begin{equation}
E = E_0 + E_1 + \dots + E_{\ell},
\end{equation}
$R_1,\dots,R_{\ell}$ are connected components of $R(M) \times_{R_{\Sigma}} L$
and $\ell=0,1,2,\dots$.
Using Theorem \ref{asymptotic}, we can define a topology on 
${\mathcal M}((M,\mathcal E),L;R_-,R_+;E)$ in the same way as \cite[Section 7.1.4]{fooobook}.
\end{defn}
\par\begin{center}
\includegraphics[scale=0.25]
{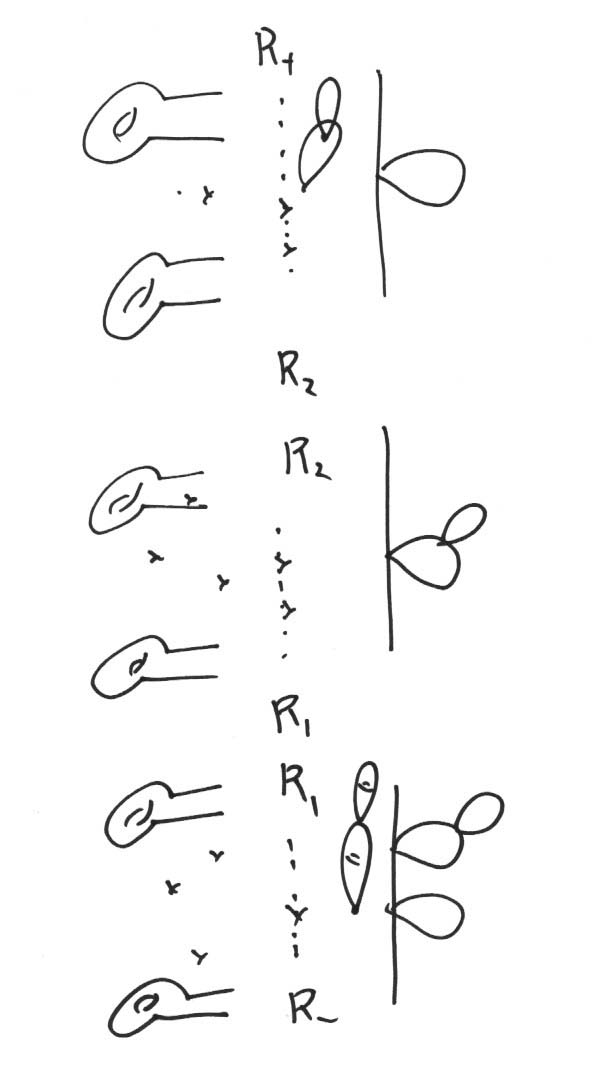}
\end{center}\centerline{\bf Figure 2.6}
\par
\begin{thm}\label{thm215}
${\mathcal M}((M,\mathcal E),L;R_-,R_+;E)$ is compact and Hausdorff.
\end{thm}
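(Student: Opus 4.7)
The plan is to prove Theorem \ref{thm215} by the usual sequential extraction argument, combining the Uhlenbeck and stable map compactness theorems adapted to the singular metric ${\bf g}$ in (\ref{form211}). Let $[\frak A_n,\frak z_n,\frak w_n,\Omega_n,u_n]$ be a sequence in $\mathcal{M}((M,\mathcal E),L;R_-,R_+;E)$. First I would reduce to the case where each term lies in a single uncompactified stratum $\overset{\circ}{\mathcal M}((M,\mathcal E),L;R_-,R_+;E)$: because the energy $E$ is fixed and each irreducible piece in the fiber product (\ref{fibercompactka}) carries a strictly positive minimum energy (bounded below by the minimal Chern--Simons gap between components of $R(M)\times_{R(\Sigma)} L$, together with the $2\pi^2$ quantum for each sphere and disk bubble), only finitely many splitting patterns appear, so up to subsequence all terms have the same combinatorial type, and compactness of the fiber product reduces to compactness of each factor.

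For a sequence in $\overset{\circ}{\mathcal M}((M,\mathcal E),L;R_-,R_+;E)$ the argument proceeds on three regions. On $M_0\times\R$ the metric is smooth and the Uhlenbeck compactness theorem (as used in \cite{Don15}) produces, after gauge transformations $g_n$ and passing to a subsequence, a limit connection $\frak A$ and a finite bubbling set $\vert\frak z\vert$ away from which $g_n^*\frak A_n$ converges in $C^\infty_{loc}$; the multiplicities in $\frak z$ are read off from the concentrated $L^2$ curvature via the equation in Definition \ref{defn1222}(3). On the cylindrical region $\Sigma\times(0,1]\times\R$, Equations (\ref{ASDprod}) together with $\chi(s)>0$ give, in the limit $\chi(s)=0$, a holomorphic map $u:(0,1]\times\R\to R(\Sigma)$, and the stable map compactness of Gromov type (in the formulation of \cite{FO}) supplies a limiting bordered nodal curve $(\Omega,u)$ with sphere and disk bubble trees attached and with boundary mapped to $L$. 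The boundary condition from Condition \ref{cond22} passes to the limit because $L$ is closed (and in the immersed case one uses the refined version from Definition \ref{defn23}).

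The main obstacle, and the part that genuinely uses \cite{fu3}, is the behavior across the interface line $\{0\}\times\R$ where the metric ${\bf g}$ degenerates. Here two phenomena can mix: gauge-theoretic curvature concentration on a ball centered at a point $x\in\{0\}\times\R$, and symplectic bubbling of the maps $u_n$ at that same point; moreover a sequence of sphere bubbles rooted at $(s_n,t_n)$ with $s_n\to 0^+$ can migrate to the line in the limit, and points of $\frak z_n$ approaching $\Sigma\times\{0\}\times\R$ can lose their $\Sigma$-component. The removable singularity and compactness results of \cite[Lemmata 4.22, 5.5]{fu3}, together with the uniform energy identity in Definition \ref{defnenergy2}, show that all mass lost in $C^\infty_{loc}$ convergence is accounted for by the multiplicity function on $\frak w$; the precise statement is exactly the energy conservation formula in Definition \ref{defn1222}(4), and verifying it is the crux of the compactness argument.

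To finish, the asymptotic behavior at $t\to\pm\infty$ is controlled by Theorem \ref{asymptotic} and the exponential estimate (\ref{expesti}): if after translation by $\R$ the sequence still fails to converge on compact sets in the $t$ direction, one rescales by $\R$ so that the energy distributes, picks off the first nontrivial bubble, and iterates; the finiteness of this process is again guaranteed by the positive energy gap between components $R_i$, and produces the broken-trajectory limit in the fiber product (\ref{fibercompactka}). Hausdorffness follows from uniqueness of the limit data $(\frak A,\frak z,\frak w,\Omega,u)$ up to the equivalence of Definition \ref{equivlimiobj}: any two limit points of the same sequence agree on $(M\times\R)\setminus(\vert\frak z\vert\cup\Sigma\times\{0\}\times\vert\frak w\vert)$ by the $C^\infty_{loc}$ convergence, agree on $\Omega$ by the uniqueness part of Gromov compactness, and therefore coincide globally. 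The principal difficulty, as noted, lies in the mixed bubbling along $\{0\}\times\R$; everything else is a standard adaptation of Uhlenbeck and stable map compactness to the broken setting.
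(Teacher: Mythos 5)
Your proposal follows essentially the same route as the paper: the paper's proof of Theorem \ref{thm215} simply cites Uhlenbeck compactness (\cite{uh1}, \cite{uh2}), Gromov/stable-map compactness (\cite{Gr}, \cite{Ye}, \cite{FO}), the removable-singularity and compactness theorems of \cite{fu3} for the degeneration locus $\{0\}\times\R$, and the argument of \cite[Lemma 10.4]{FO} for Hausdorffness. Your sequential-extraction sketch fills in details along exactly those lines and identifies the same crux (the mixed bubbling at $\{0\}\times\R$ controlled by \cite{fu3}), so you have reconstructed the intended proof rather than given a genuinely different one.
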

The compactness follows from Uhlenbeck compactness in gauge theory side (\cite{uh1}, \cite{uh2}),
Gromov compactness in symplectic geometry side (\cite{Gr}. See \cite{Ye}  for the case of 
moduli space of pseudo-holomorphic disks and \cite[Theorem 11.1]{FO} for the way to adapt the proof to the case when a
particular version of the topology is used  in the pseudo-holomorphic curve side, that is, the stable map 
topology.) and \cite[Theorems 1.6, 1.7 and 1.8]{fu3}.
The Hausdorff-ness can be proved in the same way as the proof of 
\cite[Lemma 10.4]{FO}.
\par
To  describe the boundary of ${\mathcal M}((M,\mathcal E),L;R_-,R_+;E)$ 
we include boundary marked points.
\begin{defn}\label{defn216}
We consider $(\frak A,{
\frak z},{\frak w},\Omega,u,\vec z)$
such that:
\begin{enumerate}
\item
$\frak A$ satisfies Definition 
\ref{defn2929} (1).
\item
$\frak z$ satisfies Definition 
\ref{defn2929} (2).
\item
$\frak w$ satisfies Definition 
\ref{defn2929} (3).
\item
$\vec z = (z_1,\dots,z_{k})$.
$z_i$ lies on $\partial \Sigma$. 
Namely it lies either on $\{1\}\times \R$ or on the boundary of 
one of the disk components. 
None of $z_i$ is a nodal point and
$z_i \ne z_j$ for $i\ne j$.
$(z_1,\dots,z_{k})$ respects counter clockwise orientation of 
$\partial \Omega$.
\item
$\Omega$ satisfies Condition \ref{conds27}.
\item
$(\Omega,u)$ satisfies Condition \ref{conds28} except (3), which we replace
by the stability of 
$(\Omega,u,\vec z)$. Namely the set of all maps $v : \Omega \to \Omega$ 
satisfying the next three conditions is a finite set.
\begin{enumerate}
\item
$v$ is a homeomorphism and is holomorphic on each of the irreducible components.
\item
$v$ is the identity map on $(0,1] \times \R \subseteq \Omega$.
\item $u \circ v = u$.
\item $v(z_i) = z_i$, $i=1,\dots,k$. 
\end{enumerate}
\item Definition \ref{defn2929} (6)(7)(8) hold.
\end{enumerate}
We can define equivalence among such objects by modifying 
Definition \ref{equivlimiobj} in an obvious way.
\par
Let $\overset{\circ}{\widetilde{\mathcal M}}_k((M,\mathcal E),L;R_-,R_+;E)$
be the set of equivalence classes of such objects.
We can define a topology on it by modifying Definition \ref{defn1222}
in an obvious way.
\par
Let 
$\overset{\circ}{{\mathcal M}}_k((M,\mathcal E),L;R_-,R_+;E)$
be its quotient space by $\R$ action.
\par
Replacing (\ref{fibercompactka})
by
\begin{equation}\label{fibercompactka2}
\aligned
&\overset{\circ}{{\mathcal M}}_{k_0}((M,\mathcal E),L;R_-,R_1;E_1)
\times_{R_1} 
\overset{\circ}{{\mathcal M}}_{k_1}((M,\mathcal E),L;R_1,R_2;E_2)
\times_{R_2} \dots \\
& \dots  \times_{R_{\ell-1}}\overset{\circ}{{\mathcal M}}_{k_{\ell-1}}((M,\mathcal E),L;R_{\ell-1},R_{\ell};E_{\ell-1}) \\
&\times_{R_{\ell}} {{\mathcal M}}_{k_{\ell}}((M,\mathcal E),L;R_{\ell},R_+;E_{\ell}),
\endaligned
\end{equation}
where $k_0 + k_1 + \dots +k_{\ell} = k$, 
$E_0+\dots + E_{\ell} = E$, 
and $R_1,\dots,R_{\ell}$ are connected components of 
$R(M)\times_{R(\Sigma)} \tilde L$, 
we obtain ${{\mathcal M}}_k((M,\mathcal E),L;R_-,R_+;E)$.
\par
The space
 ${{\mathcal M}}_k((M,\mathcal E),L;R_-,R_+;E)$ is compact and Hausdorff.
\par
There exists an evaluation map
\begin{equation}
{\rm ev} = ({\rm ev}_1,\dots,{\rm ev}_k) :
{{\mathcal M}}_k((M,\mathcal E),L;R_-,R_+;E) 
\to L^k
\end{equation}
other than ${\rm ev}_-$ and ${\rm ev}_+$.
(See (\ref{evpm}) for ${\rm ev}_{\pm}$.)
If 
$(\frak A,{
\frak z},{\frak w},\Omega,u,\vec z)$
is an element of $\overset{\circ}{\widetilde{\mathcal M}}_k((M,\mathcal E),L;R_-,R_+;E)$
then, by definition  
\begin{equation}
{\rm ev}_i ([\frak A,{\frak z},{\frak w},\Omega,u,\vec z]) 
= u(z_i).
\end{equation}
\end{defn}
We use the compactified moduli space of pseudo-holomorphic disks in 
the next theorem.
Let $\beta \in H_2(R(\Sigma),L;\Z)$.
We denote by
$\mathcal M_{k+1}(L;\beta)$
the compactified moduli space of 
pseudo-holomorphic disks bounding $L$ with $k+1$ boundary marked points
and of homology class $\beta$. (See \cite[Definition 2.1.27]{fooobook}  for its precise definition.)
We have an evaluation map
\begin{equation}
{\rm ev} =({\rm ev}_0,\dots,{\rm ev}_k) : \mathcal M_{k+1}(L;\beta)
\to L^{k+1}.
\end{equation}
We put
\begin{equation}\label{diskmoduli}
\mathcal M_{k+1}(L;E)
= \bigcup_{\beta; \omega(\beta) = E}\mathcal M_{k+1}(L;\beta).
\end{equation}
\begin{thm}\label{thm217}
The space ${\mathcal M}_k((M,\mathcal E),L;R_-,R_+;E)$ has a virtual fundamental chain 
$[{\mathcal M}_k((M,\mathcal E),L;R_-,R_+;E)]$
such that its boundary 
$
\partial [{\mathcal M}_k((M,\mathcal E),L;R_-,R_+;E)]
$
is a sum of the virtual fundamental chains of the following two  
types of spaces.
\begin{enumerate}
\item
The fiber product
$$
{{\mathcal M}}_{k_1}((M,\mathcal E),L;R_-,R;E_1)
\times_{R} 
{{\mathcal M}}_{k_2}((M,\mathcal E),L;R,R_+;E_2),
$$
where $E_1 + E_2 = E$ and $k_1 + k_2 = k$.
We use ${\rm ev}_+$ for the first factor and 
${\rm ev}_-$ for the second factor to define the above 
fiber product.
\item
The fiber product:
$$
{{\mathcal M}}_{k_1}((M,\mathcal E),L;R_-,R;E_1)
{}_{{\rm ev}_i}\times_{{\rm ev}_0} 
{{\mathcal M}}_{k_2}(L,E_2),
$$
\end{enumerate}
where $E_1 + E_2 = E$, $k_1 + k_2 = k+1$, $i=1,\dots,k_1$.
The fiber product is taken over $L$.
\end{thm}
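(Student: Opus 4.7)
The plan is to establish the theorem by (i) constructing a Kuranishi structure on $\mathcal{M}_k((M,\mathcal{E}),L;R_-,R_+;E)$ compatible with the stratified compactification from Theorem \ref{thm215}, (ii) identifying its codimension-one strata as exactly the two types listed, and (iii) choosing CF-perturbations on this Kuranishi structure that are inductively compatible with restriction to the boundary.

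First I would set up the Fredholm theory for the linearization of the coupled system (\ref{ASDeq})+(\ref{ASDprod}). On $M_0 \times \R$ the linearization is the standard ASD deformation operator; on $\Sigma \times (0,1] \times \R$ it is a weighted version that degenerates as $s \to 0$ into the linearized Cauchy--Riemann operator on $R(\Sigma)$, matched at $s=0$ by $[A(0,t)] = u(0,t)$. Theorem \ref{asymptotic} together with the exponential estimate (\ref{expesti}) supplies Fredholmness at $t \to \pm\infty$, using that $R_-$, $R_+$ are clean-intersection components of $R(M)\times_{R(\Sigma)}\tilde L$. After the gauge-theoretic perturbation on $M_0 \times \R$ and a generic choice of almost complex structure on $R(\Sigma)$, each point of $\overset{\circ}{\mathcal{M}}_k((M,\mathcal{E}),L;R_-,R_+;E)$ is covered by a Kuranishi chart; standard gluing then produces a compatible Kuranishi structure on the full compactification ${\mathcal{M}}_k((M,\mathcal{E}),L;R_-,R_+;E)$.

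Next I would enumerate the codimension-one strata using Definition \ref{defn1222}. A non-convergent sequence (modulo $\R$-translation) must exhibit one of the following degenerations: (a) time-splitting at some intermediate fiber product component $R \subset R(M)\times_{R(\Sigma)}\tilde L$, producing type (1); (b) a pseudo-holomorphic disk bubble attached on $\{1\}\times\R$, producing type (2); (c) an Uhlenbeck instanton bubble at an interior point of $(M \setminus (\Sigma\times[0,1]))\times\R$, recorded in $\frak{z}$; (d) a sphere bubble in $R(\Sigma)$ attached at an interior point of a disk or cylinder component; (e) mass concentration on the line $\{0\}\times\R$, recorded in $\frak{w}$ and analyzed in \cite{fu3}. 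Since the bubble location in (c), (d), (e) is a free parameter in a base of real dimension at least two, these three strata have codimension $\geq 2$ and do not contribute to the boundary of the virtual fundamental chain. Only (a) and (b) give codimension-one faces, and their descriptions coincide by construction with (\ref{fibercompactka2}) (splitting into two pieces along $R$) and with the fiber product involving (\ref{diskmoduli}) respectively.

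Finally I would construct the virtual fundamental chain inductively on the pair $(E,k)$ using the framework of CF-perturbations on Kuranishi structures with corners from \cite{fooobook2}. At each inductive step the perturbation on the boundary faces of types (1) and (2) is determined by the perturbations already constructed on the smaller moduli spaces, and we extend inward by the standard existence result for compatible multisections. The main obstacle is analytic: establishing gluing theorems along the type (1) faces in the mixed gauge-theory/holomorphic-curve setting, where two broken solutions are glued along a flat connection in $R$ using (\ref{expesti}), while simultaneously controlling the interaction of gluing with the singular metric (\ref{form211}) near $\{0\}\times\R$. This step relies on the compactness and removable-singularity results of \cite{fu3} together with the Fredholm package whose completion is postponed to \cite{fu7}, \cite{fu8}.
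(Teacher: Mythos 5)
Your enumeration of the codimension-one strata is correct and matches the paper's intent: only the time-splitting along an intermediate component $R$ and the disk bubble on $\{1\}\times\R$ occur in codimension one, while Uhlenbeck bubbles in the interior, sphere bubbles, and concentration on $\{0\}\times\R$ (the $\frak z$, $\frak w$ data) occur in codimension two or higher. Your Fredholm setup and the reliance on Theorem \ref{asymptotic} and (\ref{expesti}) for gluing along the type (1) faces are also sound.

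However, there is a genuine gap at the core of your construction: you assert that, after perturbation, "each point of $\overset{\circ}{\mathcal{M}}_k$ is covered by a Kuranishi chart; standard gluing then produces a compatible Kuranishi structure on the full compactification." This is precisely what the paper disavows in Remark \ref{rem2188}. The Uhlenbeck compactification of the moduli space of ASD connections does \emph{not} carry a Kuranishi structure in the sense of \cite{FO}, \cite{fooobook}: near a point where an instanton bubbles off, the local model is a cone over a lower-dimensional stratum rather than the zero set of a section of an orbibundle over a manifold with corners. So the strata (c), (d), (e) in your list, while indeed of codimension $\geq 2$, are not the interiors of corners of a Kuranishi space, and the usual CF-perturbation machinery of \cite{fooobook2} does not apply off the shelf. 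The paper's stated resolution is to formulate and use a \emph{generalized} Kuranishi framework that permits singularities on a subset of codimension $\geq 2$ (a stratified-space analogue of Donaldson's treatment of the Uhlenbeck compactification), with the detailed development deferred to \cite{fu8}. Your proposal would be correct if you replaced the claimed existence of a genuine Kuranishi structure by the construction of such a singular Kuranishi structure, and then argued that the virtual fundamental chain and Stokes' formula can be pushed through because perturbations can be chosen to avoid the codimension-$\geq 2$ singular locus. The identification of which strata contribute to the boundary — the content of items (1) and (2) in the theorem — would then follow exactly as you describe; it is the foundational transversality framework, not the stratification count, that is missing.

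A secondary point worth flagging: you propose to use CF-perturbations and, implicitly, a de Rham chain model. The paper notes that, because of the singularity, a de Rham model is not easy to use here, and it instead promises a bespoke chain model adapted to the singular Kuranishi structure. So even granting the generalized structure, your choice of chain model would need justification or replacement.
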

\par
\begin{center}
\includegraphics[scale=0.25]
{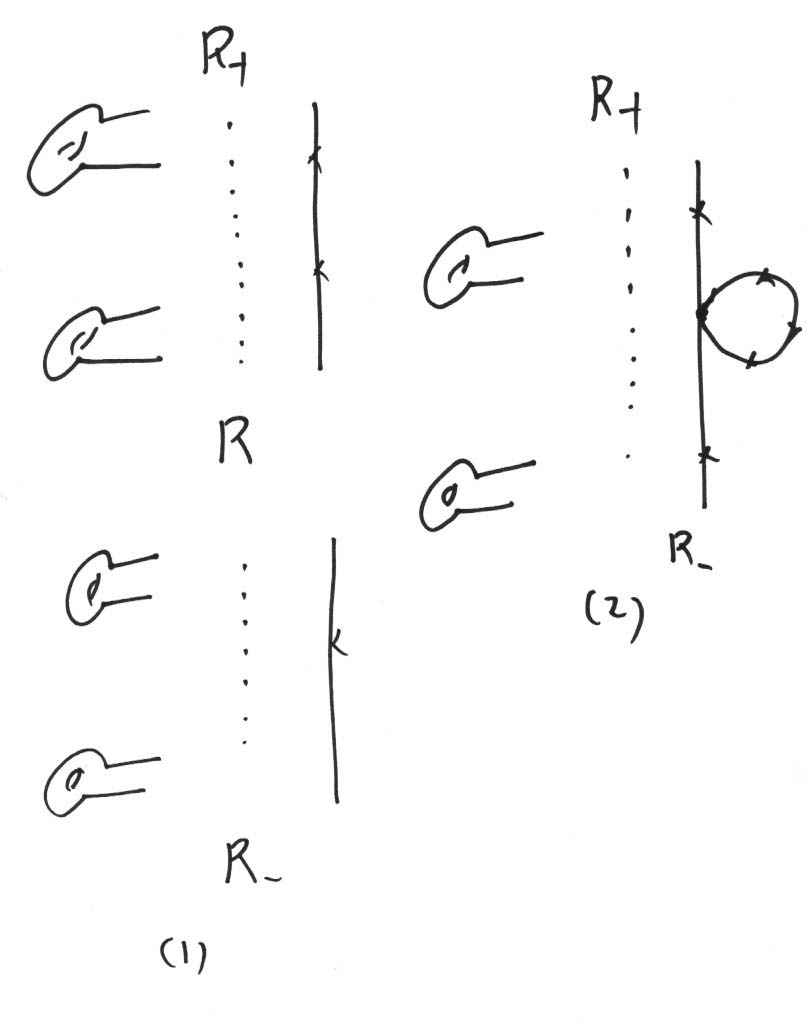}
\end{center}\centerline{\bf Figure 2.7}
\par
\begin{rem}\label{rem2188}
We do {\it not} claim that 
${\mathcal M}_k((M,\mathcal E),L;R_-,R_+;E)$ has a Kuranishi structure 
in the sense of \cite{FO} or \cite{fooobook}.
This is because our moduli space is a mixture of gauge theory and of pseudo-holomorphic 
curve. It is well known among the specialists that Uhlenbeck compactification 
of the moduli space of ASD connections does {\it not} carry a Kuranishi structure.
In Donaldson theory, people, especially Donaldson, used the fact that Uhlenbeck compactification 
has a stratification for which the top stratum has dimension higher by 2 or more 
compared to the other strata, in order to define its fundamental class. 
This fundamental class is nothing but the Donaldson invariant (\cite{Don1}).
\par
In our situation, gauge theory is mixed up with symplectic geometry 
(pseudo-holomorphic curve). 
It seems to the author that the best way to work out transversality issue 
is to use virtual technique, eg. Kuranishi structure.
\par
So we need to work out some generalization of the notion of 
Kuranishi structure which has certain `singularity' at the set of codimension 
equal to or greater than 2. We will provide the detail of the framework 
for such generalization in \cite{fu8} (or in certain separate paper).
\par
We also remark that in our situation where $R(\Sigma)$ is monotone, 
we can work over $\Z_2$ coefficient. (See \cite{fooo:overZ}.)
On the other hand, the author has no doubt that one can work out the whole story 
over $\Z$ by carefully studying orientation and sign.
\par
We also remark that, if we restrict ourselves to the proof of Corollary \ref{maincor},
we can avoid using virtual technique. The reason is that the situation we need to 
study for such a purpose is monotone. 
Especially the Lagrangian submanifold $R(M)$ is monotone if it is embedded.
We will work out this part of the story in detail in \cite{fu7}.
\end{rem}
We use virtual fundamental chain in Theorem \ref{thm217}, to define 
the operation
\begin{equation}\label{form229}
\frak n_{k,E} : C(R(M) \times_{R(\Sigma)} L;\Z_2)
\otimes C(L;\Z_2)^{k\otimes}
\to C(R(M) \times_{R(\Sigma)} L;\Z_2),
\end{equation}
by
\begin{equation}\label{form230}
\aligned
&\left\langle 
\frak n_{k,E}
(y_-;x_1,\dots,x_k), y_+\right\rangle \\
&=
\#
\big([{\mathcal M}_k((M,\mathcal E),L;R_-,R_+;E)]
{}_{({\rm ev}_-,{\rm ev}_1,\dots,{\rm ev}_k,{\rm ev}_+)}\times  \\
&
\qquad\qquad\qquad\qquad\qquad\qquad\qquad
(y_- \times x_1 \times \dots \times x_k \times y_+)\big).
\endaligned
\end{equation}
Here $\langle \cdot,\cdot\rangle$ 
in the left hand side is the Poincar\'e duality of $R(M) \times_{R(\Sigma)} L$, 
and $\#$ is the parity of the order of the set $\in \Z_2$.
The symbol $C(\cdot)$ denotes an appropriate chain model of the homology group.
Then Theorem \ref{thm217} will imply the next formula:
\begin{equation}\label{form231}
\aligned
0 = & (\partial \circ n_{E,k})(y;x_1,\dots,x_k) 
+ (n_{E,k} \circ \partial)(y;x_1,\dots,x_k) \\
&+
\sum_{} \frak n_{k_2,E_2}(\frak n_{k_1,E_1}(y;x_1,\dots,x_{k_1});x_{k_1+1},\dots,x_{k})
\\
&+ 
\sum_{}\frak n_{k_1}(y;x_1,\dots,\frak m_{k_2}(x_i,\dots,x_{i+k_2-1}),\dots,x_k).
\endaligned
\end{equation}
Here the sum in the second line is taken over  $E_1, E_2, k_1, k_2$ satisfying
$E_1 + E_2 = E$ and $k_1 + k_2 = k$.
The sum in the third line is taken over $E_1, E_2, k_1, k_2, i$
satisfying $E_1 + E_2 = E$ and $k_1 + k_2 = k+1$ and $i=1,\dots,k_1$.
\par
Note the second line of (\ref{form231}) corresponds to 
Theorem \ref{thm217} (1) and the third line of 
 (\ref{form231}) corresponds to 
Theorem \ref{thm217} (2).
Therefore, in case we take de Rham theory as our chain model, the formula
(\ref{form231}) follows from Theorem \ref{thm217} 
together with Stokes' formula (\cite[Theorem 8.11]{foootech2}) and composition formula
(\cite[Theorem 10.20]{foootech2}). 
The way to use Stokes' formula and composition formula
to prove a formula like (\ref{form231})
will be explained in detail in \cite{foootech22}.
\par
However, as we mentioned in Remark \ref{rem2188}, 
our moduli space do not carry a genuine Kuranishi structure 
but has a singularity of codimension 2 or higher.
So it seems not easy to use de Rham model.
We will explain in detail the particular chain model we use for our purpose 
and the way how we use it to prove (\ref{form231})
in detail in \cite{fu8}, or in a separate paper.
\begin{defn}\label{defn21919}
We define 
\begin{equation}\label{eq2232}
\aligned
&\frak n_0 = \partial + \sum_{E\ge 0}T^E\frak n_{0,E}, \\
&\frak n_k = \sum_{E\ge 0}T^E\frak n_{k,E}, \qquad k \ge 1. 
\endaligned
\end{equation}
We thus obtained a system of operations:
\begin{equation}
\frak n_k : C(R(M) \times_{R(\Sigma)} L;\Lambda_0^{\Z_2})
\otimes C(L;\Lambda_0^{\Z_2})^{k\otimes}
\to 
C(R(M) \times_{R(\Sigma)} L;\Lambda_0^{\Z_2}).
\end{equation}
\end{defn}
Note we put
$$
CF(R(M),L) = C(R(M) \times_{R(\Sigma)} L;\Lambda_0^{\Z_2}),
\quad
CF(L) = C(L;\Lambda_0^{\Z_2}).
$$
\begin{thm}\label{220}
The system of operations $\frak n_k$, $k=0,1,2,\dots$
defines a structure of a filtered $A_{\infty}$ right module on 
$CF(R(M),L)$ over the 
filtered $A_{\infty}$ algebra $(CF(L),\{\frak m_k\mid 
k=0,1,2,\dots\})$ 
defined in \cite{fooo:overZ}.
Namely the equality (\ref{rightmodule}) holds.
\end{thm}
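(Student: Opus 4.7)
The plan is to derive the filtered $A_\infty$ right module relation (\ref{rightmodule}) directly from the boundary description of the moduli space ${\mathcal M}_k((M,\mathcal E),L;R_-,R_+;E)$ given in Theorem \ref{thm217}, by summing the identity (\ref{form231}) over all energies $E \ge 0$ with Novikov weights $T^E$. More precisely, I would first observe that equation (\ref{form231}) is exactly the "energy $E$, input length $k$" component of the desired $A_\infty$ relation. The point is to match the two sums in (\ref{form231}) with the two sums in (\ref{rightmodule}): the second line of (\ref{form231}), coming from Theorem \ref{thm217} (1), is the splitting of a long trajectory at an intermediate fiber product $R_1,\dots,R_\ell$, which gives the composition $\frak n \circ \frak n$ term; the third line, coming from Theorem \ref{thm217} (2), is a disk bubble on the boundary carrying $k_2$ consecutive marked points, which inserts an $\frak m_{k_2}$ between input arguments. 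The first line, $\partial \circ \frak n + \frak n \circ \partial$, after including it in the $E=0$ contribution via the definition (\ref{eq2232}) of $\frak n_0 = \partial + \sum_{E>0} T^E \frak n_{0,E}$ applied "on the left" (and more generally using the conventional compatibility of the boundary operator with the moduli-space boundary), becomes the $\ell=0$ and $\ell=k$ terms of $\sum \frak n_{k-\ell}(\frak n_\ell(\cdot);\dots)$.

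Next I would carefully organize the bookkeeping. Multiplying (\ref{form231}) by $T^E$ and summing over $E$, the second line gives
\[
\sum_{\ell=0}^{k} \sum_{E_1+E_2=E} T^{E_1+E_2} \frak n_{k-\ell,E_2}\bigl(\frak n_{\ell,E_1}(y;x_1,\dots,x_\ell);x_{\ell+1},\dots,x_k\bigr),
\]
which is exactly $\sum_\ell \frak n_{k-\ell}(\frak n_\ell(y;\dots);\dots)$ after absorbing $\partial$ into $\frak n_0$. The third line becomes
\[
\sum T^{E_1+E_2} \frak n_{k_1,E_1}\bigl(y;x_1,\dots,\frak m_{k_2,E_2}(x_i,\dots),\dots,x_k\bigr),
\]
which, after reindexing $(i,k_2) \leftrightarrow (\ell+1, m-\ell)$, matches the second sum in (\ref{rightmodule}). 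The $\frak m_k$ appearing here are precisely the Akaho--Joyce operations on $CF(L)$ by construction: the disk bubbles in Theorem \ref{thm217} (2) come from $\mathcal M_{k+1}(L;\beta)$, which is exactly the moduli space used by Akaho--Joyce to define $\{\frak m_k\}$ in the monotone immersed case (via \cite{fooo:overZ} and \cite{AJ}).

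The main technical obstacle, flagged already by the author in Remark \ref{rem2188}, is that ${\mathcal M}_k((M,\mathcal E),L;R_-,R_+;E)$ does not carry a genuine Kuranishi structure, only a Kuranishi-type structure with singularities of codimension two or more coming from Uhlenbeck bubbling in the gauge theory factor. Consequently the na\"ive application of Stokes' formula \cite[Theorem 8.11]{foootech2} and the composition formula \cite[Theorem 10.20]{foootech2} must be replaced by a version adapted to this generalized Kuranishi framework. The way I would address this is as follows: the higher codimension strata coming from gauge-theoretic bubbles contribute zero to the boundary count by a dimensional argument (as in Donaldson's original construction of the Donaldson polynomial from Uhlenbeck compactifications \cite{Don1}), so only the codimension-one strata described in Theorem \ref{thm217} (1), (2) contribute to the virtual fundamental chain boundary. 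Once this is established at the chain level using the particular chain model chosen in \cite{fu8}, the formula (\ref{form231}) follows and the $A_\infty$ right module relation drops out.

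Finally, one must check that no additional boundary components have been overlooked. The convergence of $u_n$ on $\{0\}\times \R$ to the set $\frak w$, and the possibility that interior gauge-theoretic bubble points $\frak z_n$ migrate to the wall $\{0\}\times \R$, are both already incorporated into the topology of Definition \ref{defn1222}, so these phenomena occur in codimension $\ge 2$ and do not produce new boundary strata. Therefore, assuming the chain-level incarnation of (\ref{form231}) is rigorously established in \cite{fu8}, the theorem follows by direct comparison of indices. The heart of the matter is thus analytic rather than algebraic: once Theorem \ref{thm217} and the resulting boundary identity are in hand, Theorem \ref{220} is a formal consequence.
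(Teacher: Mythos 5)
Your proposal matches the paper's argument: the $A_\infty$ right-module relation (\ref{rightmodule}) is obtained by summing the energy-graded identity (\ref{form231}) over $E$ with weights $T^E$, and (\ref{form231}) is in turn read off from the codimension-one boundary decomposition in Theorem \ref{thm217} via a Stokes/composition argument, with the singular-Kuranishi issues from Uhlenbeck bubbling deferred to \cite{fu8} exactly as you note. The only points you leave unmentioned, which the remark after Theorem \ref{220} flags, are (i) the ``running out'' problem requiring one to build $\frak n_{E}$ only for $E\le E_0$ and take a homotopy limit, and (ii) the $T$-adic convergence of $\sum_E T^E\frak n_{k,E}$, for which one needs the uniform compactness of Theorem \ref{thm215prime}; both are treated as standard.
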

\begin{rem}
\begin{enumerate}
\item
Because of the problem of `running out'
described in \cite[Section 7.2.3]{fooobook2}
it is actually difficult to construct all the operations $\frak n_{E}$ at
once. So we need to construct $\frak n_{E}$
for $E \le E_0$ and use homological algebra 
to take homotopy limit.
Since this argument is now well established in \cite[Section 7]{fooobook2}
(see also \cite{foootech22}) we do not repeat it here for 
simpicity.
\item
To prove the convergence of the right hand side of (\ref{eq2232}) in $T$-adic topology we need the next Theorem \ref{thm215prime}
which is slightly stronger than Theorem \ref{thm215}.
The proof of Theorem \ref{thm215prime} is the same as that of Theorem \ref{thm215}.
\end{enumerate}
\end{rem}
\begin{thm}\label{thm215prime}
For any $E_0$, the union of the moduli spaces
${\mathcal M}((M,\mathcal E),L;R_-,R_+;E)$ for 
$E\le E_0$ is compact.
\end{thm}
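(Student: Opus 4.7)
The plan is to reduce Theorem \ref{thm215prime} to Theorem \ref{thm215} by showing that a uniform energy bound is enough to run the same compactness argument, and then to observe that only countably many values of $E$ are relevant, with no energy escaping in the limit thanks to the bookkeeping provided by $\frak z$ and $\frak w$.

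First, I would take an arbitrary sequence $[\frak A_n,\frak z_n,\frak w_n,\Omega_n,u_n]$ of elements of $\mathcal M((M,\mathcal E),L;R_-,R_+;E_n)$ with $E_n \le E_0$, and pass to a subsequence so that $E_n \to E_\infty \in [0,E_0]$. Since the definition of energy (\ref{energy2}) makes each bubble point in $\frak z_n$ or $\frak w_n$ contribute at least $2\pi^2$, the uniform bound $E_n \le E_0$ forces $\Vert\frak z_n\Vert + \Vert\frak w_n\Vert \le E_0/(2\pi^2)$, so after extraction the combinatorial type of the bubble configurations and of the stable bordered nodal curve $\Omega_n$ stabilizes. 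In particular the combined $E$-stratum containing infinitely many terms is one of only finitely many possibilities for a given combinatorial type and energy cap, which I may then isolate.

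Next, on this stratum I would apply the three compactness inputs exactly as in the proof sketch of Theorem \ref{thm215}: Uhlenbeck compactness (\cite{uh1}, \cite{uh2}) for the ASD connections on $M_0 \times \R$ and on the regular part of the neck, producing a finite bubble set that refines $\vert\frak z_\infty\vert$; Gromov compactness in the form suited to the stable map topology on the bordered nodal curve $\Omega_n$ (\cite{Gr}, \cite{Ye}, \cite[Theorem 11.1]{FO}); and \cite[Theorems 1.6, 1.7, 1.8]{fu3} to control the transition across $\Sigma \times \{0\} \times \R$, where the metric ${\bf g}$ degenerates and where points of $\frak z_n$ may flow onto the line $\{0\}\times\R$ to contribute to $\frak w_\infty$ (this is exactly the phenomenon built into items (3) and (4) of Definition \ref{defn1222}). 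Combining these with the exponential decay estimate (\ref{expesti}) coming from Theorem \ref{asymptotic}, the asymptotic limits $a_\pm$ stay in $R_\pm$ in the limit, so the extracted limit $[\frak A_\infty,\frak z_\infty,\frak w_\infty,\Omega_\infty,u_\infty]$ sits in some $\mathcal M((M,\mathcal E),L;R_-,R_+;E')$. The balancing formulas in Definition \ref{defn1222} (3)(4) guarantee that no energy is lost: $E' = \lim_n E_n = E_\infty \le E_0$, so the limit lies in the stated union. Hausdorff-ness is inherited from Theorem \ref{thm215} by the same argument as in \cite[Lemma 10.4]{FO}.

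The main obstacle is exactly the one already resolved in \cite{fu3}: controlling what happens along the singular locus $\Sigma \times \{0\} \times \R$, where the ASD equation transitions to the Cauchy--Riemann equation for maps into $R(\Sigma)$ and where bubbles from both theories may appear simultaneously. Once that analysis is in place, the only new ingredient compared to Theorem \ref{thm215} is that the bookkeeping of bubble multiplicities together with the quantization $2\pi^2 \cdot \mathrm{multi}$ prevents accumulation of infinitely many bubbles under a uniform energy cap; everything else is the same diagonal extraction.
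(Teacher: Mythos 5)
Your proof follows exactly the approach the paper indicates: it reduces the compactness to the same three inputs (Uhlenbeck compactness, Gromov/stable-map compactness, and the transition results of \cite{fu3}) that the paper cites for Theorem \ref{thm215}, with the added observation that the uniform bound $E_n\le E_0$ controls the number of bubble points via the $2\pi^2$ quantization and that the balancing conditions in Definition \ref{defn1222} prevent energy from escaping, so the limit's energy stays $\le E_0$. This is precisely what the paper means by ``the proof of Theorem \ref{thm215prime} is the same as that of Theorem \ref{thm215},'' so your proposal is correct and matches the intended argument.
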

We thus explained the outline of the construction of the filtered 
$A_{\infty}$ right module 
$(CF(R(M),L),\{\frak n_k\})$
over the filtered $A_{\infty}$ algebra
$(CF(L),\{\frak m_k\mid 
k=0,1,2,\dots\})$
in case $L$ is embedded.
\par
We can extend this construction to the 
construction of the filtered $A_{\infty}$ functor
$\mathcal{HF}_{(M,\mathcal E_{M})} : 
\mathscr{FUK}(R(\Sigma)) \to {\mathscr{CH}}$.
Namely we can define a series of operations:
\begin{equation}
\frak n_k : CF(R(M),L_1)
\otimes
\bigotimes_{i=1}^{k-1} CF(L_i,L_{i+1})  
\to
CF(R(M) ,L_k)
\end{equation}
which satisfies the same relation (\ref{rightmodule}).
The proof is similar to the proof of Theorem \ref{220}
using the moduli space of objects shown in the figure below.
See also \cite[Theorem 4.8]{fu2}.
\par
\begin{center}
\includegraphics[scale=0.25]
{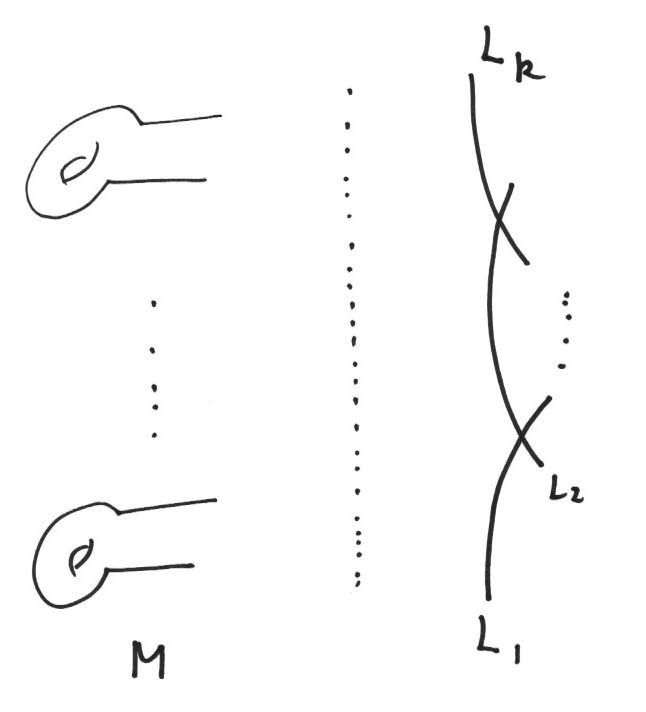}
\end{center}\centerline{\bf Figure 2.8}
\par
We next describe the way how to generalize the construction to the case when $L$ is 
immersed.
Let $i_L : \tilde L \to R(\Sigma)$ be a Lagrangian immersion with image $L$.
We decompose the fiber product $\tilde L \times_{R(M)} \tilde L$
into connected components and write
\begin{equation}\label{form235}
\tilde L \times_{R(\Sigma)} \tilde L = \bigcup_{i\in I(L)} \hat L_i.
\end{equation}
Note in our situation where 
the self-intersection of $L$ is transversal, $\hat L_i$ is either a 
connected component of $L$ or one point consisting 
$(p,q) \in \tilde L^2$ with $p\ne q$, $i_L(p) = i_L(q)$.
In the later case we call $\hat L_i = \{(p,q)\}$ {\it switching component}.
\par
We also decompose 
\begin{equation}\label{form236}
R(M) \times_{R(\Sigma)} \tilde L = \bigcup_{i\in I(R(M),L)} R_i.
\end{equation}
Now we modify Definition \ref{defn216}
 as follows.
Definition \ref{defn23} below is mostly the same 
as Definition \ref{defn216}.
The difference lies in items (6), (7) and (11).
\begin{defn}\label{defn23}
Let $R_+$ and $R_-$ are one of the connected components $R_i$ ($i \in I(R(M),L)$).
Let $\vec i = (i(1),\dots,i(k))$
where $i(1),\dots,i(k) \in I(L)$.
Here $I(L)$ (resp. $I(R(M),L)$) is as in (\ref{form235})
(resp. (\ref{form236})).
We put $\vert \vec i \vert = k$.
We consider $(\frak A,{
\frak z},{\frak w},\Omega,u,\vec z)$
such that:
\begin{enumerate}
\item
$\frak A$ satisfies Definition 
\ref{defn2929} (1).
\item
$\frak z$ satisfies Definition 
\ref{defn2929} (2).
\item
$\frak w$ satisfies Definition 
\ref{defn2929} (3).
\item
$\vec z = (z_1,\dots,z_{k})$.
$z_i$ lies on $\partial \Sigma$.
Namely it lies either on $\{1\}\times \R$ or on the boundary of 
one of the disk components. 
None of $z_i$ is a nodal point and $z_i \ne z_j$ if $i\ne j$. 
$(z_1,\dots,z_{k})$ respects counter clockwise orientation of 
$\partial \Omega$.
\item
$\Omega$ satisfies Condition \ref{conds27}.
\item
There exists $\gamma : \partial \Sigma \setminus \{z_1,\dots,z_k\}
\to 
\tilde L$
such that $u(z) = i_L(\gamma(z))$ on $\partial \Sigma \setminus \{z_1,\dots,z_k\}$.
\item
For $j=1,\dots,k$ the following holds. 
\begin{equation}\label{form23777}
(\lim_{z \uparrow z_j}\gamma(z),\lim_{z\downarrow z_j}\gamma(z))
\in R_{i(j)}.
\end{equation}
Here $z \uparrow z_j$ is the limit when $z \in \partial \Omega$ 
converges to $z_j$ while moving to the counter clockwise direction.
$z \downarrow z_j$ is the limit when $z \in \partial \Omega$ 
converges to $z_j$ while moving to the clockwise direction.
\par
If $\hat L_{i(j)}$ is not a switching component then 
(\ref{form23777}) means that $\gamma$ extends to a continuous map at $z_j$.
If $\hat L_{i(j)}$ is a switching component consisting of the point $(p,q)$, then 
(\ref{form23777}) means $\lim_{z \uparrow z_j}\gamma(z) = p$,
$\lim_{z\downarrow z_j}\gamma(z) = q$.
\item
$(\Omega,u)$ satisfies Condition \ref{conds28} (2).
\item We replace Condition \ref{conds28} (3)
by the stability of 
$(\Omega,u,\vec z)$. Namely the set of all maps $v : \Omega \to \Omega$ 
satisfying the next four conditions is a finite set.
\begin{enumerate}
\item
$v$ is a homeomorphism and is holomorphic on each of the irreducible components.
\item
$v$ is the identity map on $(0,1] \times \R \subseteq \Omega$.
\item $u \circ v = u$.
\item $v(z_i) = z_i$, $i=1,\dots,k$. 
\end{enumerate}
\item Definition \ref{defn2929} (6)(7) hold.
\item 
Let $a_{-}$ and $a_+$ be as in the conclusion of Theorem \ref{asymptotic}.
Then
\begin{equation}
([a_-],\lim_{z \downarrow -\infty}\gamma(z)) \in R_-,
\qquad 
([a_+],\lim_{z \uparrow + \infty}\gamma(z)) \in R_+.
\end{equation}

\end{enumerate}
\par
\begin{center}
\includegraphics[scale=0.25]
{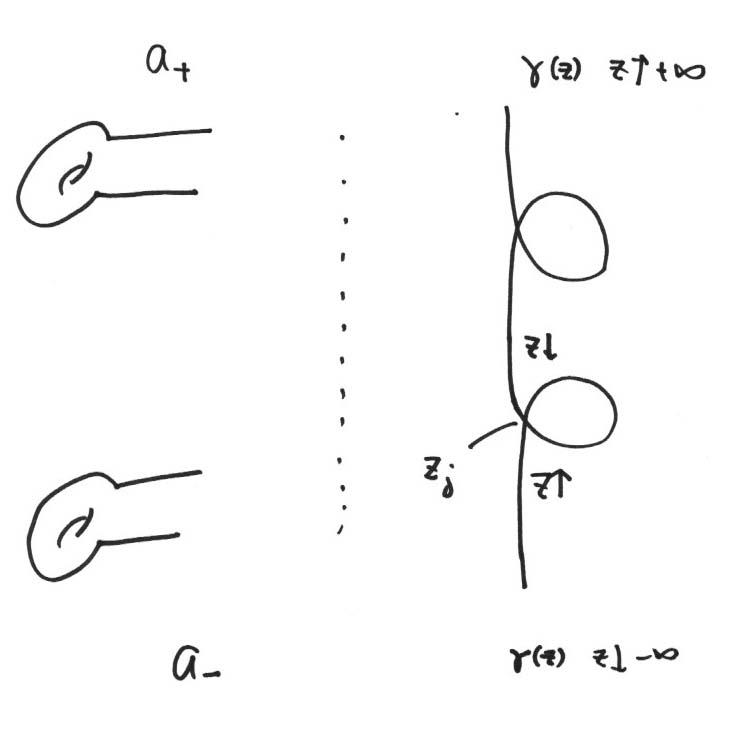}
\end{center}\centerline{\bf Figure 2.9}
\par\medskip
We can define equivalence among such objects by modifying 
Definition \ref{equivlimiobj} in an obvious way.
\par
Let $\overset{\circ}{\widetilde{\mathcal M}}((M,\mathcal E),L;R_-,R_+;\vec i;E)$
be the set of equivalence classes of such objects.
We can define a topology on it by modifying Definition \ref{defn1222}
in an obvious way.
\par
We put
$$
\overset{\circ}{\widetilde{\mathcal M}}_{k}((M,\mathcal E),L;R_-,R_+;E)
=
\bigcup_{\vec i; \vert \vec i \vert = k}\overset{\circ}{\widetilde{\mathcal M}}((M,\mathcal E),L;R_-,R_+;\vec i;E)
$$
\par
Let 
$\overset{\circ}{{\mathcal M}}_k((M,\mathcal E),L;R_-,R_+;E)$
be its quotient space by $\R$ action.
\par
By taking the union (\ref{fibercompactka2}) 
we obtain ${{\mathcal M}}_k((M,\mathcal E),L;R_-,R_+;E)$.
\par
 ${{\mathcal M}}_k((M,\mathcal E),L;R_-,R_+;E)$ is a compact Hausdorff space.
\par
There exists an evaluation map
\begin{equation}
{\rm ev} = ({\rm ev}_1,\dots,{\rm ev}_k) :
{{\mathcal M}}_k((M,\mathcal E),L;R_-,R_+;E) 
\to (\tilde L \times_{R(M)} \tilde L)^k.
\end{equation}
If 
$(\frak A,{
\frak z},{\frak w},\Omega,u,\vec z)$
is an element of $\overset{\circ}{\widetilde{\mathcal M}}((M,\mathcal E),L;R_-,R_+;\vec i;E)$
then, by definition  
\begin{equation}
{\rm ev}_j ([\frak A,{\frak z},{\frak w},\Omega,u,\vec z]) 
=
(\lim_{z \uparrow z_j}\gamma(z),\lim_{z\downarrow z_j}\gamma(z))
\in R_{i(j)}.
\end{equation}
We also define the evaluation maps
\begin{equation}
\aligned
&{\rm ev}_{-}(\frak A,{
\frak z},{\frak w},\Omega,u,\vec z) = ([a_-],\lim_{z \downarrow -\infty}\gamma(z)) \in R_-, \\
&
{\rm ev}_{+}(\frak A,{\frak z},{\frak w},\Omega,u,\vec z) =([a_+],\lim_{z \uparrow + \infty}\gamma(z)) \in R_+.
\endaligned
\end{equation}
\end{defn}
Now in the same way as (\ref{form230}) 
we obtain
\begin{equation}\label{formfffff}
\frak n_{k,E} : C(R(M) \times_{R(\Sigma)} L;\Z_2)
\otimes C(\tilde L \times_{R(M)} \tilde L;\Z_2)^{k\otimes}
\to C(R(M) \times_{R(\Sigma)} \tilde L;\Z_2).
\end{equation}
Then we apply Definition \ref{defn21919} to obtain 
\begin{equation}
\frak n_k : CF(R(M),L)
\otimes CF(L)^{k\otimes}
\to 
CF(R(M),L).
\end{equation}
Theorem \ref{220} still holds in our immersed case.
\par
We thus described the construction of right filtered $A_{\infty}$
module $(CF(R(M),L),\{\frak n_k\})$.
It is straight forward to generalize this construction to the 
construction of filtered $A_{\infty}$ functor
$\mathcal{HF}_{(M,\mathcal E_{M})} : 
\mathscr{FUK}(R(\Sigma)) \to {\mathscr{CH}}$.
Namely we can construct the operations
\begin{equation}
\frak n_k : 
CF(R(M),L_0)
\otimes \bigotimes_{i=1}^{k}CF(L_{i-1},L_{i})
\to 
CF(R(M),L_k)
\end{equation}
which satisfies the same relation (\ref{rightmodule}).
In fact we may regard the union $L_0 \cup \dots \cup L_{k}$
as a single immersed Lagrangian submanifold and can
apply Definition \ref{defn23} etc.
\par
We have thus completed the sketch of the proof of Theorem \ref{functordef}.
\begin{rem}
In case $L$ is an embedded and monotone Lagrangian submanifold of $R(\Sigma)$
the construction of $HF((M,\mathcal E);L)$ is carried out in detail 
by \cite{Sawe}.
More precisely they did the case when the restriciton of $\mathcal E$ to $\Sigma$ is a 
trivial $SU(2)$ bundle. This case is harder than the case we study here. 
\end{rem}

\section{Floer theoretical unobstructed-ness of the moduli space of flat connections on 3-manifolds
with boundary}\label{sec:unbolt}

In this section we prove Theorem \ref{mainthm} (1).
The main idea of its proof is an algebraic result, Proposition \ref{thm35} below.
To state it we introduce certain notions.

\begin{defn}{\rm (Compare \cite[Condition 3.1.6]{fooobook})}
A discrete submonoid $G$ of $\R_{\ge 0}$ is a discrete subset 
$G \subset \R_{\ge 0}$ containing $0$ such that $\lambda_1, \lambda_2 \in G$ 
implies $\lambda_1 + \lambda_2 \in G$.
Hereafter we say {\it discrete monoid} in place of discrete submonoid of $\R_{\ge 0}$
for simplicity.
\end{defn}
\begin{defn}{\rm (\cite[Definition 3.2.26 etc.]{fooobook})}
Let $\overline C$, $\overline C_i$ ($i=1,2$) be $\Z_2$ vector spaces and 
$C$ (resp. $C_i$)  the completion of $\overline C \otimes \Lambda_{0}^{\Z_2}$
(resp. of $\overline C_i \otimes \Lambda_{0}^{\Z_2}$) with respect to the $T$-adic topology.
Let $G$ be a discrete monoid.
\begin{enumerate}
\item An element $x$ of $C$ is said to be $G$-{\it gapped} if 
there exists $x_{\lambda} \in \overline C$ for each $\lambda \in G$ such that
$$
x = \sum_{\lambda \in G}T^{\lambda}x_{\lambda}.
$$
\item
A $\Lambda_0^{\Z_2}$ module homomorpism $\varphi : C_1 \to C_2$
is said to be $G$-{\it gapped} if there exists a $\Z_2$ linear maps
$\varphi_{\lambda} : \overline C_1 \to \overline C_2$ such that
$$
\varphi = \sum_{\lambda \in G}T^{\lambda}\varphi_{\lambda}.
$$
\item
A filtered $A_{\infty}$ algebra (resp. a filtered $A_{\infty}$ module) 
is said to be $G$-{\it gapped} if its structure maps $\frak m_k$ 
(resp. $\frak n_k$) are all $G$-gapped.
\end{enumerate}
\end{defn}
\begin{defn}\label{defn33}
Let $(C,\{\frak m_k\})$ be a $G$-gapped filtered $A_{\infty}$ algebra and 
$(D,\{\frak n_k\})$ be a $G$-gapped right filtered $A_{\infty}$ module over $(C,\{\frak m_k\})$.
\par
We say ${\bf 1} \in D$ is a {\it cyclic element}\footnote{The word 
cyclic element seems to be a standard one for an object satisfying a 
condition such as (1). We remark that the notion of cyclic element has no 
relation to the cyclic symmetry of the filtered $A_{\infty}$ algebra associated 
to a Lagrangian submanifold.} if the following holds
\begin{enumerate}
\item
The map 
$C \to D$ which sends $x$ to $\frak n_2({\bf 1};x)$ is an 
$\Lambda_{0}^{\Z_2}$ module isomorphism 
$C \to D$.
\item
$\frak n_0({\bf 1}) \equiv 0 \mod \Lambda_+^{\Z_2}$.
\end{enumerate}
\end{defn}
We also recall:
\begin{defn}
Let $C$ be a filtered $A_{\infty}$ algebra.
An element $b \in C \otimes_{\Lambda_0^{\Z_2}}\Lambda_+^{\Z_2}$ is said to be 
a {\it bounding cochain}  if
\begin{equation}\label{MCeq}
\sum_{k=0}^{\infty}\frak m_k(b,\dots,b) = 0.
\end{equation}
Note the left hand side converges in $T$-adic topology, since $b \equiv 0 \mod \Lambda_+^{\Z_2}$.
\end{defn}
\begin{prop}\label{thm35}
Let $(C,\{\frak m_k\})$ be a $G$-gapped filtered $A_{\infty}$ algebra and 
$(D,\{\frak n_k\})$  a $G$-gapped right filtered $A_{\infty}$ module over $(C,\{\frak m_k\})$.
Suppose ${\bf 1} \in D$ is a  cyclic element, which is $G$-gapped.
\par
Then there exists a unique $G$-gapped bounding cochain $b$ of $(C,\{\frak m_k\})$
such that 
\begin{equation}\label{eq32}
d^b({\bf 1}) = 0.
\end{equation}
\end{prop}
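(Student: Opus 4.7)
The plan is to build $b$ inductively on $G$, solving the equation $d^b({\bf 1}) = 0$ one energy level at a time by inverting the cyclic-element isomorphism, and then to derive the bounding cochain equation for $b$ as an automatic consequence of the $A_\infty$ module relation.

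Enumerate $G \cap \R_{>0} = \{\mu_1 < \mu_2 < \cdots\}$ and look for $b = \sum_{n \ge 1} T^{\mu_n} b_{\mu_n}$ with $b_{\mu_n} \in \overline C$. Let $\Psi : \overline C \to \overline D$ denote the $T^0$-part of the cyclic-element map from Definition \ref{defn33} (1); by hypothesis it is a $\Z_2$-linear isomorphism. Definition \ref{defn33} (2) ensures $\frak n_0({\bf 1}) \in \Lambda_+^{\Z_2} \otimes \overline D$, so it contributes only at positive $T$-weights. Extracting the coefficient of $T^{\mu_n}$ from $d^b({\bf 1}) = \sum_k \frak n_k({\bf 1}; b, \ldots, b) = 0$ yields
\[
\Psi(b_{\mu_n}) \;=\; (\frak n_0)_{\mu_n}({\bf 1}) + R_n\bigl(b_{\mu_1}, \ldots, b_{\mu_{n-1}}\bigr),
\]
where $R_n$ is a universal polynomial in previously determined unknowns, built from components $(\frak n_k)_\nu$ with $k \ge 1$, $\nu + \lambda_1 + \cdots + \lambda_k = \mu_n$, and all $\lambda_j \in G \cap (0, \mu_n)$. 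Inverting $\Psi$ determines $b_{\mu_n}$ uniquely, so the process constructs a unique $G$-gapped $b$ with $d^b({\bf 1}) = 0$.

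It remains to show that this $b$ automatically satisfies the bounding cochain equation $F(b) := \sum_k \frak m_k(b, \ldots, b) = 0$. Substitute $y = {\bf 1}$ and $x_1 = \cdots = x_k = b$ into the module $A_\infty$ relation (\ref{rightmodule}) and sum over $k \ge 0$; these sums converge $T$-adically because $b \in \Lambda_+^{\Z_2}$. The first double sum reassembles as $d^b(d^b({\bf 1}))$; reparametrising the second by $(j, p, i)$ = (number of $b$'s before the inner $\frak m$, number in $\frak m$, number after), it reassembles as $\Phi^b(F(b))$, where
\[
\Phi^b(y) \;:=\; \sum_{i, j \ge 0} \frak n_{i+j+1}\bigl({\bf 1}; b^{\otimes j}, y, b^{\otimes i}\bigr).
\]
Since $d^b({\bf 1}) = 0$ by construction, we conclude $\Phi^b(F(b)) = 0$. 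But $\Phi^b$ is a $\Lambda_0^{\Z_2}$-linear endomorphism of the $T$-adic completion whose $T^0$-part equals $\Psi$; since $\Psi$ is an isomorphism and the modules are $T$-adically complete, $\Phi^b$ is invertible by the standard geometric-series argument. Therefore $F(b) = 0$.

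The step that demands the most care is the combinatorial rearrangement in the last paragraph: showing that the summed "inner-$\frak m$" terms of (\ref{rightmodule}), evaluated at $({\bf 1}; b, \ldots, b)$, equal $\Phi^b(F(b))$ exactly. This is a routine reindexing on triples (and over $\Z_2$ no signs need to be tracked), but it should be matched precisely against the bracketing in (\ref{rightmodule}). Uniqueness of $b$ is built into the inductive construction in the second paragraph.
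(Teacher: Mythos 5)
Your proof is correct, and it splits cleanly into two halves that relate differently to the paper's argument.

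The construction-and-uniqueness step (invert the $T^0$-part $\Psi$ of the cyclic map, solve for the $b_{\mu_n}$ energy level by energy level) is the same as the paper's. The paper writes out the coefficient extraction (\ref{cond33}) explicitly, but the logic is identical.

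Where you genuinely diverge is in deriving the Maurer--Cartan equation. The paper proves $\sum_k \frak m_k(b,\dots,b)\equiv 0 \bmod T^{\lambda_c}$ by a second induction on $c$: it computes $\partial\,\frak n_{1,0}(\mathbf 1_0;b_n)$ two ways, invokes the induction hypothesis to cancel a large collection of cross-terms (the second and third lines of (\ref{eq3737})), and then applies $\Psi^{-1}$ at level $n$. You instead sum the module relation (\ref{rightmodule}) over all $k$ at $y=\mathbf 1$, $x_i=b$, reorganize it globally as $d^b(d^b(\mathbf 1)) + \Phi^b(F(b)) = 0$, and then use $d^b(\mathbf 1)=0$ together with the $T$-adic invertibility of $\Phi^b$ in one stroke. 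This is a genuine simplification: no second induction and no inductive cancellation of cross-terms; the bookkeeping that the paper does by hand is packaged into the single observation that $\Phi^b$ is a filtered perturbation of $\Psi$. The only minor imprecision is calling $\Phi^b$ an ``endomorphism'' --- it is a $\Lambda_0^{\Z_2}$-linear map $C\to D$ between two possibly different complete filtered modules --- but the geometric-series invertibility argument applies verbatim to such a map whose $T^0$-part is an isomorphism. (Also note the paper's Definition \ref{defn33}(1) writes $\frak n_2(\mathbf 1;x)$ where the proof itself uses $\frak n_{1,0}(\mathbf 1_0;\cdot)$; your $\Psi$ matches what the paper actually uses.)
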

Note we defined $d^b$ by
$$
d^b(y) = \sum_{k=0}^{\infty} \frak n_k(y;b,\dots,b).
$$
\begin{proof}
We first prove the uniqueness.
Let $G = \{\lambda_i \mid i=0,1,2,\dots\}$
where $0 = \lambda_0 < \lambda_1 < \lambda_2 < \dots.$
We put
$$
\aligned
{\bf 1} &=& \sum_{i=0}^{\infty} T^{\lambda_i}{\bf 1}_i,
\qquad
b &=&  \sum_{i=1}^{\infty} T^{\lambda_i}b_i \\
\frak m_k & =& \sum_{i=0}^{\infty} T^{\lambda_i}\frak m_{k,i},
\qquad
\frak n_k  &=& \sum_{i=0}^{\infty} T^{\lambda_i}\frak n_{k,i}.
\endaligned
$$
according to the definition of $G$-gapped-ness.
(Note the coefficient of $T^{\lambda_0}$ ($\lambda_0 = 0$) of $b$ is $0$ since 
$b \in  C \otimes_{\Lambda_0^{\Z_2}}\Lambda_+^{\Z_2}$.)
\par
We calculate the coefficient of $T^{\lambda_n}$
of the equation (\ref{eq32}) and obtain
\begin{equation}\label{cond33}
\frak n_{1,0}({\bf 1}_0;b_{n})
+ 
\sum \frak n_{k,m}({\bf 1}_{n_0};b_{n_1},\dots,b_{n_k}) = 0.
\end{equation}
Here the second term is the sum of all $k$, $m$, $n_0,n_1,\dots,n_k$ such that
\begin{equation}\label{cond34}
\lambda_n = \lambda_m + \lambda_{n_0} + \sum_{i=1}^k \lambda_{n_i}
\end{equation}
except the case $k=1$, $m=0$, $n_0 = 0$, $n_1 = n$.
(The case we exclude here corresponds to the first term.)
Note if $k$, $m$, $n_0,n_1,\dots,n_k$  satisfy
(\ref{cond34}) then $n_i \le n$ for $i=0,\dots,k$.
Moreover $n_i < n$ unless $k=1$, $m=0$, $n_0 = 0$, $n_1 = n$.
\par
Therefore we can solve  (\ref{cond33}) and obtain $b_n$ uniquely 
by induction on $n$.
(Here we use Definition \ref{defn33} (1).)
\par
Thus we proved that there exists a unique $G$-Gapped element
$b \in C \otimes_{\Lambda_0^{\Z_2}} \Lambda_+^{\Z_2}$ satisfying (\ref{eq32}).
\par
It remains to prove that this element $b$ satisfies the 
Maurer-Cartin equation (\ref{MCeq}).
We will prove
\begin{equation}\label{MCmod}
\sum_{k=0}^{\infty}\frak m_k(b,\dots,b) \equiv 0 \mod T^{\lambda_c}
\end{equation}
by induction on $c \in \Z_+$.
We assume (\ref{MCmod}) for $c \le n-1$ and will prove  the case $c = n$ below.
\par
We put $\partial = \frak n_{1,0}$.
(By (\ref{form231}) we have $\partial \circ \partial = 0$.)
Using (\ref{form231}) and Definition \ref{defn33} (2) we have
\begin{equation}
\partial\frak n_{1,0}({\bf 1}_0;x) + \frak n_{1,0}({\bf 1}_0;\partial x) = 0
\end{equation}
for $x \in \overline C$.
\par
We next consider $\partial \frak n_{1,0}({\bf 1}_0;b_{n})$.
Using (\ref{cond33}) we find
$$
\partial (\frak n_{1,0}({\bf 1}_0;b_{n}))
= 
\sum \partial (\frak n_{k,m}({\bf 1}_{n_0};b_{n_1},\dots,b_{n_k})).
$$
We calculate the right hand side using (\ref{form231}) to obtain:
\begin{equation}\label{eq3737}
\aligned
&\sum \frak n_{k_1,m_1}(\frak n_{k_2,m_2}({\bf 1}_{n_0};b_{n_1},\dots,b_{n_{k_2}}),
\dots, b_{n_k}) \\
&+ \sum \frak n_{k_1,m_1}({\bf 1}_{n_0};b_{n_1},\dots,
\frak m_{k_2,m_2}(b_{n_{i+1}},\dots,b_{n_{i+k_2}}),\dots,b_{n_k}) \\
& + \sum \frak n_{k,m}({\bf 1}_{n_0};b_{n_1},\dots,\partial b_{n_j},\dots,b_{n_k}).
\endaligned
\end{equation}
Here the sum in the first line is taken over 
$k_1$, $k_2$, $m_1$, $m_2$, $n_0, \dots, n_k$ such that
$k_1 + k_2 = k$ and
$
\lambda_n =
\lambda_{m_1} + 
 \lambda_{m_2} + \lambda_{n_0} + \sum_{i=1}^k \lambda_{n_i}
$, except $(k_1,m_1) = (0,0)$.
\par
The sum in the second line is taken over 
$k_1$, $k_2$, $m_1$, $m_2$, $n_0, \dots, n_k$ such that
$k_1 + k_2 = k+1$ and
$
\lambda_n =
\lambda_{m_1} + 
 \lambda_{m_2} + \lambda_{n_0} + \sum_{i=1}^k \lambda_{n_i}
$, except $m_2 = 0$, $k_2 =1$.
(The excluded case corresponds to the third line.)
\par
The sum in the third line 
is taken over 
$k$, $m$, $j$, $n_0, \dots, n_k$ such that
$j=1,\dots, k$ and
$\lambda_n = \lambda_m + \lambda_{n_0} + \sum_{i=1}^k \lambda_{n_i}$, 
except $n_0 =0$, $k=1$, $m=0$.
We exclude this case since it is excluded in the second term of 
(\ref{cond33}).
\par
Note the first line of (\ref{eq3737}) vanishes because of the equality 
(\ref{eq32}) and $(k_1,m_1) \ne (0,0)$.
\par
By using induction hypothesis (\ref{MCmod}) for $c \le n-1$, 
the sum of the second and third lines cancel each other
except the sum
$$
\sum \frak n_{0,1}({\bf 1}_{0};
\frak m_{k,m}(b_{n_1},\dots,b_{n_{k}}))
$$
which is taken over $k, m,  n_1,\dots, n_k$
such that 
$\lambda_n = \lambda_m + \sum_{i=1}^k \lambda_{n_i}$.
(In fact this sum could be canceled with 
$\frak n_{0,1}({\bf 1}_{0};
\partial b_{n})$. However this  is the case excluded in the third line.)
\par
Thus we have
$$
\frak n_{1,0}({\bf 1}_0;\partial b_{n})
=
\partial \frak n_{1,0}({\bf 1}_0;b_{n})
=
\sum \frak n_{0,1}({\bf 1}_{0};
\frak m_{k,m}(b_{n_1},\dots,b_{n_{k}})).
$$
Using Definition \ref{defn33} (1) it implies
$$
\partial b_{n} = 
\sum \frak m_{k,m}(b_{n_1},\dots,b_{n_{k}}).
$$
It implies (\ref{MCmod}) for $c=n$.
The proof of Proposition \ref{thm35} is now complete.
\end{proof}
\begin{rem}
Suppose $(C,\{\frak m_k\})$ is a filtered $A_{\infty}$ algebra and has a unit ${\bf e}$.
Then by defining 
$$
\frak n_{k_1,k_2}(x_1,\dots,x_{k_1};y;z_{1},\dots,z_{k_2})
= \frak m_{k_1+k_2+1}(x_1,\dots,x_{k_1},y,z_{1},\dots,z_{k_2})
$$
$C$ can be regarded as a filtered $A_{\infty}$ bimodule over 
$(C,\{\frak m_k\})$-$(C,\{\frak m_k\})$.
(See \cite[Example 3.7.6]{fooobook}.)
Moreover ${\bf e} = {\bf 1}$ satisfies Definition \ref{defn33} (1)(2).
\par
However in case $\frak m_0 \ne 0$ the operations
$$
\frak n_{k}(y;x_{1},\dots,x_{k})
= \frak m_{k+1}(y,x_{1},\dots,x_{k})
$$
do {\it not} define a structure of  filtered $A_{\infty}$ right module on 
$C$ over $(C,\{\frak m_k\})$.
In fact we have
$$
\aligned
&\sum_{\ell=0}^{k}
\frak n_{k-\ell+1}(\frak n_{\ell}(y;x_1,\dots,x_{\ell});x_{\ell+1},\dots,x_k) \\
&+
\sum_{0\le \ell \le m\le k}
\frak n_{k-m+\ell+1}(y;x_1,\dots,\frak m_{m-\ell}(x_{\ell},\dots,x_{m-1}),\dots,x_k) \\
&= \frak n_{1,k}(\frak m_0(1);y,x_1,\dots,x_k).
 \endaligned
$$
Therefore we can {\it not} apply Proposition \ref{thm35} in this situation.
\end{rem}
Now we apply Proposition  \ref{thm35}  to our geometric situation and prove Theorem 
\ref{mainthm} (1).
Let $M$ be a 3 manifold with boundary $\Sigma$ and $\mathcal E_{M}$  an 
$SO(3)$ bundle on $M$ such that the 2nd Stiefel-Whitney class $w_2(\mathcal E_{\Sigma})$
of the restriction $\mathcal E_{\Sigma}$ of $\mathcal E_{M}$ to $\Sigma$ 
is the fundamental class $[\Sigma]$.
Let $i_{R(M)} : R(M) \to R(\Sigma)$ be the Lagrangian immersion, where 
$R(M)$ (resp. $R(\Sigma)$) is the set of gauge equivalence classes of 
flat connections of $\mathcal E_M$ (resp. $\mathcal E_{\Sigma}$).
\par
In the previous section, we associate a right filtered $A_{\infty}$ module 
$CF(M,L)$
over $(CF(L),\{\frak m_k\})$
to a Lagrangian immersion $i_L : \tilde L \to L \subseteq R(\Sigma)$.
These filtered $A_{\infty}$ algebra and filtered $A_{\infty}$ module are 
$G$-gapped for certain discrete monoid $G$ by Gromov-Uhlenbeck compactness and Theorem \ref{thm215prime}. 
\par
We consider its special case where $\tilde L = R(M)$.
In this case the underlying $\Lambda_{0}^{\Z_2}$
module of 
$(CF(M,L),\{\frak n_k\})$
coincides with 
$(CF(R(M),R(M)),\{\frak m_k\})$.
Namely both are
$C(R(M) \times_{R(\Sigma)} R(M);\Lambda_0^{\Z_2})$.
We remark
\begin{equation}
R(M) \times_{R(\Sigma)} R(M)
\cong R(M) \sqcup \{(p,q) \in R(M)^2 \mid p \ne q, i_{R(M)}(p) = i_{R(M)}(q)\}.
\end{equation}
Therefore
$$
C(R(M) \times_{R(\Sigma)} R(M);\Lambda_0^{\Z_2})
=
C(R(M);\Lambda_0^{\Z_2})
\oplus \bigoplus_{(p,q)} \Lambda_0^{\Z_2}[(p,q)]
$$
where the direct sum in the second component is taken over 
$\{(p,q) \in R(M)^2 \mid p \ne q, i(p) = i(q)\}$.
\begin{defn}\label{def37}
We take ${\bf 1}_M \in C(R(M) \times_{R(\Sigma)} R(M);\Lambda_0^{\Z_2})$
to be the fundamental cycle of $R(M)$ in $C(R(M);\Lambda_0^{\Z_2})$.
\end{defn}
Now we observe:
\begin{lem}\label{lem38}
${\bf 1}_M$ is a cyclic element of the right filtered $A_{\infty}$ module
$(C(R(M) \times_{R(\Sigma)} R(M);\Lambda_0^{\Z_2}),\{\frak n_k\})$
over $(C(R(M) \times_{R(\Sigma)} R(M);\Lambda_0^{\Z_2}),\{\frak m_k\})$.
\end{lem}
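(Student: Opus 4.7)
The plan is to verify the two conditions of Definition \ref{defn33} by extracting the leading ($T^0$, equivalently $E=0$) term of the operations $\frak n_0$ and $\frak n_2$. Since the operations are $G$-gapped with $0 \in G$, reduction modulo $\Lambda_+^{\Z_2}$ isolates exactly the contributions of the energy-zero moduli spaces. Once the computations below are carried out, the fact that the reduction mod $\Lambda_+^{\Z_2}$ of $x \mapsto \frak n_2({\bf 1}_M;x)$ is the identity automatically upgrades to a $\Lambda_0^{\Z_2}$-module isomorphism on the complete free module $CF(R(M),R(M))$ by the standard geometric-series inversion of $\mathrm{id} + T\cdot (\text{endomorphism})$.

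For condition (2), by (\ref{eq2232}) the reduction of $\frak n_0({\bf 1}_M)$ modulo $\Lambda_+^{\Z_2}$ equals $\partial {\bf 1}_M + \frak n_{0,0}({\bf 1}_M)$. Because $R(M)$ is a closed smooth manifold (Situation \ref{situ21}), its fundamental cycle ${\bf 1}_M$ of Definition \ref{def37} is closed in the chain model, so $\partial {\bf 1}_M = 0$. The term $\frak n_{0,0}({\bf 1}_M)$ vanishes because the $E=0$ moduli with no boundary marked point consists of $t$-invariant flat connections on $M\times\R$ coupled with constant holomorphic maps $[0,1]\times\R \to R(\Sigma)$; such configurations are pointwise fixed by the $\R$-translation action on $\overset{\circ}{\widetilde{\mathcal M}}$, so they drop out of the quotient $\overset{\circ}{\mathcal M}_0((M,\mathcal E_M),R(M);R_-,R_+;0)$ by the standard Floer-theoretic convention already implicit in Theorem \ref{220}.

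For condition (1), I analyze the one-marked-point, energy-zero moduli $\overset{\circ}{\mathcal M}_1((M,\mathcal E_M),R(M);R_-,R_+;0)$. Energy zero forces $\frak A$ to be a $t$-independent flat connection on $M$ and $u$ to be a constant map to $R(\Sigma)$; the residual $\R$-action is fixed by the position of the boundary marked point $z_1 \in \{1\}\times\R$, giving a genuine $0$-parameter moduli. Splitting into cases via the decomposition (\ref{form235}) for $\tilde L = R(M)$:
\begin{enumerate}
\item[(i)] if the inserted class lies in the diagonal summand $C(R(M);\Lambda_0^{\Z_2})$, then $\gamma$ does not switch at $z_1$, so $\gamma$ is constant $\equiv a \in R(M)$, and $\mathrm{ev}_-, \mathrm{ev}_1, \mathrm{ev}_+$ all equal $a$ on the diagonal;
\item[(ii)] if the insertion is a switching generator $[p,q]$, then (\ref{form23777}) forces $\gamma \equiv p$ for $t<z_1$ and $\gamma \equiv q$ for $t>z_1$, while the matching with ${\bf 1}_M$ through $\mathrm{ev}_-$ (supported on the diagonal) forces $a_- = p$; since $\frak A$ is $t$-invariant, also $a_+ = p$ on $M$, so $\mathrm{ev}_+ = (p,q)$ lies in the switching component $R_+ = \{(p,q)\}$.
\end{enumerate}
Substituting into the pairing (\ref{form230}), the $T^0$ part of $\frak n_2({\bf 1}_M;\cdot)$ is Poincaré dual to the diagonal of $R(M)\times R(M)$ on the diagonal summand and to the Kronecker symbol on the switching summands, i.e.\ it is the identity on $CF(R(M),R(M))$; combined with the geometric-series argument of the first paragraph, this proves (1).

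The main obstacle is not algebraic but analytic-geometric: one must justify that the virtual fundamental chain of the perturbed, partially compactified moduli space specializes at energy zero to exactly the expected diagonal cycle described above, with no parasitic contributions from bubble strata carrying zero symplectic area or from perturbation terms near the diagonal. This reduction to the constant-configuration stratum follows from Gromov--Uhlenbeck compactness (Theorem \ref{thm215prime}) together with the dimension count for $E=0$ strata, but the details depend on the chain model and perturbation scheme referred to in Remark \ref{rem2188}.
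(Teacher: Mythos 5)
Your proof follows the paper's own approach: reduce modulo $\Lambda_+^{\Z_2}$, observe that energy-zero solutions are exactly pairs (a $t$-invariant flat connection on $M\times\R$, a constant holomorphic map), and read off that the leading term of $\frak n_1({\bf 1}_M;\cdot)$ is the identity and the leading term of $\frak n_0({\bf 1}_M)$ is $\partial{\bf 1}_M=0$. Your explicit case split between the diagonal and switching summands, and the $T$-adic Neumann-series inversion, are precisely what the paper compresses into ``Definition \ref{defn33} (1) follows immediately.'' One step to rephrase, though: you assert that the $\R$-invariant energy-zero configurations ``drop out of the quotient,'' but a fixed point of a group action survives set-theoretically in the quotient; nothing disappears. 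What actually ensures $\frak n_{0,0}({\bf 1}_M)=0$ is a transversality/dimension statement: on the constant stratum the $\R$-action is trivial, so the honest dimension of that stratum in $\overset{\circ}{\mathcal M}_0((M,\mathcal E),R(M);R_-,R_+;0)$ exceeds the virtual dimension by one, and consequently it contributes nothing to the virtual count after perturbation. The paper sidesteps this point by writing $\partial$ as a separate summand in (\ref{eq2232}), so that $\frak n_0\equiv\partial\bmod\Lambda_+^{\Z_2}$ holds essentially by construction rather than by a compactness argument. Your conclusion is the same, but the intermediate justification should rest on virtual dimension, not on set-theoretic collapse under the $\R$-quotient.
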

\begin{proof}
We decompose
$$
\frak n_k = \sum T^{\lambda_i} \frak n_{k,i}
$$
accoding to the definition of $G$-gapped-ness.
Then by definition $\frak n_{k,0}$ is defined by using the moduli spaces
${\mathcal M}_k((M,\mathcal E),L;R_-,R_+;0)$ consisting of the solutions of 
(\ref{ASDeq}), (\ref{ASDprod}) with zero energy.
Such solution is necessary of the form $(\frak A,{
\frak z},{\frak w},\Omega,u)$
where $\frak A$ is a flat connection on $M \times \R$, $u$ is a constant map and 
$\Omega = \Sigma \times (0,1] \times \R$.
It follows that
$$
\frak n_{1,0}({\bf 1}_M;x) = x
$$
for any $x \in C(R(M) \times_{R(\Sigma)} R(M);\Lambda_0^{\Z_2})$.
Definition \ref{defn33} (1) follows immediately.
\par
Definition \ref{defn33} (2) follows from the fact that $\frak n_0$ is congruent 
to the classical boundary operator modulo $\Lambda_{+}^{\Z_2}$.
\end{proof}
Proposition \ref{thm35} and Lemma \ref{lem38} immediately imply the next theorem.
\begin{thm}\label{thm39}
The immersed Lagrangian submanifold $R(M) \to R(\Sigma)$ is unobstructed.
Moreover we can chose the bounding cochain $b_M$ uniquely so that
\begin{equation}\label{1isccle}
d^{b_M}({\bf 1}_M) = 0.
\end{equation}
\end{thm}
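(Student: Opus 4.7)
The plan is to treat Theorem \ref{thm39} as a direct consequence of the algebraic result Proposition \ref{thm35} together with Lemma \ref{lem38}. First I would package the geometric input produced in Section \ref{HF3bdfunctor} into the algebraic setup of Proposition \ref{thm35}. That is, I take $C = CF(R(M))$ with the Akaho-Joyce filtered $A_\infty$ algebra structure $\{\frak m_k\}$, and $D = CF(R(M),R(M)) = C(R(M)\times_{R(\Sigma)} R(M);\Lambda_0^{\Z_2})$ with the right filtered $A_\infty$ module structure $\{\frak n_k\}$ produced by applying Definition \ref{defn21919} in the special case $L = R(M)$. I would first check that $C$ and $D$ are $G$-gapped for a common discrete monoid $G$. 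This is where the compactness Theorem \ref{thm215prime} enters: the set of energies $E$ realized by elements of the moduli spaces $\mathcal M_k((M,\mathcal E),L;R_-,R_+;E)$ together with those realized by Gromov-Uhlenbeck bubbling in the $\frak m_k$-defining moduli spaces is a discrete subset of $\R_{\ge 0}$, and the submonoid it generates serves as $G$.

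The second step is to invoke Lemma \ref{lem38}, which identifies $\textbf{1}_M$ (the fundamental cycle of $R(M)$ sitting inside the $R(M)$-summand of the decomposition $R(M)\times_{R(\Sigma)} R(M) = R(M) \sqcup \{\text{switching points}\}$) as a cyclic element in the sense of Definition \ref{defn33}. The two conditions in that definition follow directly from the description of the zero-energy stratum: energy-zero configurations $(\frak A,\frak z,\frak w,\Omega,u)$ are necessarily flat on $M \times \R$ with constant $u$ and domain $\Omega = \Sigma\times (0,1]\times \R$, so $\frak n_{1,0}(\textbf{1}_M;\cdot)$ reduces to the identity, while $\frak n_0 \equiv \partial \pmod{\Lambda_+^{\Z_2}}$ and the classical boundary of a fundamental cycle vanishes.

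With these two ingredients in place, Proposition \ref{thm35} applies verbatim and produces a unique $G$-gapped element $b_M \in CF(R(M))\otimes_{\Lambda_0^{\Z_2}} \Lambda_+^{\Z_2}$ that both satisfies the Maurer-Cartan equation $\sum_k \frak m_k(b_M,\dots,b_M) = 0$ and solves $d^{b_M}(\textbf{1}_M) = 0$. The first of these statements is exactly the unobstructedness assertion of Theorem \ref{thm39}, and the second is the distinguished normalization (\ref{1isccle}).

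The main obstacle in this approach is not algebraic but verification-theoretic: the clean application of Proposition \ref{thm35} requires that the module structure $\{\frak n_k\}$ described in Section \ref{HF3bdfunctor} for the pair $(R(M),R(M))$ really is available in the immersed, cleanly-intersecting setting where $\tilde L = R(M)$ meets $i_{R(M)}:R(M)\to R(\Sigma)$ along the diagonal (not transversally). This is the clean-intersection generalization of Definition \ref{defn23}, which was flagged earlier in the section as needed precisely for this argument. Once that construction is accepted and the common gapping monoid $G$ is fixed, the rest of the proof is the two-line combination indicated above, with no further analytic input.
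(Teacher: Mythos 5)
Your proposal is correct and follows exactly the paper's argument: the paper states Theorem \ref{thm39} as an immediate consequence of Proposition \ref{thm35} and Lemma \ref{lem38}, taking $C = CF(R(M))$, $D = CF(R(M),R(M))$, and ${\bf 1}_M$ the cyclic element, with $G$-gappedness supplied by Theorem \ref{thm215prime}. The extra remarks you add about the clean-intersection module structure are a fair flag of where the underlying geometric construction must be verified, but they do not change the logic of the deduction, which is the same two-line combination the paper uses.
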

Theorem  \ref{mainthm} (1) follows from Theorem \ref{thm39} except the statement 
that the gauge equivalence class of $b_{M}$ is independent of the choices.
Here the choices are perturbation to define filtered $A_{\infty}$ algebra and 
filtered $A_{\infty}$ module involved in the construction and the metric on $M$ 
etc..
One can prove this independence by using a cobordism argument, which have been used 
extensively in various related situations. 
(The one which is closest to the present situation is \cite[Sections 5,6,7]{fu2}.) 
So we can safely omit it in this paper
and postpone its detail to \cite{fu8}.
\par
Theorem \ref{mainthm} (3) follows from the next proposition.
\begin{prop}\label{prop310}
If $R(M)$ is an embedded Lagrangian submanifold then
$$
\frak n_0({\bf 1}_M) = 0.
$$
\end{prop}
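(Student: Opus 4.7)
Since ${\bf 1}_M$ is the fundamental cycle, the term $\partial{\bf 1}_M$ in the expansion $\frak n_0 = \partial + \sum_{E > 0} T^E \frak n_{0,E}$ vanishes, so the task reduces to showing $\frak n_{0,E}({\bf 1}_M) = 0$ for each $E > 0$. In the embedded case $R(M) \times_{R(\Sigma)} R(M) = R(M)$ has no switching components, and formula (\ref{form230}) gives
\[
\langle \frak n_{0,E}({\bf 1}_M), y_+\rangle = \#\,{{\mathcal M}}_0((M,\mathcal E), R(M); R(M), y_+; E) \pmod 2
\]
for every $y_+ \in R(M)$, where the left asymptote is unconstrained because we pair against the fundamental cycle. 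So it suffices to show that this mod-$2$ count vanishes for every $y_+ \in R(M)$ and every $E > 0$.

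To this end, I plan to exploit the rigidity afforded by the injectivity of $i_{R(M)} : R(M) \to R(\Sigma)$. Since the map is injective, the Lagrangian boundary trace $\gamma : \R \to R(M)$ determined by $i_{R(M)}(\gamma(t)) = u(1,t)$ is globally single-valued and smooth, and admits a canonical flat continuation across $\partial M = \Sigma$. Using this continuation one can enlarge the moduli problem by attaching a cylindrical end $\Sigma \times [1, \infty) \times \R$ and replacing the Lagrangian boundary condition by an asymptotic condition along the new end; the enlarged moduli space should then carry an additional continuous $\R$-action (by $s$-translation of the attached end) that is free on the $E > 0$ locus, since its fixed points are $s$-invariant on the cylinder and hence reduce to flat $s$-independent configurations, necessarily of energy $0$. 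Quotienting by this free $\R$-action forces the mod-$2$ count to vanish. An alternative --- perhaps cleaner --- route is to replace the symmetry by a null-cobordism obtained by varying the attached cylindrical length in a one-parameter family; either approach yields the vanishing.

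The main obstacle will be to show that the extra $\R$-action, or the cobordism, extends continuously and remains free on the full Uhlenbeck--stable-map compactification of Theorems \ref{thm215} and \ref{thm217}, in particular across the codimension-one strata corresponding to trajectory breaking, Uhlenbeck bubbling of $\frak A$, and sphere or disc bubbling of $u$. Each bubble type interacts non-trivially with the cylindrical extension, and one must verify by a generic perturbation of the Kuranishi-type data of Remark \ref{rem2188} that the symmetry remains free on the perturbed top stratum and is compatible along all boundary strata; the asymptotic exponential decay (\ref{expesti}) of Theorem \ref{asymptotic} is needed to guarantee matching at the ends. The framework parallels the symmetry arguments of Lekili--Lipyanskiy \cite{LL} used for Wehrheim--Woodward functoriality (cf.~Remarks \ref{rem312}, \ref{rem415}), and will be carried out in the spirit of Sections \ref{sec:represent} and \ref{sec:glue}.
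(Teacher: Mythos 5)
Your reduction to showing $\frak n_{0,E}({\bf 1}_M)=0$ for $E>0$ is correct, and the observation that there are no switching components when $R(M)$ is embedded is also correct. But the core of your argument---attaching a cylindrical end $\Sigma\times[1,\infty)\times\R$, replacing the Lagrangian boundary condition by an asymptotic condition, and extracting an extra free $\R$-action by $s$-translation---contains a genuine gap. First, you never explain why the enlarged moduli problem is equivalent to the original one: a Lagrangian boundary condition on $\Sigma\times\{1\}\times\R$ and an asymptotic condition on a cylindrical end as $s\to\infty$ are different boundary-value problems, and it is not clear what equation $\frak A$ or $u$ is supposed to satisfy on the attached region, nor why the counts should match. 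Second, even granting the setup, the claim that fixed points of the $s$-translation are forced to have energy zero is wrong: being $s$-invariant on the attached cylinder says nothing about the energy carried by $F_{\frak A}$ on $M_0\times\R$ or by $u$ on $(0,1]\times\R$, so the action need not be free on the $E>0$ locus. The alternative (cobordism by varying cylindrical length) has the same problem plus the familiar one that a one-parameter family has two ends and accumulates codimension-one boundary from breaking and bubbling; you would need to control all of these, and you only indicate this as a ``main obstacle'' without addressing it.

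The paper's actual proof is much more elementary and avoids all of this. It is a pure monotonicity/dimension argument: decomposing $R(M)$ into connected components $R_i$, one observes that $\mathcal M((M,\mathcal E),R(M);R_i,R_j;E)=\emptyset$ for $i\neq j$ (because the boundary trace would have to be a path joining $R_i$ to $R_j$), and then shows (Lemma \ref{lem311}) that for $E>0$ the virtual dimension of $\mathcal M((M,\mathcal E),R(M);R_i,R_i;E)$ is strictly larger than at $E=0$. This is done by comparing $\frak A$ to the flat connection $\frak A_0$ induced by the boundary trace: the relative Pontryagin number $\int_{M\times\R}(p_1(\frak A)-p_1(\frak A_0))=E/2\pi^2>0$ is a positive integer, so $\frak A$ is homotopic rel boundary to $\frak A_0$ glued with a positive-$p_1$ connection on $S^4$, and the index sum formula gives the dimension jump. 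Pairing ${\bf 1}_M$ against a moduli space of too-large virtual dimension then gives zero by dimension counting. You should replace the symmetry/cobordism scheme by this monotonicity argument, or, if you want to keep a cobordism-flavored approach, you must at minimum specify the modified PDE on the attached end, prove the moduli problems are equivalent, and give an actual argument for why the $\R$-action is free on the positive-energy locus.
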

\begin{proof}
The proof is based on the  monotonicity and proceed as follows.
We decompose $R(M)$ to the connected components $R(M) =
\bigcup_{i \in I} R_i$.
We first observe
$$
{\mathcal M}((M,\mathcal E),L;R_i,R_j;E) = \emptyset
$$
for $i\ne j$.
In fact the boundary value of an element of ${\mathcal M}((M,\mathcal E),L;R_i,R_j;E)$ 
is a path joining $R_i$ and $R_j$ in $R(M)$.
We next show:
\begin{lem}\label{lem311}
If $E>0$ then
$$
\dim {\mathcal M}((M,\mathcal E),L;R_i,R_i;E)
> \dim {\mathcal M}((M,\mathcal E),L;R_i,R_i;0).
$$
Here $\dim$ means the virtual dimension.
\end{lem}
\begin{proof}
Let $(\frak A,\Omega,u)$ be an representative of an element of 
${\mathcal M}((M,\mathcal E),L;R_i,R_i;E)$.
The boundary value of $u$ defines a $t\in \R$ parametrized family 
$a(1,t)$ of flat connections of $M$.
We may regard it as a connection on $M \times \R$, which we denote by $\frak A_0$.
We remark that $\frak A$ and $\frak A_0$ has the same boundary value on 
$\Sigma \times \{1\} \times \R$ and the same asymptotic as $\R$ coordinate goes to 
$\pm \infty$. So we can define relative Pontryagin number
$$
\int_{M \times \R} (p_1(\frak A) - p_1(\frak A_0)) \in \Z.
$$
We then have
$$
2\pi^2\int_{M \times \R} (p_1(\frak A) - p_1(\frak A_0)) =  E.
$$
Since $E$ is strictly positive 
$\int_{M \times \R} (p_1(\frak A) - p_1(\frak A_0))$ is strictly positive.
Therefore $\frak A$ is homotopic relative to the boundary 
to a connection obtained by gluing $\frak A_0$ with a 
connection on $S^4$ with positive Pontryagin number.
Lemma \ref{lem311} now follows from index sum formula.
\end{proof}
Proposition \ref{prop310} follows from Lemma \ref{lem311} and 
dimension counting.
\end{proof}
Proposition \ref{prop310} implies that $b_M = 0$ if $R(M)$ is an 
embedded Lagrangian submanifold.
This is Statement (3) of Theorem \ref{mainthm}.
\begin{rem}\label{rem312}
We can use Proposition \ref{thm35} to study Wehrheim-Woodward functoriality
(\cite{WW})
in a similar way. Especially we can prove the next theorem.
Let $(M_i,\omega_i)$ $i=1,2$ be a symplectic manifold, $i_{L_1} : \tilde L_1
\to L_1 \subset M_1$ an 
immersed Lagrangian submanifold of $M_1$ and $i_{L_{12}} : \tilde L_{12}
\to L_{12} \subset M_1 \times M_2$ an immersed Lagrangian  submanifold 
of $(M_1 \times M_2, \omega_1 \oplus - \omega_2)$.
We assume $L_1 \times M_2$ is transversal to $L_{12}$ and put
$$
\tilde L_2 = (L_1 \times M_2) \times_{M_1 \times M_2} \tilde L_{12}.
$$
Then the composition of $\tilde L_2 \subset M_1 \times M_2$ 
with the projection $M_1 \times M_2 \to M_2$ 
is a Lagrangian immersion $i_{L_2} : \tilde L_2 \to L_2 \subset M_2$.
We assume $M_1$, $M_2$, $L_1$, $L_{12}$ are  spin and fix a spin structure of them.
It induces a spin structure of $L_2$.

\begin{thm}\label{WWfunc}
In the above situation 
we assume that $L_1$ and $L_{12}$ are unobstructed, in addition.
\par
Then $L_2$ is unobstructed.
Moreover gauge equivalence classes of the bounding cochains $b_1$, $b_{12}$ of $L_{1}$ and $L_{12}$ 
determine a gauge equivalence class of a bounding cochain $b_{2}$ of $L_2$. 
\end{thm}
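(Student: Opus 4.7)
The strategy parallels the proof of Theorem \ref{mainthm} (1): I shall produce, for each immersed Lagrangian $L \subseteq M_2$, a $G$-gapped right filtered $A_{\infty}$ module over the Akaho-Joyce algebra $(CF(L),\{\frak m_k\})$, specialize to $L = L_2$, exhibit a cyclic element attached to the ``diagonal'' in $\tilde L_2 \times_{M_2} \tilde L_2$, and invoke Proposition \ref{thm35}.

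First I would define moduli spaces of pseudo-holomorphic quilted strips of the following shape: a strip carrying a map to $M_1$ joined along a common seam (mapped to $L_{12}$) to a second strip carrying a map to $M_2$; the outer boundary of the $M_1$-strip is mapped to $L_1$ with branches recorded, and the outer boundary of the $M_2$-strip is mapped to $L$, with marked points on the $L_1$, $L_{12}$, $L$ boundary components decorated respectively by inputs from $CF(L_1)$, $CF(L_{12})$, $CF(L)$. Counting these moduli spaces, with $b_1$- and $b_{12}$-insertions along the $L_1$- and $L_{12}$-boundaries, produces operations
\[
\frak n_k : CF((L_1,b_1),(L_{12},b_{12});L) \otimes CF(L)^{k\otimes} \to CF((L_1,b_1),(L_{12},b_{12});L),
\]
where the underlying module is $C(\tilde L_2 \times_{M_2} \tilde L; \Lambda_0^{\Z_2})$. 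The relations (\ref{rightmodule}) follow from a boundary analysis in the spirit of Theorem \ref{220}, with the Maurer-Cartan conditions on $b_1$ and $b_{12}$ cancelling the strata in which bubbling occurs at $L_1$ or along the seam $L_{12}$. This is the quilted analogue of Definition \ref{defn23} and is the construction carried out, in related contexts, by Wehrheim-Woodward and Lekili-Lipyanskiy.

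Second, I would specialize to $L = L_2$. By construction the fiber product $\tilde L_2 \times_{M_2} \tilde L_2$ contains $\tilde L_2$ as a connected component (the diagonal), and I let $\mathbf{1}_{L_{12}\circ L_1}$ denote the fundamental cycle of this diagonal inside $C(\tilde L_2 \times_{M_2} \tilde L_2;\Lambda_0^{\Z_2})$. I claim this is a cyclic element in the sense of Definition \ref{defn33}. Indeed, the zero-energy solutions defining $\frak n_{1,0}(\mathbf{1}_{L_{12}\circ L_1};x)$ are precisely the constant quilts whose $M_2$-component sits over a point of $L_2$ and whose $M_1$-component sits at the corresponding preimage in $L_1 \times_{M_1} L_{12}$, so this operation is the identity on $CF(L_2)$; this verifies Definition \ref{defn33} (1). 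Definition \ref{defn33} (2) is the statement that $\frak n_0$ reduces modulo $\Lambda_+^{\Z_2}$ to the classical boundary on the cycle $\tilde L_2$, which vanishes. Proposition \ref{thm35} then produces a unique $G$-gapped bounding cochain $b_2$ of $(CF(L_2),\{\frak m_k\})$ satisfying $d^{b_2}(\mathbf{1}_{L_{12}\circ L_1}) = 0$, so in particular $L_2$ is unobstructed.

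Independence of the gauge equivalence class $[b_2]$ from auxiliary choices, and its dependence only on $[b_1]$ and $[b_{12}]$, I would obtain by a one-parameter cobordism argument: a homotopy between two sets of choices (almost complex structures, abstract perturbations, representatives of $b_1$ and $b_{12}$) yields an $A_\infty$ homomorphism between the corresponding right filtered $A_\infty$ modules intertwining the two cyclic elements, and the uniqueness clause of Proposition \ref{thm35}, applied inductively in the filtration, promotes this to a pseudo-isotopy and hence a gauge equivalence of the resulting $b_2$'s, in complete analogy with \cite[Sections 5, 6, 7]{fu2}. The main obstacle will be the analytic one: setting up the quilted moduli spaces in the immersed / clean-intersection setting with the right compactness and transversality so that they define a genuine $G$-gapped filtered $A_\infty$ module; in particular one must check that the strip-shrinking phenomena of \cite{We}, \cite{Sawe} contribute only the expected boundary strata, and that the Akaho-Joyce perturbation scheme can be made compatible with seam conditions along $L_{12}$. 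Once this analytic package is in place, the algebraic part is a direct transcription of the proof of Theorem \ref{thm39}.
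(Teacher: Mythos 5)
Your proposal reproduces the paper's argument: both replace Figure 2.9 by the quilted moduli space of Figure 3.1 (an $M_1$-strip and an $M_2$-strip joined along a seam carrying the $L_{12}$-condition, with $b_1$- and $b_{12}$-insertions cancelling disk bubbles on the $L_1$-boundary and the seam), obtain in this way a right filtered $A_\infty$-module over $CF(L_2)$ with underlying module $C(\tilde L_1\times_{M_1}\tilde L_{12}\times_{M_2}\tilde L_2)\cong C(\tilde L_2\times_{M_2}\tilde L_2)$, check that $[\tilde L_2]$ is a cyclic element via the zero-energy (constant quilt) contribution, and apply Proposition \ref{thm35}. Your added discussion of gauge-independence via a one-parameter cobordism and the uniqueness clause of Proposition \ref{thm35} is exactly the argument the paper invokes for the analogous uniqueness statement in Theorem \ref{thm39}, so this is the same proof.
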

For the proof we replace Figure 2.9 by the next Figure 3.1.
\par
\begin{center}
\includegraphics[scale=0.25]
{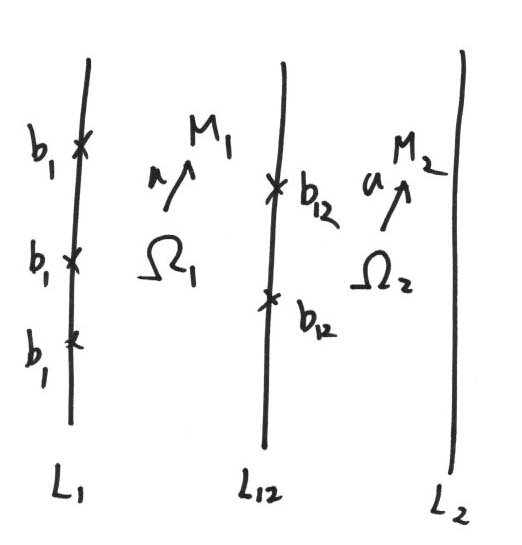}
\end{center}\centerline{\bf Figure 3.1}
\par\medskip
Here $u$ is a combination of maps, to $M_1$ 
(in the domain $\Omega_1$) and $M_2$ (in the domain $\Omega_2$).
$L_1$ and $L_{12}$ are used to define a 
boundary condition on $\partial \Omega_1 \setminus (
\Omega_1\cap \Omega_2)$ and on 
$\Omega_1\cap \Omega_2$, respectively.
We use $b_1$, the bounding cochain of $L_1$ and $b_{12}$, 
the bounding cochain of $L_{12}$ to cancel the (disk)
bubbles on $\partial \Omega_1 \setminus (
\Omega_1\cap \Omega_2)$
and on $\Omega_1\cap \Omega_2$, respectively.
We thus obtain a structure of filtered $A_{\infty}$ right 
module over $CF(L_2)$.
Using the fact that $\tilde L_2 \times_{M_2} \tilde L_2
= \tilde L_1 \times_{M_1} \tilde L_{12} \times_{M_2} \tilde L_2$,
we can show $[\tilde L_2]$ is the cyclic element of this 
filtered $A_{\infty}$ right 
module and can apply Proposition \ref{thm35}.

We will prove Theorem \ref{WWfunc} and explore related topics in 
\cite{fu9}.
See also Remark \ref{rem415}.
\end{rem}

\section{Representativity of the relative Floer homology functor}\label{sec:represent}

In this section we explain a proof of Theorem \ref{represent}.
Let $(M,\mathcal E_M)$ and $(\Sigma,\mathcal E_{\Sigma})$
be as in Situation \ref{situ21}.
In Theorem \ref{thm39} we obtain a bounding cochain $b_M$ of 
the filtered $A_{\infty}$ algebra $(CF(R(M)),\{\frak m_k\})$
satisfying (\ref{1isccle}).
\par
We consider a Lagrangian immersion $i_{L} : \tilde L \to R(\Sigma)$
and its filtered $A_{\infty}$ algebra $(CF(L),\{\frak m_k\})$
where 
$
CF(L) = C(\tilde L \times_{R(\Sigma)} \tilde L;\Lambda_0^{\Z_2}).
$
In Section \ref{HF3bdfunctor} we defined a right filtered $A_{\infty}$
module $(CF(M,L),\{\frak n_k\})$ over $(CF(L),\{\frak m_k\})$
where
$
CF(M,L) = C(R(M) \times_{R(\Sigma)} \tilde L;\Lambda_0^{\Z_2})
$.
\par
We define right filtered $A_{\infty}$ module 
$(CF((R(M),b_M),L),\{{}^{b_M}\frak n_k\})$ over $(CF(L),\{\frak m_k\})$
as follows.
We put
\begin{equation}
{}^{b_M}\frak n_k(y;x_1,\dots,x_k)
=
\sum_{\ell=0}^{\infty}
\frak n_{\ell,k}(\underbrace{b_M,\dots,b_M}_{
\ell};y;x_1,\dots,x_k).
\end{equation}
Here $\{\frak n_{\ell,k}\}$ is the filtered $A_{\infty}$ bimodule structure
of $CF(R(M),L)$ over $CF(R(M))$-$CF(L)$.
We review the construction of this  filtered $A_{\infty}$ bimodule structure
later. See Corollary \ref{cor4949}.

Using the fact that $b_M$ satisfies the Maurer-Cartin equation 
(\ref{MCeq}) we can prove that ${}^{b_M}\frak n_k$ defines the 
structure of right filtered $A_{\infty}$ module.
Namely it satisfies (\ref{rightmodule}).
\par
We recall:
\begin{defn}
Let $(D_i,\{\frak n_{k}^i\})$ ($i=1,2$) be right filtered 
$A_{\infty}$ modules over $(C,\{\frak m_k\})$.
A {\it filtered $A_{\infty}$ homomorphism} 
$: (D_1,\{\frak n_{k}^1\}) \to (D_2,\{\frak n_{k}^2\})$ is 
$\widehat{\varphi} = \{\varphi_{k}\mid k=0,1,2,\dots\}$
where
$$
\varphi_k : D_1 \otimes C^{k\otimes} \to D_2
$$
such that 
\begin{equation}\label{for4242}
\aligned
&\sum \frak n_{k_1}(\varphi_{k_2}(y;x_1,\dots,x_{k_2});\dots,x_k)
\\
= &
\sum \varphi_{k_1}(\frak n_{k_2}(y;x_1,\dots,x_{k_2});\dots,x_k)
\\
&+ \sum \varphi_{k_1}(y;x_1,\dots,\frak m_{k_2}(x_{i},\dots,x_{i+k_2-1}),\dots,x_k).
\endaligned
\end{equation}
Here the sums in the first and the second lines are taken over $k_1,k_2$ with $k_1 +k_2 = k$,
the sum in the third line is taken over $k_1,k_2,i$ such that
$k_1 + k_2 = k+1$ and $i = 1,\dots,k_1$.
\par
We say $\widehat{\varphi}$ is {\it strict} if $\varphi_0 = 0$.
\end{defn}
\par
We will prove:
\begin{thm}\label{them42}
There exists a strict right filtered $A_{\infty}$ module homomorphism
\begin{equation}
\hat\varphi  : (CF((R(M),b_M),L),\{{}^{b_M}\frak n_k\})
\to (CF(M;L),\{\frak n_k\})) 
\end{equation}
over $(CF(L),\{\frak m_k\})$ such that
\begin{equation}\label{form44}
\varphi_1 \equiv {\rm id} \mod \Lambda_+^{\Z_2}.
\end{equation}
Here $\hat\varphi = \{\varphi_k \mid k=1,2,\dots \}$.
\end{thm}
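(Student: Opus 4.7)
The plan is to construct $\hat{\varphi}$ by interpolating geometrically between the two right $A_{\infty}$-module structures, using a parameterized moduli space whose codimension-one boundary reproduces the $A_{\infty}$-homomorphism relation (\ref{for4242}). The two structures in question differ only in the treatment of the $R(M)$-side of the strip: in $\frak n_k$, chains on $R(M)$ enter only as asymptotic limits at $\pm\infty$, whereas in ${}^{b_M}\frak n_k$ we additionally insert any number of copies of $b_M$ along the $R(M)$-boundary through the bimodule operations $\frak n_{\ell,k}$. The homomorphism $\varphi_k$ should witness, up to $A_{\infty}$-homotopy, that inserting $b_M$ on the $R(M)$-boundary agrees with inserting nothing -- a statement that holds precisely because $b_M$ is the canonical bounding cochain characterized by (\ref{1isccle}).

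Concretely, I would introduce a parameterized moduli space $\mathcal{M}^{\mathrm{par}}_k((M,\mathcal{E}),L;R_-,R_+;E,\ell)$ of tuples $(s,\frak A,\frak z,\frak w,\Omega,u,\vec z,\vec z\,')$, where $s\in[0,\infty]$ is a parameter governing the length of a window on the $R(M)$-boundary inside which $\ell$ further boundary marked points $\vec z\,'=(z'_1,\ldots,z'_\ell)$ are allowed, decorated by $b_M$ at each. One then sets
\begin{equation*}
\bigl\langle \varphi_k(y;x_1,\ldots,x_k),y_+\bigr\rangle = \sum_{E,\ell} T^E\,\#\!\Bigl(\bigl[\mathcal{M}^{\mathrm{par}}_k(\cdot;E,\ell)\bigr] \times_{\mathrm{ev}} \bigl(y\times x_1\times\cdots\times x_k\times y_+\times b_M^{\otimes\ell}\bigr)\!\Bigr),
\end{equation*}
with the virtual fundamental chain built as in Theorem~\ref{thm217}. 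At $s=0$ the $\ell$ extra marked points collapse to a single point and the space reduces, for $\ell=0$, to the unparameterized $\mathcal{M}_k$ of Definition~\ref{defn216}; at $s=\infty$ the window elongates so that the $b_M$-decorations drift off, and the family Gromov--Uhlenbeck compactifies as a product factoring through a gauge-theoretic end that reproduces the bimodule operation $\frak n_{\ell,k}$ evaluated on $b_M^{\otimes\ell}$. The codimension-one boundary of the compactification of $\mathcal{M}^{\mathrm{par}}_k$ then consists of these two endpoint strata together with (i) breakings of the gauge-theoretic strip along a connected component of $R(M)\times_{R(\Sigma)}\tilde L$, (ii) disk bubbles on the $\tilde L$-boundary contributing $\frak m$-insertions, and (iii) disk bubbles on the $R(M)$-boundary, which cancel against the Maurer--Cartan equation for $b_M$. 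Reorganizing these contributions gives (\ref{for4242}) directly.

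For the normalization $\varphi_1 \equiv \mathrm{id} \bmod \Lambda^{\Z_2}_+$, the only $E=0,\ell=0$ contribution comes from flat connections on $M\times\R$ with constant asymptotics and constant symplectic content, which yields the identity of $C(R(M)\times_{R(\Sigma)}\tilde L;\Z_2)$. The main obstacle is analytical. First, one must construct a coherent system of virtual Kuranishi-type charts on the parameterized moduli space, accommodating in a $[0,\infty]$-family the codimension-two singularities already present in the unparameterized case (Remark~\ref{rem2188}); this requires the generalized Kuranishi framework promised in \cite{fu8}. Second, and more delicate, is the asymptotic analysis at $s=\infty$: one must show that as the $b_M$-window elongates, the moduli space compactifies into exactly the product matching the definition of $\frak n_{\ell,k}$ with $b_M$ insertions, a gluing statement controlled by the exponential decay estimate (\ref{expesti}) of Theorem~\ref{asymptotic}. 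The algebraic matching of the two endpoints is in the end dictated by the cyclic-element observation of Lemma~\ref{lem38} together with the defining property (\ref{1isccle}) of $b_M$.
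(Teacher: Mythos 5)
Your plan captures the right \emph{algebraic} ingredients --- use $b_M$, the cyclic element, and the Maurer--Cartan equation to force the cancellations --- but the geometric construction as described does not get off the ground, for two concrete reasons.

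First, the base moduli space $\mathcal{M}_k((M,\mathcal E),L;R_-,R_+;E)$ of Definition~\ref{defn216} has \emph{no $R(M)$-boundary component}. Its pseudo-holomorphic part has domain $(0,1]\times\R$ with boundary only on $\{1\}\times\R$, mapped to $L$; the space $R(M)$ enters exclusively through the asymptotics as $t\to\pm\infty$. So there is simply no locus on which to open a ``window'' and insert $b_M$-decorated boundary marked points. To make $b_M$-insertions meaningful one must \emph{change the domain} so that it acquires a second boundary component mapping into $R(M)$. That is precisely what the paper does: it builds a new four-manifold $X$ from the $Y$-shaped region $W\subset\C$ (Condition~\ref{conds44}) whose $\Sigma\times\partial_3 W$ is the new $R(M)$-boundary, while $\Sigma\times\partial_1 W$ is the $L$-boundary.

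Second, the two module structures you are trying to connect live in fundamentally different geometric settings. The operations $\frak n_k$ on $CF(M;L)$ are defined by ASD connections on the degenerate-metric $M\times\R$; the operations ${}^{b_M}\frak n_k$ are built from the filtered $A_\infty$ \emph{bimodule} structure $\frak n_{\ell,k}$ on $CF(R(M),L)$, which comes from pseudo-holomorphic strips in $R(\Sigma)$ with boundary on $R(M)$ and $L$ (Proposition~\ref{prop48}, Corollary~\ref{cor4949}), not from gauge theory. You describe the $s=\infty$ end as ``a gauge-theoretic end that reproduces $\frak n_{\ell,k}$,'' but $\frak n_{\ell,k}$ has no gauge-theoretic description; a ``window-elongation'' parameter on the gauge-theory picture cannot degenerate into a pseudo-holomorphic strip. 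The paper's $X$ resolves this by having \emph{three} ends of different types: one gauge-theoretic end (${\rm Im}\,z\to+\infty$, the $\varphi$-output), one gauge-theoretic end (${\rm Re}\,z\to-\infty$, into which the cyclic element $\mathbf{1}_M$ is fed), and one pseudo-holomorphic strip end (${\rm Im}\,z\to-\infty$, the input). It is the breaking along this last end that produces the bimodule operations $\frak n_{\ell,k}$, and the breaking along the second that produces the terms annihilated by $d^{b_M}(\mathbf 1_M)=0$; bubbling on the $R(M)$-boundary produces the terms annihilated by the Maurer--Cartan equation. None of these boundary strata appears as an endpoint of a $[0,\infty]$-family; they are codimension-one boundary strata of a fixed two-parameter domain. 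Finally, you never actually place $\mathbf{1}_M$ in the construction, even though the characterization $d^{b_M}(\mathbf{1}_M)=0$ only helps if $\mathbf{1}_M$ appears as an asymptotic input to the moduli space you are analyzing; feeding $\mathbf 1_M$ into the ${\rm Re}\,z\to-\infty$ end is an essential step of the paper's definition $\varphi_{k_1}=\sum_{k_3}\psi_{k_3,k_1}(\mathbf 1_M;b_M,\dots,b_M;y_3;x_1,\dots,x_{k_1})$.
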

We remark that $CF((R(M),b_M),L)$ and $CF(M,L)$
are both
$C(R(M)\times_{R(\Sigma)} L;\Lambda_0^{\Z_2})$ as 
$\Lambda_0^{\Z_2}$ modules. So the identity map 
in the right hand side of (\ref{form44}) makes sense.
\par
Before proving Theorem \ref{them42} we draw its consequence.
\par
Let $b$ be a bounding cochain of $(C(L),\{\frak m_k\})$
we define $d^b : C(M,L) \to C(M;L)$ by
(\ref{form28}). Then $d^b \circ d^b =0$ and 
the Floer cohomology $HF(M,(L,b))$ is the cohomlogy group 
of $d^b$.
\par
In the same way we define $d^b = ({}^{b_M}\frak n^{b})_0 : CF((R(M),b_M),L) \to CF((R(M),b_M),L)$
by 
$$
d^b(y) 
=
\sum_{\ell=0}^{\infty}\sum_{k=0}^{\infty}
\frak m_{\ell + k + 1}(\underbrace{b_M,\dots,b_M}_{
\ell},y,\underbrace{b,\dots,b}_k).
$$
$d^b \circ d^b =0$ again holds and 
$HF((R(M),b_M),(L,b))$ is its cohomology group.
\begin{cor}\label{cor423}
There exists a canonical isomorphism
\begin{equation}\label{iso4545}
 HF((R(M),b_M),(L,b)) \cong HF(M,(L,b)).
\end{equation}
\end{cor}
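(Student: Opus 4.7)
The plan is to derive the isomorphism directly from Theorem~\ref{them42} by specialising the $A_\infty$ module homomorphism $\hat\varphi = \{\varphi_k\}$ at the bounding cochain $b$ of $(CF(L),\{\frak m_k\})$. First I would define
$$
\varphi^{b}(y) \;:=\; \sum_{k=0}^{\infty}\varphi_{k+1}(y;\underbrace{b,\dots,b}_{k}),
$$
which converges in the $T$-adic topology because $b \in CF(L)\otimes_{\Lambda_0^{\Z_2}}\Lambda_+^{\Z_2}$ and because the filtered $A_\infty$ module homomorphism $\hat\varphi$ is itself $G$-gapped for the common discrete monoid $G$ extracted from Uhlenbeck--Gromov compactness and Theorem~\ref{thm215prime}. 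Feeding the tuple $(b,\dots,b)$ into the relation (\ref{for4242}) and invoking the Maurer--Cartan equation $\sum_k \frak m_k(b,\dots,b)=0$ collapses all $\frak m$-contributions, yielding $\frak n^{b}_{\,0}\circ \varphi^{b} = \varphi^{b}\circ d^{b}_{b_M}$, where $d^b_{b_M}$ denotes the differential $({}^{b_M}\frak n^{b})_0$ on $CF((R(M),b_M),L)$. Thus $\varphi^{b}$ is a chain map between the two deformed Floer complexes.

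Next I would promote $\varphi^{b}$ to a quasi-isomorphism. By (\ref{form44}) we have $\varphi_1 \equiv \mathrm{id} \mod \Lambda_+^{\Z_2}$, and every summand with $k\ge 1$ in the definition of $\varphi^{b}$ carries at least one factor $b \in \Lambda_+^{\Z_2}$. Hence $\varphi^{b} \equiv \mathrm{id} \mod \Lambda_+^{\Z_2}$ after identifying $CF((R(M),b_M),L)$ and $CF(M,L)$ as the underlying $\Lambda_0^{\Z_2}$-module $C(R(M)\times_{R(\Sigma)}\tilde L;\Lambda_0^{\Z_2})$. Both complexes are $T$-adically complete and $G$-gapped, so the standard homological algebra over the universal Novikov ring (cf.\ \cite[Chapter 4]{fooobook}) yields a $G$-gapped quasi-inverse $\psi^{b}$ and a chain homotopy: inductively on $\lambda_n \in G$ one solves for the coefficient of $T^{\lambda_n}$, and at each stage the obstruction already lies in $T^{\lambda_{n+1}}\Lambda_0^{\Z_2}$ because the leading order of $\varphi^{b}$ is the identity. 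This gives the canonical isomorphism (\ref{iso4545}).

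The main obstacle will be the passage from the module homomorphism $\hat\varphi$ of Theorem~\ref{them42} to an honest chain homotopy equivalence of the $b$-deformed complexes. Concretely, one needs to verify that the identity $\frak n^{b}_{\,0}\circ \varphi^{b} = \varphi^{b}\circ d^b_{b_M}$ follows from the full $A_\infty$ relation (\ref{for4242}) in the presence of \emph{both} bounding cochains $b$ and $b_M$ simultaneously, which requires bookkeeping of all the ways that $b_M$ can be inserted on the $R(M)$-side of the module. The cleanest way to proceed is to first note that the construction of Section~\ref{HF3bdfunctor} actually produces $CF(M,L)$ as a filtered $A_\infty$ \emph{bimodule} over $(CF(R(M)),\{\frak m_k\})$-$(CF(L),\{\frak m_k\})$ (the bimodule alluded to around Corollary~\ref{cor4949}), so that the twist by $b_M$ on the left and by $b$ on the right are genuinely compatible, and then to apply the module version of the algebraic lemma guaranteeing that a filtered $A_\infty$ module homomorphism whose linear part reduces to the identity modulo $\Lambda_+^{\Z_2}$ is a homotopy equivalence. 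Finally, canonicity of (\ref{iso4545}) follows from the uniqueness clause of Proposition~\ref{thm35} applied to $b_M$ together with the standard independence-of-choices cobordism argument indicated at the end of Section~\ref{sec:unbolt}.
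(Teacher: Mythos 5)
Your proposal is correct and follows essentially the same route as the paper's proof: insert $b$ into the module homomorphism $\hat\varphi$ of Theorem~\ref{them42} to obtain $\varphi^b$, verify that it is a chain map via the relation~(\ref{for4242}) together with the Maurer--Cartan equation for $b$, and conclude it is an isomorphism from $\varphi^b \equiv \mathrm{id} \bmod \Lambda_+^{\Z_2}$. The ``main obstacle'' raised in your third paragraph is actually a non-issue at this stage: once Theorem~\ref{them42} is in hand, the $b_M$-twist is fully absorbed into the deformed structure maps $\{{}^{b_M}\frak n_k\}$ and into the components $\varphi_k$ themselves, so the chain-map identity is a purely formal consequence of the module-homomorphism relation and the Maurer--Cartan equation for $b$, with no further bookkeeping of $b_M$ insertions required --- the bimodule structure you invoke is what is needed to \emph{prove} Theorem~\ref{them42}, not to deduce its corollary.
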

\begin{proof}[Proof of Theorem \ref{them42} $\Rightarrow$ Corollary \ref{cor423}]
We define a map 
$$\varphi^b : CF((R(M),b_M),L) \to CF(M,L)$$
by 
$$
\varphi^b(y)
= \sum_{k=0}^{\infty} \varphi_k(y;b,\dots,b).
$$ 
Since $b \equiv 0 \mod \Lambda_+^{\Z_2}$ the right hand side 
converges in $T$-adic topology.
It is easy to check that
(\ref{for4242}) and (\ref{MCeq}) imply
$d^b \circ \varphi^b = \varphi^b \circ d^b$.
Namely $\varphi^b$ is a chain map.
Then (\ref{form44}) and $b \equiv 0 \mod \Lambda_+^{\Z_2}$
implies that 
$\varphi^b \equiv {\rm id} \mod \Lambda_+^{\Z_2}$.
Therefore $\varphi^b$ induces an isomorphism on cohomologies.
\end{proof}
\begin{proof}[Proof of Theorem \ref{them42}]
The main part of the proof is defining the moduli spaces which we use to define 
the operators $\varphi_k$.
\par
We take a domain $W \subset \C$ such that
the following holds. (See Figure 4.1.)
\begin{conds}\label{conds44}
\begin{enumerate}
\item 
The intersection 
$W \cap \{z \in \C \mid {\rm Im} z < -2\}$
is $\{z \in \C \mid \vert{\rm Re}z \vert \le 1, {\rm Im} z < -2 \}$.
\item
The intersection 
$W \cap \{z \in \C \mid {\rm Im} z > +2\}$
is $\{z \in \C \mid \vert{\rm Re} z\vert \le 1, {\rm Im} z > +2 \}$.
\item
The intersection 
$W \cap \{z \in \C \mid {\rm Re} z > 0\}$
is $\{z \in \C \mid 0 < {\rm Re} z \le 1 \}$.
\item
The intersection 
$W \cap \{z \in \C \mid {\rm Im} z < -2\}$
is $\{z \in \C \mid {\rm Im} z < -2,  \vert{\rm Re} z\vert \le 1\}$.
\item
The boundary $\partial W$ has three connected components 
$\partial_i W$ ($i=1,2,3$) each of which is a $C^{\infty}$ submanifold of $\C$ 
and is diffeomorphic to 
$\R$.
Moreover
$
\partial_1W = \{z \in \C \mid {\rm Re} z = 1\}
$,
$
\partial_2W 
\subset \{z \in \C \mid {\rm Re} z <0,  {\rm Im} z > 0\}
$, 
$
\partial_3W 
\subset \{z \in \C \mid {\rm Re} z <0,  {\rm Im} z < 0\}
$.
\end{enumerate}
\end{conds}
\par
\begin{center}
\includegraphics[scale=0.25]
{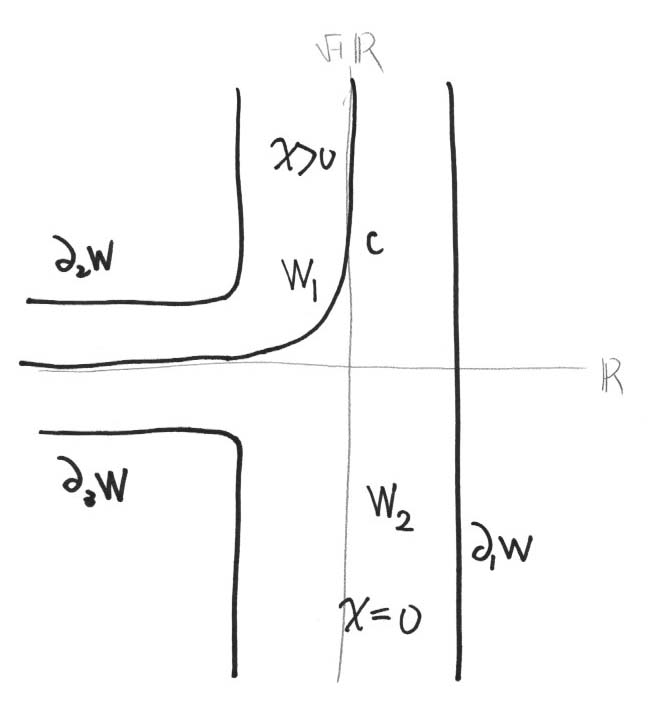}
\end{center}\centerline{\bf Figure 4.1}
\par\medskip
We take $\chi : W \to [0,1]$, a submanifold $C \subset W$ diffeomorphic to $\R$, and a Riemannian metric ${\bf g}$ on $\Sigma \times W_1$ with the following properties: 
(See Figure 4.1.)
\begin{conds}\label{cond45}
\begin{enumerate}
\item
$W \setminus C$ consists of two connected components $W_1$ and $W_2$.
Moreover
$\{z \in \C \mid \chi(z) > 0\} = W_1$.
\item
On $\{z \in W \mid {\rm Re} z < -3\}$, 
$\chi(z) = \chi(-{\rm Im} z)$, where $\chi$ in the right hand side is the same 
function as one appeared in (\ref{form211}).
${\bf g} = \chi^2 g_{\Sigma} + ds^2 + dt^2$ 
on $\Sigma \times \{z \in W \mid {\rm Re} z < -3\}$, 
where we put $z = t - \sqrt{-1}s$.
\item
On  $\{z \in W \mid {\rm Im} z < -3\}$, 
$\chi(z) = \chi({\rm Re} z)$, where $\chi$ in the right hand side is the same 
function as one appeared in (\ref{form211}).
${\bf g} = \chi^2 g_{\Sigma} + ds^2 + dt^2$ 
on $\Sigma \times \{z \in W \mid {\rm Im} z < -3\}$, 
where we put $z = s + \sqrt{-1}t$.
\item
In a neighborhood of $\Sigma \times \partial_2 W$,
the space $\Sigma \times W_1$ with metric ${\bf g}$ is isometric to the direct product
$g_{\Sigma} \times (0,\epsilon) \times \R$.
Here $g_{\Sigma} \times \{0\} \times \R$
corresponds to $\Sigma \times \partial_2 W$.
This isometry is compatible with the isometry obtained by 
items (2)(3) in the domain described by those items.
\item
Let $U(C)$ be  a neighborhood of $C$ in $\C$.
Then on $\Sigma \times (W_1\cap U(C))$, 
the metric ${\bf g}$ becomes $\chi^2(s,t) g_{\Sigma} + ds^2 + dt^2$.
where $s+\sqrt{-1}t$ is the standard coordinate of $\C$ and 
$\chi$ satisfies the condition of \cite[Lemma 4.7]{fu3}.
\end{enumerate}
\end{conds}
We extend the metric ${\bf g}$ on $\Sigma \times W_1$ to 
a `singular metric' on $\Sigma \times W$ by putting 
${\bf g} = 0 g_{\Sigma} + ds^2 + dt^2$ outside $\Sigma \times W_1$.
\par
By Condition \ref{cond45} (4), $(\Sigma \times W_1,{\bf g})$
is isometric to $(\Sigma \times (0,\epsilon) \times \R,g_{\Sigma} + ds^2 + dt^2)$ 
in a neighborhood of $\Sigma \times \partial_2W$.
\par
We remark that $M_0\times \R$ is isometric to 
$(\Sigma \times (-\epsilon,0) \times \R,g_{\Sigma} + ds^2 + dt^2)$
in an neighborhood of its boundary.
Therefore we can glue them together to obtain 
$(X,{\bf g})$. Here $\bf g$ is a `Riemannian metric' which is degenerate
on $\Sigma \times \overline{W_2}$.
The $SO(3)$ bundle $\mathcal E_M$ on $M$ induces an
$SO(3)$ bundle on $X$  in an obvious way, which we denote by $\mathcal E_X$.
\par
Note that $X$ has 3 ends and 2 boundary components. 
The 3 ends appear as ${\rm Im} z \to +\infty$, 
${\rm Re} z \to -\infty$,
${\rm Im} z \to -\infty$.
\par
The end corresponding to  ${\rm Im} z \to +\infty$ is 
$M \times [c,\infty)$.
The end corresponding to ${\rm Re} z \to -\infty$ is 
$M \times (-\infty,-c]$.
The end corresponding to ${\rm Im} z \to -\infty$ 
is $\Sigma \times [-1,1] \times (-\infty,-c]$.
\par
The boundaries are $\Sigma \times \partial_1W$ and 
$\Sigma \times \partial_3W$. Note 
$\Sigma \times \partial_2W$ is glued with 
$\partial M_0 \times \R$ and so is not a boundary of $X$.
\par
For a smooth connection $\frak A$ of $\mathcal E_X$ we can 
consider the `ASD-equation'. Namely we require (\ref{ASDeq})
on $X \setminus (\Sigma \times \overline{W_2})$
and (\ref{ASDprod}) on $\Sigma \times W \subset X$.
(Note (\ref{ASDeq}) coincides with (\ref{ASDprod}) on the overlapped part.)
We say $\frak A$ is an ASD-connection by an abuse of notation 
if it satisfies (\ref{ASDeq})
on $X \setminus (\Sigma \times \overline{W_2})$
and (\ref{ASDprod}) on $\Sigma \times W \subset X$.
\par\newpage
\begin{center}
\includegraphics[scale=0.25]
{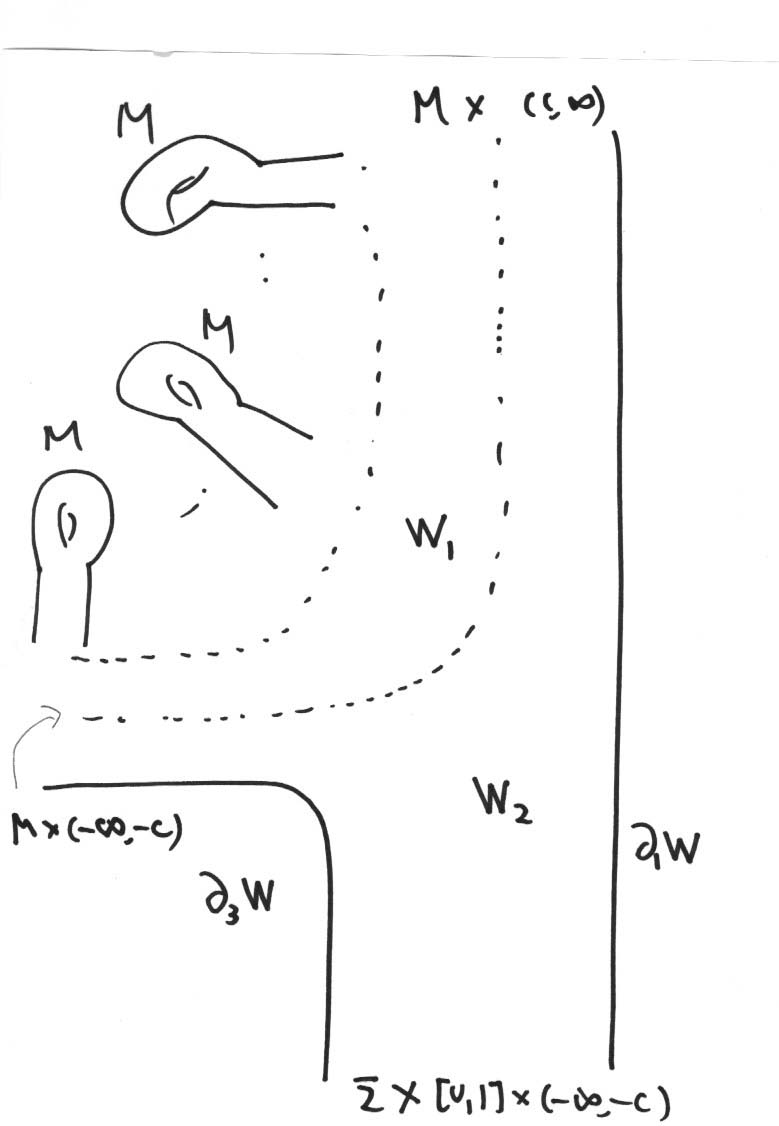}
\end{center}
\centerline{\bf Figure 4.2}
\par\medskip
We consider also $\Omega$ which contains $W_2$.
Namely $\Omega$ is a union of $W_2$
and tree of disk and sphere components attached 
to $\partial W_2$ and ${\rm Int} W_2$, respectively.
We consider the pair $(\Omega,u)$
which satisfies Condition \ref{conds27}, except we replace 
$(0,1)\times \R$ and $\{0\} \times \R$  by
$W_2$ and $C$, respectively.
We call this condition Condition \ref{conds27}'.
\par
Let $\partial_1\Omega$ 
(resp. $\partial_3\Omega$) be the union of $\partial_1W$ 
(resp.  $\partial_3W$)
and the boundary of the disk components attached to $\partial_1W$
(resp.  $\partial_3W$).
\par
Now we modify Definitions \ref{defn2929} and \ref{defn23}
as follows.
We consider the decompositions
(\ref{form236}) and  (\ref{form235}).  Let $I(L)$ and $I(R(M),L)$ be 
the index sets as in there.
We consider the decomposition
 (\ref{form235})
 in case $L=R(M)$ and let
$I(R(M))$ be its index set.
Namely
$$
\aligned
\tilde L \times_{R(\Sigma)} \tilde L &= \bigcup_{j\in I(L)} \hat L_j \\
R(M) \times_{R(\Sigma)} R(M) &= \bigcup_{j\in I(R(M))} \widehat{R(M)}_j \\
R(M) \times_{R(\Sigma)} \tilde L &= \bigcup_{j\in I(R(M),L)} R_i.
\endaligned
$$
Let $R_1$ be a connected component of 
$R(M) \times_{R(\Sigma)}R(M)$ and $R_2$, $R_3$ be connected components 
of  $R(M) \times_{R(\Sigma)} \tilde L$.
In other words $R_1 = \widehat{R(M)}_{j_1}$ for some $j_1\in I(R(M))$.
$R_2 = R_{j_2}$, $R_3 = R_{j_3}$ for some $j_2, j_3\in I(R(M),L)$.
\par
We also take $\vec i^{(1)}$,  $\vec i^{(3)}$
as in Definition \ref{defn46640} (5) below.
\begin{defn}\label{defn46640}
We define the set $\overset{\circ}{{\mathcal M}}((X,\mathcal E_X),R(M),L;R_1,R_2,R_3;
\vec i^{(1)},\vec i^{(3)};E)$
as the set of all equivalence classes of $(\frak A,{
\frak z},{\frak w},\Omega,u,\vec z^{(1)},\vec z^{(3)})$ satisfying the following conditions.
(See Figure 4.3.)
\begin{enumerate}
\item
$\frak A$ is a connection of $\mathcal E_{X}$ satisfying equations  
 (\ref{ASDeq}), (\ref{ASDprod}).
\item
$\frak z = (\frak z_1,\dots,\frak z_{m_1})$ is an {\it unordered} 
$m_1$-tuple of points of $X \setminus (\Sigma \times \overline{W_2})$.
We put $\Vert \frak z\Vert = m_1$.
We say subset $\{\frak z_1,\dots,\frak z_{m_1}\}
\subset X \setminus (\Sigma \times \overline{W_2})$ the {\it support} of $\frak z$ 
and denote it by $\vert\frak z\vert$.
We define ${\rm multi} : \vert\frak z\vert \to \Z_{>0}$ by 
$
{\rm multi}(x) = \#\{i \mid z_i = x\}$
and call it the {\it multiplicity function}.
\item
$\frak w = (\frak w_1,\dots,\frak w_{m_2})$ is an {\it unordered} 
$m_2$-tuple of points of $C$.
We put $\Vert \frak w\Vert = m_2$.
We say the subset $\{\frak w_1,\dots,\frak w_{m_2}\} 
\subset C$ the {\it support} of $\frak w$.
We define ${\rm multi} : \vert\frak w\vert \to \Z_{>0}$ by 
$
{\rm multi}(x) = \#\{i \mid w_i = x\}$
and call it the {\it multiplicity function}.
\item
$\Omega$ satisfies Condition \ref{conds27}'.
\item
$\vec i^{(1)} = ( i^{(1)}(1),\dots,i^{(1)}(k_1)) 
\in I(L)^{k_1}$ and  $\vec i^{(3)} = ( i^{(3)}(1),\dots,i^{(3)}(k_3)) 
\in I(R(M))^{k_3}$
\item
$\vec z^{(1)} = (z_1^{(1)},\dots,z_{k_1}^{(1)})$
(resp. $\vec z^{(3)} = (z_1^{(3)},\dots,z_{k_3}^{(3)})$)
$z_i^{(1)}$ lies on $\partial_1 \Omega$,
(resp. $z_i^{(3)}$ lies on $\partial_3 \Omega$).
None of $z^{(1)}_i$ or $z^{(3)}_i$ is a nodal point. 
$z^{(1)}_i \ne z^{(1)}_j$, $z^{(3)}_i \ne z^{(3)}_j$  if  $i\ne j$.
$(z^{(1)}_1,\dots,z^{(1)}_{k_1})$ 
(resp. $(z^{(3)}_1,\dots,z^{(3)}_{k_3})$ ) 
respects counter clockwise orientation of 
$\partial_1 \Omega$ (resp. $\partial_3 \Omega$).
\item
There exists $\gamma^{(1)} : \partial_1 \Omega \setminus \{z^{(1)}_1,
\dots,z^{(1)}_{k_1}\}
\to 
\tilde L$
such that $u(z) = i_L(\gamma^{(1)}(z))$ on $\partial_1 \Omega \setminus \{z^{(1)}_1,
\dots,z^{(1)}_{k_1}\}$.
\par
There exists $\gamma^{(3)} : \partial_3 \Omega \setminus \{z^{(3)}_1,
\dots,z^{(3)}_{k_3}\}
\to 
R(M)$
such that $u(z) = i_{R(M)}(\gamma^{(3)}(z))$ on $\partial_3 \Omega \setminus \{z^{(3)}_1,
\dots,z^{(3)}_{k_3}\}$.
\item
For $j=1,\dots,k_1$ the following holds. 
\begin{equation}\label{form23777rev}
(\lim_{z \uparrow z^{(1)}_j}\gamma^{(1)}(z),\lim_{z\downarrow z^{(1)}_j}\gamma^{(1)}(z))
\in \hat L_{i^{(1)}(j)}.
\end{equation}
Here the notation $z \uparrow z_j$, $z \downarrow z_j$ is 
defined in the same way as (\ref{form23777})\par
For $j=1,\dots,k_3$ the following holds. 
\begin{equation}\label{form23777revrev}
(\lim_{z \uparrow z^{(3)}_j}\gamma^{(3)}(z),\lim_{z\downarrow z^{(3)}_j}\gamma^{(3)}(z))
\in \widehat{R(M)}_{i^{(3)}(j)}.
\end{equation}
Here the notation $z \uparrow z_j$, $z \downarrow z_j$ is 
defined in the same way as (\ref{form23777})\footnote{We remark that 
$z \uparrow z_j$ here means that $z$ moves to the counter-clock-wise 
way towards $z$. So ${\rm Im} z > {\rm Im} z_j$.}.
\item
We replace Condition \ref{conds28} (3)
by the stability of 
$(\Omega,u,\vec z^{(1)},\vec z^{(3)})$. Namely the set of all maps $v : \Omega \to \Omega$ 
satisfying the next three conditions is a finite set.
\begin{enumerate}
\item
$v$ is a homeomorphism and is holomorphic on each of the irreducible components.
\item
$v$ is the identity map on $(0,1] \times \R \subseteq \Omega$.
\item $u \circ v = u$.
\item $v(z^{(1)}_j) = z^{(1)}_j$, $j=1,\dots,k_1$ 
and $v(z^{(3)}_j) = z^{(3)}_j$, $j=1,\dots,k_3$. 
\end{enumerate}
\item
For $(s,t) \in W_2$
we have
$$
[A(s,t)] = u(s,t). 
$$
Here $A(s,t)$ is obtained from $\frak A$ by  (\ref{AnadfraA}).
\item
The energy of $(\frak A,{
\frak z},{\frak w},\Omega,u)$
which is defined in the same way as Definition \ref{defnenergy2} is $E$.
\item
We assume the following 
asymptotic boundary conditions, which are defined by using $R_1,R_2,R_3$.
\begin{enumerate}
\item
\begin{equation}\label{form23777revrevrev}
(\lim_{z \to -1-\infty\sqrt{-1}}\gamma^{(3)}(z),\lim_{z\to 
+1-\infty\sqrt{-1}}\gamma^{(1)}(z))
\in R_1.
\end{equation}
Here $\lim_{z \to -1-\infty\sqrt{-1}}$ is the limit 
when the imaginary part of $z \in \partial_3 W$ goes to $-\infty$.
(We remark that then the ${\rm Re}z = -1$ by 
Condition \ref{conds44} (4)(5).)
The meaning of 
$\lim_{z\to 
+1-\infty\sqrt{-1}}$ is similar.
\item
We consider the restriction of $\frak A$ to $\Sigma \times \{z \in W\mid {\rm Im} z = c\}$
for $c >3$. We glue it with the restriction of $\frak A$ to 
$M_0 = M \setminus (\Sigma \times (-1,1])$, which is attached to $-1 + c\sqrt{-1}$.
(See Figure 4.4.) We call it $\frak A\vert_{{\rm Im} z = c}$.
It is a connection of the bundle $\mathcal E_M$ on $M$.
We assume that $\frak A\vert_{{\rm Im} z = c}$ converges to a 
flat connection as $c \to + \infty$. We write its limit
$\lim_{c\to+\infty} \frak A\vert_{{\rm Im} z = c}$.
\par
Then we also assume
\begin{equation}\label{form23777revrevrev2}
(\lim_{c\to+\infty} \frak A\vert_{{\rm Im} z = c},\lim_{z\to 
+1+\infty\sqrt{-1}}\gamma^{(1)}(z))
\in R_2.
\end{equation}
Here the meaning of $\lim_{z\to
+1+\infty\sqrt{-1}}$ is similar to 
(\ref{form23777revrevrev}).
\item
We consider the restriction of $\frak A$ to $\Sigma \times \{z 
\in W\mid {\rm Re} z = c\}$
for $c < -3$. We glue it with the restriction $\frak A$ to 
$M_0$ which is attached to $c + \sqrt{-1}$.
(See Figure 4.5.) We call it $\frak A\vert_{{\rm Re} z = c}$.
It is a connection of the bundle $\mathcal E_M$ on $M$.
We assume that $\frak A\vert_{{\rm Re} z = c}$ converges to a 
flat connection as $c \to - \infty$. We write its limit
$\lim_{c\to-\infty} \frak A\vert_{{\rm Re} z = c}$.
\par
Then we also assume
\begin{equation}\label{form23777revrevrev3}
(\lim_{c\to-\infty} \frak A\vert_{{\rm Re} z = c},\lim_{z\to 
-\infty -\sqrt{-1}}\gamma^{(3)}(z))
\in R_3.
\end{equation}
Here the meaning of $\lim_{z\to 
+1+\infty\sqrt{-1}}$ is similar to 
(\ref{form23777revrevrev}).
\end{enumerate}
\end{enumerate}
The equivalence relation is defined in the same way as Definition \ref{equivlimiobj}.
\end{defn}
\par\newpage
\begin{center}
\includegraphics[scale=0.25]
{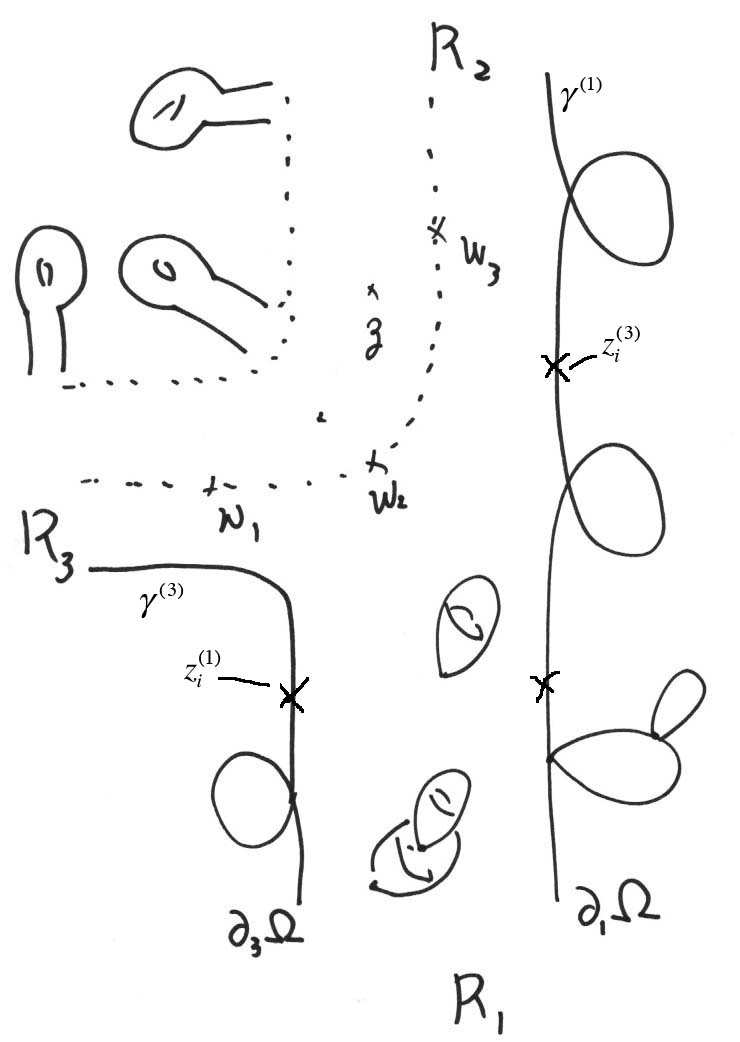}
\end{center}
\centerline{\bf Figure 4.3}
\par
\begin{center}
\includegraphics[scale=0.25]
{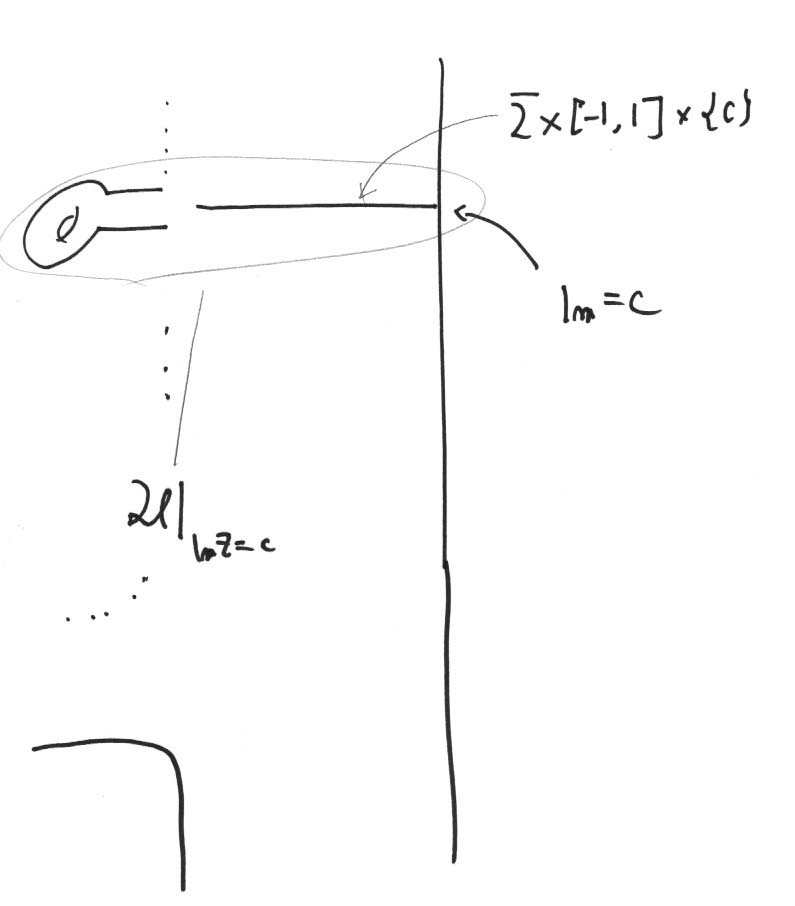}
\end{center}
\centerline{\bf Figure 4.4}
\par
\begin{center}
\includegraphics[scale=0.25]
{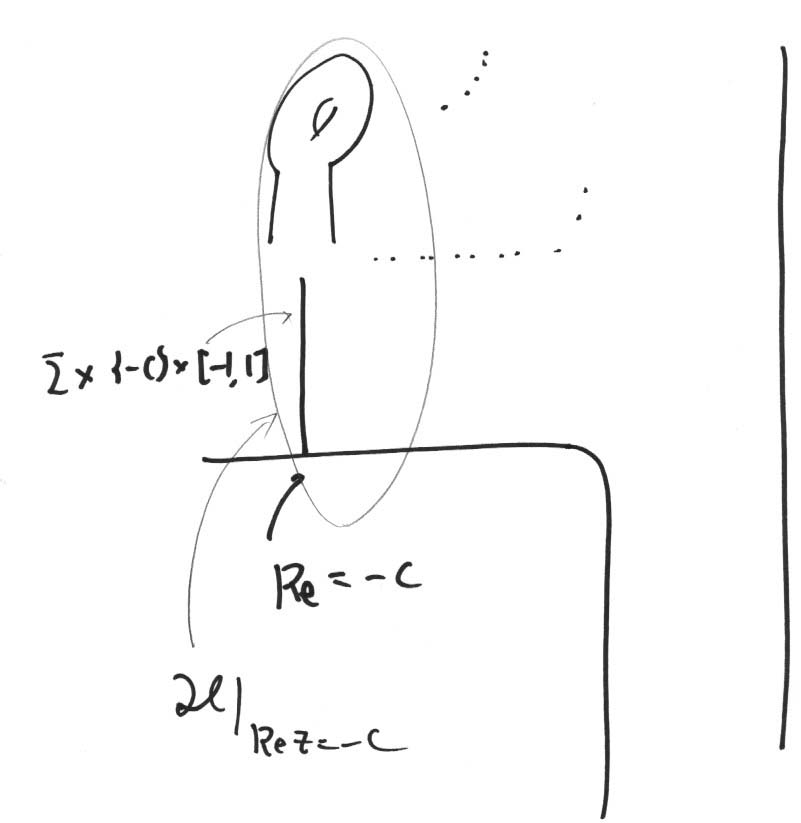}
\end{center}
\centerline{\bf Figure 4.5}
\par

We can define a topology on
$\overset{\circ}{{\mathcal M}}((X,\mathcal E_X),R(M),L;R_1,R_2,R_3;
\vec i^{(1)},\vec i^{(3)};E)$ by modifying Definition \ref{defn1222}
in an obvious way.
\par
We put
$$
\aligned
&\overset{\circ}{{\mathcal M}}_{k_1,k_3}((X,\mathcal E_X),L;R_1,R_2,R_3;E) \\
&=
\bigcup_{\vec i^{(1)}; \vert \vec i^{(1)} \vert = k_1}
\bigcup_{\vec i^{(3)}; \vert \vec i^{(3)} \vert = k_3}
\overset{\circ}{\widetilde{\mathcal M}}((X,\mathcal E_X),L;R_1,R_2,R_3;\vec i^{(1)},\vec i^{(3)};E)
\endaligned
$$
$\overset{\circ}{{\mathcal M}}_{k_1,k_3}((X,\mathcal E_X),L;R_1,R_2,R_3;E)$ is a Hausdorff space.
\par
We define evaluation maps
\begin{equation}
\aligned
&{\rm ev}^{(1)}  :
\overset{\circ}{{\mathcal M}}_{k_1,k_3}((X,\mathcal E_X),L;R_1,R_2,R_3;E)
\to (\tilde L \times_{R(\Sigma)} \tilde L)^{k_1}
\\
&{\rm ev}^{(3)}  :
\overset{\circ}{{\mathcal M}}_{k_1,k_3}((X,\mathcal E_X),L;R_1,R_2,R_3;E)
\to (R(M) \times_{R(\Sigma)} R(M))^{k_3}
\endaligned
\end{equation}
They are defined by (\ref{form23777rev}) and (\ref{form23777revrev}).
\par
We also define the evaluation maps
\begin{equation}
{\rm ev}^{\infty}_i  :
\overset{\circ}{{\mathcal M}}_{k_1,k_3}((X,\mathcal E_X),L;R_1,R_2,R_3;E)
\to R_i
\end{equation}
for $i=1,2,3$. They are defined by 
(\ref{form23777revrevrev}), (\ref{form23777revrevrev2}) and
(\ref{form23777revrevrev3}).
\par
Note $\overset{\circ}{{\mathcal M}}_{k_1,k_3}((X,\mathcal E_X),L;R_1,R_2,R_3;E)$
is not yet compact. There are still three types of ends, which are:
\begin{enumerate}
\item[(I)]
An ASD-connection escape to the direction ${\rm Im}(z) \to +\infty$.
\item[(II)]
An ASD-connection escape to the direction ${\rm Re}(z) \to -\infty$.
\item[(III)]
A pseudo-holomorphic strip escape to the direction ${\rm Im}(z) \to -\infty$.
\end{enumerate}
The end (I) is described by the union of the fiber products:
\begin{equation}\label{form4133}
\overset{\circ}{{\mathcal M}}_{k'_1,k_3}((X,\mathcal E_X),L;R_1,R'_2,R_3;E_1) 
{}_{{\rm ev}^{\infty}_2}\times_{{\rm ev}_-}
{\mathcal M}_{k''_1}((M,\mathcal E),L;R'_2,R_2;E_2)
\end{equation}
Here the union is taken over all $k'_1$, $k''_1$, $E_1$, $E_2$, $R'_2$
such that $k'_1 + k''_1 = k_1$, $E_1 + E_2 = E$ and 
$R'_2$ is a connected component of 
$R(M) \times_{R(\Sigma)} \tilde L$.
Note ${\mathcal M}_{k''_1}((M,\mathcal E),L;R'_2,R_2;E_2)$ 
is defined in Definition \ref{defn23}.
See Figure 4.6.
\par
\begin{center}
\includegraphics[scale=0.25]
{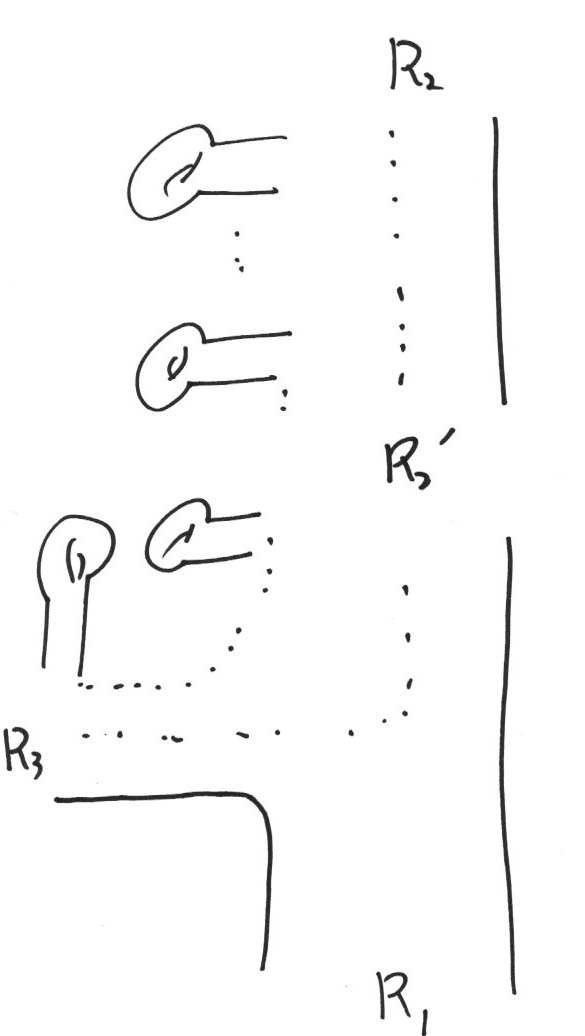}
\end{center}
\centerline{\bf Figure 4.6}
\par
The end (II) is described by the union of the fiber products:
\begin{equation}\label{form414}
\aligned
&\overset{\circ}{{\mathcal M}}_{k_1,k'_3}((X,\mathcal E_X),L;R_1,R_2,R'_3;E_1)  \\
&{}_{{\rm ev}^{\infty}_3}\times_{{\rm ev}_+}
{\mathcal M}_{k''_3}((M,\mathcal E),R(M);R_3,R'_3;E_2)
\endaligned
\end{equation}
Here the union is taken over all $k'_3$, $k''_3$, $E_1$, $E_2$, $R'_3$
such that $k'_3 + k''_3 = k_3$, $E_1 + E_2 = E$ and 
$R'_3$ is a connected component of 
$R(M) \times_{R(\Sigma)} R(M)$.
See Figure 4.7.
We remark that in the second line $R_3$ appears first and $R'_3$ next.
(Namely $R_3,R'_3$ and not $R'_3,R_3$.)
The reason is as follows. 
In Figure 4.7 `the bubble'  component lies in the left. 
We need to rotate it by 90 degree counter-clock-wise direction 
to put it in the same way as Figure 2.2.
Then after rotation, $R_3$ will lie in the direction ${\rm Im}z \to 
-\infty$.
\par\newpage
\begin{center}
\includegraphics[scale=0.25]
{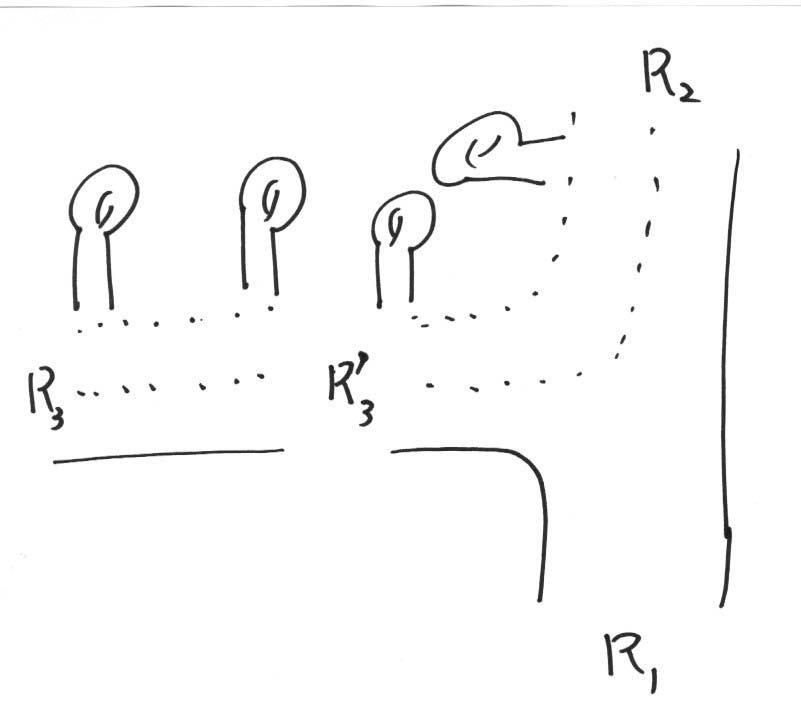}
\end{center}
\centerline{\bf Figure 4.7}
\par
To describe the end (III) we use the moduli space of pseudo-holomorphic strips 
which is used to define Lagrangian Floer homology
$HF(R(M),L)$. We here need a digression and review the definition of $HF(R(M),L)$.
The discussion below is a generalization of \cite[Section 3.7.4]{fooobook}
to the case of a pair of {\it immersed} Lagrangian submanifolds.
\cite{AJ} did not discuss the case of pairs of 
immersed Lagrangian submanifolds since we may regard 
the union as a single immersed Lagrangian submanifold
and so it is actually included in the case of  single immersed Lagrangian submanifold.
Therefore the only point which is not literally written 
in \cite{AJ} is that we include the case when the intersection of 
two immersed Lagrangian submanifolds are clean but not 
transversal. 
(This generalization is not a big deal and one can handle it in the same way
as \cite[Section 3.7.4]{fooobook}.)
\par
It seems more natural to explain it in a general situation rather than
a special case we use here.
Let $(Y,\omega)$ be a monotone symplectic manifold,
(We assume monotonicity here since we use $\Z_2$ coefficient.)
and $i_{L_j} : \tilde L_j \to Y$ be Lagrangian immersion for $j=1,2$.
We fix a compatible almost complex structure $J_Y$ on $Y$.
We assume that the self-intersection of $L_j$ are transversal for $j=1,2$ and 
the fiber product $\tilde L_1 \times_Y \tilde L_2$ is clean.
We decompose the fiber products into connected components and put:
$$
\aligned
\tilde L_j \times_{Y} \tilde  L_j  &= \bigcup_{k\in I(L_j)} \hat L_{j,k} \quad \text{$j =1,2$}, \\
\tilde L_1 \times_{Y} \tilde L_2 &= \bigcup_{k\in I(L_1,L_2))} \widehat{R}_k \endaligned
$$
Let $R_-$, $R_+$ be connected components of $\tilde L_1 \times_{Y} \tilde L_2$.
Let $\vec i^{(j)} = (i^{(j)}(1),\dots,i^{(j)}(k_j))$
where $i^{(j)}(1),\dots,i^{(j)}(k_j) \in I(L_j)$.
\par
We define the moduli space $\overset{\circ}{\widetilde{\mathcal M}}(L_1,L_2;R_-,R_+;\vec i^{(1)},\vec i^{(2)};E)$ as follows.
\begin{defn}\label{defn47}
The moduli space 
$\overset{\circ}{{\mathcal M}}(L_1,L_2;R_-,R_+;\vec i^{(1)},\vec i^{(2)};E)$
is the set of all the equivalence classes of $(\Omega,u,\vec z^{(1)},\vec z^{(2)})$ 
with the following properties.
(See Figure 4.8.)
\begin{enumerate}
\item
$\Omega$ is a union of the domain
\begin{equation}\label{form45}
\Omega_0 = \{z \in \C \mid \vert{\rm Re} z\vert \le 1\},
\end{equation}
trees of spheres attached to the interior of $\Omega_0$
and trees of disk components attached to the boundary of $\Omega_0$.
(The disk components may contain a tree of sphere components 
attached to its interior.)
We denote by $\partial_1\Omega$ (resp. $\partial_2\Omega$)
the union of 
$\{z \in \Omega \mid {\rm Re z} = -1\}$ 
and the boundaries of the tree of disks attached to it
(resp. the union of 
$\{z \in \Omega \mid {\rm Re z} = 1\}$ 
and the boundaries of the tree of disks attached to it.)
\par
We remark $\partial \Omega = \partial_1\Omega \cup \partial_2\Omega$.
\item
$u : \Omega \to R(\Sigma)$ is a pseudo-holomorphic map.
\item
$\vec z^{(1)} = (z_1^{(1)},\dots,z_{k_1}^{(1)})$
(resp. $\vec z^{(2)} = (z_1^{(2)},\dots,z_{k_2}^{(2)})$)
$z_i^{(1)}$ lies on $\partial_1 \Omega$,
(resp. $z_i^{(2)}$ lies on $\partial_2 \Omega$).
None of $z^{(1)}_i$ or $z^{(2)}_i$ is a nodal point. 
$z^{(1)}_i \ne z^{(1)}_j$, $z^{(2)}_i \ne z^{(2)}_j$  if  $i\ne j$.
$(z^{(1)}_1,\dots,z^{(1)}_{k_1})$ 
$(z^{(1)}_1,\dots,z^{(1)}_{k_1})$ 
(resp. $(z^{(2)}_1,\dots,z^{(2)}_{k_2})$ ) 
respects counter clockwise orientation of 
$\partial_1 \Omega$ (resp. $\partial_2 \Omega$).
\item
There exists $\gamma^{(1)} : \partial_1 \Omega \setminus \{z^{(1)}_1,
\dots,z^{(1)}_{k_1}\}
\to 
\tilde L_1$
such that $u(z) = i_{L_1}(\gamma(z))$ on $\partial_1 \Omega \setminus \{z^{(1)}_1,
\dots,z^{(1)}_{k_1}\}$.
\par
There exists $\gamma^{(2)} : \partial_2 \Omega \setminus \{z^{(2)}_1,
\dots,z^{(2)}_{k_2}\}
\to 
\tilde L_2$
such that $u(z) = i_{L_2}(\gamma(z))$ on $\partial_2 \Omega \setminus \{z^{(2)}_1,
\dots,z^{(2)}_{k_2}\}$.
\item
For $j=1,\dots,k_1$ the following holds. 
\begin{equation}\label{form47}
(\lim_{z \uparrow z^{(1)}_j}\gamma^{(1)}(z),\lim_{z\downarrow z^{(1)}_j}\gamma^{(1)}(z))
\in \hat L_{1,i(j)}.
\end{equation}
Here the notation $z \uparrow z_j$, $z \downarrow z_j$ is 
defined in the same way as (\ref{form23777})\par
For $j=1,\dots,k_2$ the following holds. 
\begin{equation}\label{form47rev}
(\lim_{z \uparrow z^{(2)}_j}\gamma^{(2)}(z),\lim_{z\downarrow z^{(2)}_j}\gamma^{(2)}(z))
\in \hat L_{2,i(j)}.
\end{equation}
Here the notation $z \uparrow z_j$, $z \downarrow z_j$ is 
defined in the same way as (\ref{form23777}).
\item
We asuume the stability of 
$(\Omega,u,\vec z^{(1)},\vec z^{(2)})$. Namely the set of all maps $v : \Omega \to \Omega$ 
satisfying the next three conditions is a finite set.
\begin{enumerate}
\item
$v$ is a homeomorphism and is holomorphic on each of the irreducible components.
\item
$v$ is the identity map on $\Omega_0$.
\item $u \circ v = u$.
\item $v(z^{(1)}_j) = z^{(1)}_j$, $j=1,\dots,k_1$ 
and $v(z^{(2)}_j) = z^{(2)}_j$, $j=1,\dots,k_2$. 
\end{enumerate}
\item
The energy of $u$ is $E$. Namely
$$
\int_{\Omega} u^*\omega = E.
$$
\item
We assume the following 
asymptotic boundary conditions, which are defined by using $R_-,R_+$.
\begin{enumerate}
\item
\begin{equation}\label{form418}
(\lim_{z \to -1-\infty\sqrt{-1}}\gamma^{(1)}(z),\lim_{z\to 
+1-\infty\sqrt{-1}}\gamma^{(2)}(z))
\in R_-.
\end{equation}
Here $\lim_{z \to -1-\infty\sqrt{-1}}$ is the limit 
when the imaginary part of $z$ goes to $-\infty$ and 
$z \in \partial_1 \Omega$.
The meaning of 
$\lim_{z\to 
+1-\infty\sqrt{-1}}$ is similar.
\item
\begin{equation}\label{form418rev}
(\lim_{z \to -1+\infty\sqrt{-1}}\gamma^{(1)}(z),\lim_{z\to 
+1+\infty\sqrt{-1}}\gamma^{(2)}(z))
\in R_+.
\end{equation}
Here $\lim_{z \to -1+\infty\sqrt{-1}}$ is the limit 
when the imaginary part of $z$ goes to $-\infty$ and 
$z \in \partial_1 \Omega$.
The meaning of 
$\lim_{z\to 
+1-\infty\sqrt{-1}}$ is similar.
\end{enumerate}\end{enumerate}
\par
\begin{center}
\includegraphics[scale=0.25]
{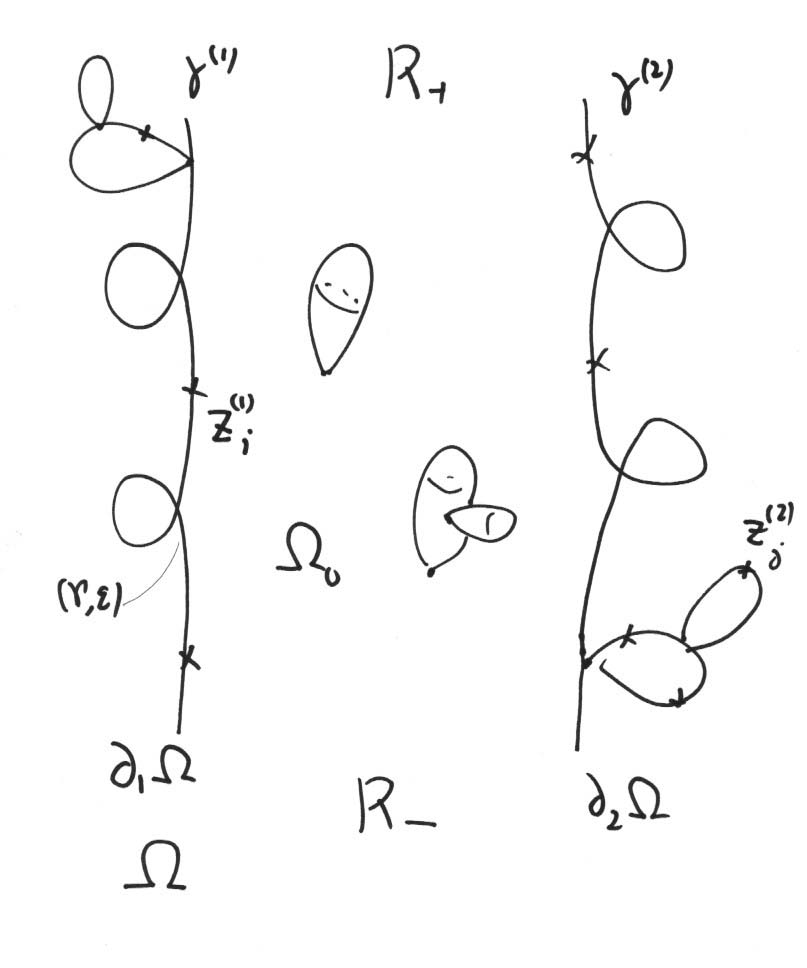}
\end{center}
\par
\centerline{\bf Figure 4.8}
\par

We say 
$(\Omega^1,u^1,\vec z^{1,(1)},\vec z^{1,(2)})$
is equivalent to 
$(\Omega^2,u^2,\vec z^{2,(1)},\vec z^{2,(2)})$
if there exists a homeomorphism 
$v : \Omega^1 \to \Omega^2$
such that
\begin{enumerate}
\item
$v$ is biholomorphic on each irreducible component.
\item
$v(\partial_1\Omega^1) = \partial_1\Omega^2$.
\item
$v(z^{1,(1)}_j) = z^{2,(1)}_j$, $j=1,\dots,k_1$ 
and $v(z^{1,(2)}_j) = z^{2,(2)}_j$, $j=1,\dots,k_2$. 
\item 
$u^2\circ v = u^1$.
\end{enumerate}
The $\R$ action 
which translate the ${\rm Im}z$ direction of $\Omega$
is actually included in the definition of equivalence relation above. 
\end{defn}
We put
$$
\aligned
&\overset{\circ}{{\mathcal M}}_{k_1,k_2}(L_1,L_2;R_-,R_+;E)\\
&=
\bigcup_{\vec i^{(1)}; \vert \vec i^{(1)} \vert = k_1}
\bigcup_{\vec i^{(2)}; \vert \vec i^{(2)} \vert = k_2}
\overset{\circ}{{\mathcal M}}(L_1,L_2;R_-,R_+;\vec i^{(1)},\vec i^{(2)};E)
\endaligned
$$

We define evaluation maps
\begin{equation}\label{evmap420}
\aligned
&{\rm ev}^{(1)}  :
\overset{\circ}{{\mathcal M}}_{k_1,k_2}(L_1,L_2;R_-,R_+;E)
\to (\tilde L_1 \times_{Y} \tilde L_1)^{k_1}
\\
&{\rm ev}^{(2)}  :
\overset{\circ}{{\mathcal M}}_{k_1,k_2}(L_1,L_2;R_-,R_+;E)
\to (\tilde L_2 \times_{Y} \tilde L_2)^{k_2}.
\endaligned
\end{equation}
They are defined by (\ref{form47}) and (\ref{form47rev}).
\par
We also define the evaluation maps
\begin{equation}\label{evmap421}
{\rm ev}^{\infty}_i  :
\overset{\circ}{{\mathcal M}}_{k_1,k_2}(L_1,L_2;R_-,R_+;E)
\to R_i
\end{equation}
for $i=+, -$. They are defined by 
(\ref{form418}) and (\ref{form418rev}).
\par
We consider the union of the fiber products:
\begin{equation}\label{fibercompactkarefref}
\aligned
&\overset{\circ}{{\mathcal M}}_{k_1^0,k_2^0}(L_1,L_2;R_-,R_1;E_0)
\times_{R_1} \\
&\overset{\circ}{{\mathcal M}}_{k_1^1,k_2^1}(L_1,L_2;R_1,R_2;E_1)
\times_{R_2} \dots \\
& \dots  \times_{R_{\ell-1}}\overset{\circ}{{\mathcal M}}_{k_1^{\ell-1},k_2^{\ell-1}}(L_1,L_2;R_{\ell-1},R_{\ell};E_{\ell-1}) \\
&\times_{R_{\ell}} {{\mathcal M}}_{k_1^{\ell},k_2^{\ell}}(L_1,L_2;R_{\ell},R_+;E_{\ell}),
\endaligned
\end{equation}
where $k^{(j)}_0 + k^{(j)}_1 + \dots +k^{(j)}_{\ell} = k_j$ for $j=1,2$, 
$E_0+\dots + E_{\ell} = E$, and $R_i$ for $i=1,\dots,\ell$
are connected components of $\tilde L_1 \times_{Y} \tilde L_2$.
This union is by definition 
$
{{\mathcal M}}_{k_1,k_2}(L_1,L_2;R_-,R_+;E)
$.
The evaluation maps (\ref{evmap420}) and (\ref{evmap421}) 
extends there.

\begin{prop}\label{prop48}
We can define a topology on $
{{\mathcal M}}_{k_1,k_2}(L_1,L_2;R_-,R_+;E)
$
by which it becomes compact and Hausdorff.
\par
It has a Kuranishi structure with corner such that 
its boundary is a union of the following three types of 
fiber products.
\begin{enumerate}
\item
$$
{{\mathcal M}}_{k'_1,k'_2}(L_1,L_2;R_-,R;E')
\times_R {{\mathcal M}}_{k''_1,k''_2}(L_1,L_2;R,R_+;E'')
$$
where $k'_1 + k''_1 = k_1$, $k'_2 + k''_2 = k_2$, $E' + E'' = E$ and 
$R$ is a connected component of $\tilde L_1 \times_{Y} \tilde L_2$.
See Figure 4.9.
\item
$$
{{\mathcal M}}_{k'_1,k_2}(L_1,L_2;R_-,R_+;E')
{}_{{\rm ev}^{(1)}_{i}}\times_{{\rm ev}_0} {{\mathcal M}}_{k''_1}(L_1;E'')
$$
where $k'_1 + k''+1 = k_1$, $E' + E'' = E$ and 
$i = 1,\dots,k'_1$.
The second factor is (\ref{diskmoduli}).
(More precisely its analogue in immersed case.)
See Figure 4.10.
\item
$$
{{\mathcal M}}_{k_1,k'_2}(L_1,L_2;R_-,R_+;E')
{}_{{\rm ev}^{(2)}_{i}}\times_{{\rm ev}_0} {{\mathcal M}}_{k''_2}(L_2;E'')
$$
where $k'_2 + k''+2 = k_2$, $E' + E'' = E$ and 
$i = 1,\dots,k'_2$.
The second factor is (\ref{diskmoduli}).
(More precisely its analogue in immersed case.)
See Figure 4.11.
\end{enumerate}
\end{prop}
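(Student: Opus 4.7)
The plan is to adapt the Akaho-Joyce construction for immersed Lagrangians with transversal self-intersection together with the treatment of Lagrangian Floer theory with clean intersection in \cite[Section 3.7.4]{fooobook}, by viewing $L_1\cup L_2$ as a single immersed Lagrangian submanifold whose self-intersection locus is the union of the individual self-intersections of $L_1$ and $L_2$ together with the clean intersection $\tilde L_1\times_Y \tilde L_2$.

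First I would topologize ${\mathcal M}_{k_1,k_2}(L_1,L_2;R_-,R_+;E)$ by combining the stable map topology of \cite[Definition 10.2]{FO} on the underlying bordered nodal curves with the requirement that, at each boundary marked point and at the two asymptotic ends, the boundary lifts $\gamma^{(j)}$ converge into the prescribed clean intersection components $\hat L_{j,k}$ and $R_\pm$ in the sense built into Definition \ref{defn47}. Compactness of the union over $E \le E_0$ then follows from the Gromov compactness theorem for pseudo-holomorphic disks (\cite{Gr}, \cite{Ye}, \cite[Theorem 11.1]{FO}) applied to $L_1\cup L_2$; Hausdorff-ness is proved verbatim as in \cite[Lemma 10.4]{FO}; monotonicity of $Y$ rules out disk bubbles of non-positive Maslov index and allows us to work over $\Z_2$ as in \cite{fooo:overZ}.

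Next I would construct the Kuranishi structure with corners by the standard technique of attaching finite-dimensional obstruction spaces to each stable object in the compactification. At an interior point, the linearization of $\bar\partial$ with the clean Lagrangian boundary condition gives a Fredholm operator whose cokernel can be killed by a generic perturbation; the kernel then supplies a smooth chart because the fiber products $\tilde L_j \times_Y \tilde L_j$ and $\tilde L_1\times_Y \tilde L_2$ are smooth by the clean intersection hypothesis. At the strip-breaking and disk-bubbling configurations, the usual gluing analysis produces Kuranishi charts with corner strata whose top boundary stratum is exactly the fiber product over the relevant component of $\tilde L_1\times_Y \tilde L_2$ in case (1), and over the relevant component of $\tilde L_j\times_Y \tilde L_j$ in cases (2) and (3), identifying the three boundary types listed.

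The main obstacle, in my view, is making the gluing at the clean but non-transversal strata rigorous: when a switching marked point $z_j^{(1)}$ lies in a positive-dimensional component $\hat L_{1,i^{(1)}(j)}$, or a breaking strip is asymptotic to a positive-dimensional $R_i$, the pregluing must be carried out in families parametrized by the intersection component, and one must verify that the evaluation maps ${\rm ev}^{(1)}$, ${\rm ev}^{(2)}$, ${\rm ev}^{\infty}_\pm$ become smooth submersions onto the relevant fiber products. This is the analog of the device used in \cite[Section 3.7.4]{fooobook} for the embedded clean case combined with the Akaho-Joyce treatment \cite{AJ} at the transversal switching points; together they yield the corner structure and the identification of the boundary strata with (1), (2), (3).
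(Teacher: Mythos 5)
Your proposal is correct and takes essentially the same approach as the paper: the paper's proof consists of citing \cite[Propositions 7.1.1--7.1.2]{fooobook2}, and the discussion immediately preceding Definition \ref{defn47} already reduces the problem exactly as you describe, by treating $L_1\cup L_2$ as a single Akaho--Joyce immersed Lagrangian and handling the clean but non-transversal intersection $\tilde L_1\times_Y\tilde L_2$ as in \cite[Section 3.7.4]{fooobook}. Your outline of the stable-map topology, Gromov compactness, Hausdorffness via \cite[Lemma 10.4]{FO}, and Kuranishi charts with corners at strip-breaking and disk-bubbling strata is precisely the content of those references.
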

\par
\begin{center}
\includegraphics[scale=0.25]
{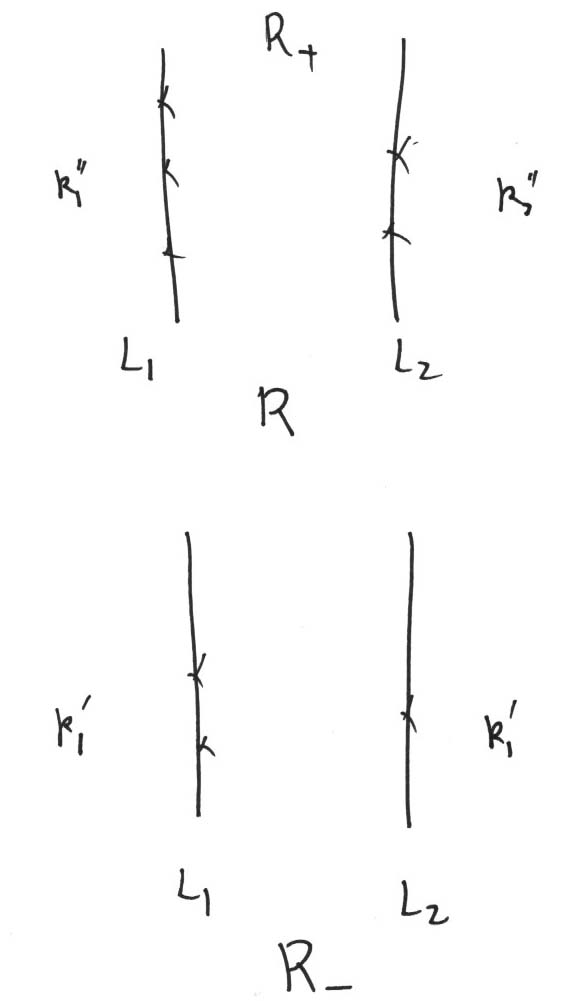}
\end{center}
\centerline{\bf Figure 4.9}
\par
\begin{center}
\includegraphics[scale=0.25]
{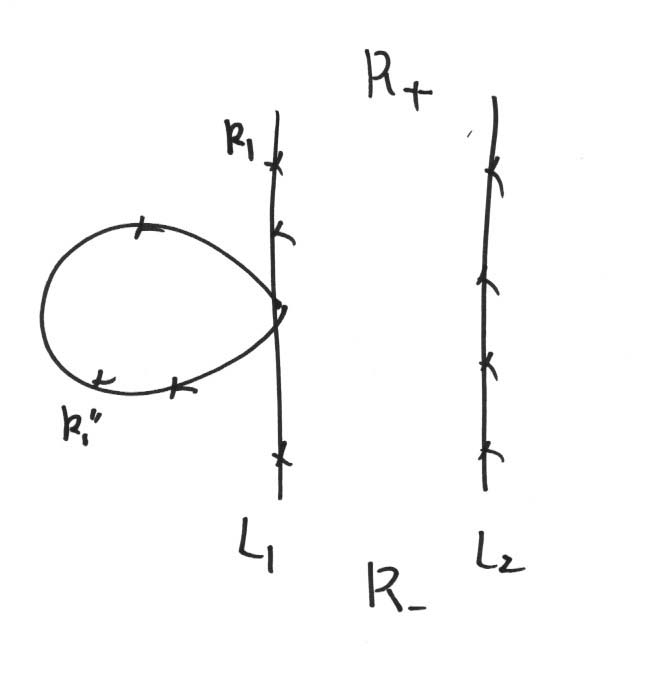}
\end{center}
\centerline{\bf Figure 4.10}
\par
\begin{center}
\includegraphics[scale=0.25]
{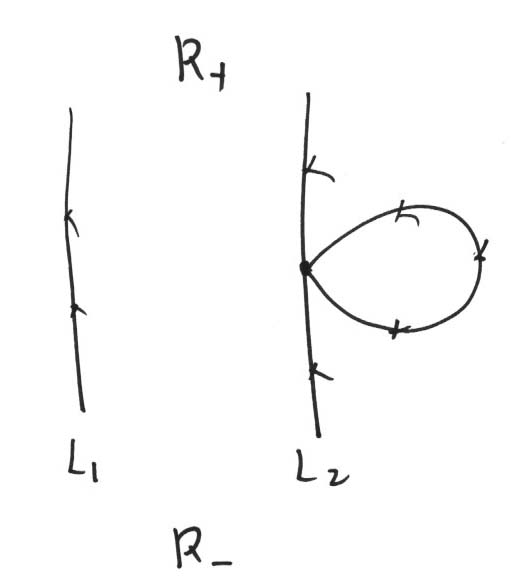}
\end{center}
\centerline{\bf Figure 4.11}
\par
The proof of Proposition \ref{prop48} is entirely similar to the proof of 
\cite[Propositions 7.1.1, 7.1.2]{fooobook2}.
\par
Let $x^1_1,\dots,x^1_{k_1} \in C(\tilde L_1 \times_{Y} \tilde L_1)$, 
$y_-, y_+ \in C(\tilde L_1 \times_{Y} \tilde L_2)$
and $x^2_1,\dots,x^2_{k_2} \in C(\tilde L_2 \times_{Y} \tilde L_2)$.
We define:
\begin{equation}\label{form423}
\aligned
&\langle
\frak n_{k_1,k_2;E}(x^1_1,\dots,x^1_{k_1};y_-;x^2_1,\dots,x^2_{k_2}),y_+
\rangle
\\
&= \#
\big(
{{\mathcal M}}_{k_1,k_2}(L_1,L_2;R_-,R_+;E)\,\,
{}_{{\rm ev}^{(1)},{\rm ev}^{\infty}_-,{\rm ev}^{(2)},{\rm ev}^{\infty}_+}\times\\
&\qquad\qquad (x^1_1 \times \dots \times x^1_{k_1}\times y_- \times x^2_1 \times \dots \times x^2_{k_2} \times y_+)
\big).
\endaligned
\end{equation}
There are various chain models by which (\ref{form423}) becomes rigorous.
(See \cite{fooobook2}, \cite{fooo:overZ}, \cite{foootech2}, \cite{foootech22}.)
We will introduce a chain model which is also suitable to handle the moduli space appearing 
in gauge theory case 
in \cite{fu8}, or in a separate paper.
(We remark again we can work over $\Z_2$ because $Y$ is assumed to 
be monotone. See \cite{fooo:overZ}.)
\par
We then put
\begin{equation}
\aligned
\frak n_{k_1,k_2} = &\sum T^E \frak n_{k_1,k_2;E} 
\\&: CF(L_1)^{k_1\otimes} \otimes_{\Lambda_0^{\Z_2}}
CF(L_1,L_2) \otimes_{\Lambda_0^{\Z_2}} CF(L_2)^{k_2\otimes} 
\to CF(L_1,L_2).
\endaligned
\end{equation}
Here 
$$
CF(L_i) = C(\tilde L_i \times_{Y} \tilde L_i;\Lambda_0^{\Z_2}), 
\qquad 
CF(L_1,L_2) = C(\tilde L_1 \times_{Y} \tilde L_2;\Lambda_0^{\Z_2}).
$$
\par
It is a part of the general theory of Kuranishi structure and 
virtual fundamental chain 
(See \cite{foootech22} for its thorough detail) that Proposition \ref{prop48} and our definitions imply the following:
\begin{cor}\label{cor4949}
$\{\frak n_{k_1,k_2}\}$ defines a structure of filtered $A_{\infty}$ bimodule on $CF(L_1,L_2)$ over 
$(CF(L_1),\{\frak m_k\})$-$(CF(L_2),\{\frak m_k\})$.
\par
Namely we have
\begin{equation}\label{form425}
\aligned
0 = &\sum \frak n_{k'_1,k'_2}(x^1_1,\dots,\frak n_{k''_1,k''_2}(x^1_{k'_1+1},\dots x^1_{k_1} ; y ; x^2_1,\dots,x^2_{k''_2});
\dots,x^2_{k_2}) \\
&+\sum \frak n_{k'_1,k_2}(x^1_1,\dots,\frak m_{k''_1}(x_i,\dots,x_{i+k''_1-1}),\dots,x^1_{k_1};y;x^2_1,\dots,x^2_{k_2})
\\
&+\sum \frak n_{k_1,k'_2}(x^1_1,\dots,x^1_{k_1};y;x^2_1,\dots,\frak m_{k''_2}(x_i,\dots,x_{i+k''_2-1}),\dots,x^2_{k_2}).
\endaligned
\end{equation}
Here the sum in the first line is taken over $k'_1, k''_1$, $k'_2, k''_2$ with 
$k_1 = k'_1 + k''_1$, $k_2 = k'_2 + k''_2$.
The sum in the second line is taken over $k'_1, k''_1, i$ with $k_1+1 = k'_1 + k''_1$, 
$i=1,\dots,k'_1+1$.
The sum in the third line is taken over $k'_2, k''_2, i$ with $k_2+1 = k'_2 + k''_2$, 
$i=1,\dots,k'_2+1$.
\end{cor}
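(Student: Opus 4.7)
The plan is to deduce Corollary~\ref{cor4949} from Proposition~\ref{prop48} by a standard boundary/count argument, translating the geometric codimension-one strata of ${{\mathcal M}}_{k_1,k_2}(L_1,L_2;R_-,R_+;E)$ into the three algebraic terms on the right-hand side of (\ref{form425}).

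First, I would fix energy $E$, indices $k_1,k_2$, connected components $R_\pm$, and inputs $x^1_1,\dots,x^1_{k_1}$, $y$, $x^2_1,\dots,x^2_{k_2}$, $y_+$ represented by appropriate chains in the chosen chain model. I then take the fiber product of the virtual fundamental chain of ${{\mathcal M}}_{k_1,k_2}(L_1,L_2;R_-,R_+;E)$ with these input chains along the evaluation maps $({\rm ev}^{(1)},{\rm ev}^{\infty}_-,{\rm ev}^{(2)},{\rm ev}^{\infty}_+)$, obtaining a virtual chain whose dimension, in the relevant stratum, is $1$. Applying Stokes' formula (\cite[Theorem 8.11]{foootech2}) to this 1-dimensional chain yields an identity whose right-hand side is the sum over its geometric boundary.

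Next, I would identify each boundary component using Proposition~\ref{prop48}. The three boundary types in Proposition~\ref{prop48} exhaust the codimension-one strata of the compactification: strip-breaking along an intermediate component $R$ of $\tilde L_1\times_Y \tilde L_2$, bubbling of a pseudo-holomorphic disk with boundary on $L_1$ along $\partial_1\Omega$, and the analogous disk bubbling along $\partial_2\Omega$. Using the composition formula (\cite[Theorem 10.20]{foootech2}) to evaluate the fiber product with inputs over each of these boundary pieces, the three strata contribute exactly, in order:
\begin{itemize}
\item[(1)] the first sum in (\ref{form425}), a composition $\frak n_{k'_1,k'_2;E'}\circ\frak n_{k''_1,k''_2;E''}$ fiber-producted along $R$;
\item[(2)] the second sum, namely $\frak n_{k'_1,k_2;E'}$ with $\frak m_{k''_1;E''}$ inserted among the $L_1$-inputs;
\item[(3)] the third sum, namely $\frak n_{k_1,k'_2;E'}$ with $\frak m_{k''_2;E''}$ inserted among the $L_2$-inputs.
\end{itemize}
Summing over $E$ with weights $T^E$ (convergence in the $T$-adic topology follows from the Gromov compactness analogue of Theorem~\ref{thm215prime} for pseudo-holomorphic strips, giving $G$-gappedness), and observing that the $\partial$-term of $\frak n_0$ coming from the classical boundary operator pairs correctly with the Stokes boundary, gives (\ref{form425}).

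The main obstacle is not the combinatorics of signs (we work over $\Z_2$) nor the identification of the boundary strata, but rather the transversality/virtual fundamental chain setup: as the author notes in Remark~\ref{rem2188} and after (\ref{form423}), a genuine Kuranishi structure is not available in the gauge theory context, and even in the pure symplectic case here one must commit to a chain model (de~Rham, singular with Kuranishi perturbations, CF-perturbations in the sense of \cite{foootech2,foootech22}) in which both Stokes and the composition formula are rigorously available. In the purely symplectic situation of Corollary~\ref{cor4949} itself this is by now routine---the argument runs in parallel with \cite[Section 3.7.4]{fooobook} and the immersed generalization in the spirit of \cite{AJ}---so the only genuinely new ingredient is the careful handling of clean (non-transverse) intersections of $\tilde L_1$ and $\tilde L_2$, for which the fiber product formulation of $CF(L_1,L_2)$ and Proposition~\ref{prop48} are already set up. The detailed chain-level implementation is the item the paper defers to \cite{fu8}.
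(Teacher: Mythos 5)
Your proposal takes essentially the same approach as the paper: the paper invokes Proposition~\ref{prop48} together with the general theory of Kuranishi structures and virtual fundamental chains (Stokes' formula and the composition formula of \cite{foootech2,foootech22}), noting exactly as you do that the three lines of (\ref{form425}) correspond to boundary types (1), (2), (3) of Proposition~\ref{prop48}. Your observation that this corollary lives in the purely symplectic regime where a genuine Kuranishi structure is available, so the gauge-theoretic complications of Remark~\ref{rem2188} do not arise, is also consistent with what the paper says.
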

See \cite[Formula (3.7.2)]{fooobook}.
We remark that the first, second, third lines of Formula (\ref{form425}) corresponds to 
(1), (2), (3) in Proposition \ref{prop48}, respectively.
\par
Let $b_1$, $b_2$ be bounding cochains of $CF(L_1)$, $CF(L_2)$, respectively.
Following \cite[Definition-Lemma 3.7.13]{fooobook} we define
$$
\delta_{b_1,b_2} : CF(L_1,L_2) \to CF(L_1,L_2)
$$
by 
\begin{equation}
\delta_{b_1,b_2}(y) 
= 
\sum_{k_1,k_2} \frak n_{k_1,k_2}(b_1,\dots,b_1;y;b_2,\dots,b_2).
\end{equation}
(\ref{form425}) implies $\delta_{b_1,b_2}\circ \delta_{b_1,b_2} = 0$.
(\cite[Lemma 3.7.14]{fooobook}.)
\begin{defn}
Floer homology of the pair $((L_1,b_1),(L_2,b_2))$ is 
$$
HF((L_1,b_1),(L_2,b_2)) = \frac{{\rm Ker}\, \delta_{b_1,b_2}}{{\rm Im}\, \delta_{b_1,b_2}}.
$$
\end{defn}
This is the Floer homology appearing in (\ref{glueiso}), (\ref{formula13}), (\ref{iso4545}).
\par\smallskip
We finish digression and go back to the description of the compactifiation of the moduli space
$\overset{\circ}{{\mathcal M}}_{k_1,k_3}((X,\mathcal E_X),L;R_1,R_2,R_3;E)$.
By definition, ends of type (III) correspond to the following fiber product.
\begin{equation}\label{427427}
\overset{\circ}{{\mathcal M}}_{k'_1,k'_3}((X,\mathcal E_X),L;R'_1,R_2,R_3;E_1)
{}_{{\rm ev}^{\infty}_1}\times_{{\rm ev}_-} 
{{\mathcal M}}_{k''_1,k''_2}(R(M),L;R'_1,R_1;E_2).
\end{equation}
Here $k'_1 + k''_1 = k_1$, $E_1 + E_2 = E$ and 
$R'_1$ is a connected component of $R(M) \times_{R(\Sigma)} \tilde L$.
\par\newpage
\begin{center}
\includegraphics[scale=0.25]
{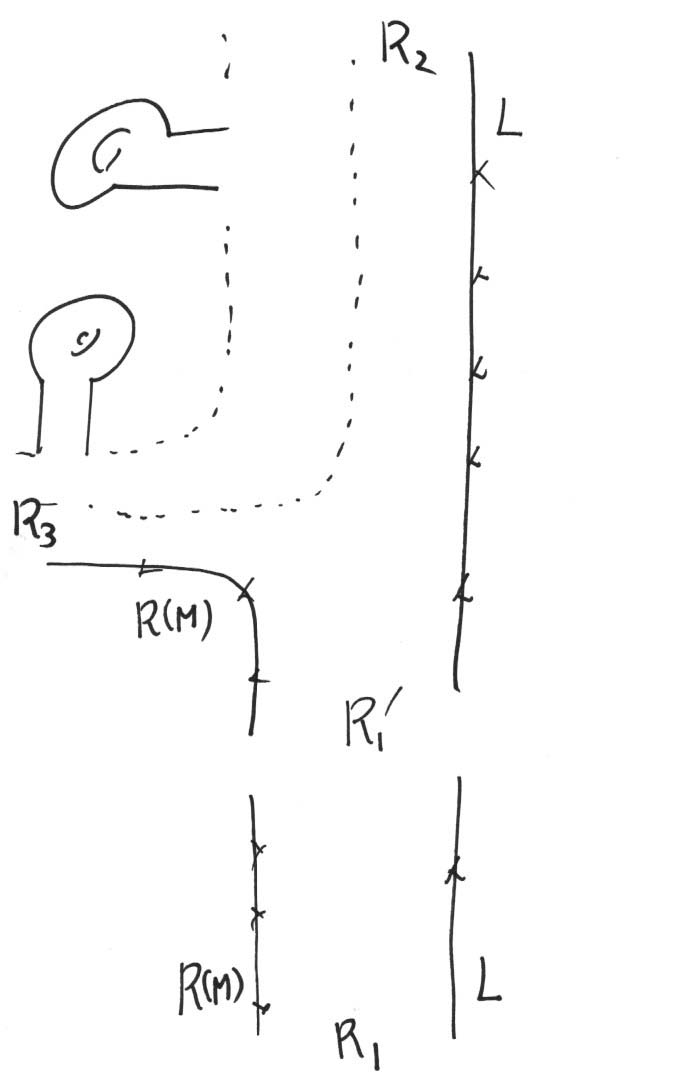}
\end{center}
\centerline{\bf Figure 4.12}
\par
\begin{defn}\label{defn411}
The set ${{\mathcal M}}_{k_1,k_3}((X,\mathcal E_X),L;R_1,R_2,R_3;E)$
is the union of the following fiber products:
\begin{equation}\label{form428}
\aligned
&\overset{\circ}{{\mathcal M}}_{k^1_1,k^1_3}((X,\mathcal E_X),L;R'_1,R'_2,R'_3;E_1)\\
&{}_{{\rm ev}^{\infty}_2}\times_{{\rm ev}_-}
{\mathcal M}_{k^2_1}((M,\mathcal E),L;R'_2,R_2;E_2) \\
&{}_{{\rm ev}^{\infty}_3}\times_{{\rm ev}_+}
{\mathcal M}_{k^2_3}((M,\mathcal E),R(M);R_3,R'_3;E_3)\\
&{}_{{\rm ev}^{\infty}_1}\times_{{\rm ev}_+}  
{{\mathcal M}}_{k^3_1,k^3_2}(R(M),L;R_1,R'_1;E_4).
\endaligned
\end{equation}
Here $k_1^1 + k_1^2 + k_1^3 = k_1$, $k_2^1 + k_2^2 + k_2^3 = k_2$,
$E_1 + E_2 + E_3 + E_4 = E$ and 
$R'_2$, $R'_3$, $R'_1$ are connected components of 
$R(M) \times_{R(\Sigma)} R(M)$, $R(M) \times_{R(\Sigma)} R(M)$, $R(M) \times_{R(\Sigma)} \tilde L$, 
respectively.
\par
We remark that we include the case when some of the 2nd, 3rd, 4th factors of the 
fiber product (\ref{form428}) is trivial.
For example, if $E_2 = 0$, $R'_2 = R_2$ and $k_1^2 = 0$ then 
the factor ${\mathcal M}_{k^2_1}((M,\mathcal E),L;R'_2,R_2;E_2)$
drops.
\end{defn}
\begin{prop}\label{prop412}
We can define a topology on 
${{\mathcal M}}_{k_1,k_3}((X,\mathcal E_X),L;R_1,R_2,R_3;E)$
so that it becomes compact and Hausdorff.
\par
The space ${{\mathcal M}}_{k_1,k_3}((X,\mathcal E_X),L;R_1,R_2,R_3;E)$
carries a virtual fundamental chain such that 
its boundary is the sum of the virtual fundamental chains of the 
following 5 types of fiber products.
\begin{enumerate}
\item The compactification of (\ref{form4133}), which is:
$$
{{\mathcal M}}_{k'_1,k_3}((X,\mathcal E_X),L;R_1,R'_2,R_3;E_1) 
{}_{{\rm ev}^{\infty}_2}\times_{{\rm ev}_-}
{\mathcal M}_{k''_1}((M,\mathcal E),L;R'_2,R_2;E_2).
$$
\item
The compactification of (\ref{form414}), which is:
$$
{{\mathcal M}}_{k_1,k'_3}((X,\mathcal E_X),L;R_1,R_2,R'_3;E_1) 
{}_{{\rm ev}^{\infty}_3}\times_{{\rm ev}_+}
{\mathcal M}_{k''_3}((M,\mathcal E),R(M);R_3,R'_3;E_2).
$$
\item
The compactification of (\ref{427427}), which is:
$$
{{\mathcal M}}_{k'_1,k'_3}((X,\mathcal E_X),L;R'_1,R_2,R_3;E_1)
{}_{{\rm ev}^{\infty}_1}\times_{{\rm ev}_+} 
{{\mathcal M}}_{k''_1,k''_2}(R(M),L;R_1,R'_1;E_2).
$$
\item
$$
{{\mathcal M}}_{k'_1,k_3}((X,\mathcal E_X),L;R_1,R_2,R_3;E_1)
{}_{{\rm ev}^{(1)}_i}\times_{{\rm ev}_0} {{\mathcal M}}_{k''_1}(L;E_2),
$$
where $k'_1 + k''_1 = k_1 + 1$, $E_1 + E_2 = E$.
\item
$$
{{\mathcal M}}_{k_1,k'_3}((X,\mathcal E_X),L;R_1,R_2,R_3;E_1)
{}_{{\rm ev}^{(3)}_i}\times_{{\rm ev}_0} {{\mathcal M}}_{k''_3}(R(M);E_2),
$$
where $k'_3 + k''_3 = k_3 + 1$, $E_1 + E_2 = E$.
\end{enumerate}
\end{prop}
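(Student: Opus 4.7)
The proof will follow the same pattern as Theorems \ref{thm215}, \ref{thm217} and Proposition \ref{prop48}, with each of the five boundary types corresponding to a distinct mode of non-compactness. First I would define the topology on ${{\mathcal M}}_{k_1,k_3}((X,\mathcal E_X),L;R_1,R_2,R_3;E)$ by adapting Definition \ref{defn1222} to the enlarged domain $X$: convergence away from the supports of $\frak z$, $\frak w$ in compact $C^\infty$ topology on the gauge-theory side; stable map topology for the bordered nodal curve $\Omega \supset W_2$; plus the balance-of-energy formulas recording where gauge-theoretic and symplectic bubbles have escaped. Hausdorffness then follows by the argument of \cite[Lemma 10.4]{FO}.

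For compactness, given a sequence of bounded-energy elements I would analyse where the energy can accumulate. On the Riemannian region $X\setminus(\Sigma\times\overline{W_2})$ Uhlenbeck compactness \cite{uh1,uh2} together with the removable singularity theorem of \cite{fu3} produces a subsequential limit connection; on $\Sigma\times W_2$ stable map compactness \cite{Gr,Ye} together with \cite[Theorem 11.1]{FO} produces a subsequential limit holomorphic curve; along the interface $\Sigma\times C$ the bubble analysis of \cite[Theorems 1.6--1.8]{fu3} converts gauge-theoretic energy loss into sphere bubbles in $R(\Sigma)$ and absorbs their masses into $\frak w$. The isometric product structure imposed in Condition \ref{cond45} (4) rules out any energy concentration along $\Sigma\times\partial_2W$ that is not already covered by the above local analyses. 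Exponential decay (Theorem \ref{asymptotic} and its straightforward analogue for the holomorphic strip end) identifies the asymptotic data at each of the three ends.

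The boundary stratification then identifies the five codimension-one faces. Energy loss at the end ${\rm Im}\,z\to+\infty$ splits off a trajectory in ${{\mathcal M}}_{k''_1}((M,\mathcal E),L;R'_2,R_2;E_2)$ of the type treated in Definition \ref{defn23}, giving (1); the analogous analysis at ${\rm Re}\,z\to-\infty$ gives (2); loss at the pseudo-holomorphic strip end ${\rm Im}\,z\to-\infty$ splits off a trajectory in ${{\mathcal M}}_{k''_1,k''_2}(R(M),L;R_1,R'_1;E_2)$ of the type treated in Definition \ref{defn47}, giving (3); and disk bubbles on the two boundary components $\partial_1\Omega$ and $\partial_3\Omega$ produce (4) and (5). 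The deeper fiber products appearing in Definition \ref{defn411} where several of the factors degenerate simultaneously are of codimension $\ge 2$, hence do not contribute to the boundary formula.

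For the virtual fundamental chain I would invoke the generalised Kuranishi-type framework foreshadowed in Remark \ref{rem2188} and to be detailed in \cite{fu8}: the symplectic side carries the usual Kuranishi structure of \cite{fooobook2}; the gauge-theoretic side carries an Uhlenbeck-type stratification whose non-top strata have virtual codimension $\ge 2$; fibering one over the other along the interface produces a virtual fundamental chain on ${{\mathcal M}}_{k_1,k_3}((X,\mathcal E_X),L;R_1,R_2,R_3;E)$. The boundary formula is then extracted from the Stokes and composition formulas of \cite{foootech2,foootech22}, exactly as in the derivation of (\ref{form231}). The \emph{main obstacle} is the rigorous setup of this singular Kuranishi structure; modulo that analytic foundation (deferred to \cite{fu8}), the only geometrically new input beyond Sections \ref{HF3bdfunctor} and the first part of Section \ref{sec:represent} is the treatment of the pseudo-holomorphic strip end (III), which is the same mechanism already understood in Proposition \ref{prop48}.
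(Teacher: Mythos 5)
Your proposal is correct and follows the same approach as the paper: the paper's own proof is a terse identification of boundary items (1)--(5) with the three ends (I)--(III) and the disk bubbles on $\partial_1 W$ and $\partial_3 W$, plus the remark that all other degenerations occur in codimension $\ge 2$. Your expanded discussion of the topology, compactness, Hausdorffness, and virtual fundamental chain (deferred to \cite{fu8}) fills in exactly the background that the paper implicitly invokes from Section \ref{HF3bdfunctor}, Remark \ref{rem2188}, and Proposition \ref{prop48}.
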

\begin{proof}
(1)(2)(3) corresponds to the end of Type (I), (II), (III) respectively.
(4) corresponds to the disk bubble on $\partial_1W$.
(See Figure 4.13.)
(5) corresponds to the disk bubble on $\partial_3W$.
All other bubbles occur in codimension 2.
\end{proof}
\begin{center}
\includegraphics[scale=0.25]
{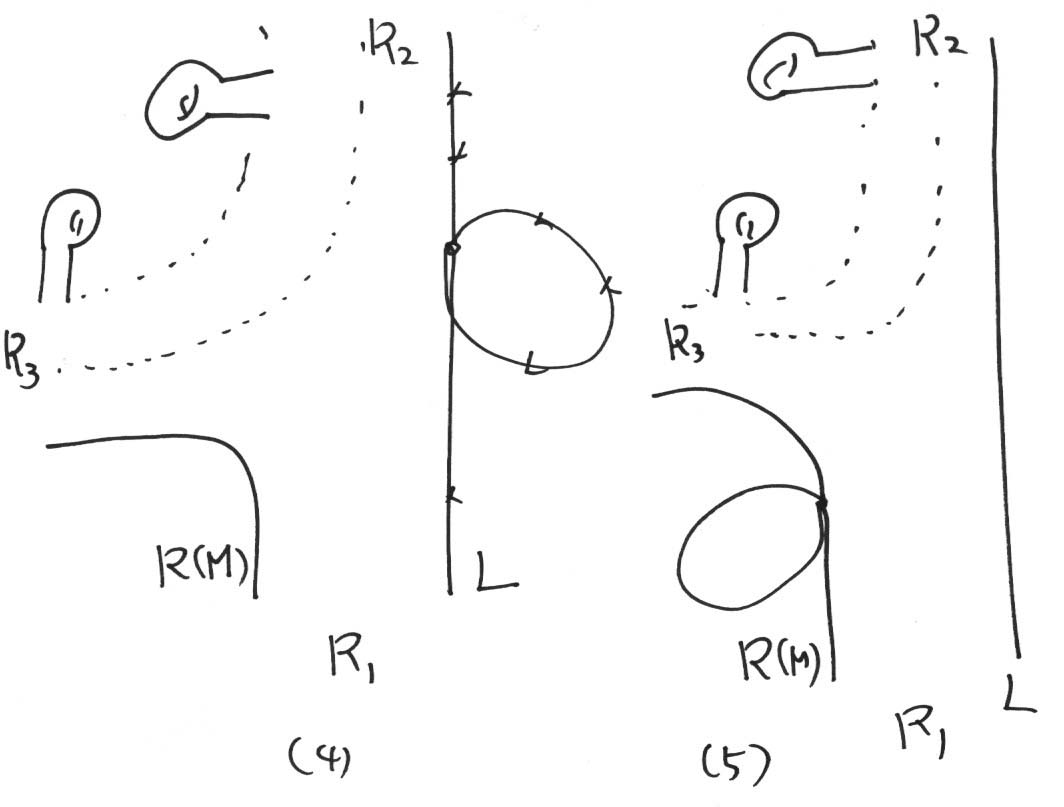}
\end{center}
\centerline{\bf Figure 4.13}
\par\medskip
We remark again that 
${{\mathcal M}}_{k_1,k_3}((X,\mathcal E_X),L;R_1,R_2,R_3;E)$
does not carry Kuranishi structure because Uhlenbeck compactification 
of the moduli space of ASD connections does not carry one.
So we need to generalize the story of Kuranishi structure and 
its virtual fundamental chain.
We will do it in \cite{fu8} or in a separate paper.
\par
Now we consider
$x^{(1)}_1,\dots,x^{(1)}_{k_1} \in C(\tilde L\times_{R(\Sigma)}\tilde L;\Lambda_0^{\Z_1})$, 
$x^{(3)}_1,\dots,x^{(3)}_{k_3} \in C(R(M)\times_{R(\Sigma)}R(M);\Lambda_0^{\Z_1})$, 
$y_1 \in C(R(M)\times_{R(\Sigma)}R(M);\Lambda_0^{\Z_1})$
$y_2, y_3 \in C(R(M)\times_{R(\Sigma)}\tilde L;\Lambda_0^{\Z_1})$.
We define
$$
\psi_{k_1,k_3} :
CF(M, R(M)) 
\otimes CF(R(M))^{k_3\otimes}
\otimes CF(R(M),L)
\otimes  CF(L)^{k_1\otimes}
\to CF(M,L)
$$
by
\begin{equation}\label{form430}
\aligned
&\langle
\psi_{k_3,k_1}(y_2;x^{(3)}_1,\dots,x^{(3)}_{k_3};y_3;x^{(1)}_1,\dots,x^{(1)}_{k_1})
,y_1
\rangle
\\
&= \#\big(
{{\mathcal M}}_{k_3,k_1}((X,\mathcal E_X),L;R_1,R_2,R_3;E) \\
&\quad{}_{{\rm ev}^{\infty}_3,{\rm ev}^{(3)},{\rm ev}^{\infty}_1,{\rm ev}^{(1)},
{\rm ev}^{\infty}_2}\times \\
&\quad (y_2\times x^{(3)}_1 \times \dots
\times x^{(3)}_{k_3}\times  y_3\times  x^{(1)}_1\times \dots\times x^{(1)}_{k_1}
\times y_1)
\big).
\endaligned
\end{equation}
\begin{lem}\label{lem413}
$\psi_{k_1,k_3}$ satisfies the next equality.
\begin{equation}\nonumber
\aligned
&\sum\frak n_{k'_1}(
\psi_{k_3,k''_1}(y_2;x^{(3)}_1,\dots,x^{(3)}_{k''_3};y_3;x^{(1)}_1,\dots,x^{(1)}_{k''_1}),
x^{(1)}_{k''_1+1},\dots,x^{(1)}_{k_1}) \\
+& \sum\psi_{k''_3,k_1}
(\frak n_{k'_3}(y_2;x^{(3)}_1,\dots,x^{(3)}_{k'_3});
x^{(3)}_{k'_3+1},\dots,x^{(3)}_{k_3};y_3;x^{(1)}_1,\dots,x^{(1)}_{k_1})
\\
+&\sum\psi_{k'_3,k'_1}
(y_2;x^{(3)}_1,\dots,x^{(3)}_{k'_3},
\frak n_{k''_3,k''_1}(x^{(3)}_{k'_3+1},\dots,x^{(3)}_{k_3};y_3;
x^{(1)}_1, \dots, x^{(1)}_{k''_1}),
\dots, x^{(1)}_{k_1})\\
+&\sum\psi_{k''_3,k_1}
(y_2;x^{(3)}_1,\dots,\frak m_{k_3'}(x^{(3)}_i,\dots,x^{(3)}_{i+k_3'-1}),
\dots x^{(3)}_{k_3};y_3;x^{(1)}_1,\dots,x^{(1)}_{k_1}) \\
+& \sum \psi_{k_3,k''_1}(y_2;x^{(3)}_1,\dots,x^{(3)}_{k_3};y_3
;x^{(1)}_1,\dots,\frak m_{k'_1}(x^{(1)}_i,\dots,x^{(1)}_{i+k'_1-1}),\dots,x^{(1)}_{k_1})\\
=&0.
\endaligned
\end{equation}
Here the sum in the first line is taken over $k'_1, k''_1$ with $k_1 = k'_1+ k''_1$
and $\frak n_{k'_1}$ is the right filtered $A_{\infty}$ module structure of 
$CF(M, L)$.
\par
The sum in the second line is taken over $k'_3, k''_3$ with $k_3 = k'_3+ k''_3$
and $\frak n_{k'_3}$ in the second line is 
the right filtered $A_{\infty}$ module structure of 
$CF(M, R(M))$.
\par
The sum in the third line is taken over $k'_1, k''_1$, $k'_3, k''_3$
with  $k_1 = k'_1+ k''_1$, $k_3 = k'_3+ k''_3$ and 
$\frak n_{k''_3,k''_1}$ in the third line is the 
filtered $A_{\infty}$ bimodule structure on 
$CF(R(M),L)$.
\par
The sum in the fourth line is taken over $k'_3, k''_3,i$ 
with $k_3 +1 = k'_3+ k''_3$, $i = 1,\dots,k''_3$
and $\frak m_{k_3'}$ in the fourth line is the 
filtered $A_{\infty}$ algebra structure on $CF(R(M))$.
\par
The sum in the fifth line is taken over $k'_1, k''_1,i$ 
with $k_1 +1 = k'_1+ k''_1$, $i = 1,\dots,k''_1$
and $\frak m_{k_1'}$ in the fourth line is the 
filtered $A_{\infty}$ algebra structure on $CF(L)$.
\end{lem}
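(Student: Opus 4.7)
The plan is to derive the identity in Lemma \ref{lem413} by applying the standard Stokes/composition formula to the virtual fundamental chain of the moduli space ${\mathcal M}_{k_3,k_1}((X,\mathcal E_X),L;R_1,R_2,R_3;E)$ whose boundary decomposition is described by Proposition \ref{prop412}. Concretely, for fixed input chains $y_2, x^{(3)}_*, y_3, x^{(1)}_*, y_1$ I consider the fiber product of ${\mathcal M}_{k_3,k_1}((X,\mathcal E_X),L;R_1,R_2,R_3;E)$ with $y_2 \times x^{(3)}_1 \times \dots \times x^{(3)}_{k_3} \times y_3 \times x^{(1)}_1 \times \dots \times x^{(1)}_{k_1} \times y_1$ along the evaluation maps $({\rm ev}^{\infty}_3, {\rm ev}^{(3)}, {\rm ev}^{\infty}_1, {\rm ev}^{(1)}, {\rm ev}^{\infty}_2)$. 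This fiber product carries a virtual fundamental chain whose boundary, by Stokes, equals the boundary of the ambient moduli space fiber-producted with the inputs (up to terms coming from boundaries of the inputs themselves, which we may absorb into the chain model as in \cite{foootech22}).

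Next I match the five boundary types listed in Proposition \ref{prop412} to the five lines of the claimed identity. Type (1) (ASD bubble escaping to ${\rm Im}\,z \to +\infty$, glued to ${\mathcal M}_{k''_1}((M,\mathcal E),L;R'_2,R_2;E_2)$ along ${\rm ev}^{\infty}_2$--${\rm ev}_-$) produces the first line: an outer application of the right $A_\infty$ module structure $\frak n_{k'_1}$ of $CF(M,L)$ to an inner $\psi$. Type (2) (ASD bubble at ${\rm Re}\,z \to -\infty$, glued along ${\rm ev}^{\infty}_3$--${\rm ev}_+$) yields the second line, with the right module structure $\frak n_{k'_3}$ of $CF(M,R(M))$ feeding into $\psi$. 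Type (3) (holomorphic strip escaping to ${\rm Im}\,z \to -\infty$, glued along ${\rm ev}^{\infty}_1$--${\rm ev}_-$) gives the third line, with the bimodule operation $\frak n_{k''_3,k''_1}$ on $CF(R(M),L)$ inserted between the $x^{(3)}$ and $x^{(1)}$ inputs. The disk bubble types (4) and (5) on $\partial_1 W$ and $\partial_3 W$ give, respectively, the last line (insertion of $\frak m_{k'_1}$ on $CF(L)$ among the $x^{(1)}$'s) and the fourth line (insertion of $\frak m_{k'_3}$ on $CF(R(M))$ among the $x^{(3)}$'s). Summing over all $E$ with weight $T^E$ and over all partitions of the inputs turns the chain-level boundary equation $\partial \circ \psi + \psi \circ \partial = \sum(\text{boundary contributions})$ into exactly the displayed identity; since we are working over $\Z_2$ by monotonicity, no signs need to be tracked.

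The main obstacle is the analytic/virtual-fundamental-chain justification: as emphasized in Remark \ref{rem2188}, ${\mathcal M}_{k_3,k_1}((X,\mathcal E_X),L;R_1,R_2,R_3;E)$ does not carry a genuine Kuranishi structure because of the Uhlenbeck strata in the gauge theory factor, so the usual Stokes' formula from \cite[Theorem 8.11]{foootech2} and composition formula from \cite[Theorem 10.20]{foootech2} cannot be quoted verbatim. What is required is the generalized Kuranishi framework with codimension $\ge 2$ singularities announced for \cite{fu8}, together with the verification that the Uhlenbeck-type bubbling loci in each of the three ends (I), (II), (III) and in the disk bubble strata (4), (5) have expected codimension at least $2$ except in the enumerated boundary types of Proposition \ref{prop412}. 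Once this is granted, the matching of the five contributions to the five lines is purely combinatorial, and the proof is the same structural argument used to establish $\frak m$--$\frak n$ compatibilities for filtered $A_\infty$ bimodules, e.g.\ in \cite[Section 3.7]{fooobook}.
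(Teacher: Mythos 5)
Your proposal is correct and follows essentially the same route the paper takes: the paper's proof of Lemma \ref{lem413} consists precisely of quoting Proposition \ref{prop412} and matching its five boundary types to the five lines of the identity (with the Stokes/composition machinery and the codimension-$\ge 2$ caveat of Remark \ref{rem2188} understood in the background, as you also note). One small point where you are actually more careful than the paper's text: the paper asserts the matching $(1),(2),(3),(4),(5) \leftrightarrow$ 1st, 2nd, 3rd, 4th, 5th lines, but Proposition \ref{prop412} (4) is the disk bubble on $\partial_1 W$ with factor ${\mathcal M}_{k''_1}(L;E_2)$, which corresponds to the insertion of $\frak m$ on $CF(L)$ and hence to the \emph{fifth} line, while Proposition \ref{prop412} (5) on $\partial_3 W$ with factor ${\mathcal M}_{k''_3}(R(M);E_2)$ corresponds to the \emph{fourth} line; your pairing $(4)\leftrightarrow$ 5th and $(5)\leftrightarrow$ 4th is the semantically correct one, so the paper's sentence appears to contain a small transposition.
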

\begin{proof}
This is a consequence of Proposition \ref{prop412}.
In fact boundary components (1), (2), (3), (4), (5) 
in Proposition \ref{prop412} corresponds to 
1st, 2nd, 3rd, 4th, 5th lines of (\ref{form430}) 
respectively.
\end{proof}
We now define
\begin{equation}
\varphi_{k _1}: CF((R(M),b_M),L) \otimes CF(L)^{\otimes k_1}
\to CF(M;L)
\end{equation}
by
\begin{equation}
\varphi_{k_1}(y_3;x^{(1)}_1,\dots,x^{(1)}_{k_1})
=
\sum_{k_3} \psi_{k_3,k_1}({\bf 1}_M;\underbrace{b_M,\dots,b_M}_{k_3};y_3;x^{(1)}_1,\dots,x^{(1)}_{k_1}),
\end{equation}
where ${\bf 1}_M$ (resp. $b_M$) is as in Definition \ref{def37}
(resp. Theorem \ref{thm39}).
\par
We claim
\begin{equation}\label{modulohomodesu}
\aligned
&\sum\frak n_{k''_1}(\varphi_{k'_1}(y_3;x^{(1)}_1,\dots,x^{(1)}_{k'_1}),\dots,k''_1)
\\
+&\sum\varphi_{k''_1}({}^{b_M}\frak n_{k'_1}(y_3;x^{(1)}_1,\dots,x^{(1)}_{k'_1}),\dots,k''_1)
\\
+& \sum
\frak n_{k'_1}(y_3;x^{(1)}_1,\dots,\frak m_{k''_1}(x_i,\dots,x_{i+k''_1-1}),
\dots,x^{(1)}_{k_1})
=0.
\endaligned
\end{equation}
In fact 1st, 2nd, 3rd term of (\ref{modulohomodesu}) corresponds 
1st, 2nd, 5th lines of the formula in Lemma \ref{lem413}, respectively.
Note the sum of 3rd line of the formula in Lemma \ref{lem413} vanish 
when we put $y_2 = {\bf 1}_M$ and $x^{(3)}_i = b_M$, 
because of (\ref{1isccle}).
Note the sum of 4th line of the formula in Lemma \ref{lem413} vanish 
when we put $x^{(3)}_i = b_M$
since $b_M$ is a bounding cochain.
\par
(\ref{modulohomodesu}) means that 
$\hat\varphi = \{\varphi_k\}$ consists a 
filtered $A_{\infty}$ right module homomorphism
$$
(CF((R(M),b_M),L),\{{}^{b_M}\frak n_k\})
\to (CF(M;L),\{\frak n_k\})).
$$
\par
To prove (\ref{form44}) we consider the moduli space
${{\mathcal M}}_{0,0}((X,\mathcal E_X),L;R_1,R_2,R_3;0)$ in 
Definition \ref{defn47}.
We recall that this space consists of 
$(\frak A,{
\frak z},{\frak w},\Omega,u,\vec z^{(1)},\vec z^{(3)})$
where 
$\frak A$ is a flat connection, $u$ is a constant map,
$\Omega = (0,1] \times \R$ and
$\vec z^{(1)} = \vec z^{(3)} =\emptyset$.
We also consider the case $R_3$ is the fundamental 
class, which is 
${\bf 1}_M$.
Therefore the energy $0$ part of $\varphi_0$ is the identity map.
\par
The proof of Theorem \ref{them42} is complete.
\end{proof}
\begin{rem}\label{rem414}
We remark that $\psi_{k_1,k_3}$ induces a chain map
\begin{equation}\label{map434}
\psi : 
CF(M, (R(M),b_M)) 
\otimes CF((R(M),b_M),(L,b))
\to CF(M,(L,b))
\end{equation}
by using a bounding cochain $b$ of $L$ and 
$b_M$ of $R(M)$.
This map is very similar to the map $\frak m_2$ 
defining the composition of morphisms in 
$\mathscr{FUK}(R(\Sigma))$.
The only difference is: in (\ref{map434}) $M$ plays a role of a `Lagrangian submanifold'
or an object of $\mathscr{FUK}(R(\Sigma))$.\footnote{
To regard $M$ as an object of $\mathscr{FUK}(R(\Sigma))$ 
is an idea which was a starting point of the whole project of 
`Floer homology of 3 manifolds with boundary' and was mentioned in \cite{Don}.}
If we regard $M$ as an `object' of $\mathscr{FUK}(R(\Sigma))$, then  Theorem \ref{functordef}
claims that this object `$M$' is isomorphic to the object $(R(M),b_M)$
in $\mathscr{FUK}(R(\Sigma))$.
To prove this `fact' we need to find an element of 
$CF(M, (R(M),b_M)$ which gives the isomorphism.
The obvious candidate of such an element is the fundamental class ${\bf 1}_M$.
Our proof of Theorem \ref{functordef} shows that we can choose $b_M$ appropriately 
so that ${\bf 1}_M$ indeed becomes an isomorphism.
\end{rem}
\begin{rem}\label{rem415}
The proof of Theorem \ref{functordef} in this section 
has an analogue in the story of Wehrheim-Woodward functoriality.
In fact we can use it to prove the next theorem.
\begin{thm}\label{thm416}
Suppose we are in the situation Theorem \ref{WWfunc}.
Then for any spin immersed Lagrangian submanifold $L$ of $M_2$ and 
bounding cochain $b$ of the filtered $A_{\infty}$ algebra of $L$, 
we have a canonical isomorphism:
\begin{equation}
HF((L_{12},b_{12}),(L_1,b_1) \times (L,b)) 
\cong 
HF((L_2,b_2),(L,b)).
\end{equation}
\end{thm}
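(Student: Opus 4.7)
The plan is to mirror the argument of Section \ref{sec:represent} in a purely symplectic setting, under the dictionary $L_{12}\leftrightarrow M$, $L_2\leftrightarrow R(M)$, $L_1\times L\leftrightarrow L$, $b_2\leftrightarrow b_M$. The bounding cochain $b_2$ is the one produced in Theorem \ref{WWfunc} via Proposition \ref{thm35}, and it is characterized, in analogy with \eqref{1isccle}, by $d^{b_2}({\bf 1}_{L_2})=0$ where ${\bf 1}_{L_2}$ is the fundamental cycle of $\tilde L_2$ regarded as a cyclic element of the appropriate right filtered $A_\infty$-module.

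First I would introduce the symplectic analogue of the moduli space ${\mathcal M}_{k_1,k_3}((X,\mathcal E_X),L;R_1,R_2,R_3;E)$ from Definition \ref{defn411}. The domain is the same $W\subset \C$ of Figure 4.1 with its seam $C$. The target is $M_1\times M_2$ on $W_1$ and $M_2$ on $W_2$; these are glued along $C$ by imposing the condition that the $W_1$-side values on $C$ lie in the coisotropic $L_1\times M_2\subset M_1\times M_2$ and project to the $W_2$-side values in $M_2$, which is the quilted/seam boundary condition of Wehrheim-Woodward. The boundary $\partial_3 W$ carries values in $L_{12}$ and $\partial_1 W$ in $L$; marked points on them evaluate into $\tilde L_{12}\times_{M_1\times M_2}\tilde L_{12}$ and $\tilde L\times_{M_2}\tilde L$ respectively. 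The three asymptotic ends supply evaluation maps into the generating sets of $CF(L_{12},L_1\times L)$ (end $\mathrm{Im}\,z\to+\infty$), $CF(L_{12})$ (end $\mathrm{Re}\,z\to-\infty$, the analogue of the gauge-theoretic end), and $CF(L_2, L)$ (end $\mathrm{Im}\,z\to-\infty$).

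Counting isolated points of such moduli spaces yields operators $\psi_{k_1,k_3}$ satisfying the exact analogue of Lemma \ref{lem413}, whose five lines are controlled by the five boundary strata of the analogue of Proposition \ref{prop412}: strip-breaking at each of the three ends and disk bubbling on each of the two boundaries. Substituting ${\bf 1}_{L_2}$ and $b_2$ into the appropriate slots --- just as ${\bf 1}_M$ and $b_M$ were substituted in the definition of $\varphi_k$ from $\psi_{k_3,k_1}$ at the end of the proof of Theorem \ref{them42} --- produces a strict filtered $A_\infty$ right module homomorphism
\begin{equation*}
\hat\varphi : CF((L_2,b_2), L)\longrightarrow CF(L_{12}, L_1\times L)
\end{equation*}
over $CF(L)$ satisfying $\varphi_1\equiv \mathrm{id}\mod \Lambda_+^{\Z_2}$. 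As in the proof of Corollary \ref{cor423}, plugging in the bounding cochains $b$ of $L$, $b_1$ of $L_1$, and $b_{12}$ of $L_{12}$ yields a chain map inducing the claimed Floer cohomology isomorphism.

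The main obstacle is the analytic setup of the seam boundary condition along $C$ and the verification that the compactification has exactly the five expected boundary strata; the ``figure-eight'' degeneration at the endpoints of $C$, familiar from quilted Floer theory, is the most delicate phenomenon, but is precisely what the strip-shrinking analysis of \cite{WW} and \cite{LL} controls, so the required framework is already in the literature. Once the analogue of Proposition \ref{prop412} is established, the algebraic conclusion --- using $d^{b_2}({\bf 1}_{L_2})=0$ to kill the analogue of the third line of Lemma \ref{lem413} and the Maurer-Cartan equations for $b_1$, $b_{12}$ to kill the fourth --- follows verbatim.
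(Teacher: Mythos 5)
In Remark \ref{rem415} the paper gives the geometric setup for Theorem \ref{thm416} explicitly: on $W_1$ the map goes to $M_1$ (not to $M_1 \times M_2$), on $W_2$ it goes to $M_2$, the seam $C$ carries the Lagrangian correspondence $L_{12}$, and the boundary Lagrangians are $u(\partial_1 W)\subset L$, $u(\partial_2 W)\subset L_1$, $u(\partial_3 W)\subset L_2$. Your proposal instead places the target $M_1\times M_2$ over all of $W_1$ and imposes the coisotropic-fold condition along $C$. Both encode the same quilt, but the paper's version is the literal transcription of the gauge-theoretic picture (Figure 4.14 matching Figure 4.1), which is what makes the analogy with Theorem \ref{them42} exact.

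There is one genuine divergence from the paper's strategy that you should flag. You invoke the strip-shrinking / figure-eight analysis of \cite{WW} and \cite{bww} as the analytic input controlling the boundary of the moduli space. This is exactly what the paper is explicitly trying to avoid: the whole point of the Lekili--Lipyanskiy cobordism picture (the domain $W$ with the seam $C$) is that one never shrinks a strip, so no figure-eight degeneration ever occurs. The seam $C$ is a smooth embedded curve in the fixed domain $W$, and the boundary strata of the compactified moduli space are ordinary strip-breaking at the three ends and ordinary disk (or sphere) bubbling on $\partial_i W$ or $C$ --- all handled by standard Gromov compactness for quilts with a fixed seam, as in \cite{LL}. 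No delicate new analysis is needed, and referencing figure-eight bubbles here misidentifies the source of difficulty. Apart from this, your dictionary, the role of the cyclic element ${\bf 1}_{L_2}$, the use of $d^{b_2}({\bf 1}_{L_2})=0$ to kill the analogue of line three of Lemma \ref{lem413}, and the Maurer--Cartan equations for $b_1, b_{12}$ to kill line four, all match the paper's intended argument.
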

To prove Theorem \ref{thm416} we replace 
Figure 4.9 by the following Figure 4.14.
\par\newpage
\begin{center}
\includegraphics[scale=0.25]
{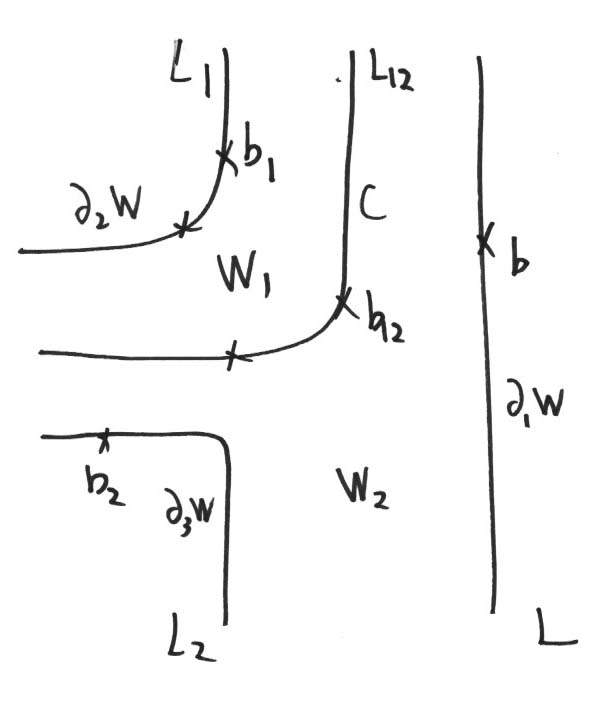}
\end{center}
\centerline{\bf Figure 4.14}
\par\medskip
Here we consider a combination of pseudo-holomorphic maps $u$ from $W$.
Namely on $W_1$ we consider $u$ as a map  to $M_1$ and on $W_2$ we 
consider $u$ as a map to $M_2$. 
\par
We put $C = W_1 \cap W_2$. We assume $u$ maps $C$ to $L_{12}$.
We also require $u(\partial_1W) \subset L$, $u(\partial_2 W) \subset L_1$ and $u(\partial_3W) \subset L_2$.
Here $\partial_2W = \partial W_1 \setminus C$, $\partial_1W \cup \partial_3W = \partial W_2 \setminus C$.
\par
We use bounding cochain $b_1$ on 
$\partial_2W$, $b_{12}$ on $C$, 
$b_2$ on $\partial_3 W$  and $b$ on $\partial_1 W$.
The role of ${\bf 1}_M$ 
in the proof of Theorem \ref{functordef} is taken by the fundamental class of 
$(L_1 \times L_2) \cap L_{12} \cong L_2$.
\par
We remark that Figure 4.14 is similar to 
\cite[Figure 1]{LL}. In fact in case $L_1$, $L_{12}$, $L_2$ and $L$ are all 
monotone, Figure 4.14  is a special case of 
\cite[Figure 1]{LL}.
In fact \cite{LL} discussed the case of composition of 
Lagrangian correspondences $L_{01}$ from $M_0$ to $M_1$ 
and $L_{12}$ from $M_1$ to $M_2$.
If we put $M_0 = $point, then it corresponds to our situation.
I also like to mention that the result of \cite{LL} obtained by \cite[Figure 1]{LL}
is a variant of an earlier results in \cite{WW} and the map 
$u : (W_1,W_2) \to (M_1,M_1 \times M_2)$ is a particular 
case of objects called pseudo-holomorphic quilt in \cite{WW}.
\end{rem}

\section{Gluing isomorphism of relative Floer homology}\label{sec:glue}

In this section we prove Theorem \ref{mainthm} (2).
The proof is based on a similar idea as the proof of Theorem \ref{functordef}
in Section \ref{sec:represent}.
We consider $(M_1,\mathcal E_1)$ 
and $(M_2,\mathcal E_2)$ as in Theorem \ref{mainthm} (2).
Let $(M,\mathcal E)$ be a closed 3-manifold with $SO(3)$ bundle 
obtained by gluing $(M_1,\mathcal E_1)$ 
and $(M_2,\mathcal E_2)$.
We fix a Riemannian metric on $M$.
We will construct a chain map
\begin{equation}\label{form51}
\Phi : CF((R(M_1),b_{M_1}),(R(M_2),b_{M_2})) \to CF(M,\mathcal E;\Lambda_0^{\Z_2}) 
\end{equation}
which is congruent to the identity map modulo $\Lambda_+^{\Z_2}$.
Here the chain complex $CF(M,\mathcal E;\Lambda_0^{\Z_2})$ is the Floer's chain 
complex which defines $SO(3)$-Floer homology of $(M,\mathcal E)$.
We first review its definition from \cite{fl1}, \cite{fl2}.
\par
We assume that $R(M_1)$ is transversal to $R(M_2)$ for simplicity.
(We can remove this assumption by appropriate perturbation.)
Then the set of flat connections on $(M,\mathcal E)$
is a finite set and is identified with 
$R(M_1) \cap R(M_2)$.
We denote this set by $R(M)$. 
\begin{defn}
Let $a_-,a_+ \in R(M)$. 
We denote by $\overset{\circ\circ}{\widetilde{\mathcal M}}(a_-,a_+)$
the gauge equivalence classes of all connections $\frak A$ on 
$(M \times \R,\mathcal E \times \R)$ such that the following holds.
\begin{enumerate}
\item 
$\frak A$ is an ASD connection. Namely it satisfies the equation
(\ref{ASDeq}).
\item
The energy $\mathcal E(\frak A)$ is finite. Here
$$
\mathcal E(\frak A) 
=
\int_{M \times \R}
\Vert F_{\frak A}\Vert^2 \Omega_{\bf g}.
 $$
 \item
We put the following asymptotic boundary condition.
There exists a gauge transformation $g$ such that:
\begin{equation}\label{form52}
\lim_{t\to \infty} g^*\frak A\vert_{M\times \{t\}} = a_+,
\qquad
\lim_{t\to -\infty} g^*\frak A\vert_{M\times \{t\}} = a_-.
\end{equation}
 \end{enumerate}
\end{defn}
\begin{rem}
Floer \cite{fl1} proved the following.
The transversality of $R(M_1)$ and $R(M_2)$ 
implies that the convergence in (\ref{form52}) is 
of exponential order in the sense of (\ref{expesti}).
(See also Remark \ref{rem2525}.)
\par
Floer also proved the following if $\frak A$ be a connection 
satisfying (1)(2) above. Then there exists $a_-$, $a_+$ 
such that (3) is satisfied.
\end{rem}
We divide $\overset{\circ\circ}{\widetilde{\mathcal M}}(a_-,a_+)$
by the $\R$ action defined by translation and denote by
$\overset{\circ\circ}{{\mathcal M}}(a_-,a_+)$ the quotient space.
Including bubble in the same way as Uhlenbeck compactification 
we obtain $\overset{\circ}{{\mathcal M}}(a_-,a_+)$.
Finally we define ${{\mathcal M}}(a_-,a_+)$ as the union of 
\begin{equation}\label{form5353}
\overset{\circ}{{\mathcal M}}(a_-,a_1) \times 
\overset{\circ}{{\mathcal M}}(a_1,a_2) \times 
\dots \times \overset{\circ}{{\mathcal M}}(a_k,a_+),
\end{equation}
where $a_1,\dots,a_k \in R(M)$. Then we can define a 
topological space
${{\mathcal M}}(a_-,a_+)$ so that it becomes compact and Hausdorff.
\par
We now define
\begin{defn}\label{defn53}
Let $CF(M,\mathcal E)$ be a $\Z_2$ vector space 
where the set of its basis is identified with $R(M)$.
\par
The Floer boundary operator $\partial : CF(M,\mathcal E)
\to CF(M,\mathcal E)$ is defined by:
\begin{equation}
\partial [a_-] = \sum_{a_+} \#({{\mathcal M}}(a_-,a_+)) [a_+],
\end{equation}
where the sum is taken over all $a_+ \in R(M)$ 
and a component of ${{\mathcal M}}(a_-,a_+)$ such that 
the virtual dimension of ${{\mathcal M}}(a_-,a_+)$ is zero.
\par
By using the moduli space ${{\mathcal M}}(a_-,a_+)$ in case 
its virtual dimension is $1$ we can prove 
$\partial \circ \partial = 0$.
We then put:
\begin{equation}
HF(M,\mathcal E) = \frac{{\rm Ker} \partial}{{\rm Im} \partial}
\end{equation}
and call it the $SO(3)$-{\it Floer homology}.
\end{defn}
Since in Lagrangian Floer theory, it is standard to use the universal Novikov ring 
$\Lambda_{0}^{\Z_2}$ as its coefficient ring, for the sake of consistency, 
we define a variant $HF(M,\mathcal E;\Lambda_0^{\Z_2})$ of 
$HF(M,\mathcal E)$ which is a module over $\Lambda_{0}^{\Z_2}$.
\par
We denote by ${{\mathcal M}}(a_-,a_+)_0$ the 
component of ${{\mathcal M}}(a_-,a_+)$ of virtual dimension $0$.
Using the monotonicity, we find that the energy 
$\mathcal E(\frak A)$ of elements $\frak A$ of ${{\mathcal M}}(a_-,a_+)_0$
is independent of $\frak A$.
We write it $\mathcal E(a_-,a_+;0)$.
(This number is not defined if ${{\mathcal M}}(a_-,a_+)_0$
is an empty set.) 
We put 
\begin{equation}\label{form55}
CF(M,\mathcal E; \Lambda_0^{\Z_2}) =CF(M,\mathcal E) \otimes_{\Z_2}
\Lambda_0^{\Z_2}
\end{equation} 
and define
$\partial' : CF(M,\mathcal E; \Lambda_0^{\Z_2})
\to CF(M,\mathcal E; \Lambda_0^{\Z_2})$ by:  
\begin{equation}\label{form56}
\partial' [a_-] = \sum_{a_+} T^{\mathcal E(a_-,a_+;0)}\#({{\mathcal M}}(a_-,a_+)) [a_+].
\end{equation}
The proof by Floer of the equality $\partial \circ \partial = 0$ 
can be used without change to show $\partial' \circ \partial' = 0$.
We now define:
\begin{defn}\label{defHFgauge}
\begin{equation}
HF(M,\mathcal E;\Lambda_0^{\Z_2}) = \frac{{\rm Ker} \partial'}{{\rm Im} \partial'}.
\end{equation}
\end{defn}
\begin{rem}
This remark is not related to the proof of Theorem \ref{mainthm} (2) so much 
but is related to the point that $\Lambda_0^{\Z_2}$ coefficient version 
might have some more information than $\Z_2$ coefficient version.
\begin{enumerate}
\item
Note 
$HF(M,\mathcal E;\Lambda_0^{\Z_2}) 
\ne HF(M,\mathcal E) \otimes_{\Z_2} \Lambda_0^{\Z_2}$.
In fact it is easy to see that
$$
\rank_{\Z_2} \frac{HF(M,\mathcal E;\Lambda_0^{\Z_2})}{\Lambda_+^{\Z_2}HF(M,\mathcal E;\Lambda_0^{\Z_2})} 
=
\# R(M),
$$
which can be in general different from $\rank_{\Z_2}  HF(M,\mathcal E)$.
This is so in case Floer's boundary operator is nontrivial.
\par
We 
use $\Z_2$ coefficient here since we use it in the main theorems 
of this paper. However we can certainly definite $HF(M,\mathcal E;\Lambda_0^{\Z})$
since orientation and sigh is fully worked out in 
gauge theory Floer homology by Floer.
\item
In case $R(M)$ is not a finite set (or is not Fredholm regular), 
we need to perturb. The independence of 
$HF(M,\mathcal E;\Lambda_0^{\Z_2})$ of the perturbation does {\it not} hold.
This is similar to the following fact:
The Floer homology of a pair of Lagrangian submanifolds
$HF((L_1,b_1),(L_2,b_2);\Lambda_0)$ over $\Lambda_0$ coefficient
is {\it not} an invariant of Hamiltonian perturbation of $L_1$, $L_2$.
(Namely for a pair of Hamiltonian diffeomorphisms $\varphi_1, \varphi_2$ 
of our symplectic manifold, the isomorphism 
$$
HF((\varphi_1(L_1),(\varphi_1)_*(b_1)),(\varphi_2(L_2),(\varphi_2)_*(b_2));\Lambda_0)
\cong
HF((L_1,b_1),(L_2,b_2);\Lambda_0)
$$
is false.)
On the other hand, if we use $\Lambda$ instead of $\Lambda_0$
as a coefficient ring then  $HF((L_1,b_1),(L_2,b_2);\Lambda)$  becomes invariant 
of Hamiltonian isotopy.
(See \cite[Theorem 4.1.4]{fooobook}  and \cite[Theorem 4.1.5]{fooobook}.)
So there is an issue for the well-defined-ness of 
$HF(M,\mathcal E;\Lambda_0)$ in case $R(M)$ is not
Fredholm regular. Nevertheless we can use the argument of 
\cite[Section 6.5.4]{fooobook} to define 
$HF(M,\mathcal E;\Lambda^F_0)$ also in case $R(M)$ is not
Fredholm regular, if $F$ is a field.
\par
The Floer homology $HF(M,\mathcal E;\Lambda^F_0)$ in general 
contain a torsion subgroup such as $\Lambda^F_0/T^{\lambda}\Lambda^F_0$
as its direct factor. It is not clear for the author whether such components 
can be applicable to the study of topology of 3 or 4 manifolds. 
\end{enumerate}
\end{rem}
Let us go back to the proof of Theorem \ref{mainthm} (2).
The chain complex in the right hand side of (\ref{form51}) 
is one defined by 
(\ref{form55}) and (\ref{form56}).
The main part of the construction of 
(\ref{form51})  is the definition of the moduli space 
we use for that purpose.
\par
We take a domain $W \subset \C$ such that
the following holds. (See Figure 5.1.)
\begin{conds}\label{conds54}
\begin{enumerate}
\item 
The intersection 
$W \cap \{z \in \C \mid {\rm Im} z < -2\}$
is $\{z \in \C \mid \vert{\rm Re}z \vert \le 1, {\rm Im} z < -2 \}$.
The intersection 
$W \cap \{z \in \C \mid {\rm Im} z > +2\}$
is $\{z \in \C \mid \vert{\rm Re}z \vert \le 1, {\rm Im} z > +2 \}$.
\item
The intersection 
$W \cap \{z \in \C \mid {\rm Im} z > +2\}$
is $\{z \in \C \mid \vert{\rm Re} z\vert \le 1, {\rm Im} z > +2 \}$.
The intersection 
$W \cap \{z \in \C \mid {\rm Im} z < -2\}$
is $\{z \in \C \mid \vert{\rm Re} z\vert \le 1, {\rm Im} z < -2 \}$.
\item
The boundary $\partial W$ has four connected components 
$\partial_i W$ ($i=1,2,3,4$) each of which is a $C^{\infty}$ submanifold of $\C$ 
and is diffeomorphic to 
$\R$.
Moreover
$
\partial_1W = \{z \in \C \mid {\rm Re} z > 0, {\rm Im}z>0\}
$,
$
\partial_2W 
\subset \{z \in \C \mid {\rm Re} z < 0, {\rm Im}z>0\}
$, 
$
\partial_3W 
\subset \{z \in \C \mid {\rm Re} z < 0, {\rm Im}z<0\}
$.
$
\partial_4W 
\subset \{z \in \C \mid {\rm Re} z > 0, {\rm Im}z<0\}
$.
\end{enumerate}
\end{conds}
\begin{center}
\includegraphics[scale=0.25]
{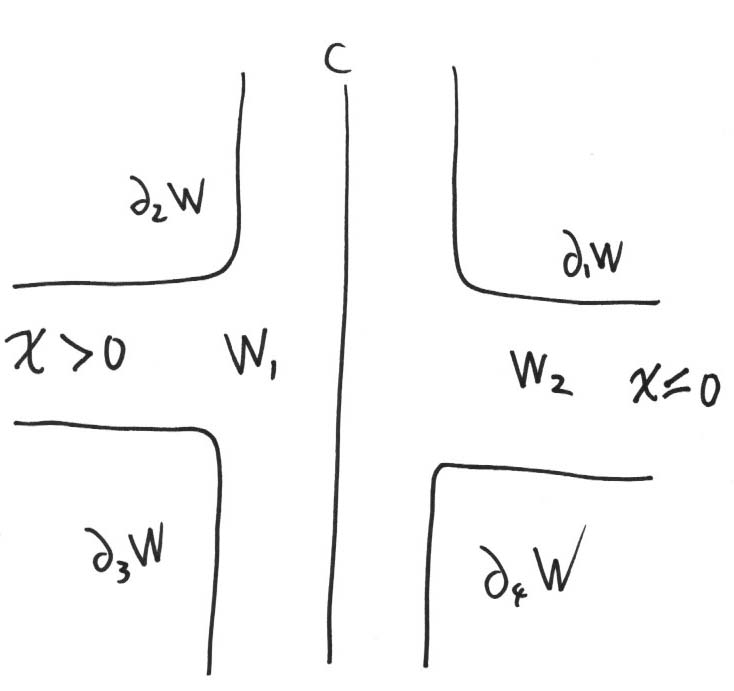}
\end{center}
\centerline{\bf Figure 5.1}
\par\medskip
We put $C = \{z \mid {\rm Re}z = 0\} 
\subset W$
and 
$$
W_1 = \{z \in W \mid {\rm Re}z  < 0\},
\qquad 
W_2 = \{z \in W \mid {\rm Re}z  > 0\}.
$$
We take $\chi : W \to [0,1]$ and a Riemannian metric 
${\bf g}$ on $\Sigma \times W_1$ with the following 
properties.
(See Figure 5.2.)
\begin{conds}\label{cond55}
\begin{enumerate}
\item
$\{z \in \C \mid \chi(z) > 0\} = W_1$.
\item
On  $\{z \in W \mid \vert{\rm Im} z \vert> -3\}$, 
$\chi(z) = \chi({\rm Re} z)$, where $\chi$ in the right hand side is the same 
function as one appeared in (\ref{form211}).
${\bf g} = \chi^2 g_{\Sigma} + ds^2 + dt^2$ 
on $\{z \in W \mid \vert{\rm Im} z \vert> -3\}$, 
where we put $z = s + \sqrt{-1}t$.
\item
On $\{z \in W \mid {\rm Re} z < -3\}$, 
$\chi(z) = 1$.
${\bf g} = g_{\Sigma} + ds^2 + dt^2$ 
on $\Sigma \times \{z \in W \mid {\rm Re} z < -3\}$, 
where we put $z = s + \sqrt{-1}t$.
\item
In a neighborhood of $\Sigma \times \partial_2 W$ 
(resp. $\Sigma \times \partial_3 W$),
the space $\Sigma \times W_1$ with metric ${\bf g}$ is isometric to the direct product
$g_{\Sigma} \times (0,\epsilon) \times \R$.
Here $g_{\Sigma} \times \{0\} \times \R$
corresponds to $\Sigma \times \partial_2 W$ (resp. $\Sigma \times \partial_3 W$).
This isometry is compatible with the isometry obtained by 
items (2)(3) in the domain described by those items.
\item 
On a $(-\epsilon,0) \times \R$,
$\chi(z) = \chi({\rm Re}z)$, where $\chi$ in the right hand side is the same 
function as one appeared in (\ref{form211}).
On $\Sigma\times (-\epsilon,0) \times \R$ we have
${\bf g} = \chi^2 g_{\Sigma} + ds^2 + dt^2$ 
where we put $z = s + \sqrt{-1}t$.
\end{enumerate}
\end{conds}
\par
\begin{center}
\includegraphics[scale=0.25]
{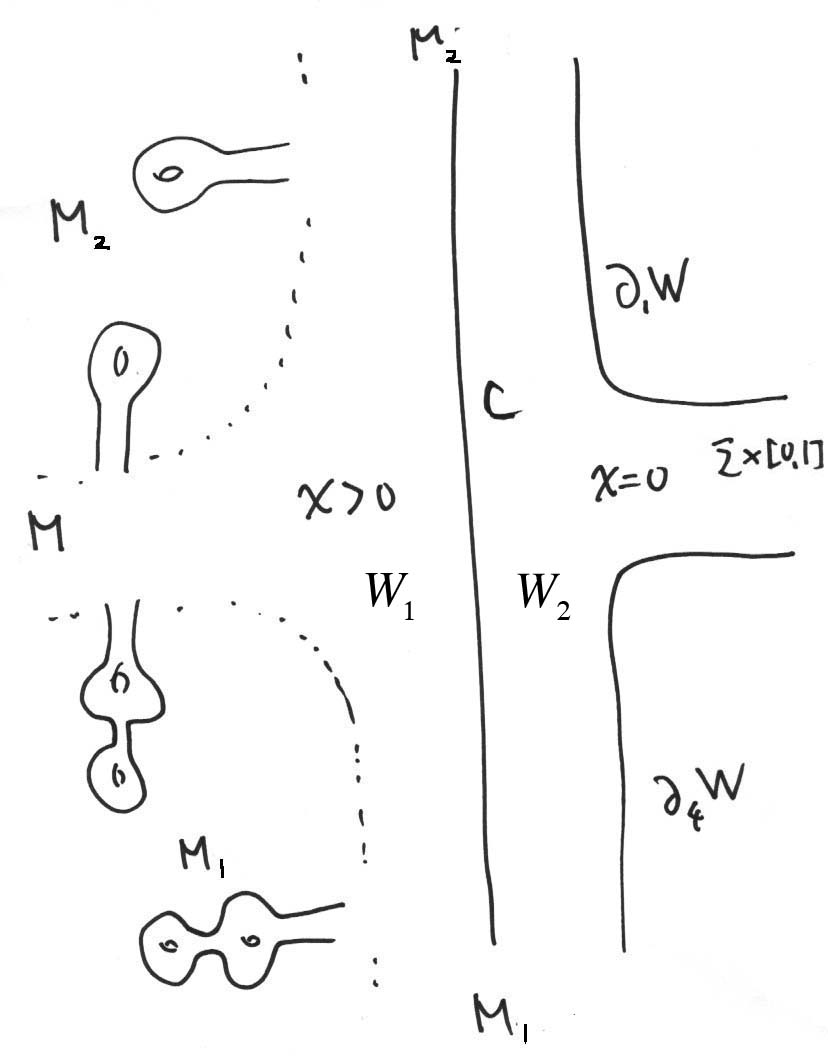}
\end{center}
\centerline{\bf Figure 5.2}
\par\medskip
We extend the metric ${\bf g}$ on $\Sigma \times W_1$ to 
a `singular metric' on $\Sigma \times W$ by putting 
${\bf g} = 0 g_{\Sigma} + ds^2 + dt^2$ outside $\Sigma \times W_1$.
\par
By Condition \ref{cond55} $(\Sigma \times W_1,{\bf g})$
is isometric to $(\Sigma \times (0,\epsilon) \times \R,g_{\Sigma} + ds^2 + dt^2)$
at a neighborhood of $\partial_2W$ (resp. $\partial_3W$).
We remark that $M_{2,0}\times \R$ (resp.  $M_{1,0}\times \R$) is isometric to 
$(\Sigma \times (-\epsilon,0) \times \R,g_{\Sigma} + ds^2 + dt^2)$
at a neighborhood of its boundary.
Here $M_{1,0} = M_1 \setminus (\Sigma \times (-1,1])$, 
(resp. $-M_{2,0} = -M_2 \setminus (\Sigma \times (-1,1])$).
Therefore we can glue them together to obtain 
$(Y,{\bf g})$. Here $\bf g$ is a `Riemannian metric' which is degenerate
on $\Sigma \times \overline{W_2}$.
\par
We remark that $Y$ has $4$ ends and $2$ boundary components.
The $4$ ends corresponds to ${\rm Re} z \to \pm \infty$ 
and  ${\rm Im} z \to \pm \infty$.
The end corresponding to ${\rm Re} z \to + \infty$ is 
of the form $\Sigma \times [-1,1] \times [c,\infty)$.
The end corresponding to ${\rm Im} z \to + \infty$
is of the form $-M_2 \times [-1,1] \times [c,\infty)$.
The end ${\rm Re} z \to - \infty$ is of the form 
$M \times (-\infty,-c]$.
The end  corresponding to ${\rm Im} z \to - \infty$ is 
of the form $M_1 \times (-\infty,-c]$.
\par
The boundaries are $\Sigma \times \partial_1W$ and 
$\Sigma \times \partial_4W$.
We remark that we glued $M_{2,0} \times \R$
(resp. $M_{1,0} \times \R$) to $\Sigma \times \partial_2W$
(resp. $\Sigma \times \partial_3W$). So 
$\Sigma \times \partial_2W$ and $\Sigma \times \partial_3W$
are not  boundary components of $Y$.
\par
The $SO(3)$ bundles $\mathcal E_1,\mathcal E_2$ on $M_1, -M_2$ induce an
$SO(3)$ bundle on $Y$  in an obvious way, which we denote by $\mathcal E_Y$.
For a smooth connection $\frak A$ of $\mathcal E_Y$ we can 
consider the `ASD-equation'. Namely we require (\ref{ASDeq})
on $Y \setminus (\Sigma \times \overline{W_2})$
and (\ref{ASDprod}) on $\Sigma \times W \subset Y$.
(Note (\ref{ASDeq}) coincides with (\ref{ASDprod}) on the overlapped part.)
We say $\frak A$ is an ASD-connection by an abuse of notation 
if it satisfies (\ref{ASDeq})
on $Y \setminus (\Sigma \times \overline{W_2})$
and (\ref{ASDprod}) on $\Sigma \times W \subset Y$.
\par
We consider also $\Omega$ which contains $W_2$.
Namely $\Omega$ is a union of $W_2$
and trees of disk and sphere components attached 
to $\partial W_2$ and ${\rm Int} W_2$, respectively.
We consider the pair $(\Omega,u)$
which satisfies Condition \ref{conds27}, except we replace 
$(0,1)\times \R$  by
$W_2$.
We call this condition Condition \ref{conds27}''.
\par
Now we modify Definitions \ref{defn2929}, \ref{defn23} and
\ref{defn4664} as follows.
We consider the decompositions
(\ref{form236}) and  (\ref{form235}).  Let $I(R(M_j))$  be 
the index sets as in there.
Namely
$$
\aligned
R(M_j)\times_{R(\Sigma)} R(M_j) &= \bigcup_{k\in I(R(M_j))} R(M_j)_k 
\qquad j=1,2\\
R(M_1) \times_{R(\Sigma)} R(M_2) &= R(M).
\endaligned
$$
Let $R_k$ $j=1,2$ be connected components of 
$R(M_j) \times_{R(\Sigma)}R(M_j)$ and 
$a_-,a_+ \in R(M) = R(M_1) \cap R(M_2)$.
\par
We also take $\vec i^{(1)}$,  $\vec i^{(2)}$
as in Definition \ref{defn4664} (5) below.
\begin{defn}\label{defn4664}
We define the set $\overset{\circ}{{\mathcal M}}((Y,\mathcal E_Y);a_-,a_+;R_1,R_2;
\vec i^{(1)},\vec i^{(2)};E)$
as the set of all equivalence classes of $(\frak A,{
\frak z},{\frak w},\Omega,u,\vec z^{(1)},\vec z^{(2)})$ satisfying the following conditions.
(See Figure 5.3.)
\begin{enumerate}
\item
$\frak A$ is a connection of $\mathcal E_{Y}$ satisfying equations  
 (\ref{ASDeq}), (\ref{ASDprod}).
\item
$\frak z = (\frak z_1,\dots,\frak z_{m_1})$ is an {\it unordered} 
$m_1$-tuple of points of $Y \setminus (\Sigma \times \overline{W_2})$.
We put $\Vert \frak z\Vert = m_1$.
We say the subset $\{\frak z_1,\dots,\frak z_{m_1}\}
\subset Y \setminus (\Sigma \times \overline{W_2})$ the {\it support} of $\frak z$ 
and denote it by $\vert\frak z\vert$.
We define ${\rm multi} : \vert\frak z\vert \to \Z_{>0}$ by 
$
{\rm multi}(x) = \#\{i \mid z_i = x\}$
and call it the {\it multiplicity function}.
\item
$\frak w = (\frak w_1,\dots,\frak w_{m_2})$ is an {\it unordered} 
$m_2$-tuple of points of $C$.
We put $\Vert \frak w\Vert = m_2$.
We say the subset $\{\frak w_1,\dots,\frak w_{m_2}\} \subset C$ the {\it support} of $\frak w$.
We define ${\rm multi} : \vert\frak w\vert \to \Z_{>0}$ by 
$
{\rm multi}(x) = \#\{i \mid w_i = x\}$
and call it the {\it multiplicity function}.
\item
$\Omega$ satisfies Condition \ref{conds27}''.
\item
$\vec i^{(1)} = ( i^{(1)}(1),\dots,i^{(1)}(k_1)) 
\in I(R(M_1))^{k_1}$ and  $\vec i^{(2)} = ( i^{(2)}(1),\dots,i^{(2)}(k_2)) 
\in I(R(M_2))^{k_2}$
\item
$\vec z^{(1)} = (z_1^{(1)},\dots,z_{k_1}^{(1)})$
(resp. $\vec z^{(2)} = (z_1^{(2)},\dots,z_{k_2}^{(2)})$).
$z_i^{(1)}$ lies on $\partial_4 \Omega$,
(resp. $z_i^{(2)}$ lies on $\partial_1 \Omega$).
None of $z^{(1)}_i$ or $z^{(2)}_i$ is a nodal point. 
If $i \ne j$ then $z^{(1)}_i \ne z^{(1)}_j$, $z^{(2)}_i \ne z^{(2)}_j$.
$(z^{(1)}_1,\dots,z^{(1)}_{k_1})$ 
(resp. $(z^{(2)}_1,\dots,z^{(2)}_{k_2})$ ) 
respects counter clockwise orientation of 
$\partial_4 \Omega$ (resp. $\partial_1 \Omega$).
\item
There exists a smooth map $\gamma^{(1)} : \partial_4 \Omega \setminus \{z^{(1)}_1,
\dots,z^{(1)}_{k_1}\}
\to 
R(M_1)$
such that $u(z) = i_{R(M_1)}(\gamma(z))$ on $\partial_4 W \setminus \{z^{(1)}_1,
\dots,z^{(1)}_{k_1}\}$.
\par
There exists a smooth map $\gamma^{(2)} : \partial_1 \Omega \setminus \{z^{(2)}_1,
\dots,z^{(2)}_{k_2}\}
\to 
R(M_2)$
such that $u(z) = i_{R(M_2)}(\gamma(z))$ on $\partial_4 W \setminus \{z^{(2)}_1,
\dots,z^{(2)}_{k_2}\}$.
\item
For $j=1,\dots,k_1$ the following holds. 
\begin{equation}\label{form58777rev}
(\lim_{z \uparrow z^{(1)}_j}\gamma^{(1)}(z),\lim_{z\downarrow z^{(1)}_j}\gamma^{(1)}(z))
\in \widehat{R(M_1)}_{i^{(1)}(j)}.
\end{equation}
Here the notation $z \uparrow z_j$, $z \downarrow z_j$ is 
defined in the same way as (\ref{form23777})\par
For $j=1,\dots,k_2$ the following holds. 
\begin{equation}\label{form58777revrev}
(\lim_{z \uparrow z^{(2)}_j}\gamma^{(2)}(z),\lim_{z\downarrow z^{(2)}_j}\gamma^{(2)}(z))
\in \widehat{R(M_2)}_{i^{(2)}(j)}.
\end{equation}
Here the notation $z \uparrow z_j$, $z \downarrow z_j$ is 
defined in the same way as (\ref{form23777})
\item
We replace Condition \ref{conds28} (3)
by the stability of 
$(\Omega,u,\vec z^{(1)},\vec z^{(2)})$. Namely the set of all maps $v : \Omega \to \Omega$ 
satisfying the next three conditions is a finite set.
\begin{enumerate}
\item
$v$ is a homeomorphism and is holomorphic on each of the irreducible components.
\item
$v$ is the identity map on $(0,1] \times \R \subseteq \Omega$.
\item $u \circ v = u$.
\item $v(z^{(1)}_j) = z^{(1)}_j$, $j=1,\dots,k_1$ 
and $v(z^{(2)}_j) = z^{(2)}_j$, $j=1,\dots,k_3$. 
\end{enumerate}
\item
For $(s,t) \in W_2$
we have
$$
[A(s,t)] = u(s,t). 
$$
Here $A(s,t)$ is obtained from $\frak A$ by  (\ref{AnadfraA}).
\item
The energy of $(\frak A,{
\frak z},{\frak w},\Omega,u)$
which is defined in the same way as Definition \ref{defnenergy2} is $E$.
\item
We assume the following 
asymptotic boundary conditions, which are defined by using $R_1,R_2,a_-,a_+$.
\begin{enumerate}
\item
\begin{equation}\label{form58777revrevrev}
\lim_{z \to +\infty- \sqrt{-1}}\gamma^{(4)}(z) = \lim_{z\to 
+\infty+\sqrt{-1}}\gamma^{(1)}(z)
= a_+.
\end{equation}
Here $\lim_{z \to +\infty- \sqrt{-1}}$ is the limit 
when the real part of $z \in \partial_4 W$ goes to $+\infty$.
The meaning of 
$ \lim_{z\to 
+\infty+\sqrt{-1}}$ is similar.
\item
We consider the restriction of $\frak A$ to $\Sigma \times \{z \in W \mid {\rm Re} z = -c\}$
for $c > 3$. We glue it with the restriction $\frak A$ to 
$(M_1)_0 = M_1 \setminus (\Sigma \times (-1,1])$
and $(M_2)_0 = M_2 \setminus (\Sigma \times (-1,1])$
which are attached to $\Sigma \times \{-c -\sqrt{-1}\}$ and 
$\Sigma \times \{-c +\sqrt{-1}\}$ respectively.
(See Figure 5.4.)
We call it $\frak A\vert_{{\rm Re} z = -c}$.
It is a connection on $M = M_1 \cup -M_2$.
We assume that $\frak A\vert_{{\rm Re} z = -c}$ converges to a 
flat connection $a_-$ as $c \to \infty$. 
\item
We consider the restriction of $\frak A$ to $\Sigma \times \{z \in W\mid {\rm Im} z = c\}$
for $c >3$. We glue it with the restriction $\frak A$ to 
$-M_2 = -M_2 \setminus (\Sigma \times (-1,1])$, which is attached to $-1 + c\sqrt{-1}$.
(See Figure 5.5.) We call it $\frak A\vert_{{\rm Im} z = c}$.
It is a connection of $-M_2$.
We assume that $\frak A\vert_{{\rm Im} z = c}$ converges to a 
flat connection as $c \to + \infty$. We write its limit
$\lim_{c\to+\infty} \frak A\vert_{{\rm Im} z = c}$.
\par
Then we also assume
\begin{equation}\label{form53777revrevrev2}
(\lim_{c\to+\infty} \frak A\vert_{{\rm Im} z = c},\lim_{z\to 
+1+\infty\sqrt{-1}}\gamma^{(1)}(z)
\in R_2.
\end{equation}
Here the meaning of $\lim_{z\to
+1+\infty\sqrt{-1}}$ is similar to 
(\ref{form58777revrevrev}).
\item
We consider the restriction of $\frak A$ to $\Sigma \times \{z \mid {\rm Im} z = -c\}$
for $c > 3$. We glue it with the restriction $\frak A$ to 
$(M_1)_0$ which is attached to $-c + \sqrt{-1}$.
(See Figure 5.6.) We call it $\frak A\vert_{{\rm Im} z = -c}$.
It is a connection of $M_1$.
We assume that $\frak A\vert_{{\rm Im} z = -c}$ converges to a 
flat connection as $c \to \infty$. We write its limit
$\lim_{c\to\infty} \frak A\vert_{{\rm Im} z = -c}$.
\par
Then we also assume
\begin{equation}\label{form58777revrevrev3}
(\lim_{c\to\infty} \frak A\vert_{{\rm Im} z = -c},\lim_{z\to 
-\infty -\sqrt{-1}}\gamma^{(3)}(z))
\in R_1.
\end{equation}
Here the meaning of $\lim_{z\to 
-\infty -\sqrt{-1}}$ is similar to 
(\ref{form58777revrevrev}).
\end{enumerate}
\end{enumerate}
The equivalence relation is defined in the same way as Definition \ref{equivlimiobj}.
\end{defn}
\par
\begin{center}
\includegraphics[scale=0.25]
{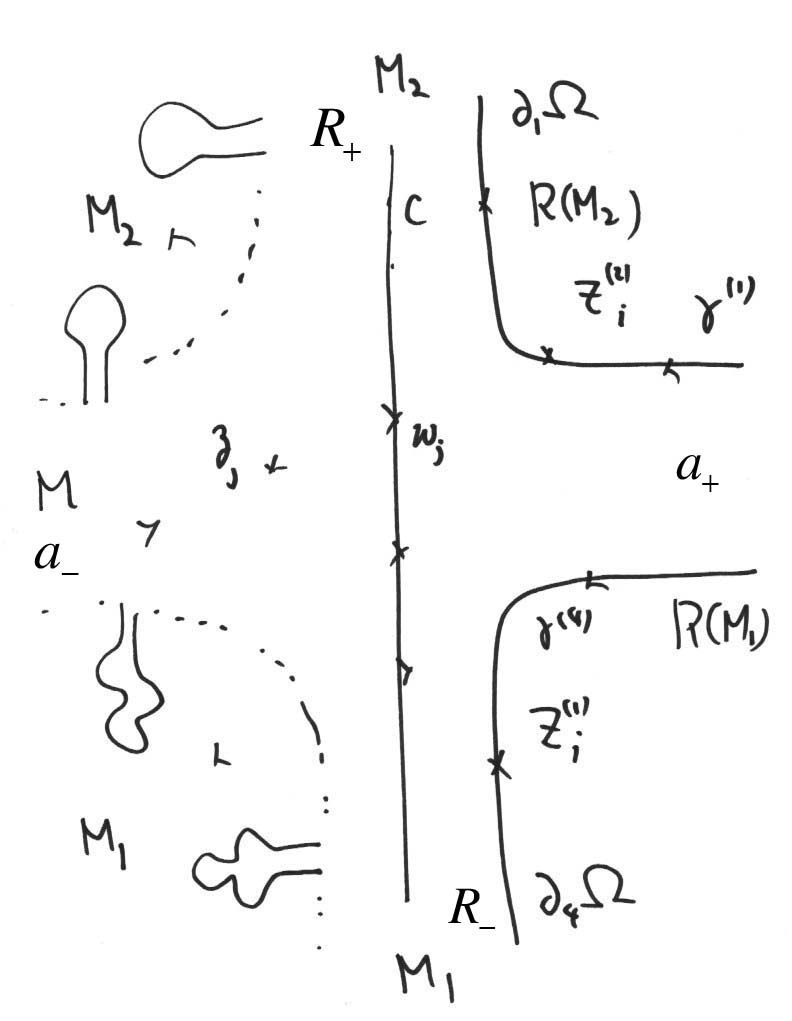}
\end{center}
\centerline{\bf Figure 5.3}
\par
\begin{center}
\includegraphics[scale=0.25]
{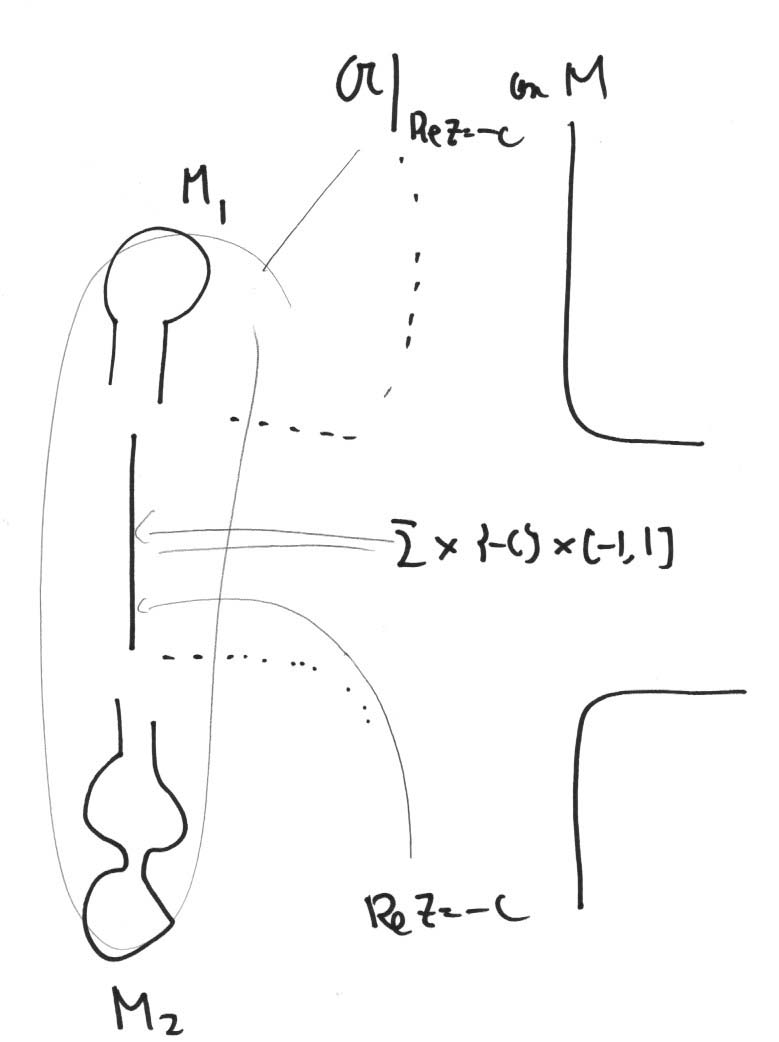}
\end{center}
\centerline{\bf Figure 5.4}
\par
\begin{center}
\includegraphics[scale=0.25]
{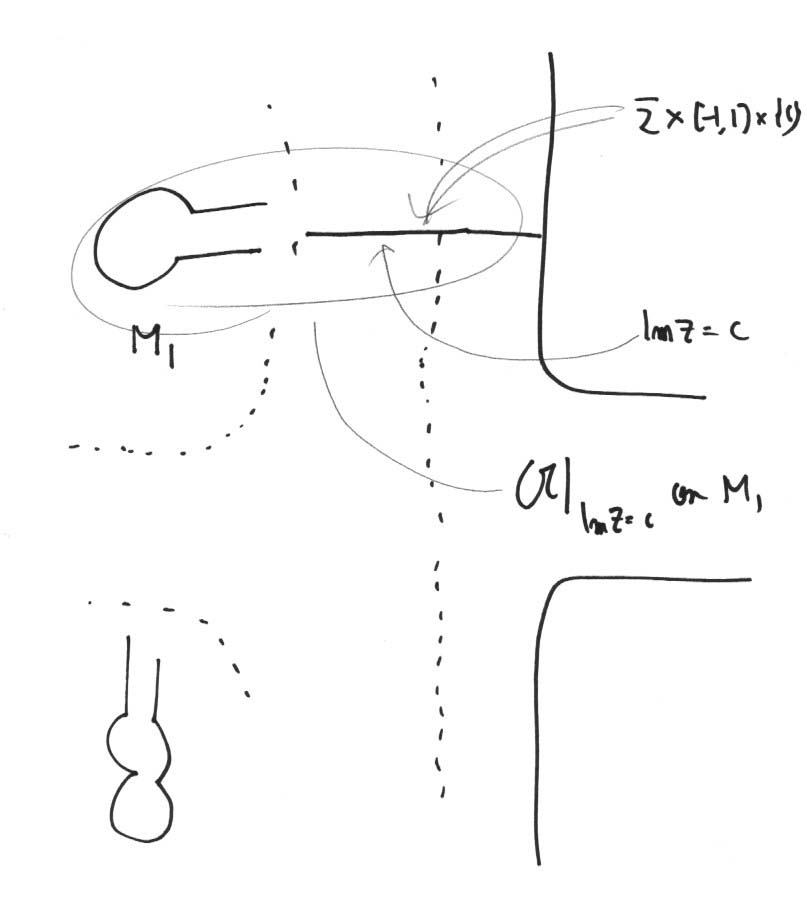}
\end{center}\centerline{\bf Figure 5.5}
\par
\begin{center}
\includegraphics[scale=0.25]
{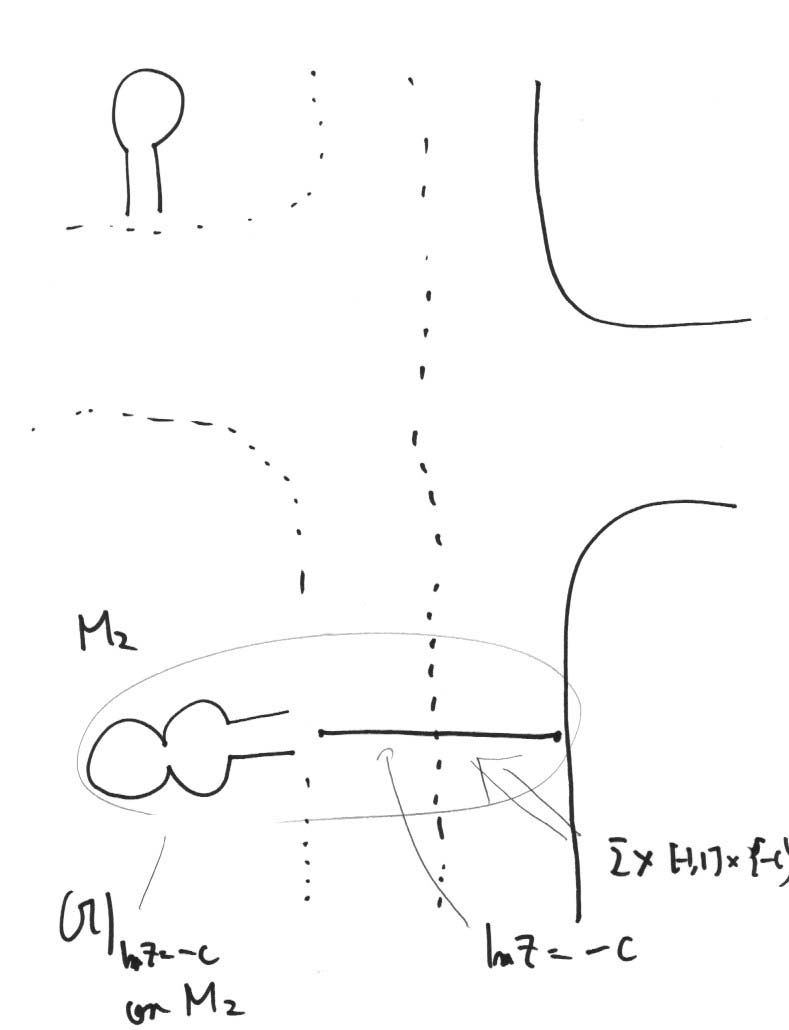}
\end{center}
\centerline{\bf Figure 5.6}
\par
We can define a topology on
$\overset{\circ}{{\mathcal M}}((Y,\mathcal E_Y);a_-,a_+;R_1,R_2;
\vec i^{(1)},\vec i^{(2)};E)$ by modifying Definition \ref{defn1222}
in an obvious way.
\par
We put
$$
\aligned
&\overset{\circ}{{\mathcal M}}_{k_1,k_2}((Y,\mathcal E_Y);a_-,a_+;R_1,R_2;E) \\
&=
\bigcup_{\vec i^{(1)}; \vert \vec i^{(1)} \vert = k_1}
\bigcup_{\vec i^{(2)}; \vert \vec i^{(2)} \vert = k_2}
\overset{\circ}{{\mathcal M}}((Y,\mathcal E_Y);a_-,a_+;R_1,R_2;
\vec i^{(1)},\vec i^{(2)};E)
\endaligned
$$
$\overset{\circ}{{\mathcal M}}_{k_1,k_2}((Y,\mathcal E_Y);a_-,a_+;R_1,R_2;E)$ is a Hausdorff space.
\par
We define evaluation maps
\begin{equation}
\aligned
&{\rm ev}^{(1)}  :
\overset{\circ}{{\mathcal M}}_{k_1,k_2}((Y,\mathcal E_Y);a_-,a_+;R_1,R_2;E)
\to (R(M_1) \times_{R(\Sigma)} R(M_1))^{k_1}
\\
&{\rm ev}^{(2)}  :
\overset{\circ}{{\mathcal M}}_{k_1,k_2}((Y,\mathcal E_Y);a_-,a_+;R_1,R_2;E)
\to (R(M_2) \times_{R(\Sigma)} R(M_2))^{k_2}.
\endaligned
\end{equation}
They are defined by (\ref{form58777rev}) and (\ref{form58777revrev}), respectively.
\par
We also define the evaluation maps
\begin{equation}
{\rm ev}^{\infty}_i :
\overset{\circ}{{\mathcal M}}_{k_1,k_2}((Y,\mathcal E_Y);a_-,a_+;R_1,R_2;E)
\to R_i
\end{equation}
for $i=1,2$. They are defined by 
(\ref{form58777revrevrev3}) and
(\ref{form53777revrevrev2}).
\par
Note $\overset{\circ}{{\mathcal M}}_{k_1,k_2}((Y,\mathcal E_Y);a_-,a_+;R_1,R_2;E)$
is not yet compact. There are still four types ends, which are,
\begin{enumerate}
\item[(I)]
A pseudo-holomorphic strip escape to the direction ${\rm Re}(z) \to +\infty$.
\item[(II)]
An ASD-connection escape to the direction ${\rm Im}(z) \to +\infty$.
\item[(III)]
An ASD-connection escape to the direction ${\rm Re}(z) \to -\infty$.
\item[(IV)]
An ASD-connection escape to the direction ${\rm Im}(z) \to -\infty$.
\end{enumerate}
By definition, ends of type (I) correspond to the union of the following direct products.
\begin{equation}\label{5527427}
\aligned
&\overset{\circ}{{\mathcal M}}_{k'_1,k'_2}((Y,\mathcal E_Y);a_-,a'_+;R_1,R_2;E_1) \\
&\times 
{{\mathcal M}}_{k''_1,k''_2}(R(M_1),R(M_2);\{a'_+\},\{a_+\};E_2).
\endaligned
\end{equation}
Here $k'_1 + k''_1 = k_1$, $k'_2 + k''_2 = k_2$, $E_1 + E_2 = E$ and 
$a'_+ \in R(M_1) \cap R(M_2)$.
(Figure 5.7)
\par
The moduli space 
${{\mathcal M}}_{k''_1,k''_2}(R(M_2),R(M_1);\{a'_+\},\{a_+\};E_2)$ is 
defined as the union of (\ref{fibercompactkarefref}).
Note in our case we assume $R(M_1)$ is transversal to $R(M_2)$.
Therefore each of the connected components of $R(M_1) \cap R(M_2)$
consists of a single point.
\par
\begin{center}
\includegraphics[scale=0.25]
{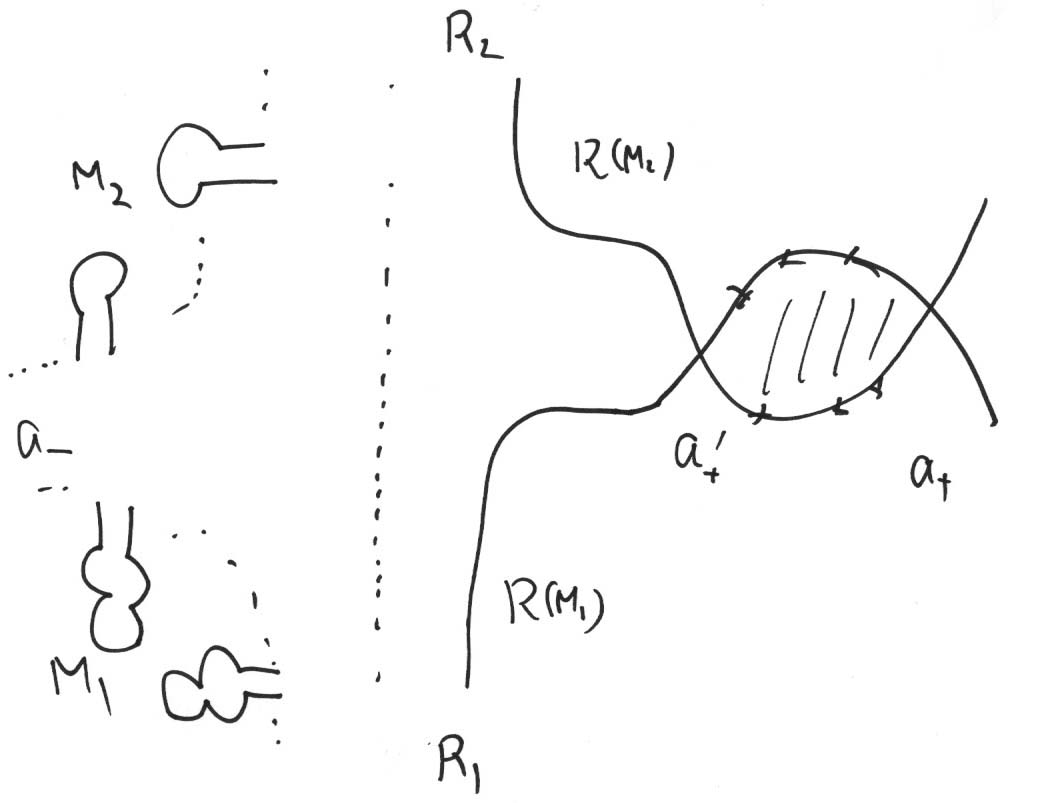}
\end{center}
\centerline{\bf Figure 5.7}
\par\medskip
By definition, ends of type (III) correspond to the union of the following direct products.
\begin{equation}\label{54444488}
\overset{\circ}{{\mathcal M}}_{k_1,k_2}((Y,\mathcal E_Y);a_+,a'_-,;R_1,R_2;E_1) \\
\times 
{{\mathcal M}}(a'_-,a_-;E_2).
\end{equation}
Here $E_1+E_2 = E$ and $a'_- \in R(M) = R(M_1) \cap R(M_2)$.
The moduli space ${{\mathcal M}}(a'_-,a_-;E_2)$ is defined as the 
union of (\ref{form5353}) 
(Figure 5.8).
Note we need to rotate the bubble appearing at the part ${\rm Re} z \to 
-\infty$ by 90 degree clock-wise direction. Therefore 
after rotation $a'_-$ will appear in the part ${\rm Im} z \to - \infty$.
So $a'_-$ and $a_-$ appears as  $a'_-,a_-$ 
in the second factor of (\ref{54444488}).
\par
\begin{center}
\includegraphics[scale=0.25]
{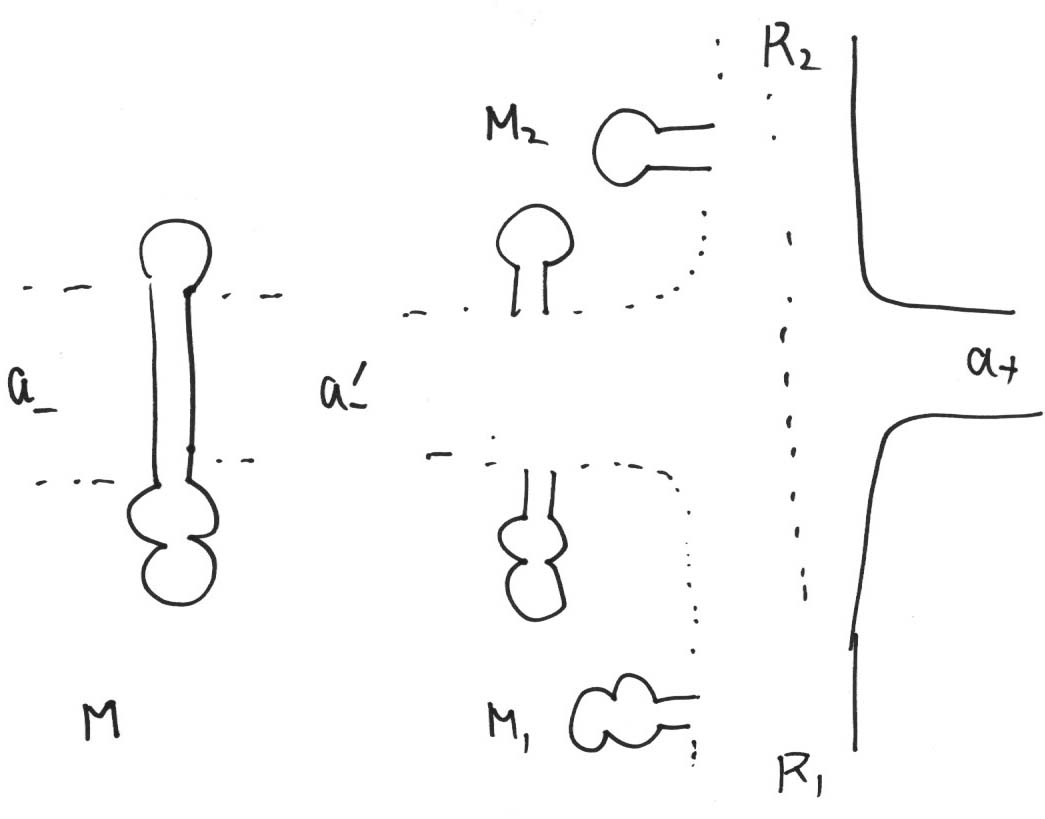}
\end{center}
\centerline{\bf Figure 5.8}
\par\medskip
We next describe the ends of type (II).
For this purpose we use the moduli space 
${{\mathcal M}}_k((M,\mathcal E),L;R_-,R_+;E)$ defined in Definition \ref{defn23}.
This moduli space is defined as the set of solutions of a 
partial differential equation on a space which is obtained from a 3-manifold with boundary, which was 
denoted by $M$ in Section \ref{HF3bdfunctor}.
Here we consider either $M_1$ or $-M_2$ as a 3-manifold $M$ with boundary.
So we use the notations ${{\mathcal M}}_k((M_1,\mathcal E_1),L;R_-,R_+;E)$
or  ${{\mathcal M}}_k((-M_2,\mathcal E_2),L;R_-,R_+;E)$.
(We also take $L=R(M_1)$ or $L = R(M_2)$.)
\par
Now the ends of type (II) is described by the fiber product:
\begin{equation}\label{5524444edcd}
\aligned
&\overset{\circ}{{\mathcal M}}_{k_1,k'_2}((Y,\mathcal E_Y);a_-,a_+;R_1,R'_2;E_1) 
\\
&{}_{{\rm ev}^{\infty}_2}\times_{{\rm ev}_-} 
{{\mathcal M}}_{k''_2}((-M_2,\mathcal E_2),R(M_2);R'_2,R_2;E_2).
\endaligned
\end{equation}
Here $k_2 = k'_2 + k''_2$, $E = E_1 + E_2$ and 
$R'_2$ is a connected component of $R(M_2) \times_{R(\Sigma)} R(M_2)$. 
(Figure 5.9)
\par
\begin{center}
\includegraphics[scale=0.25]
{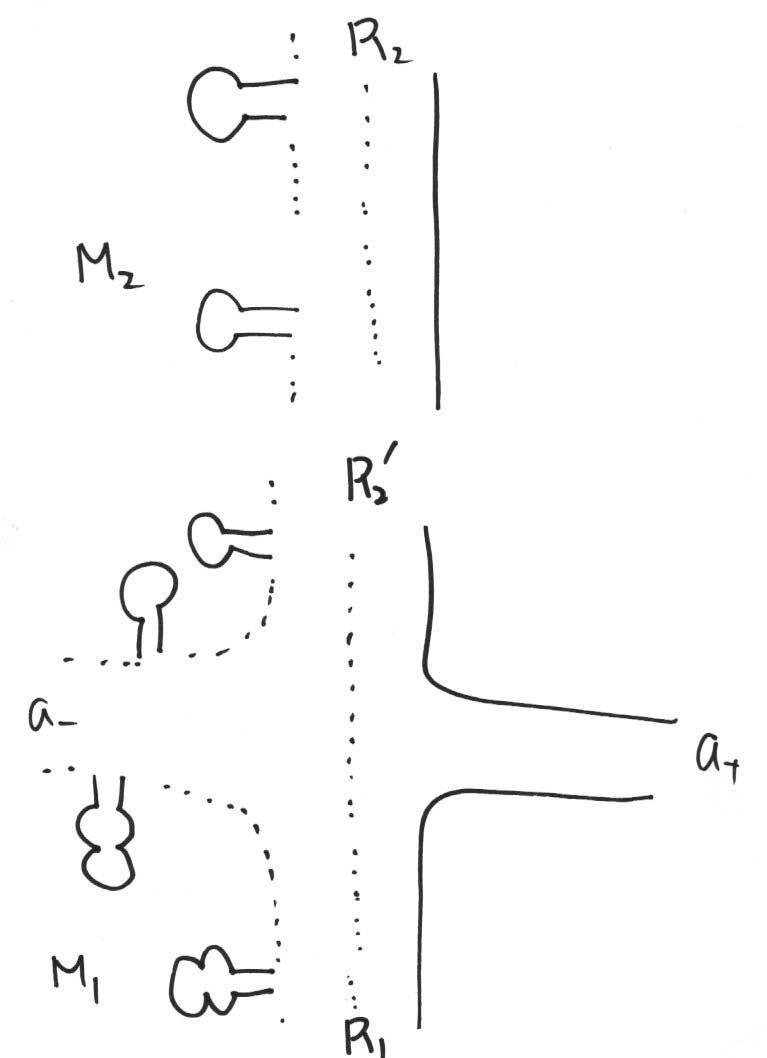}
\end{center}
\centerline{\bf Figure 5.9}
\par\medskip
Similarly the ends of type (II) is described by the fiber product:
\begin{equation}\label{5524444edcd2}
\aligned
&\overset{\circ}{{\mathcal M}}_{k'_1,k_2}((Y,\mathcal E_Y);a_-,a_+;R'_1,R_2;E_1) \\
&{}_{{\rm ev}^{\infty}_1}\times_{{\rm ev}_+} 
{{\mathcal M}}_{k''_1}((M_1,\mathcal E_1),R(M_1);R_1,R'_1;E_2).
\endaligned
\end{equation}
Here $k_1 = k'_1 + k''_1$, $E = E_1 + E_2$ and 
$R'_1$ is a connected component of $R(M_1) \times_{R(\Sigma)} R(M_1)$. 
(Figure 5.10)
\par
\begin{center}
\includegraphics[scale=0.25]
{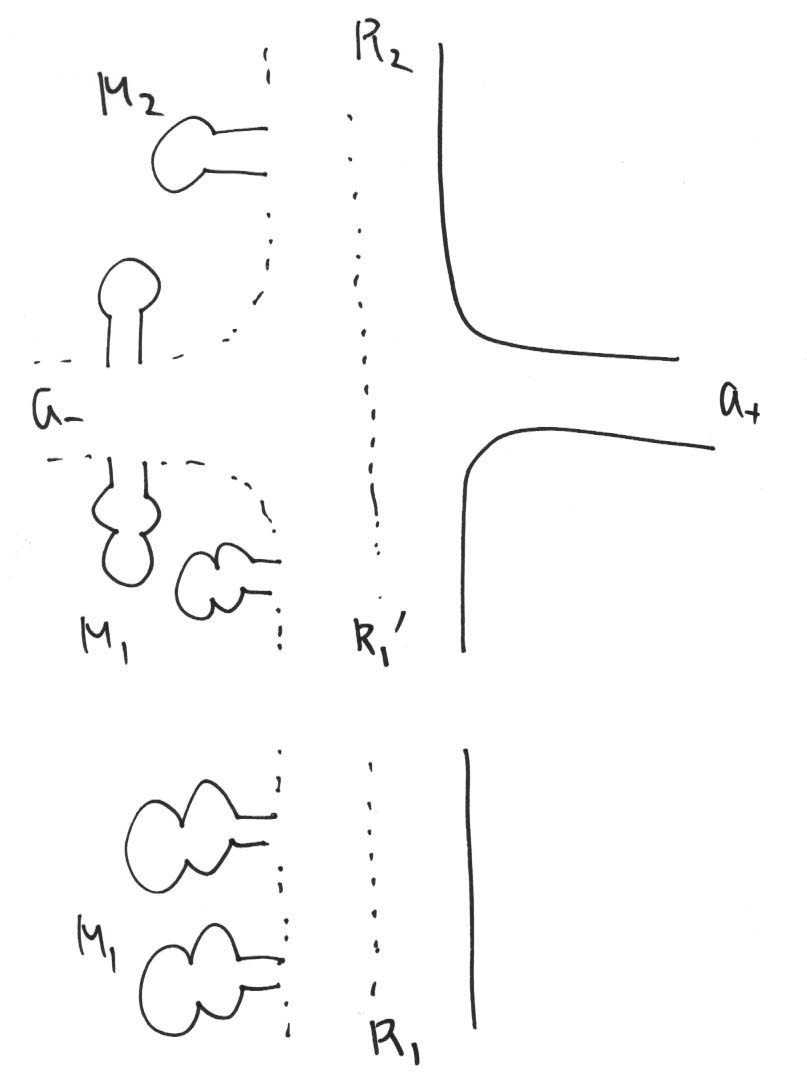}
\end{center}
\centerline{\bf Figure 5.10}
\par
\begin{prop}\label{prop5858}
We can compactify 
$\overset{\circ}{{\mathcal M}}_{k_1,k_2}((Y,\mathcal E_Y);a_-,a_+;R_1,R_2;E)$
to a compact Hausdorff space 
${{\mathcal M}}_{k_1,k_2}((Y,\mathcal E_Y);a_-,a_+;R_1,R_2;E)$.
It carries a virtual fundamental chain,
whose boundary is the sum  of the virtual fundamental chains of the
following 6 types of spaces.
\begin{enumerate}
\item
$$
{{\mathcal M}}_{k'_1,k'_2}((Y,\mathcal E_Y);a_-,a'_+;R_1,R_2;E_1) 
\times 
{{\mathcal M}}_{k''_1,k''_2}(R(M_1),R(M_2);\{a'_+\},\{a_+\};E_2),
$$
where the notations are as in (\ref{5527427}).
\item
$$
{{\mathcal M}}_{k_1,k_2}((Y,\mathcal E_Y);a'_-,a_+;R_1,R_2;E_1) \\
\times 
{{\mathcal M}}(a_-,a'_-;E_2),
$$
where the notations are as in (\ref{54444488}).
\item
$$
{{\mathcal M}}_{k_1,k'_2}((Y,\mathcal E_Y);a_-,a_+;R_1,R'_2;E_1) 
{}_{{\rm ev}^{\infty}_2}\times_{{\rm ev}_-} 
{{\mathcal M}}_{k''_2}((-M_2,\mathcal E_2),R(M_2);R'_2,R_2;E_2),
$$
where the notations are as in (\ref{5524444edcd}).
\item
$$
{{\mathcal M}}_{k'_1,k_2}((Y,\mathcal E_Y);a_-,a_+;R'_1,R_2;E_1) 
{}_{{\rm ev}^{\infty}_1}\times_{{\rm ev}_+} 
{{\mathcal M}}_{k''_1}((M_1,\mathcal E_1),R(M_1);R_1,R'_1;E_2),
$$
where the notations are as in (\ref{5524444edcd2}).
\item
$$
{{\mathcal M}}_{k'_1,k_2}((Y,\mathcal E_Y);a_-,a_+;R_1,R_2;E_1)
\,{}_{{\rm ev}^{(1)}_i}\times_{{\rm ev}_0}
{{\mathcal M}}_{k''_1}(R(M_1);E_2),
$$
where $k_1 +1 = k'_1 + k''_1$, $E = E_1 + E_2$, $i=1,\dots,k'_1$.
\item
$$
{{\mathcal M}}_{k_1,k'_2}((Y,\mathcal E_Y);a_-,a_+;R_1,R_2;E_1)
\,{}_{{\rm ev}^{(2)}_i}\times_{{\rm ev}_0}
{{\mathcal M}}_{k''_2}(R(M_2);E_2),
$$
where $k_2 +1 = k'_2 + k''_2$, $E = E_1 + E_2$, $i=1,\dots,k'_2$.
\end{enumerate}
\end{prop}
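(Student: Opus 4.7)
The plan is to follow exactly the same pattern used to prove Proposition~\ref{prop412} and, before that, Theorems~\ref{thm215} and~\ref{thm217}, since Proposition~\ref{prop5858} is the natural closed-3-manifold analogue of those statements. The set-up $(Y,\mathcal E_Y,\bf g)$ is again a mixture of a genuine Riemannian 4-dimensional piece where the ASD equation~(\ref{ASDeq}) holds and a degenerate piece $\Sigma\times W_2$ where the reduced equation~(\ref{ASDprod}) (hence, when $\chi\equiv 0$, a holomorphic map to $R(\Sigma)$) is imposed. So I would first establish compactness of the enlarged moduli space by combining three inputs exactly as in Theorem~\ref{thm215}: Uhlenbeck compactness (\cite{uh1},\cite{uh2}) for the gauge-theory part away from $\Sigma\times\overline{W_2}$, Gromov-type compactness (\cite{Gr},\cite{Ye}, together with \cite[Thm.~11.1]{FO} for the stable-map topology) for the holomorphic curves in $\Omega$, and the interface compactness \cite[Thms.~1.6,~1.7,~1.8]{fu3} for bubbles forming on $C=\Sigma\times(\{0\}\times\R)$. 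One has in addition to control bubbling at the four asymptotic ends; for this I would invoke the exponential asymptotic behaviour of Theorem~\ref{asymptotic} on three of the ends and the Floer-type exponential decay on the closed end $M\times(-\infty,-c]$. Hausdorffness is then routine, reproducing \cite[Lemma~10.4]{FO}.

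Next I would analyse the codimension-one strata in order to identify the six pieces of the boundary. There are precisely four sources of non-compact behaviour given by bubbling in each of the four end directions of $W$, plus disk bubbles on the two boundary components $\partial_1W$ and $\partial_4W$; interior sphere bubbles and bubbles in the interior of $C$ are codimension $\geq 2$ because they carry at least $2\pi^2$ of energy with a two-dimensional symmetry group, exactly as in Remark~\ref{rem2188}. The ends of types (I)--(IV) are identified respectively with the fibre products~(\ref{5527427}),~(\ref{5524444edcd}),~(\ref{54444488}),~(\ref{5524444edcd2}); the identifications are cylindrical-end gluing statements of three different flavours: a pure pseudo-holomorphic strip break (type (I), giving (1)); a pure gauge-theory break on the closed tube $M\times\R$ (type (III), giving (2)); and two mixed Lagrangian-boundary breaks where an ASD solution on $(\pm M_i)\times\R$ with Lagrangian boundary condition $R(M_i)$ in the sense of Definition~\ref{defn23} peels off (types (II),(IV), giving (3),(4)). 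The disk-bubble strata on $\partial_1 W$ and $\partial_4 W$ give (5) and (6) in the standard Fukaya/Akaho--Joyce way.

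The virtual fundamental chain is then assembled on top of this stratification. Because Uhlenbeck compactification is not a Kuranishi space (it carries strata of codimension $\geq 2$ where sphere bubbles concentrate), I would, as in Remark~\ref{rem2188}, use the generalised Kuranishi-with-singularity framework to be detailed in \cite{fu8}: on the top stratum and along each of the six codimension-one boundary faces one has an honest Kuranishi structure with corner inherited from Proposition~\ref{prop412} on the mixed side and from Definition~\ref{defn53} and the standard gauge-theoretic story on the closed side, and the codimension-$\geq 2$ bubbling strata are negligible for the purpose of defining a virtual fundamental chain. Monotonicity of $R(\Sigma)$ and the use of $\Z_2$ coefficients let one avoid signs and orientation issues, and bounds the relevant energies so that at each fixed $E$ only finitely many chains contribute (Theorem~\ref{thm215prime} and its analogue here).

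The principal obstacle is the same one flagged in Remark~\ref{rem2188}: making the above virtual-chain construction rigorous across the interface $\Sigma\times C$ where gauge-theory bubbling and symplectic bubbling can collide, and simultaneously across the four asymptotic ends where three qualitatively different linearised operators (cylindrical on $M\times\R$, cylindrical on $(\pm M_i)\times\R$ with Lagrangian boundary, and cylindrical pseudo-holomorphic strip with boundary on $R(M_1),R(M_2)$) must be glued coherently. Establishing a single Fredholm framework carrying all four gluing theorems and proving that the six listed pieces account for the entire codimension-one boundary are the steps I expect to demand the most care; once they are in place, the formal description of the boundary in (1)--(6) is forced by dimension counting and by the local model of each end.
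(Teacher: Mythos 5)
Your proposal is correct and follows the same route the paper takes, only with considerably more detail: the paper's own proof of Proposition~\ref{prop5858} is three sentences long, simply identifying the first four items with the ends of types (I)--(IV) and items (5),(6) with disk bubbles on $\partial_4W$ and $\partial_1W$, declaring all other bubbles to be of codimension $\geq 2$. Your identification of the ends with the fiber products (\ref{5527427}), (\ref{5524444edcd}), (\ref{54444488}), (\ref{5524444edcd2}), your compactness argument modeled on Theorem~\ref{thm215}, and your deferral of the virtual-chain construction to the generalized Kuranishi framework of Remark~\ref{rem2188} / \cite{fu8} all match what the paper does or silently presupposes.
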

\begin{proof}
The first 4 items describe the boundaries (I), (II), (III), (IV), respectively.
(5) describes the disk bubble on $\partial_4W$ and (6) 
describes the disk bubble on $\partial_1W$.
All the other bubbles occur in codimension 2 or higher.
\end{proof}
We next rewrite Proposition \ref{prop5858} to an algebraic formula.
\par
We first need a digression.
The second factor in Proposition \ref{prop5858} (3).
we used  $-M_2$ and we use 
${\rm ev}_-$ to take fibre product.
In the construction of right filtered $A_{\infty}$ module
structure in Section \ref{HF3bdfunctor} the evaluation map ${\rm ev}_-$
corresponds to the input variables.
When we change from $-M_2$ to $M_2$
the input variables becomes the output variables.
Namely we have an isomorphism
$$
{{\mathcal M}}_{k}((-M_2,\mathcal E_2),R(M_2);R_-,R_+;E)
\cong
{{\mathcal M}}_{k}((M_2,\mathcal E_2),R(M_2);R_+,R_-;E)
$$
which intertwine  $({\rm ev}_-,{\rm ev}_+)$ to $({\rm ev}_+,{\rm ev}_-)$.
Moreover $i$-th evaluation map ${\rm ev}_i
: {{\mathcal M}}_{k}((-M_2,\mathcal E_2),R(M_2);R_-,R_+;E)
\to R(M_2)$ becomes 
$(k-i)$-th evaluation map ${\rm ev}_{k-i}
: {{\mathcal M}}_{k}((M_2,\mathcal E_2),R(M_2);R_-,R_+;E)
\to R(M_2)$
by this isomorphism.
\par
This fact is used in Lemma \ref{lem5959} below.
\par
We now define the map
\begin{equation}
\aligned
\Phi_{k_1,k_2;E} :
&CF(M_1,R(M_1))
\otimes CF(R(M_1))^{k_2\otimes}\\
&\otimes  CF(R(M_1),R(M_2))
\\
&   \otimes CF(M_2,R(M_2)) \otimes CF(R(M_2))^{k_1\otimes}
\to CF(M;\mathcal E)
\endaligned
\end{equation}
by the formula
\begin{equation}
\aligned
&\Phi_{k_1,k_2;E}(y_1;
x^{(1)}_1,\dots,x^{(1)}_{k_1}; a_+;y_2;x^{(2)}_1,\dots,x^{(2)}_{k_2}) \\
&= 
\sum_{a_-} 
\#\big(
{{\mathcal M}}_{k_1,k_2}((Y,\mathcal E_Y);a_-,a_+;R_1,R_2;E)
\\
&\qquad
{}_{\rm ev}\times (
y_1\times x^{(1)}_1 \times \dots \times x^{(1)}_{k_1}  \times y_2
\times x^{(2)}_1\times \dots \times x^{(2)}_{k_2})
\big) [a_-].
\endaligned
\end{equation}
Here $y_1 \in CF(M_1,R(M_1))$ and $y_2 \in CF(M_2,R(M_2))$, 
$a_+ \in CF(R(M_1),R(M_2))$, $a_- \in CF(M;\mathcal E)$.
\par
We then put
\begin{equation}
\Phi_{k_1,k_2} = \sum_E T^E \Phi_{k_1,k_2;E}.
\end{equation}
\par
Proposition \ref{prop5858} implies 
the next formula:
\begin{lem}\label{lem5959}
$$
\aligned
&\partial(\Phi_{k_1,k_2}(
y_1;
x^{(1)}_1,\dots,x^{(1)}_{k_1};a_+;y_2;x^{(2)}_1,\dots,x^{(2)}_{k_2})) 
\\
=&
\sum \Phi_{k''_1,k''_2}(y_1;
x^{(1)}_1,\dots,x^{(1)}_{k''_1};\frak n_{k'_1,k'_2}(x^{(1)}_{k''_1+1},\dots,x^{(1)}_{k_1}
a_+; x^{(2)}_1,\dots,x^{(2)}_{k'_2}) ;y_2;x^{(2)}_{k'_2+1},\dots,x^{(2)}_{k_2}) \\
&+
\sum \Phi_{k''_1,k_2}(\frak n_{k'_1}(y_1;x^{(1)}_{1},\dots,x^{(1)}_{k'_1}); 
\dots,x^{(1)}_{k_1};
a_+;y_2;
x^{(2)}_1,\dots,x^{(2)}_{k_2}) \\
&+
\sum \Phi_{k_1,k''_2}(y_1;x^{(1)}_1,\dots,x^{(1)}_{k_1};a_+;
\frak n_{k'_2}(y_2;
x^{(2)}_{k_2},\dots,x^{(2)}_{k''_2+1}); x^{(2)}_{1},\dots,x^{(2)}_{k''_2}) \\
&+
\sum \Phi_{k''_1,k_2}(y_1;x^{(1)}_1,\dots,
\frak m_{k'_1}(x^{(1)}_i,\dots,x^{(1)}_{i+k'_1-1}),\dots,x^{(1)}_{k_1};a_+;y_2;
x^{(2)}_1,\dots,x^{(2)}_{k_2}) \\
&+
\sum \Phi_{k_1,k''_2}(y_1;x^{(1)}_1,\dots,x^{(1)}_{k_1};a_+;y_2;
x^{(2)}_1,\dots,\frak m_{k'_2}(x^{(2)}_i,\dots,x^{(2)}_{i+k'_2-1}),\dots,x^{(2)}_{k_2}).
\endaligned
$$
Here $\partial$ in the first line is Floer's boundary operator (Definition \ref{defn53}).
\end{lem}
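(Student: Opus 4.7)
The plan is to derive the identity directly from Proposition \ref{prop5858} by the usual Stokes/composition formula for virtual fundamental chains (compare \cite[Theorem 8.11]{foootech2}, \cite[Theorem 10.20]{foootech2}), establishing a bijective correspondence between the six boundary types of ${{\mathcal M}}_{k_1,k_2}((Y,\mathcal E_Y);a_-,a_+;R_1,R_2;E)$ and the six terms appearing in the asserted formula.

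First I would unwind the definition of $\Phi_{k_1,k_2}$ and observe that applying $\partial$ on the output $[a_-]$ and pairing with the virtual fundamental chain of ${{\mathcal M}}_{k_1,k_2}((Y,\mathcal E_Y);a_-,a_+;R_1,R_2;E)$ is equivalent to counting the codimension-one boundary of this moduli space, via Stokes. So the goal becomes to match each of the six boundary components listed in Proposition \ref{prop5858} with one of the six terms in the formula. Boundary type (2) in Proposition \ref{prop5858} is a fiber product with ${{\mathcal M}}(a_-,a'_-;E_2)$ whose count is by definition the matrix coefficient of $\partial$ (Definition \ref{defn53}, with Novikov weights as in (\ref{form56})); this gives exactly the left-hand side $\partial(\Phi_{k_1,k_2}(\dots))$ after moving it to the right. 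Boundary type (1) is a product with ${{\mathcal M}}_{k''_1,k''_2}(R(M_1),R(M_2);\{a'_+\},\{a_+\};E_2)$ whose count produces, by Definition \ref{defn47} and (\ref{form423}), precisely the bimodule operator $\frak n_{k'_1,k'_2}$ on $CF(R(M_1),R(M_2))$, yielding the first sum on the RHS.

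Next I would treat the two gauge-theory ends attached to $M_1$ and $-M_2$. Boundary type (4) involves ${{\mathcal M}}_{k''_1}((M_1,\mathcal E_1),R(M_1);R_1,R'_1;E_2)$, which is by construction the moduli space defining the right filtered $A_\infty$ module structure $\frak n_{k'_1}$ on $CF(M_1,R(M_1))$ (see Definition \ref{defn21919}); this contributes the second sum on the RHS. Boundary type (3) involves ${{\mathcal M}}_{k''_2}((-M_2,\mathcal E_2),R(M_2);R'_2,R_2;E_2)$; here I would use the orientation-reversal identification pointed out in the digression just before Lemma \ref{lem5959}, which says that replacing $-M_2$ by $M_2$ swaps $({\rm ev}_-,{\rm ev}_+)$ and reverses the cyclic order of the boundary evaluations. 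This converts the fiber product into the $\frak n_{k'_2}$ operator acting on $CF(M_2,R(M_2))$, producing the third sum on the RHS, with the reversal of the order $x^{(2)}_{k_2},\dots,x^{(2)}_{k''_2+1}$ being exactly the cyclic reversal coming from this identification. Finally, boundary types (5) and (6) are disk-bubble strata whose counts are the defining counts of the $\frak m_{k'_1}$ (resp.\ $\frak m_{k'_2}$) operations on $CF(R(M_1))$ (resp.\ $CF(R(M_2))$), giving the fourth and fifth sums.

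The main obstacle, as flagged in Remark \ref{rem2188} and the text following Proposition \ref{prop412}, is that ${{\mathcal M}}_{k_1,k_2}((Y,\mathcal E_Y);a_-,a_+;R_1,R_2;E)$ is not a space with a genuine Kuranishi structure: its Uhlenbeck stratum in the gauge-theory side has codimension $\ge 2$ singularities, so the naive de Rham / Kuranishi Stokes formula cannot be applied off the shelf. I would therefore work with the particular chain model and the generalized ``Kuranishi structure with singularities of codimension $\ge 2$'' whose detail is deferred to \cite{fu8}; the role of the high codimension is to guarantee that the relevant singular strata contribute nothing to the codimension-one boundary, so that only the six enumerated boundary types appear. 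Once that framework is in place, the algebraic identity is a direct book-keeping of contributions. A minor additional point is to verify that the Novikov weights match, which is immediate from energy additivity in (\ref{form428}) and (\ref{form5353}) since $E$ decomposes additively along every boundary stratum. With those checks, the sum of the six contributions gives exactly the asserted formula.
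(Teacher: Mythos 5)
Your proposal is correct and follows essentially the same approach as the paper: the proof there is just the one-line observation that the six lines of the formula correspond to boundary types (2), (1), (4), (3), (5), (6) of Proposition \ref{prop5858} respectively, which is exactly the matching you carry out in more detail (including the orientation-reversal identification for the $-M_2$ end and the deferral of the generalized virtual-chain technology to \cite{fu8}).
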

\begin{proof}
1st, 2nd, 3rd, 4th, 5th, 6th lines of the formal corresponds to items 
(2), (1), (4), (3), (5), (6) of Proposition \ref{prop5858}, respectively.
\end{proof}
We define a map
$\Phi : CF(R(M_1),R(M_2)) \to CF(M;\mathcal E)$ by
\begin{equation}
\Phi(a_+) 
= 
\sum_{k_1,k_2} \Phi_{k_1,k_2}({\bf 1}_{M_1};b_{M_1},\dots,b_{M_1};a_+;
{\bf 1}_{M_2};b_{M_2},\dots,b_{M_2}).
\end{equation}
\begin{lem}
$\Phi$ is a chain map. Namely
$$
\partial \circ \Phi = \Phi \circ {}^{b_{M_1}}(\frak n_0)^{b_{M_2}}
$$
\end{lem}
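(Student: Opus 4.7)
The plan is to derive the chain map identity as a direct specialization of Lemma \ref{lem5959}, in complete analogy with how Theorem \ref{them42} was deduced from Lemma \ref{lem413} via the cyclic element and bounding cochain structure. First, I would substitute into Lemma \ref{lem5959} the specific inputs $y_1 = {\bf 1}_{M_1}$, $y_2 = {\bf 1}_{M_2}$, $x^{(1)}_j = b_{M_1}$ for all $j$, and $x^{(2)}_j = b_{M_2}$ for all $j$, and then sum over all $k_1, k_2 \ge 0$. The left-hand side becomes $\partial(\Phi(a_+))$ by definition of $\Phi$.

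The next step is to identify each of the five sums on the right-hand side after specialization. The first line, under the substitution, becomes
\begin{equation*}
\sum_{k''_1,k''_2} \Phi_{k''_1,k''_2}\bigl({\bf 1}_{M_1}; b_{M_1},\dots,b_{M_1}; \, {}^{b_{M_1}}(\frak n_{0})^{b_{M_2}}(a_+); \, {\bf 1}_{M_2}; b_{M_2},\dots,b_{M_2}\bigr),
\end{equation*}
since summing $\frak n_{k'_1,k'_2}(b_{M_1},\dots;a_+;b_{M_2},\dots)$ over $k'_1,k'_2$ yields exactly ${}^{b_{M_1}}(\frak n_{0})^{b_{M_2}}(a_+)$ by the definition of the bimodule differential on $CF(R(M_1),R(M_2))$. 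Thus this term equals $\Phi\circ {}^{b_{M_1}}(\frak n_{0})^{b_{M_2}}(a_+)$, which is precisely the right-hand side we want.

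It remains to show that the remaining four sums vanish. The second sum contains the factor $\sum_{k'_1} \frak n_{k'_1}({\bf 1}_{M_1}; b_{M_1},\dots,b_{M_1})$, which is precisely $d^{b_{M_1}}({\bf 1}_{M_1})$; by Theorem \ref{thm39} (formula (\ref{1isccle})) this is zero, so the entire second sum vanishes. Analogously, the third sum vanishes because $\sum_{k'_2}\frak n_{k'_2}({\bf 1}_{M_2}; b_{M_2},\dots,b_{M_2}) = d^{b_{M_2}}({\bf 1}_{M_2}) = 0$. The fourth sum, after grouping all insertions at a fixed position, involves the factor $\sum_{k'_1\ge 0}\frak m_{k'_1}(b_{M_1},\dots,b_{M_1})$, which vanishes by the Maurer–Cartan equation (\ref{MCeq}) satisfied by $b_{M_1}$; similarly the fifth sum vanishes because $b_{M_2}$ satisfies Maurer–Cartan.

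The step that looks routine but deserves some care is verifying that, after summing over $k_1, k_2$ in Lemma \ref{lem5959}, the re-indexing of the telescoped sums correctly matches Maurer–Cartan (for the $\frak m$-insertions) and the defining sums of ${}^{b}\frak n$ (for the $\frak n$-insertions), with no residual boundary terms. This is the same combinatorial reorganization used in the proof of Corollary \ref{cor423} and in the final step of the proof of Theorem \ref{them42}, so no new algebraic idea is needed. The main conceptual content is already packaged into two inputs: the cyclic-element equations $d^{b_{M_i}}({\bf 1}_{M_i}) = 0$ (Theorem \ref{thm39}) and the Maurer–Cartan equations for $b_{M_1}$ and $b_{M_2}$; the present lemma is then a formal consequence of Lemma \ref{lem5959}.
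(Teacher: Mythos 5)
Your proof is correct and follows exactly the paper's argument: it specializes Lemma~\ref{lem5959} to $y_1 = {\bf 1}_{M_1}$, $y_2 = {\bf 1}_{M_2}$, $x^{(1)}_i = b_{M_1}$, $x^{(2)}_i = b_{M_2}$, identifies the first right-hand term after summation with $\Phi\circ{}^{b_{M_1}}(\frak n_0)^{b_{M_2}}$, and kills the remaining four sums via the cyclic-element equations $d^{b_{M_1}}({\bf 1}_{M_1}) = d^{b_{M_2}}({\bf 1}_{M_2}) = 0$ together with the Maurer--Cartan equations for $b_{M_1}$ and $b_{M_2}$. You also write the two cyclic-element vanishings with matched indices, $d^{b_{M_1}}({\bf 1}_{M_1})$ and $d^{b_{M_2}}({\bf 1}_{M_2})$, whereas the paper's proof as printed appears to have transposed the superscripts; yours is the correct identification.
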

\begin{proof}
We recall
$$
{}^{b_{M_1}}(\frak n_0)^{b_{M_2}}(a_+)
=
\sum_{k_1,k_2} \frak n_{k_1,k_2}(b_{M_1},\dots,b_{M_1};a_+;
b_{M_2},\dots,b_{M_2}).
$$
We put $y_1 = {\bf 1}_{M_1}$, $y_2 = {\bf 1}_{M_2}$,
$x^{(1)}_i = b_{M_1}$, $x^{(2)}_i = b_{M_2}$ in 
Lemma \ref{lem5959}.
Then the first line becomes $\partial \circ \Phi$.
The second line becomes 
${}^{\frak b_{M_1}}(\frak n_0)^{\frak b_{M_2}}(a_+)$.
The third line cancels since 
$d^{b_{M_2}}({\bf 1}_{M_1}) = 0$.
The fourth line cancels since 
$d^{b_{M_1}}({\bf 1}_{M_2}) = 0$.
The fifth line cancels since $b_{M_1}$ is a bounding cochain.
The sixth line cancels since 
$b_{M_2}$ is a bounding cochain.
\end{proof}
We thus defined a chain map (\ref{form51}).
To show that it is an isomorphism it suffices to prove the next:
\begin{lem}
$\Phi \equiv {\rm id} \mod \Lambda_{+}^{\Z_2}$.
\end{lem}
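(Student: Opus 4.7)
The plan is to isolate the contribution to $\Phi$ that is of $T$-order zero and show this contribution is the identity map on the generators $a_+ \in R(M) = R(M_1) \cap R(M_2)$. Since $b_{M_i} \in CF(R(M_i)) \otimes_{\Lambda_0^{\Z_2}} \Lambda_+^{\Z_2}$, modulo $\Lambda_+^{\Z_2}$ the only surviving term in the defining sum of $\Phi(a_+)$ is the $k_1 = k_2 = 0$, $E = 0$ piece, namely $\Phi_{0,0;0}({\bf 1}_{M_1}; a_+; {\bf 1}_{M_2})$. So it suffices to prove
\begin{equation*}
\Phi_{0,0;0}({\bf 1}_{M_1}; a_+; {\bf 1}_{M_2}) = a_+ \qquad (\text{mod } 2)
\end{equation*}
for every $a_+ \in R(M)$.

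First I would analyze $\overset{\circ}{\mathcal{M}}_{0,0}((Y,\mathcal{E}_Y); a_-, a_+; R_1, R_2; 0)$. Inspecting the energy formula (the analogue of Definition \ref{defnenergy2} in this setting), $E = 0$ forces $F_{\frak A} \equiv 0$ on $Y \setminus (\Sigma \times \overline{W_2})$, $\Vert \nabla u\Vert \equiv 0$ on $W_2$, and $\Vert \frak z\Vert = \Vert \frak w\Vert = 0$; by Condition \ref{conds28} (3) applied through item (9) of Definition \ref{defn4664}, stability forces $\Omega = W_2$ and $u$ to be constant. Moreover, the matching condition (item (10)) identifies this constant with $[A(s,t)]$ for $A$ the $\Sigma$-component of $\frak A$, so on $\Sigma \times W$ the connection $\frak A$ is (gauge equivalent to) the pullback of a flat connection on $\Sigma$. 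In short, $\frak A$ is a genuine flat connection on the smooth closed manifold obtained from $Y$ by the singular metric identification, and $u$ is a constant map.

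Next, the asymptotic boundary conditions ((a)-(d) of item (12)) extract four limits of this single flat connection: a flat connection on $M$ at ${\rm Re}\,z \to -\infty$ (with gauge class $a_-$), a flat connection on $-M_2$ at ${\rm Im}\,z \to +\infty$, a flat connection on $M_1$ at ${\rm Im}\,z \to -\infty$, and the constant value of $u$ (sitting in $R(\Sigma)$) at ${\rm Re}\,z \to +\infty$ which equals $a_+$ viewed in $R(\Sigma)$. Since $Y$ is connected and flat connections are determined up to gauge by their holonomy, all four restrictions descend from one holonomy representation of $\pi_1(Y)$; the restrictions to $M_1$ and to $-M_2$ then glue (along $\Sigma \times \{-\infty\}$) to give exactly the flat connection $a_-$ on $M$, while compatibility with the constant map $u$ forces that glued connection to be the unique flat connection on $M$ whose restriction to each component of $\Sigma$ is $a_+$. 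Consequently $a_- = a_+$ and the underlying moduli space is a single gauge-equivalence class; the corresponding components $R_1, R_2$ of the self-intersection of $R(M_1), R(M_2)$ are forced to be the diagonals at $a_+$.

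Finally I would perform the evaluation count. The insertion of ${\bf 1}_{M_1}$ (resp.\ ${\bf 1}_{M_2}$) against ${\rm ev}^\infty_1$ (resp.\ ${\rm ev}^\infty_2$) amounts, at $E=0$, to taking fiber product with the fundamental class of the diagonal component of $R(M_j) \times_{R(\Sigma)} R(M_j)$ through $a_+$, which contributes $1$ mod $2$; this uses the same observation as in the proof of Lemma \ref{lem38}, where the $E=0$ moduli space for $(M, L)$ consists of flat connections extended trivially in the $\R$-direction with constant $u$. Assembling these, $\Phi_{0,0;0}({\bf 1}_{M_1}; a_+; {\bf 1}_{M_2}) = a_+$, proving $\Phi \equiv {\rm id}$ mod $\Lambda_+^{\Z_2}$. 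The main obstacle I anticipate is the bookkeeping of gauge equivalence across the singular-metric region $\Sigma \times W_2$ and the confirmation that the degenerate asymptotic limits really glue to the expected flat connection on $M$; these are variants of the conclusions already embedded in Theorem \ref{asymptotic} (and the estimate \eqref{expesti}) applied at each of the four ends of $Y$.
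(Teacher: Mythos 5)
Your proposal is correct and follows essentially the same route as the paper's own (very brief) proof: reduce to $\Phi_{0,0;0}$ because $b_{M_1},b_{M_2}\equiv 0\bmod\Lambda_+^{\Z_2}$, observe that the energy-zero moduli space consists of a flat connection with constant $u$ and $\Omega=W_2$, conclude $a_-=a_+$, and use that ${\bf 1}_{M_1},{\bf 1}_{M_2}$ are fundamental classes to get the count $1 \bmod 2$. One small caution: the phrase ``all four restrictions descend from one holonomy representation of $\pi_1(Y)$'' is slightly loose, since $\frak A$ is only defined on $Y\setminus(\Sigma\times\overline{W_2})$ and is glued to the constant map $u$ across the degenerate region; what you really want is that $\frak A$ restricted to the $M_1$-part and the $-M_2$-part, matched along $\Sigma$ via the constant value of $u$ in $R(\Sigma)$, determines a single flat connection on $M=M_1\cup_\Sigma -M_2$, forcing $a_-=a_+$. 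With that rephrasing the argument is exactly the paper's.
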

\begin{proof}
We remark that both 
$CF(R(M_1),R(M_2))$ and $CF(M;\mathcal E)$
are free $\Lambda_{0}^{\Z_2}$ module with basis 
$R(M) = R(M_1) \cap R(M_2)$.
So the statement makes sense.
\par
Since $b_{M_1},b_{M_2} \equiv 0 \mod \Lambda_{+}^{\Z_2}$
it suffices to study $\Phi_{0,0;0}$.
\par
The map $\Phi_{0,0;0}$ is defined by the moduli space
${{\mathcal M}}_{0,0}((Y,\mathcal E_Y);a_-,a_+;R_1,R_2;0)$.
\par
From the definition it consists of equivalence classes of elements
$(\frak A,{
\frak z},{\frak w},\Omega,u)$
such that $\frak A$ is a flat connection $u$ is a constant map 
and $\Omega = W$.
Therefore $\frak A = a_- = a_+$.
Using the fact that ${\bf 1}_{M_1}$ and ${\bf 1}_{M_2}$ are fundamental classes 
also we can prove the lemma easily.
\end{proof}
The proof of Theorem \ref{mainthm}  and other results stated 
in the introduction are now complete
modulo the construction and the check of its basic properties of the 
moduli spaces we used.
\qed
\begin{rem}\label{rem513}
This remark is a continuation of Remark \ref{rem415}.
Let $M_1$ and $M_2$ be symplectic manifolds and $L_{12}$
an immersed Lagrangian submanifold of $M_1 \times  -M_2$.
Let $L_1$, $L'_1$ be immersed Lagrangian submanifolds of $M_1$.
We obtain an immersed Lagrangian submanifolds $L_2$ (resp. $L'_2$) of $M_2$ 
by using $L_1$ and $L_{12}$ (resp. $L'_1$ and $L_{12}$).
We assume $M_1$, $M_2$, $L_{12}$, $L_1$, $L'_1$ are spin.
Then $L_2$ and $L'_2$ are spin.
Let $b_1$, $b'_1$, $b_{12}$ be bounding cochains of 
the filtered $A_{\infty}$ algebra associated to $L_1$, $L'_1$,
$L_{12}$, respectively.
\par
Then by Theorem \ref{WWfunc}, we obtain bounding cochains 
$b_{2}$ and $b'_2$ of $L_2$ and $L'_2$, respectively.
In the same way as the construction of the chain map 
(\ref{form51}) and the proof of Theorem \ref{mainthm} (2)
we can construct a canonical homomorphism
\begin{equation}
\Phi_{(L_{12},b_{12})} : 
HF((L_1,b_1),(L'_1,b'_1)) \to HF((L_2,b_2),(L'_2,b'_2)).
\end{equation}
The construction is based on the following Figure 5.11.
\par
\begin{center}
\includegraphics[scale=0.25]
{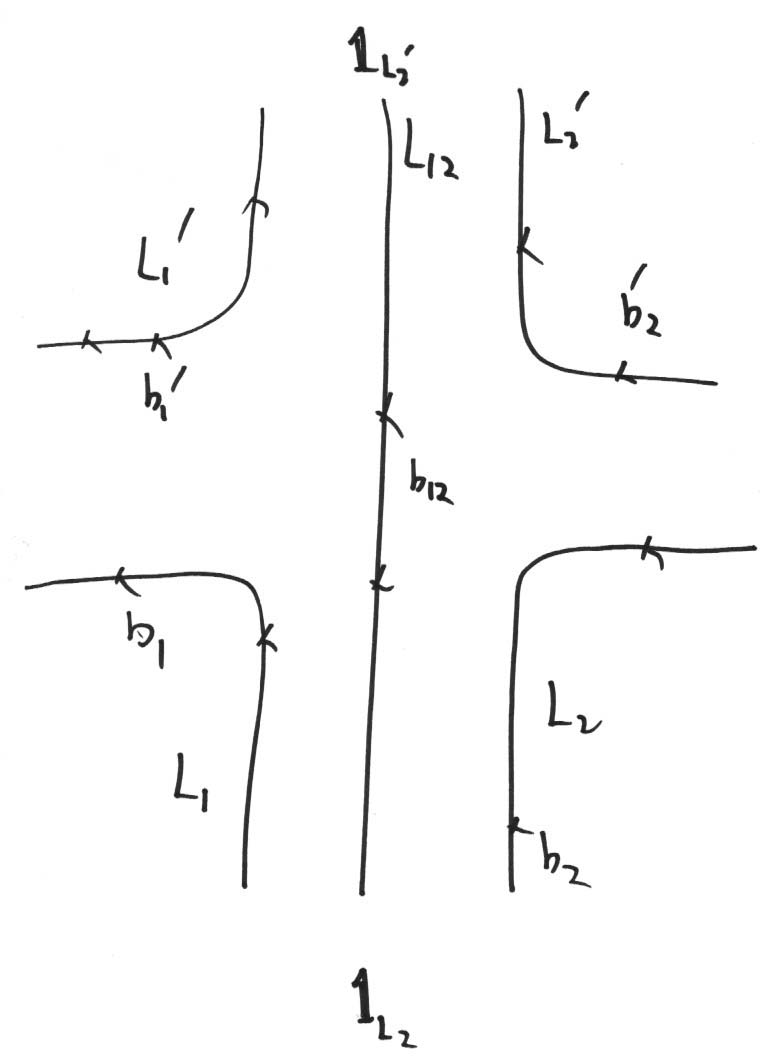}
\end{center}
\centerline{\bf Figure 5.11}
\par\medskip
We can enhance the homomorphism $\Phi_{(L_{12},b_{12})}$ to 
an $A_{\infty}$ functor
\begin{equation}\label{formula525}
\Phi_{(L_{12},b_{12})} : 
\mathscr{FUK}(M_1) \to \mathscr{FUK}(M_2).
\end{equation}
Note this map $\Phi_{(L_{12},b_{12})}$ may not be an isomorphism 
in general. In fact $L_1 \cap L'_1 \ne L_2 \cap L'_2$ in general.
\par
This functor was constructed by Ma'u-Wehrheim-Woodward
\cite{MWW} under certain additional assumptions.
In the same way as Lekili-Lipyanskiy \cite{LL}
we can prove its compatibility with composition.
Namely:
\begin{thm}
Let $M_3$ be another spin symplectic manifold 
and $L_{23}$ an immersed spin Lagrangian submanifold of 
$M_2 \times -M_3$.
Suppose $b_{23}$ is a bounding cochain of $L_{23}$.
Let $\tilde L_{13} = \tilde L_{12} \times_{M_2} \tilde L_{23}$.
(Then $i_{L_{13}} : \tilde L_{13} \to L_{13} \subset M_1 \times M_3$ is an 
immersed Lagrangian submanifold.)
\par
Applying Lagrangian correspondence $L_{23}$ to $L_2$ and $L'_2$ 
we obtain immersed Lagrangian submanifolds $L_3$ and $L'_3$ respectively.
(They coincide with the immersed Lagrangian submanifolds obtained from 
$L_1$ and $L'_1$ by $L_{13}$.)
\par
We apply Theorem \ref{WWfunc}  to $(L_2,b_2)$ (resp. $(L'_2,b'_2)$) and $(L_{23},b_{23})$ 
and obtain bounding cochain $b_{3}$ (resp. $b'_3$) of $L_3$ (resp. $L'_3$).
\begin{enumerate}
\item
The immersed Lagrangian submanifold 
$L_{13}$ is unobstracted. Moreover the bounding cochains $b_{12}$ and $b_{23}$ induce a 
bounding cochain $b_{13}$ of $L_{13}$ in a canonical way, up to gauge equivalence.
\item
The bounding cochain $b_3$ (resp. $b'_3$) is gauge equivalent to the bounding cochains 
obtained by applying Theorem \ref{WWfunc} to 
$(L_{13},b_{13})$ and  $b_3$ (resp. $b'_3$).
\item
We have equality
$$
\Phi_{(L_{23},b_{23})}\circ\Phi_{(L_{12},b_{12})}
=\Phi_{(L_{13},b_{13})}:
HF((L_1,b_1),(L'_1,b'_1)) \to HF((L_3,b_3),(L'_3,b'_3)).
$$
\end{enumerate}
\end{thm}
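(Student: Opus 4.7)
The plan is to mirror, in the symplectic Lagrangian-correspondence setting, the two-step strategy that proves Theorem \ref{mainthm}: first produce the bounding cochain $b_{13}$ by an application of Proposition \ref{thm35} to a suitable moduli space with a cyclic element, and then establish the composition identity by constructing a chain-level interpolation using a quilted version of the domain $Y$ of Section \ref{sec:glue}. Part (2) will follow as a byproduct of the same cobordism once uniqueness of the cyclic bounding cochain from Proposition \ref{thm35} is invoked.

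For part (1), I would build a right filtered $A_\infty$ module $CF((L_{12},b_{12})\circ(L_{23},b_{23}),L_{13})$ over $CF(L_{13})$ by counting pseudo-holomorphic quilts whose domain has two parallel seams mapped to $L_{12}$ and $L_{23}$ respectively (so $u$ is a triple $u_1,u_2,u_3$ into $M_1,M_2,M_3$), with the outer boundary carrying $L_{13}$-condition and with $b_{12}, b_{23}$ inserted on the internal seams to kill disk bubbles there (as in Remark \ref{rem312}). Since $\tilde L_{13}=\tilde L_{12}\times_{M_2}\tilde L_{23}$, the fiber product $\tilde L_{13}\times_{M_1\times M_3}\tilde L_{13}$ contains a distinguished diagonal component, and one checks that the fundamental class ${\bf 1}_{L_{13}}$ of this diagonal is a cyclic element in the sense of Definition \ref{defn33}: the argument is identical to Lemma \ref{lem38}, isolating the zero-energy configurations where the three maps are constant. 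Proposition \ref{thm35} then produces the desired $b_{13}$, uniquely characterized by $d^{b_{13}}({\bf 1}_{L_{13}})=0$.

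For part (3), I would set up a quilted domain $W\subset\C$ with four asymptotic ends and three interior seams, modeled on Condition \ref{conds54}, so that $u$ splits into maps into $M_1,M_2,M_3$ on three regions $W_1,W_2,W_3$, with seams along $L_{12}$ and $L_{23}$. The two outer boundary components of $W$ carry $L_1,L'_1$ or $L_3,L'_3$ according to the region. Counting solutions as in Definition \ref{defn4664} and Lemma \ref{lem5959}, inserting $b_{12},b_{23}$ on the internal seams and $b_1,b'_1$ on the outer boundary, yields a chain map. Its analysis of ends, parallel to Proposition \ref{prop5858}, shows that the ends in which the $M_2$-region stretches to infinity yield $\Phi_{(L_{23},b_{23})}\circ\Phi_{(L_{12},b_{12})}$, while the ends in which the $M_2$-region collapses (a strip-shrinking degeneration) yield $\Phi_{(L_{13},b_{13})}$; all other ends cancel thanks to the Maurer–Cartan equations for $b_{12},b_{23},b_1,b'_1$ and the cyclic element identities in the style of the proof at the end of Section \ref{sec:glue}. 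Part (2) is obtained from the same interpolation applied with $L_1=L_2$ replaced by the diagonal factors that appear in Theorem \ref{WWfunc}, combined with the uniqueness clause of Proposition \ref{thm35}.

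The main obstacle will be the strip-shrinking degeneration of the $M_2$-region, which is the genuine analytic input underlying Wehrheim–Woodward functoriality; here one needs a version of the Wehrheim–Woodward–Lekili–Lipyanskiy strip-shrinking theorem valid in the immersed and unobstructed setting, so that the limit of quilts with arbitrarily thin $M_2$-strip is identified with quilts using $L_{13}$ directly and the induced bounding cochain $b_{13}$. A secondary technical issue is the coherent choice of Kuranishi-type perturbations across the interpolation, ensuring compatibility with the perturbations used in constructing $\Phi_{(L_{12},b_{12})}$, $\Phi_{(L_{23},b_{23})}$, and $\Phi_{(L_{13},b_{13})}$; this is handled by the standard cobordism-of-perturbations argument already referenced for the independence of $b_M$ in Theorem \ref{thm39}, and the full analytical details would be deferred to \cite{fu8}, \cite{fu9}.
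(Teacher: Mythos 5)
Your proof of part (1) is essentially the paper's: both build a right filtered $A_\infty$ module over $CF(L_{13})$ from a quilt domain whose internal seams are labelled by $L_{12}$ and $L_{23}$ (with $b_{12},b_{23}$ inserted to cancel disk bubbles there), identify the fundamental chain of $L_{13}$ inside
$\tilde L_{13}\times_{M_1\times M_3}\tilde L_{13}$
as a cyclic element by isolating the zero-energy configurations, and then apply Proposition \ref{thm35}. The only cosmetic difference is that the paper realizes this domain as an infinite cylinder split into three strips by the seams $C_{12},C_{23},C_{13}$ (Figure 5.12, cast as a Floer complex for the pair $L_{12}\times L_{23}\times L_{13}$ and the diagonal in $(M_1\times M_2\times M_3)^2$), whereas you describe an outer boundary carrying $L_{13}$; either realization supports the same algebra.

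Your argument for (2) and (3) has a genuine gap: it hinges on a strip-shrinking degeneration of the $M_2$-region, which is exactly the analytic step the paper (following Lekili--Lipyanskiy) is designed to \emph{avoid}. Section \ref{other moduli} explicitly points out that beyond the monotone case strip shrinking produces figure-eight bubbles whose contribution is nonzero and whose analysis is not established in the immersed, unobstructed setting, and that the whole point of Proposition \ref{thm35} together with the cobordism arguments is to bypass it; so ``a version of the strip-shrinking theorem valid in the immersed and unobstructed setting'' is not a technicality to defer but an open problem. Your appeal to the uniqueness clause of Proposition \ref{thm35} for part (2) also fails: $b_3$ (built from the quilt using $(L_2,b_2)$ and $(L_{23},b_{23})$) and the candidate built from $(L_{13},b_{13})$ and $(L_1,b_1)$ arise from Proposition \ref{thm35} applied to two \emph{different} right $A_\infty$ modules over $CF(L_3)$ (different quilt geometries), so they are not the unique solution of a common fixed-point problem and no equality follows from uniqueness alone. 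What the paper actually does for (2) is prove two Floer-cohomology isomorphisms, (\ref{form526}) and (\ref{form527}), each via a chain map coming from a fixed quilt domain whose only codimension-one boundary strata are translation-at-infinity breakings (never a collapsing strip), combine these with the immersed analogue of Theorem \ref{thm416}, and then conclude gauge equivalence of the two bounding cochains from $A_\infty$ Yoneda; part (3) is not proved in this paper at all (it is deferred to \cite{fu9}). You should therefore replace the adiabatic/shrinking limit by quilt domains with additional fixed-width ends carrying the cyclic element as asymptotic condition, reducing everything to the chain-map-congruent-to-the-identity-mod-$\Lambda_+$ pattern already used repeatedly in Sections \ref{sec:represent} and \ref{sec:glue}.
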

\begin{proof}[Sketch of the proof]
Proof of (1): 
We consider the maps $u$ from the cylinder of infinite length in the 
Figure 5.12 below, where three regions 
$W_1$, $W_2$ and $W_3$ are mapped to 
$M_1$, $M_2$ and $M_3$, respectively, by a pseudo-holomorphic map $u$. 
We also require that the three lines 
$C_{12}$, $C_{23}$ and $C_{13}$
are mapped to $L_{12}$, $L_{23}$ and $L_{13}$ respectively.
\par
We are in Bott-Morse situation and the asymptotic limit as we go to plus 
or minus infinity is an element of 
$$
L_{13} \times_{M_1\times M_3} L_{13}
= (L_{12} \times L_{23} \times L_{13}) \times_{(M_1\times M_2 \times M_3)^2}
\Delta_{M_1\times M_2 \times M_3}
$$
Here $\Delta_{M_1\times M_2 \times M_3}$ is a diagonal.
(We can work out analytic detail 
by regarding this moduli space as those to define 
Floer homology
$$
HF(L_{12} \times L_{23} \times L_{13},\Delta_{M_1\times M_2 \times M_3})
$$
of a pair of immersed Lagrangian submanifolds of 
$(M_1\times M_2 \times M_3)^2$.)
\par
We use bounding cochains $b_{12}$, $b_{23}$ to cancel the effect of 
bubbles on $C_{12}$, $C_{23}$, respectively, in the same way as 
\cite{fooobook}. 
\par
Then using marked points on the line $C_{13}$, we can define a structure of 
filtered right $A_{\infty}$ module on 
$CF(L_{12} \times L_{23} \times L_{13},\Delta_{M_1\times M_2 \times M_3})$
over the filtered $A_{\infty}$ algebra $CF(L_{13})$.
In the same way as Section \ref{sec:unbolt}, 
we can show that the fundamental class is a cyclic element of this right 
filtered $A_{\infty}$ module.
Thus applying Proposition \ref{thm35}, 
we obtain $b_{13}$.
\par
\begin{center}
\includegraphics[scale=0.25]
{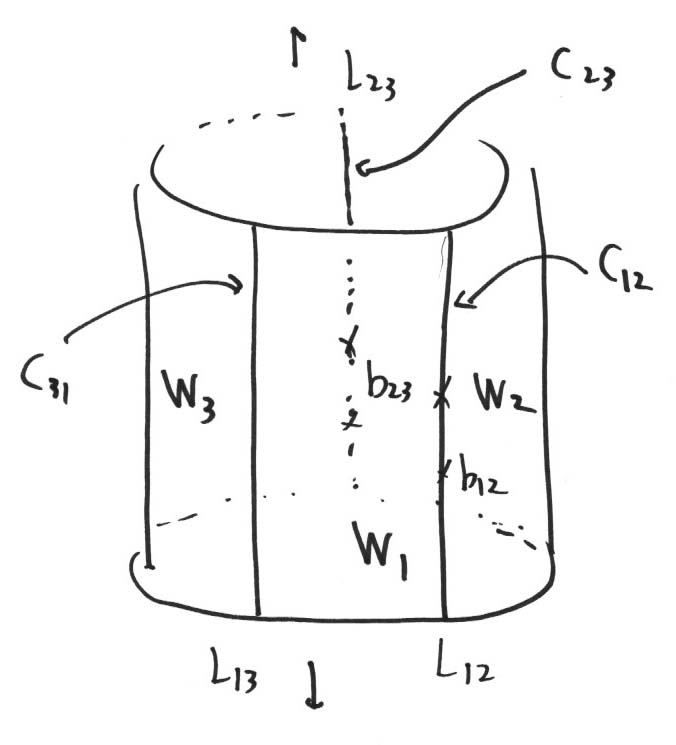}
\end{center}
\centerline{\bf Figure 5.12}
\par\medskip
Proof of (2):
We denote by $b_{3}^{1}$ (resp. $b_{3}^{\prime 1}$) be the 
bounding cochain on $L_3$ (resp. on $L'_3$) 
obtained from $(L_{23},b_{23})$ and $(L_2,b_2)$ (resp.
$(L'_2,b'_2)$)). 
We remark that $(L_2,b_2)$ (resp.
$(L'_2,b'_2)$) is obtained from $(L_{12},b_{12})$ and $(L_1,b_1)$ (resp.
$(L'_1,b'_1)$).
\par
We denote by $b_{3}^{2}$ (resp. $b_{3}^{\prime 2}$) the 
bounding cochain on $L_3$ (resp. on $L'_3$) 
obtained from $(L_{13},b_{13})$ and $(L_1,b_1)$ (resp.
$(L'_1,b'_1)$)). 
\par Let $(L,b)$ be a pair of an immersed spin Lagrangian submanifold of $M_3$ and 
its bounding cochain. We consider the moduli space of 
pseudo-holomorphic quilt as in Figure 5.13 below:
\par
\begin{center}
\includegraphics[scale=0.25]
{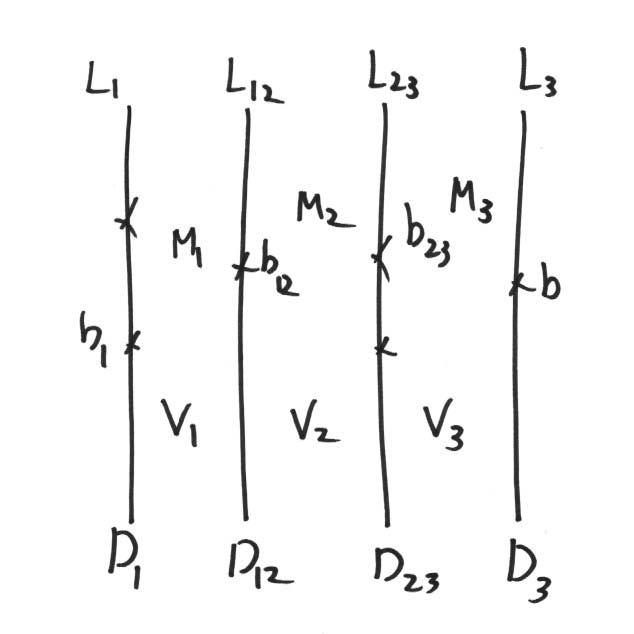}
\end{center}
\centerline{\bf Figure 5.13}
\par\medskip
Here the domains $V_1$, $V_2$ and $V_3$ are mapped to $M_1$, $M_2$ and
$M_3$ respectively and the maps are pseudo-holomorphic.
We also require the four lines $D_1$, $D_{12}$, $D_{23}$ and $D_3$ are mapped to 
$L_1$, $L_{12}$, $L_{23}$, $L$  respectively.
Using $b_1$, $b_{12}$, $b_{23}$ and $b$ to cancel the bubble 
on the lines $D_1$, $D_{12}$, $D_{23}$ and $D_3$, respectively,  
we obtain a chain complex, on the $\Lambda_0^{\Q}$ module.
$$
C(L_1 \times_{M_1} L_{12} \times_{M_2} L_{23} \times_{M_3} L;\Lambda_{0}^{\Q}).
 = CF(L_1;L_{12};L_{23};L)
$$
We write its cohomology by
$HF((L_1,b_1);(L_{12},b_{12});(L_{23},b_{23});(L,b))$.
We claim
\begin{equation}\label{form526}
HF((L_1,b_1);(L_{12},b_{12});(L_{23},b_{23});(L,b)) 
\cong HF((L_1,b_1) \times (L,b);(L_{13},b_{13})).
\end{equation}
The proof of (\ref{form526}) is basically the same as the proof of the 
corresponding result  in \cite{LL}.
Namely we use the next Figure 5.14.
(Figure 5.14 is the same as  \cite[Figure 1]{LL}.)
\par\newpage
\begin{center}
\includegraphics[scale=0.25]
{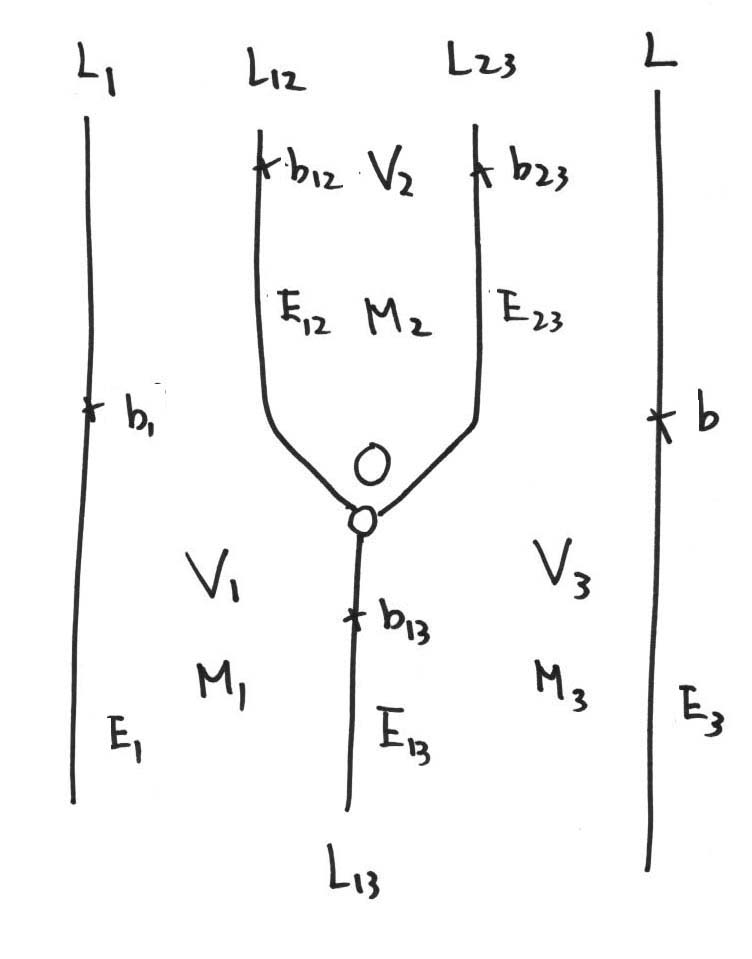}
\end{center}
\centerline{\bf Figure 5.14}
\par\medskip
We consider maps $u$ from $V$ in Figure 5.14 such that 
$V_1$, $V_2$, $V_3$ are mapped by $u$  to $M_1$, 
$M_2$, $M_3$, respectively.
We also require that the curves $E_1$, $E_{12}$, $E_{23}$, $E_{13}$, $E_3$
are mapped to $L_1$, $L_{12}$, $L_{23}$, $L_{13}$, $L$, respectively.
Moreover we conformally identify a neighborhood of the point $O$ as the 
(half of the) cylinder as in Figure 5.12.
\par
We use bounding cochains $b_1$, $b_{12}$, $b_{23}$, $b_{13}$, $b$
to cancel bubbles on the lines $E_1$, $E_{12}$, $E_{23}$, $E_{13}$, $E_3$,
respectively, in the same way as \cite{fooobook}.
At the point $O$ we use the cyclic element (the fundamental chain) to 
define asymptotic boundary condition.
\par
We remark that $V_1$, $V_2$, $V_3$ are in the clock-wise order 
in Figure 5.14 and are in counter-clock-wise order 
in Figure 5.12. This changes the cycle we put to $O$ from output variable 
to input variable.
\par
Then in the same way as the proof of Theorem \ref{mainthm} (2) given 
in this section, the remaining boundary component is 
the next two cases.
\begin{enumerate}
\item[(I)]
A pseudo-holomorphic strip escape to the direction ${\rm Im}z \to +\infty$.
\item[(II)]
A pseudo-holomorphic strip escape to the direction ${\rm Im}z \to -\infty$.
\end{enumerate}
(I) gives the boundary operator defining 
$HF((L_1,b_1);(L_{12},b_{12});(L_{23},b_{23});(L,b))$.
(II) given the boundary operator defining
$HF((L_1,b_1) \times (L,b);(L_{13},b_{13}))$.
Therefore we obtain a chain map between two 
chain complexes defining them.
We can show that this chain map is congruent to the 
identity map mod $\Lambda_{0}^{\Q}$, 
since the energy zero element of this moduli space 
is a constant map.
This proves (\ref{form526}).
\par
We next prove:
\begin{equation}\label{form527}
HF((L_1,b_1);(L_{12},b_{12});(L_{23},b_{23});(L,b)) 
\cong HF((L_3,b_{3}^{1}),(L,b)).
\end{equation}
To prove (\ref{form527}) we consider the next Figure 5.15.
We consider a map $u$ such that it is a map to $M_1$, $M_2$, $M_3$ 
on the domain $U_1$, $U_2$, $U_3$, respectively.
We also require $u$ maps $F_1$, $F_2$, $F_3$, $F'_3$, $F_{12}$, $F_{23}$ to 
$L_1$, $L_2$, $L_3$, $L$, $L_{12}$, $L_{23}$ respectively.
\par
We put bounding cochains $b_1$, $b_2$, $b_3^{1}$, $b$, $b_{12}$, $b_{23}$
on $F_1$, $F_2$, $F_3$, $F'_3$, $F_{12}$, $F_{23}$, respectively to cancel 
the contribution of disk bubbles there.
\par
We regard the ends $O_{12}$ and $O_{23}$ are as Figure 4.15 
and put asymptotic boundary conditions by using fundamental classes there.
\par
\begin{center}
\includegraphics[scale=0.25]
{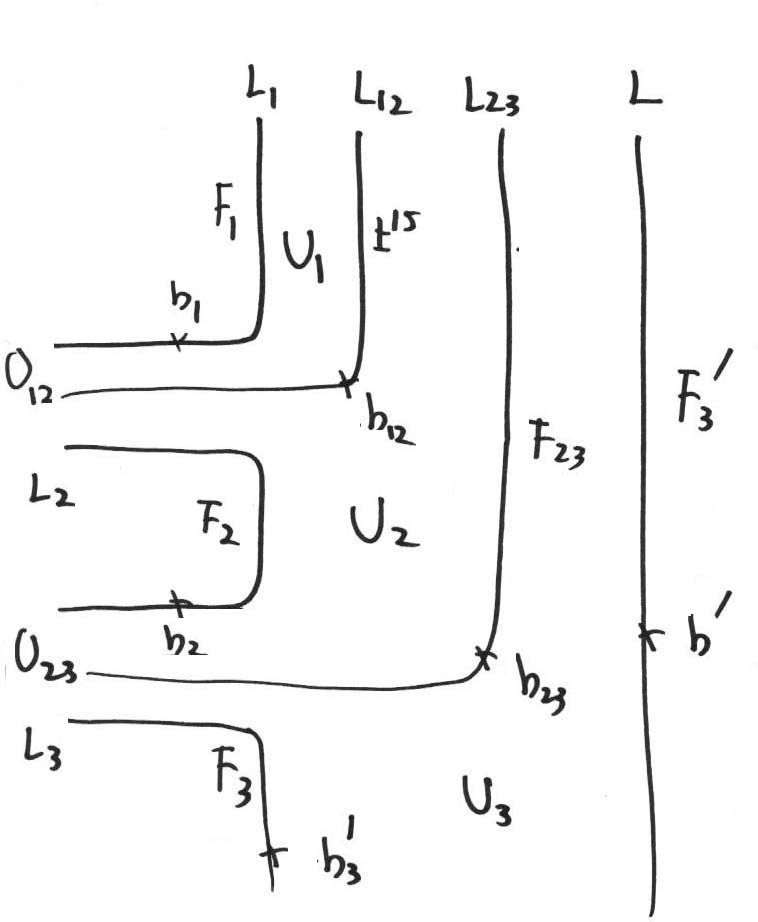}
\end{center}
\centerline{\bf Figure 5.15}
\par\medskip
Thus the remaining boundary component of this moduli space is 
as in (I) and (II) above.
(I)  corresponds to the boundary operator defining left hand side of 
(\ref{form527}). 
(II) corresponds to the  boundary operator defining right hand side of 
(\ref{form527}). 
Therefore we obtain a chain map.
Using the fact that energy zero solution corresponds to the 
constant map and we put fundamental classes at $O_{12}$ and $O_{23}$,
we find that this chain map is congruent to the identity map modulo $\Lambda_0^{\Q}$. 
We proved (\ref{form527}).
\par
On the other hand, Theorem \ref{WWfunc} implies
\begin{equation}\label{form528}
HF((L_1,b_1) \times (L,b),(L_{13},b_{13}))
\cong HF((L_3,b_{3}^{2}),(L,b)).
\end{equation}
Combining (\ref{form526}), (\ref{form527}), (\ref{form528}) we find
\begin{equation}\label{form529}
HF((L_3,b_{3}^{1}),(L,b)) \cong HF((L_3,b_{3}^{2}),(L,b)).
\end{equation}
It is straightforward to check that the isomorphism (\ref{form529}) is functorial.
Namely two filtered $A_{\infty}$ functors represented by 
$(L_3,b_{3}^{1})$ and by $(L_3,b_{3}^{2})$ are homotopy equivalent.
Using $A_{\infty}$-Yoneda's lemma two objects 
$(L_3,b_{3}^{1})$ and $(L_3,b_{3}^{2})$ are homotopy equivalent.
It implies (2).
\par
We omit the proof of (3). (We will prove it in \cite{fu9}.)
We can also enhance the isomorphism in (3) to a homotopy equivalence between 
two filtered $A_{\infty}$ functors.
\end{proof}
\end{rem}

\section{Concluding remarks}\label{conceal}

\subsection{Topological field theory description}\label{topofie}

We remark that the results of this paper provides 2-3 dimensional topological
field theory picture of Gauge theory Floer homology.
(Such a picture is initiated by \cite{Don}, \cite{fu1} and also by G. Segal, 
in early 1990's.)
\par
Let $(M,\mathcal E_M)$ be as in Situation \ref{situ21}.
We divide $\partial M$ into two pieces, input part $\partial_{{\rm in}}M
= \Sigma_{{\rm in}}$
and output part  $\partial_{{\rm in}}M = \Sigma_{\rm out}$.
For  $\partial_{{\rm out}}M = \Sigma_{{\rm out}}$  we invert the orientation.
\begin{defn}
We call this situation that 
$(M,\mathcal E_M)$ is a {\it cobordism} from $(\Sigma_{{\rm in}},\mathcal E_{{\rm in}})$  to $(\Sigma_{{\rm out}},\mathcal E_{\rm out})$.
\end{defn}
\par
The space $R(\Sigma)$ with it symplectic structure $\omega_{\Sigma}$ 
is written as 
$$
(R(\Sigma),\omega_{\Sigma}) 
= (R(\Sigma_{{\rm in}}),\omega_{\Sigma_{{\rm in}}})
\times 
(R(\Sigma_{{\rm out}}),-\omega_{\Sigma_{{\rm out}}}).
$$
The space of flat connections $R(M)$ of $M$ is an immersed Lagrangian submanifold 
of it.
By Theorem \ref{mainthm} (1) we obtain a bounding cochain $b_M$ of 
the filtered $A_{\infty}$ algebra associated to $R(M)$.
Therefore (\ref{formula525}) induces a filtered $A_{\infty}$
functor
\begin{equation}
\Phi_{M} : 
\mathscr{FUK}(R(\Sigma_{{\rm in}})) \to \mathscr{FUK}(R(\Sigma_{{\rm out}})).
\end{equation}
This construction behave functorially as follows.
Let $(M_{12},\mathcal E_{12})$ 
(resp. $(M_{23},\mathcal E_{23})$) be a cobordism from $(\Sigma_1,\mathcal E_1)$ to $(\Sigma_2,\mathcal E_2)$ (resp. from $(\Sigma_2,\mathcal E_2)$ to $(\Sigma_3,\mathcal E_3)$).
\par
We glue $(M_{12},\mathcal E_{12})$  and
$(M_{23},\mathcal E_{23})$ along $(\Sigma_2,\mathcal E_2)$
to obtain $(M_{13},\mathcal E_{13})$.
\begin{thm}\label{thm62}
The composition $\Phi_{M_{23}}\circ \Phi_{M_{12}}$ is homotopy equivalent to 
$\Phi_{M_{13}}$ as filtered $A_{\infty}$ functors.
\end{thm}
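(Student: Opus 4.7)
The plan is to combine the gauge-theory gluing argument of Section \ref{sec:glue} with the Lagrangian-correspondence composition argument sketched in Remark \ref{rem513}, and then invoke $A_\infty$-Yoneda to pass from an identification of Floer groups to homotopy equivalence of the functors $\Phi_{M_{13}}$ and $\Phi_{M_{23}}\circ\Phi_{M_{12}}$. First I would establish the basic identification at the level of critical sets: the restriction maps give a natural isomorphism
\begin{equation}
R(M_{13}) \;\cong\; R(M_{12}) \times_{R(\Sigma_2)} R(M_{23})
\end{equation}
of immersed Lagrangians in $R(\Sigma_1)\times R(\Sigma_3)$, via the same Mayer--Vietoris type argument used for the closed gluing $R(M)\cong R(M_1)\cap R(M_2)$ in Section \ref{sec:glue}. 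Combined with Theorem \ref{thm39}, this identification produces two a priori bounding cochains on $R(M_{13})$: the canonical one $b_{M_{13}}$ produced by Proposition \ref{thm35} applied to $M_{13}$ directly, and the one $b_{M_{12}\ast M_{23}}$ produced by the Wehrheim--Woodward composition procedure of Theorem \ref{WWfunc} (applied, as in Remark \ref{rem513}(1), to $(R(M_{12}),b_{M_{12}})$ and $(R(M_{23}),b_{M_{23}})$). I would prove these agree up to gauge equivalence by reproducing the argument of Section \ref{sec:unbolt}: the fundamental class ${\bf 1}_{M_{13}}$ is a cyclic element for both the right $A_\infty$-module structure coming from $M_{13}$ and the one coming from the composition, and Proposition \ref{thm35} gives uniqueness.

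Next, I would construct a moduli space that simultaneously realizes both sides. Take a domain $Y$ built by gluing (along $\Sigma_2$-ends) a copy of $M_{12}\times W$ and $M_{23}\times W$ with $W\subset\mathbb C$ chosen as in Conditions \ref{conds44}, \ref{conds54}, so that the total space $Y$ carries a degenerate metric of the type used in (\ref{form211}) and the associated ASD/pseudo-holomorphic-curve equation makes sense. Impose boundary conditions using $R(M_{12})$, $R(M_{23})$, $R(M_{13})$ and use $b_{M_{12}}$, $b_{M_{23}}$, $b_{M_{13}}$ as in Sections \ref{sec:represent}, \ref{sec:glue}. Stretching the neck along $\Sigma_2$ exhibits an end that breaks a solution into a pair (solution on $M_{12}\times W_L$, solution on $M_{23}\times W_R$) matching along a flat connection on $\Sigma_2$; running the neck in the opposite direction collapses $M_{12}\cup_{\Sigma_2}M_{23}$ to $M_{13}$ and produces the functor $\Phi_{M_{13}}$. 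The corresponding compactification, analogous to Proposition \ref{prop5858} (with the five or six boundary strata there now augmented by the neck-breaking stratum), yields a chain-level homotopy whose two ends are the defining operators of $\Phi_{M_{23}}\circ\Phi_{M_{12}}$ and of $\Phi_{M_{13}}$.

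To upgrade this equality on morphisms to a homotopy of $A_\infty$-functors, I would put marked points on all four boundary arcs of $Y$ and use the resulting higher moduli spaces to define pre-natural transformations, exactly as in Remark \ref{rem513}(3). The composition-compatibility argument there (using Figures 5.12--5.15 and the intermediate identification (\ref{form527})) goes through almost verbatim once $Y$ is set up, because all gluings occur in the gauge-theory region and the cyclic-element arguments for the various fundamental classes ${\bf 1}_{M_{12}}$, ${\bf 1}_{M_{23}}$, ${\bf 1}_{M_{13}}$ at the degenerating ends work as in Section \ref{sec:glue}. Finally, $A_\infty$-Yoneda (\cite[Theorem 8.4]{fu4}) combined with Theorem \ref{represent} applied to $M_{12}$, $M_{23}$, and $M_{13}$ lets one conclude that the pre-natural transformation built above is a homotopy equivalence of filtered $A_\infty$-functors.

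The main obstacle I anticipate is the neck-stretching compactness at the $\Sigma_2$-slice of $Y$: the metric is degenerate along $\Sigma_2\times\overline{W_2}$, and the ASD connections one must match have only conformal (not $L^\infty$) control as the neck elongates, so one needs a bubble analysis combining the $\{0\}\times\mathbb R$-bubbling of \cite{fu3} with the standard Uhlenbeck-type bubbling at the neck of \cite{Don15}. Once this combined compactness and the corresponding gluing theorem are in place, the algebraic part of the argument (matching cyclic elements and applying Proposition \ref{thm35} and Yoneda) is formal. I would relegate the full analytic verification to \cite{fu8} in the same spirit as Remark \ref{rem2188}.
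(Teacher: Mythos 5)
Your high-level plan — identify $R(M_{13})$ with the fiber product $R(M_{12})\times_{R(\Sigma_2)}R(M_{23})$, show the corresponding bounding cochains match via uniqueness in Proposition \ref{thm35}, build a moduli space that interpolates between the two functors, and close with $A_\infty$-Yoneda and Theorem \ref{represent} — is in the right spirit, but it is a genuinely different route than the one the paper takes, and it re-opens exactly the analytic problem the paper is designed to avoid. You propose a neck-stretching/degeneration along $\Sigma_2$, producing a one-parameter family of geometries whose limits are the broken $(M_{12},M_{23})$ configuration on one end and $M_{13}$ on the other. The paper (see Section \ref{other moduli}, \textquotedblleft Cobordism method and degeneration method\textquotedblright) explicitly replaces that degeneration by a cobordism argument in a \emph{fixed} geometry: the 4-manifold $X_0$ in Figure 6.2 is a pair-of-pants with three cylindrical ends $M_{12}\times(-\infty,-c]$, $M_{23}\times(-\infty,-c]$, $M_{13}\times[c,+\infty)$, interfacing with three symplectic regions $\Sigma_i\times V_i$ carrying the degenerate metric of (\ref{form211}). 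All breaking happens as translation-parameter Floer breaking at the three ends; no metric degeneration along $\Sigma_2$ is needed, and hence no new compactness theorem beyond what is already assumed for the Section \ref{HF3bdfunctor}--\ref{sec:glue} moduli spaces. The \textquotedblleft main obstacle\textquotedblright\ you flag (combining $\{0\}\times\mathbb R$-bubbling with Uhlenbeck bubbling on an elongating neck) is precisely what the cobordism domain avoids.

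There is also an algebraic point you gloss over. In the paper's proof, one reduces to constructing, for each $(L_1,b_1)$ and $(L_3',b_3')$, a quasi-isomorphism realizing (\ref{form666}), and the decisive step is the asymptotic condition imposed at the $M_{12}$-end of $X_0$: one uses the \emph{fundamental class} $[L_2]$ of $L_2=\Phi_{M_{12}}(L_1)$, and its vanishing contribution in the codimension-one boundary analysis is exactly the equation $d^{b_2}([L_2])=0$, which holds by the way $b_2$ was produced via Proposition \ref{thm35}. You mention cyclic-element arguments for ${\bf 1}_{M_{12}},{\bf 1}_{M_{23}},{\bf 1}_{M_{13}}$, but the class that actually does the work at the $M_{12}$-end is $[L_2]$, not ${\bf 1}_{M_{12}}$. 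Without isolating this, the argument that the unwanted boundary stratum drops out is incomplete. If you keep the degeneration approach you will still need this same input, plus a gluing/compactness theorem at the elongating neck that the paper does not prove and does not need. So: not wrong, but structurally harder in a way the paper deliberately sidesteps, and missing the precise cyclic-element identification that closes the codimension-one argument.
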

\begin{proof}[Sketch of the proof]
We remark that in case $\Sigma_1 = \Sigma_3 = \emptyset$, this is Theorem \ref{mainthm} (2).
For the proof of the general case we first consider filtered $A_{\infty}$
bifunctor
\begin{equation}\label{bifunctor}
\mathscr{FUK}(R(\Sigma_{1}))^{\rm op} \times \mathscr{FUK}(R(\Sigma_{2})) 
\to \mathscr{CH}.
\end{equation}
Here ${\rm op}$ stands for the opposite $A_{\infty}$ category.
(See \cite[Definition 7.8]{fu4}.)
Note the left hand side of (\ref{bifunctor}) is a full subcategory of 
$\mathscr{FUK}(- R(\Sigma_{1}) \times R(\Sigma_{2}))$.
(See \cite{Limo}.)
\par
Therefore object of 
$\mathscr{FUK}(R(\Sigma_{1}) \times -R(\Sigma_{2}))$
represents a functor as in 
(\ref{bifunctor}). 
The pair $(R(M_{12}),b_{M_{12}})$  is such an object and 
it represents 
\begin{equation}\label{phigatoro}
((L_1,b_1),(L'_2,b'_2)) \mapsto HF(\Phi_{M_{12}}(L_1,b_1),(L'_2,b'_2)).
\end{equation}
This is a consequence of Theorem \ref{WWfunc}. 
Namely (\ref{phigatoro}) is isomorphic to 
\begin{equation}\label{64dd}
HF((R(M_{12}),b_{M_{12}}),(L_1,b_1) \times (L'_2,b'_2))
\end{equation}
by  Theorem \ref{WWfunc}.
\par
On the other hand, Theorem \ref{represent} implies that 
the Floer homology group (\ref{64dd}) is isomorphic to 
$HF((M_{12},\mathcal E_{12}),(L_1,b_1) \times (L'_2,b'_2)))$.
We can rephrase the definition of this group in Section \ref{HF3bdfunctor} and 
find that it is  
a homology group of the chain complex $C(R(M_{12})\times_{R(\Sigma)}(L_1 \times L'_2);\Lambda_0^{\Z_2})$
whose boundary operator is defined by using the moduli space 
of solutions of  (\ref{ASDeq}), (\ref{ASDprod}) on the domain described in the 
next figure.
\par\newpage
\begin{center}
\includegraphics[scale=0.25]
{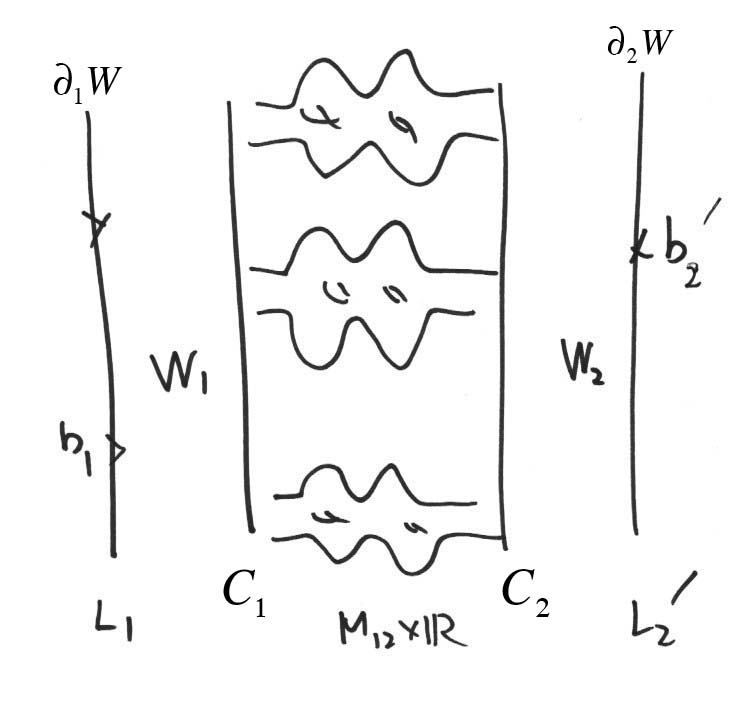}
\end{center}
\centerline{\bf Figure 6.1}
\par\medskip
Here the domain is divided into 3 parts. On two of them $W_1$ and $W_2$ 
we use degenerate metrics $0 g_{\Sigma_1} + ds^2 + dt^2$ and 
$0 g_{\Sigma_2} + ds^2 + dt^2$, respectively and our equation is a holomorphic
curve equation to $R(\Sigma_1)$ and to $R(\Sigma_2)$, respectively.
The third part is $M_{12} \times \R$, on which we consider the ASD-equation 
(\ref{ASDeq}). The metric is of the form $\chi^2 g_{\Sigma_2} + ds^2 + dt^2$
near their borderline curves $C_1$ and $C_2$, where $\chi$ is a function similar to those 
we used several times.
\par
We require that the holomorphic curve takes boundary value 
in $L_1$ and $L'_2$ on the boundary $\partial_1 W$ and $\partial_2 W$,
 respectively.
 \par
 We require asymptotic boundary conditions for $t \to \pm \infty$ 
 by using elements of $L_1 \times_{R(\Sigma_1)} R(M_{12}) \times_{R(\Sigma_2)} L'_2$.
We cancel the contribution of the disk bubble on $\partial_1 W$ and on $\partial_2 W$
by using bounding cochains $b_1$ and $b'_2$ in the same way as 
\cite{fooobook}.
We thus obtain a boundary operator on 
$C(R(M_{12})\times_{R(\Sigma)}(L_1 \times L'_2);\Lambda_0^{\Z_2})$.
\par
It is easy to see from definition that the homology group of this chain complex is
\begin{equation}\label{group65}
HF((M_{12},\mathcal E_{12}),((L_1 \times L'_2),(b_1 \times b'_2))).
\end{equation}
The isomorphism $(\ref{64dd}) \cong (\ref{group65})$ is 
a consequence of Theorem \ref{represent}.
\par
We go back to the proof of Theorem \ref{thm62}.
Let $(L_1,b_1)$ (resp. $(L'_3,b'_3)$) be an 
object of $\mathscr{FUK}(R(\Sigma_{1}))$
(resp. $\mathscr{FUK}(R(\Sigma_{3}))$).
We put $\Phi_{M_{12}}(L_1,b_1) = (L_2,b_2)$.
To prove Theorem \ref{thm62}
it suffices to construct a functorial isomorphism:
\begin{equation}\label{form666}
HF(\Phi_{M_{23}} (L_2,b_2),(L'_3,b'_3))
\cong 
HF(\Phi_{M_{13}} (L_1,b_1),(L'_3,b'_3)).
\end{equation}
We use the moduli space 
obtained by the solution of  (\ref{ASDeq}), (\ref{ASDprod}) on the domain described in the 
next figure.
\par\newpage
\begin{center}
\includegraphics[scale=0.25]
{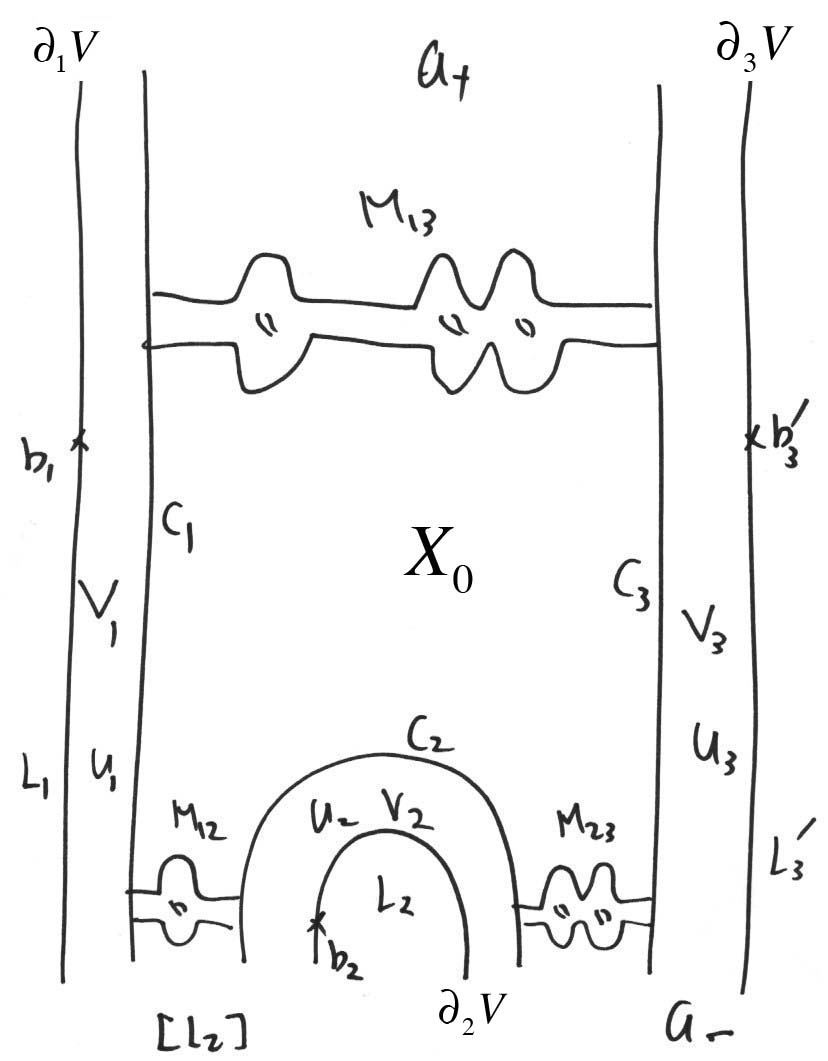}
\end{center}
\centerline{\bf Figure 6.2}
\par\medskip
This domain is divided into 4 pieces. On three of them 
$V_1$, $V_2$, $V_3$ we use the degenerate metrics 
$0 g_{\Sigma_1} + ds^2 + dt^2$,
$0 g_{\Sigma_2} + ds^2 + dt^2$, 
$0 g_{\Sigma_3} + ds^2 + dt^2$, respectively.
So our equation is a holomorphic curve equation to 
$R(\Sigma_1)$, $R(\Sigma_2)$, $R(\Sigma_3)$
on $V_1$, $V_2$, $V_3$ respectively.
\par
The fourth part of the domain $X_0$ is a Riemannian 
4-manifold. It has three ends. Two of them are in the part $t\to
-\infty$ and are isometric to $M_{12} \times (-\infty,-c]$
and to $M_{23} \times (-\infty,-c]$.
The third end is in the part $t \to +\infty$ and is 
isometric to $M_{13} \times (c,+\infty)$.
On $X_0$ we require the ASD-equation.
\par
$X_0$ intersects with $\Sigma_i \times V_i$ 
on $\Sigma \times C_i$.
Near the borderline curve $C_i$ (which is diffeomorphic to $\R$) 
the `metric' is isometric to one of the form $\chi(s)^2g_{\Sigma_1} + ds^2 + dt^2$
where $\chi$ is a similar function as we used several times in this paper.
\par
We put $\partial_iV = \partial V_i \setminus C_i$.
They are diffeomorphic to $\R$. 
On $\partial_iV$ we put the following boundary conditions.
We put $\Phi_{12}(L_1,b_1) = (L_2,b_2)$.
\begin{enumerate}
\item
The image of the restriction of $u_1$ to $\partial_1V$ is in $L_1$.
\item
The image of the restriction of $u_2$ to $\partial_2V$ is in $L_2$.
\item
The image of the restriction of $u_3$ to $\partial_3V$ is in $L'_3$.
\end{enumerate}
Actually in case $L_1$, $L_2$ or $L'_3$  are immersed we need to state the boundary condition 
a bit more carefully. Since we explained the way how to do so in a similar 
situation several times already in this paper we omit it here. 
\par
We now consider the asymptotic boundary conditions for 
three ends.
We take 
\begin{equation}\label{form67}
a_-, a_+ \in \tilde L_1 \times_{R(M_{13})} \tilde L'_3 
\cong \tilde L_2 \times_{R(M_{23})} \tilde L'_3.
\end{equation}
Note the isomorphism in (\ref{form67}) can be proved by 
$$
R(M_{13}) = R(M_{12}) \times_{R(\Sigma_2)} R(M_{23}) 
$$
and
$$
\tilde L_2 = \tilde L_1 \times_{R(\Sigma_1)} R(M_{12}).
$$
\begin{enumerate}
\item[(I)] 
At the end isometric to $M_{12} \times (-\infty,-c]$
we require that our connection converges to a flat connection 
in $L_2$.
In other words we put the fundamental class 
$$
[L_2] \in C(M_{12} \times_{R(\Sigma_1) \times R(\Sigma_2)} (\tilde L_1\times \tilde L_2))
\cong C(\tilde L_2 \times_{R(\Sigma_2)} \tilde L_2).
$$
\item[(II)]
At the end isometric to $M_{23} \times (-\infty,-c]$
we require that our connection is asymptotic to $a_-$.
\item[(III)]
At the end isometric to $M_{13} \times [c,+\infty)$
we require that our connection is asymptotic to $a_+$.
\end{enumerate}
Finally we use bounding cochains $b_1$, $b_2$ and $b^1_3$ 
on $\partial_1V$, $\partial_2V$ and $\partial_3V$, 
respectively, to cancel the contribution of the disk bubble there.
\par
We use this moduli space in case when its virtual dimension is $0$
to define a matrix element 
$\langle\Phi(a_-),a_+\rangle$ of the map
\begin{equation}\label{homo68}
\Phi : CF(\Phi_{M_{23}} (L_2,b_2),(L'_3,b'_3))
\to CF((M_3,\mathcal E_3),(L_1,b_1)\times (L'_3,b'_3)).
\end{equation}
We claim that $\Phi$ is a chain map.
To prove it we consider the same moduli space
when its virtual dimension is $1$.
We study its boundary. 
\par
The contribution of the codimension one boundary corresponding to the 
disk bubbles on $\partial_iV$ is zero since 
we used bounding cochains $b_1$, $b_2$ and $b^1_3$ to cancel it.
The contribution of the codimension one boundary corresponds to the 
end of type (I) also vanishes. 
This is because of the choice of $b_2$.
(In fact we used Proposition \ref{thm35} to find $b_2$.
In other words, the fundamental class which we put as an 
asymptotic value at this ends, is a cycle.)
\par
The two other ends (II) and (III) 
correspond to the boundary operator of the 
source and target of (\ref{homo68}), 
respectively. Thus (\ref{homo68}) is a chain map.
\par
The energy $0$ solution of our equation consists of flat 
connections and constant maps.
We use this fact to prove that (\ref{homo68}) 
is congruent to the identity map modulo $\Lambda_0^{\Z_2}$.
\par
We thus constructed an isomorphism (\ref{form666}).
\par 
We omit the proof of functoriality.
\end{proof}
\begin{thm}\label{thm63}
We have the following additional properties of our 
functor $M_{12} \mapsto \Phi_{M_{12}}$ and 
$\Sigma \mapsto \mathscr{FUK}(R(\Sigma))$.
\begin{enumerate}
\item
$\mathscr{FUK}(R(\Sigma_1 \sqcup \Sigma_2))
= \mathscr{FUK}(R(\Sigma_1) \times \mathscr{FUK}(R(\Sigma_2))$.
\item 
$\mathscr{FUK}(R(-\Sigma)) = \mathscr{FUK}(R(\Sigma))^{\rm op}$.
Here ${\rm op}$ denotes the opposite filtered $A_{\infty}$ category.
\item
We invert the orientation of $M_{12}$ and obtain $M_{21}$. We use the same bundle 
$\mathcal E_{12} = \mathcal E_{21}$. Then  $\Phi_{M_{12}}$ is the adjoint functor
to $\Phi_{M_{21}}$. Namely there exists an isomorphism
$$
HF(\Phi_{M_{12}}(L_1,b_1),(L'_2,b'_2)) \cong
HF(\Phi_{M_{21}}(L'_2,b'_2),(L_1,b_1)),
$$
which is functorial. 
Namely the left and right hand sides are homotpy equivalent 
as filtered $A_{\infty}$ bifunctors (\ref{bifunctor}).
\item
Suppose $\partial_{\rm out}(M,\mathcal E) \cong -\partial_{\rm in}(M,\mathcal E)$.
We glue $\partial_{\rm out}(M,\mathcal E)$ and 
$\partial_{\rm in}(M,\mathcal E)$ in $(M,\mathcal E)$ to obtain 
a closed manifold with bundle,
$(\hat M,\hat{\mathcal E})$.  Then the following isomorphism holds.
$$
HF(\hat M,\hat{\mathcal E}) \otimes_{\Z_2} \Lambda^{\Z_2}
\cong
H(\mathscr{HOM}(\frak{ID},\Phi_{M_{12}})) \otimes_{\Z_2} \Lambda^{\Z_2}.
$$
Here $\frak{ID} : \mathscr{FUK}(R(\Sigma_{\rm in}),R(\Sigma_{\rm in}))$ 
is the identity functor and the right hand side is the homology of the 
chain complex consisting of $A_{\infty}$ pre-natural transformations
from $\frak{ID}$ to $\Phi_{12}$. (See \cite[Definition 7.49]{fu4}.)
\end{enumerate}
\end{thm}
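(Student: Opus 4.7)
My plan is to reduce each of the four assertions to results already proved in the paper (Theorems \ref{mainthm}, \ref{represent}, \ref{functordef}, \ref{WWfunc}) together with standard categorical constructions in the filtered $A_\infty$ setting. Parts (1) and (2) are essentially definitional; parts (3) and (4) require combining the Atiyah--Floer isomorphism with the Wehrheim--Woodward functoriality and $A_\infty$-Yoneda.

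For Part (1), I would first observe that as symplectic manifolds $R(\Sigma_1 \sqcup \Sigma_2) = R(\Sigma_1) \times R(\Sigma_2)$ with $\omega = \omega_1 \oplus \omega_2$. I then establish a K\"unneth-type equivalence of filtered $A_\infty$ categories by: (i) restricting attention to split immersed Lagrangians of the form $L_1 \times L_2$ with $b = b_1 \otimes 1 + 1 \otimes b_2$, (ii) showing the Floer chain complex of such split pairs splits as a tensor product since any holomorphic strip into a product factors into strips to each factor (monotonicity and $\Z_2$ coefficients make this clean), and (iii) checking that split objects generate by the same generation argument used in \cite{ancher}. For Part (2), the orientation reversal of $\Sigma$ replaces $\omega_\Sigma$ by $-\omega_\Sigma$, which replaces the almost complex structure $J$ by $-J$ and hence exchanges positive and negative Floer strips; this reverses the order of composition in $\frak m_2$ and gives the opposite $A_\infty$ structure.

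For Part (3), I would apply Theorem \ref{WWfunc} together with Theorem \ref{represent}. Observe that the Lagrangian correspondence $R(M_{12}) \subset -R(\Sigma_{\rm in}) \times R(\Sigma_{\rm out})$ becomes, after reversing orientation of $M_{12}$, the correspondence $R(M_{21}) \subset -R(\Sigma_{\rm out}) \times R(\Sigma_{\rm in})$ (the underlying subspace is the same but viewed with swapped factors). By Theorem \ref{represent} and the discussion in Remark \ref{rem414}, both Floer groups in the statement are canonically isomorphic to
\begin{equation}
HF((R(M_{12}),b_{M_{12}}),\, (L_1,b_1) \times (L'_2,b'_2)),
\end{equation}
since the former is computed as $HF(\Phi_{M_{12}}(L_1,b_1),(L'_2,b'_2))$ via Theorem \ref{WWfunc} and the latter similarly via the orientation-reversed correspondence, and the fiber product $R(M_{12}) \times_{R(\Sigma)} (L_1 \times L'_2)$ is the same set of flat connections in either reading. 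Functoriality follows from functoriality of the isomorphisms in Theorems \ref{WWfunc} and \ref{represent}.

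For Part (4), the idea is to view $\hat M$ as the gluing of $M$ with the trivial cobordism $\Sigma \times [0,1]$ along $\partial_{\rm in} M \sqcup -\partial_{\rm out}M$, and then apply Theorem \ref{mainthm}(2):
\begin{equation}
HF(\hat M, \hat{\mathcal E}) \;\cong\; HF\bigl((R(M),b_M),\, (R(\Sigma\times[0,1]),0)\bigr),
\end{equation}
where $R(\Sigma \times[0,1])$ is the diagonal in $R(\partial_{\rm in}M) \times R(\partial_{\rm out}M)$ (which has $b=0$ by Theorem \ref{mainthm}(3)). Under the Wehrheim--Woodward functor picture, the diagonal represents $\frak{ID}$ and $R(M)$ represents $\Phi_M$, so after inverting $T$ the right-hand side is identified with $H(\mathscr{HOM}(\frak{ID},\Phi_M)) \otimes \Lambda^{\Z_2}$ by the $A_\infty$-Yoneda lemma (\cite[Theorem 8.4]{fu4}), exactly as in the proof of Corollary \ref{oldconj}. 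The hardest step will be the K\"unneth decomposition in Part (1) for the immersed, curved filtered setting---one must verify that split bounding cochains exhaust gauge equivalence classes on products of immersed Lagrangians and that the resulting quasi-equivalence of $A_\infty$ categories is compatible with the Akaho--Joyce construction; this technicality is the main analytic-categorical obstacle, whereas Parts (2)--(4) are essentially bookkeeping on top of the main theorems.
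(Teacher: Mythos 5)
Your treatment of (2) and (4) matches the paper's: for (2) the paper likewise uses the conjugation $u \mapsto \overline u$ at the level of the disk moduli spaces (together with the observation that $J_{R(-\Sigma)} = -J_{R(\Sigma)}$, $\omega_{R(-\Sigma)} = -\omega_{R(\Sigma)}$) to exhibit $\frak m_k^+(x_1,\dots,x_k) = \frak m_k^-(x_k,\dots,x_1)$; for (4) the paper runs exactly your chain of reductions — $\Delta$ represents $\frak{ID}$, Theorem \ref{mainthm}(3) gives $b_{\Delta}=0$ via $\Sigma\times[0,1]$, Theorem \ref{mainthm}(2) provides the gluing isomorphism, and $A_\infty$-Yoneda closes the loop.

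Your (1) and (3), however, diverge from the paper in ways worth flagging. For (1) the paper simply asserts that the identity is \emph{obvious from the definition}: $\mathscr{FUK}(R(\Sigma_1\sqcup\Sigma_2))$ is taken by construction to have split objects, so the equality is a notational convention rather than a K\"unneth theorem. Your elaborate three-step program (split objects, splitting of the Floer complex, generation in the sense of \cite{ancher}) is directed at a much stronger claim — that the full immersed Fukaya category of the product is generated by split objects with split bounding cochains — and none of it is actually invoked by the paper. If you want to state (1) as a K\"unneth equivalence you would owe the generation argument; if you state it as the paper does, your whole paragraph is unnecessary.

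For (3) you replace the paper's direct gauge-theoretic argument (the $t\mapsto -t$ symmetry of the moduli space in Figure 6.2) by a representability argument, identifying both sides with $HF\bigl((R(M_{12}),b_{M_{12}}),(L_1,b_1)\times(L_2',b_2')\bigr)$ via Theorem \ref{represent} and Theorem \ref{WWfunc}. This is a plausible alternative, but you silently use that the canonical bounding cochain $b_{M_{21}}$ is carried to $b_{M_{12}}$ under the swap $-R(\Sigma_{\rm in})\times R(\Sigma_{\rm out})\to -R(\Sigma_{\rm out})\times R(\Sigma_{\rm in})$. That swap is an \emph{anti}-symplectomorphism, so it intertwines the filtered $A_\infty$ structures only up to passing to the opposite algebra; checking that the cyclic-element construction of Proposition \ref{thm35} applied to $M_{21}$ produces the opposite of $b_{M_{12}}$ (rather than something merely gauge-equivalent in a weaker sense) is precisely the content the paper's $t\mapsto -t$ map handles in one stroke. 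Your route is workable but should make this identification explicit, since it is not a formal consequence of the theorems you cite.
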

\begin{proof}
(1) is obvious from the definition.
\par
(2) We observe that $R(\Sigma) = 
R(-\Sigma)$ as spaces. On the other hand, the symplectic forms 
$\omega_{R(\Sigma)}$, $\omega_{R(-\Sigma)}$
and complex structures 
$J_{R(\Sigma)}$, $J_{R(-\Sigma)}$
are related by the formula:
$$
\omega_{R(-\Sigma)} = -\omega_{R(\Sigma)},
\qquad 
J_{R(-\Sigma)} = -J_{R(\Sigma)}.
$$
This implies (2) as follows.
We remark that an (immersed) Lagrangian 
submanifold $L$ of 
$(R(\Sigma),\omega_{R(\Sigma)})$ is also 
a Lagrangian 
submanifold $L$ of 
$(R(\Sigma),-\omega_{R(\Sigma)})$.
We consider the 
moduli space
$\mathcal M_{k+1}((R(\Sigma),\omega_{R(\Sigma)});L;\beta)$,
which appeared (\ref{diskmoduli}) and
is the case when we take $(R(\Sigma),\omega_{R(\Sigma)})$ 
as the ambient symplectic manifold.
\par
We have an isomorphism 
$$
\mathcal M_{k+1}((R(\Sigma),\omega_{R(\Sigma)});L;\beta)
\cong
\mathcal M_{k+1}((R(\Sigma),-\omega_{R(\Sigma)});L;-\beta)
$$
of spaces with Kuranishi structures. 
This isomorphism is defined by sending an element
$
(u;(z_0,z_1,\dots,z_k))
$
of the left hand side 
to $(\overline{u};(\overline z_0,\overline z_k,\overline z_{k-1},\dots,\overline z_1))$.
Here
$
\overline u(z) = u(\overline z)
$.
Therefore evaluation maps
$$
{\rm ev} = ({\rm ev}_0,\dots,{\rm ev}_{k+1}) :
\mathcal M_{k+1}((R(\Sigma),\omega_{R(\Sigma)});L;\beta)
\to L^{k+1}$$ satisfies
$
{\rm ev}_{k-i} \circ I = {\rm ev}_{i}.
$
(Our situation is somewhat similar to \cite{fooo:inv}.)
If we write the structure map of filtered $A_{\infty}$
algebra of $L \subset (R(\Sigma),\omega_{R(\Sigma)})$
(resp. of $L \subset (R(\Sigma),-\omega_{R(\Sigma)})$) 
by $\frak m_k^+$ (resp. $\frak m_k^-$) we have an equality
$$
\frak m_k^+(x_1,\dots,x_k) = 
\frak m_k^-(x_k,\dots,x_1).
$$
This implies that the filtered $A_{\infty}$ algebra $(CF(L),\{\frak m_k^+\})$
is an opposite algebra of  $(CF(L),\{\frak m_k^-\})$.
We can generalize this fact to the case we have several Lagrangian submanifolds
in a straight forward way.
It implies (2).
\par
(3) 
We remark that $\partial M_{12} = \Sigma_1\sqcup -\Sigma_2$.
Therefore $\partial M_{21} = -\Sigma_1\sqcup \Sigma_2$.
 \par
The moduli space of the connection on the space in 
 Figure 6.2, which we use to define $HF(\Phi_{M_{12}}(L_1,b_1),(L'_2,b'_2))$
 is isomorphic to the moduli space of the connection on the space in 
 Figure 6.2, which we use to define $HF(\Phi_{M_{21}}(L'_2,b'_2),(L_1,b_1))$.
 In fact such an isomorphism is obtained by the map which sends $t$ to $-t$.
(3) follows from this fact.
\par
(4) follows from Theorem \ref{mainthm} and $A_{\infty}$ Yoneda lemma 
\cite[Theorem 9.1]{fu4}  as follows.
The diagonal $\Delta \subset R(\Sigma_{\rm in}) \times R(\Sigma_{\rm out})$
represents the identity functor.
Therefore $A_{\infty}$ Yoneda lemma implies
$$
HF((\Delta,0),(R(M_{12},b_{12})) \otimes_{\Z_2} \Lambda^{\Z_2}
\cong
H(\mathscr{HOM}(\frak{ID},\Phi_{M_{12}})).
$$
On the other hand, Theorem \ref{mainthm} (3)  implies that the functor represented by
$(\Delta,0)$ is homotopy equivalent to the functor associated to  $(\Sigma_{\rm in}\times [0,1],b_{\Sigma_{\rm in}\times [0,1]})$ 
by Theorem \ref{functordef}.
Thus by Theorem \ref{mainthm} (2) we have
$$
HF((\Delta,0),(R(M_{12},b_{12}))
\cong
HF(\hat M,\hat{\mathcal E}),
$$
as required.
\end{proof}
Theorems \ref{thm62} and \ref{thm63} 
show that the theory of relative 
$SO(3)$-Floer homology we developed in this paper 
satisfies (at least certain large portion of) the 
axioms of topological field theory.
\par
We also remark that we can give an alternative proof of \cite[Theorem 2]{BD1},
(which is attributed to Floer) over $\Z_2$ coefficient from Theorem \ref{thm63} (4) in the case 
$\Sigma$ is a torus. In fact $R(T^2)$ consists of one point.
\begin{rem}
It seems that Wehrheim and Woodward proposed 
in \cite{WW2} and several other papers, 
to  use certain expected properties, which are similar to Theorems \ref{thm62}, \ref{thm63} etc..
as an axiom and use it together with various results in differential 
topology 
to show the existence and uniqueness of a version of the relative Floer theory.
It seems to the author that their idea is, in this way 
one may avoid studying gauge theory of 3 or 4 manifolds
directly and can prove the results expected from the gauge theory 
by a combinatorial 
method.
\par
An origin of such an idea is Floer's paper \cite{fl3},
where Floer tried to use his Dehn surgery triangle 
as a main axiom to characterize Floer homology 
of 3-manifolds. 
(See \cite{BD1} for certain discussion about  it.)
The author in \cite{fu1}, \cite{fu2}  proposed 
to use this Dehn surgery approach 
to prove Theorem \ref{mainthm} (2), without 
using analysis so much.
(This proposal by the author is not yet successful.)
\par
The distinguished example where this kinds of idea 
works very much successfully is  Heegard Floer theory by Ozvath-Szabo.
\par
In this paper and in this subsection we take opposite route.
Namely we define functors $\Phi_{M_{21}}$ 
directly by a geometric and analytic method and show its expected properties
directly without using combinatorial method.
\end{rem}

\subsection{Using similar moduli spaces}\label{other moduli}

In this paper, we use the moduli space introduced in \cite{fu3} or its 
variant to define and study Floer homology of 3-manifolds with boundary.
There are several other moduli spaces which are similar to but is slightly different 
from that.
\par\smallskip
The moduli space studied by Lipyanskiy in \cite{Li} is somewhat of similar flavor.
An element of his moduli space is also a combination of an ASD-connection and a
pseudo-holomorphic curve. 
The difference is in place of the metric $\chi(s)^2 g_{\Sigma} + ds^2 + dt^2$
Lipyanskiy used direct product metric $g_{\Sigma} + ds^2 + dt^2$ and its ASD-connection.
He instead introduce matching condition on the line where he switch from 
ASD-equation to pseudo-holomorphic curve equation.
Lipyanskiy obtained also removable singularity and compactness results, 
which are mixture of Uhlenbeck and Gromov compactness.
It seems likely that we can use Lipyanskiy's moduli space 
instead of one in \cite{fu3} to prove all the results of this paper,
though the author did not check the detail.
\par\smallskip
In \cite{fu1} and \cite{We} the moduli space of different flavor is 
proposed and established, respectively. 
Namely we study the moduli space of ASD-connections on $M\times \R$, 
for example, where $M$ has a boundary $\partial M$.
We need certain boundary condition on $\partial M \times \R
= \Sigma \times \R$.
Such a boundary condition must be an `infinite dimensional enhancement' 
of the Lagrangian submanifold of $R(\Sigma)$.
The one which the author proposed in  \cite{fu1} for this `infinite dimensional enhancement' 
is different from one used in \cite{We}.
With respect to this point, it seems that the one in \cite{We} 
(and not the one in \cite{fu1}) is the correct choice.
The moduli space studied in \cite{We} can be used for problems 
related to those discussed in this paper.
\par
However if we try to use it, in the same way as we are using the moduli space 
of \cite{fu3} in this paper, to prove the results of this paper, then there will be an issue.
Let us explain this issue briefly.
\par
 Let $L$ be an immersed Lagrangian submanifold of $R(\Sigma)$.
 We consider ASD equation on $M \times \R$. 
 (Here the metric we use near $\partial M \times \R$ is 
 $g_{\Sigma} + ds^2 + dt^2$ and does not degenerate.)
 We use $L$ to define a boundary condition as in \cite{We}.
 Requiring asymptotic boundary condition as $t \to \pm\infty$ 
 using $a_{\pm} \in R(M)\times_{R(\Sigma)} \tilde L$
 in the same way as Definition \ref{defn2626} (3), we can define a moduli space 
 $\mathcal M'((M,\mathcal E),L;a_-,a_+;E)$ and try to use it 
 (instead of  $\mathcal M((M,\mathcal E),L;a_-,a_+;E)$ in Definition \ref{defn2929})
 to define $HF((M,\mathcal E),L)$. 
 As is shown by Salamon-Wehrheim \cite{Sawe} this story works
as far as $L$ is embedded and monotone, since all the codimension one boundaries are 
of the form 
$$
\mathcal M'((M,\mathcal E),L;a_-,a;E_1)
\times \mathcal M'((M,\mathcal E),L;a,a_+;E_2),
$$
in that case.
In case $L$ is immersed or is not monotone, 
we need to combine this construction with the story of bounding cochains, 
since a disk bubble may produce codimension one boundary component.
For this purpose it seems that we need to glue an
element of  $\mathcal M'((M,\mathcal E),L;a_-,a_+;E)$ with a 
pseudo-holomorphic disk which bounds $L$. 
Namely we need to introduce $\mathcal M'_k((M,\mathcal E),L;a_-,a_+;E)$,
an analogue of $\mathcal M_k((M,\mathcal E),L;a_-,a_+;E)$ in Definition \ref{defn216},
(where $k$ is the number of boundary
marked points) and show that the fiber product
\begin{equation}\label{beforeglued}
\mathcal M'_{k_1}((M,\mathcal E),L;a_-,a_+;E_1)\,\, {}_{{\rm ev}_i}\times_{{\rm ev}_0} \mathcal M_{k_2+1}(L;E_2)
\end{equation}
appears at the boundary of 
$\mathcal M'_{k_1+k_2-1}((M,\mathcal E),L;a_-,a_+;E_1+E_2)$.
\par
The gluing analysis which we need to prove this statement is extremely difficult.
Let me elaborate this point more. Let us start with an element  $((\frak A,\vec z),(u,\vec z'))$ of 
(\ref{beforeglued}). 
The point $u(z'_0) \in R(\Sigma)$ is the gauge 
equivalence class of $\frak A\vert_{\Sigma \times \{z_i\}}$.  
The usual method for gluing is to regard $u$ as a family of flat connections 
and put it near $\Sigma \times \{z_i\}$ and try to glue it with $\frak A$.
The issue is we need to take a conformal diffeomorphism from 
a small neighborhood of $z_i$ to the domain of $u$ so that the family of 
flat connections $u$ is supported in this small neighborhood of $z_i$
after reparametrization.
When we scale it so that the diameter of the domain of this family of flat 
connections becomes something  like $1$, 
then the metric becomes 
$$
\frac{1}{\epsilon^2} g_{\Sigma} + ds^2 + dt^2.
$$
We observe that the family of flat connections $u$ actually is very far from being a solution 
of the ASD-equation with respect to the scaled metric. 
It would be close to the ASD-connection if the factor $\frac{1}{\epsilon^2}$ were very small. However this 
factor is actually very large.
By this reason the standard way of gluing does not seem to work.
\par
We remark that the situation is different in the case when we consider the same gluing 
problem for the moduli space 
$\mathcal M_k((M,\mathcal E),L;a_-,a_+;E)$ in Definition \ref{defn216}, 
which we use in this paper.
In fact, an element of this moduli space  is  $(\frak A,{
\frak z},{\frak w},\Omega,u,\vec z)$ where $u$ is a genuine pseudo-holomorphic curve to 
$R(\Sigma)$ 
in a neighborhood of the boundary.
Therefore, gluing an element of 
$\mathcal M_k((M,\mathcal E),L;a_-,a_+;E)$ with a pseudo-holomorphic disk 
is similar to the gluing between two pseudo-holomorphic disks.
In other words, if we have an appropriate Fredholm theory, we can work out the gluing analysis,  
in a way similar to those written in various literatures, eg. \cite[Sections 7.1.4 and 
A.1.4]{fooobook2}.
The situation seems to be similar in the case of Lipyanskiy's moduli space.
\par\medskip
On the other hand, if we restrict ourselves to the situation when 
$R(M)$ is embedded in $R(\Sigma)$ then it is likely that
the moduli space of \cite{We} can used to prove for example Corollary \ref{maincor}.
The  proof could be in 4 steps.
\par\smallskip
\noindent(Step  A)
Let $L_1$, $L_2$ be two embedded monotone Lagrangian submanifolds 
in $R(\Sigma)$. 
Let $a_{\pm} \in L_1 \cap L_2$. 
We consider $\Sigma \times [-1,1] \times \R$ and 
study ASD connections on it under the boundary condition 
which is induced by $L_1$ on $\Sigma \times \{-1\} \times \R$
and $L_2$ on $\Sigma \times \{1\} \times \R$ and
asymptotic boundary conditions given by $a_-$ and $a_+$.
Counting the solution in case its virtual dimension is $0$, 
we obtain a matrix coefficient 
$\langle \partial^G a_- ,a_+\rangle$.
It gives a boundary operator 
$\partial^G$ on the $\Z_2$ vector space with basis 
$L_1 \cap L_2$. The monotonicity implies that 
$\partial^G \circ \partial^G = 0$.
Let $HF(L_1,L_2)^{\rm gauge}$ be its homology group.
This step is already worked out in \cite{Sawe}, in a harder case when 
the bundle on $\Sigma$ is a trivial $SU(2)$ bundle.
\par\smallskip
\noindent(Step  B)
We can use pseudo-holomorphic strip and define 
Floer homology of Lagrangian intersection $HF(L_1,L_2)$.
(This step was done by Oh \cite{Oh}.)
\par\smallskip
\noindent(Step  C)
It may be possible to use adiabatic argument 
in a way similar to \cite{DS} to show
$HF(L_1,L_2) \cong HF(L_1,L_2)^{\rm gauge}$.
\par\smallskip
\noindent(Step  D)
We consider the case $\partial(M_1,\mathcal E_1) = -\partial(M_2,\mathcal E_2)
= (\Sigma,\mathcal E_{\Sigma})$
and suppose $R(M_1)$ and $R(M_2)$ are both embedded Lagrangian submanifolds 
of $R(\Sigma)$.
It seems that we can prove an isomorphism 
$HF(M,\mathcal E) \cong HF(R(M_1),R(M_2))^{\rm gauge}$ 
in a way similar to Section \ref{sec:glue} of this paper as follows.
We consider the domain $W$ as in Figure 5.1 and 
Condition \ref{conds54}.
We glue $-M_2 \times \R$ to $\partial_2W$ and $M_1 \times \R$ to $\partial_3W$
in the same way to obtain the 4 manifold $Y$. This time we consider the 
direct product metric $g_{\Sigma} + ds^2 + dt^2$ on $\Sigma \times W$.
We consider the set of gauge equivalence classes of connections, 
which are
ASD connections on $Y$ with respect to this metric and 
which satisfy the boundary conditions induced by $R(M_2)$ on $\partial_1W$ and 
by $R(M_1)$ on $\partial_4W$. 
We can use this moduli space 
to construct a chain map from 
$CF(M,\mathcal E)$  to  $CF(R(M_1),R(M_2))^{\rm gauge}$, which is 
congruent to the identity map.
Using the basic analytic results of \cite{Sawe} it is very likely 
that we can work out the detail of this proof.
\par\medskip
Let us consider the case when the bundle $\mathcal E_{\Sigma}$
is a trivial $SU(2)$ bundle, especially the case of handle body 
$M$. This is the case of  Atiyah-Floer conjecture in its original form. In this case $R(\Sigma)$ is not a symplectic manifold since it has a singularity.
Nevertheless as far as (Step D) concerns, there may not be  so much  big difference 
between this case and the case of nontrivial $SO(3)$ bundle on $\Sigma$.
Namely we may be able prove the isomorphism
\begin{equation}\label{welconj}
HF(M) \cong HF(R(M_1),R(M)_2))^{\rm gauge}
\end{equation}
as follows. Here $M_1$, $M_2$ are handle bodies whose boundaries are $\Sigma$.
$R(M_i)$ is the space of flat connections on $M_i$ regarded as a Lagrangian 
submanifold of $R(\Sigma)$.
The closed 3 manifold $M$, which we assume to be a homology 3 sphere, 
is obtained by gluing $M_1$ and $M_2$ along $\Sigma$.
The left hand side is a Floer homology of 3 manifold $M$ 
defined by Floer \cite{fl1} and the right hand side is defined by
Salamon-Wehrheim \cite{Sawe}. Here we consider 
trivial $SU(2)$ bundles.
We remark that (\ref{welconj}) is 
\cite[Conjecture 4.3]{We2}.
\par
We consider $\Sigma \times W$ where $W$ is as in Figure 5.1.
We glue $M_2 \times \R$ and $M_1 \times \R$ to $\Sigma \times \partial_2W$
and $\Sigma \times \partial_3W$ respectively and obtain $X$.
We put direct product metric $g_{\Sigma} + ds^2 + dt^2$ on $\Sigma \times W$
and extend it   to $Y$.
\par
For $a_-,a_+ \in R(M)  =  R(M_1) \cap R(M_2)$ 
we consider the moduli space $\mathcal M'(Y;a_-,a_+;E)$ of ASD-connections
on $Y$ of energy $E$ such that:
\begin{enumerate}
\item
On $\Sigma \times \partial _1W$ it satisfies the boundary condition of 
Wehrheim \cite{We} induced by $R(M_2) \subset R(\Sigma_1)$.
\item
On $\Sigma \times \partial _4W$ it satisfies the boundary condition of 
Wehrheim \cite{We} induced by $R(M_1) \subset R(\Sigma_1)$.
\item
On the end where ${\rm Re}z \to +\infty$, it will converge to $a_+$.
\item
On the end where ${\rm Re}z \to -\infty$, it will converge to $a_-$.
\end{enumerate}
Note at the end where ${\rm Im}z \to +\infty$ we use the fundamental class
$R(M_2)$ as the asymptotic boundary condition and 
on the end where ${\rm Im}z \to -\infty$ we use the fundamental class
$R(M_1)$ as the asymptotic boundary condition.
It seems that these conditions are automatically satisfied 
from our assumption that the energy is finite.
We however need some argument to prove it since $R(M_i)$ does not satisfy 
\cite[Condition L.3]{Sawe} when we consider $M_i$ as the 3 manifold. (Note the 3 manifold
which we denote by $M_i$ above, 
is denoted by $Y$
in \cite[page 748]{Sawe}.)
\par
Using the moduli space $\mathcal M'(Y;a_-,a_+;E)$ in case its virtual dimension is $0$ we 
may obtain 
matrix elements of the map
$$
\Phi : CF(M) \to CF(R(M_1),R(M_2))
$$
between chain complexes defining left and right hand sides of 
(\ref{welconj}).
To show that this map $\Phi$ is a chain map we consider the moduli space,
$\mathcal M'(Y;a_-,a_+;E)$, in case its virtual dimension is $1$.
We consider its boundary.
The possibilities are
\begin{enumerate}
\item[(I)]
ASD connections escape to the direction ${\rm Im}z \to +\infty$.
\item[(II)]
ASD connections escape to the direction ${\rm Im}z \to -\infty$.
\item[(III)]
ASD connections escape to the direction ${\rm Re}z \to -\infty$.
\item[(IV)]
ASD connections escape to the direction ${\rm Re}z \to +\infty$.
\end{enumerate} 
Using monotonicity in the same way as the proof of Proposition \ref{prop310}
it seems that we can show that there is no contribution from the end of types 
(I) and (II).
(We need some argument at this point also since $R(M_i)$ contains reducible 
connections.)
(The monotonicity also implies that there is no contribution from the bubble 
at $\Sigma \times \partial_1W$ and $\Sigma \times \partial_4W$.)
\par
The ends of type (III) (resp. of type (IV)) gives the boundary operator 
of the left hand side (resp. right hand side) of (\ref{welconj}) 
composed with $\Phi$.
Therefore we may be able to prove that $\Phi$ is a chain map in this way.
\par
The set of zero energy solutions  $\mathcal M'(Y;a_-,a_+;0)$ 
is the empty set if $a_- \ne a_+$ and consists of a single point $a$
if $a = a_- = a_+$.
Therefore $\Phi$ is congruent to the identity map modulo 
$\Lambda_+$.
\par
We thus sketched a possible way to  prove (\ref{welconj}).
\par\medskip
Note (Step A) in the case of handle bodies is established by Salamon-Wehrheim 
\cite{Sawe}. 
\par
Therefore, according to the author's opinion, (Step B) is the most difficult part  which remains 
to be worked out to prove Atiyah-Floer conjecture
based on the argument summarized above. Here (Step B) means: finding
correct notion of Lagrangian intersection Floer homology 
for a pair of Lagrangian submanifolds in the singular space $R(\Sigma)$, 
by using pseudo-holomorphic map to $R(\Sigma)$.
(Note this step had been done by Oh in case $R(\Sigma)$ is smooth.)
\par
We also remark that in the proof of removable singularity theorem and compactness 
theorem in \cite{fu3}, the author used the fact 
that all the elements of $R(\Sigma)$
are irreducible. So to use the moduli space of \cite{fu3} to study 
Atiyah-Floer conjecture in the original case of handle body, 
we first need to improve this point.

\subsection{Cobordism method and degeneration method}\label{other moduli}
In \cite{DS} the key idea of the proof of their main theorem, which is equivalent to 
Corollary \ref{maincor} in the case $M_1 =M_2 = \Sigma \times [0,1]$, 
is studying an adiabatic limit, where $\Sigma$ collapses to a point.
The method of this paper does not study this degeneration directly but
replace it by a cobordism argument.\footnote{However 
the proof of removable singularity and compactness theorem in \cite{fu3}
uses a similar estimate as \cite{DS}.}
In other words, we bypass some of the difficult analytic issue by using 
cobordism argument.
We remark that the idea proposed in \cite[Section 4]{We2} to prove Atiyah-Floer conjecture (in its original form) 
is based on degeneration analysis, which is harder than \cite{DS}.
\par
A similar point appears also when we consider the related problem of the study of 
Wehrheim-Woodward functoriality (\cite{WW}).
At the most important point of their work, Wehrheim-Woodward
used strip shrinking, which is of a similar flavor as taking 
adiabatic limit. Then it appeared interesting and deep problem 
of figure eight bubble. In \cite{WW}, 
figure eight bubble is excluded by using monotonicity.
The main idea of Lekili-Lipyanskiy in \cite{LL} is 
to replace difficult analytic problem of strip shrinking
by a cobordism argument.
\par
If we go beyond the monotone case, the effect of figure eight bubble
becomes nonzero while studying strip shrinking.
In the recent works by Bottman \cite{bott},
Bottman-Wehrheim \cite{bww}, several important 
facts are discovered about figure eight bubble.
There are certain heuristic discussions on the moduli space of 
figure eight bubbles in \cite{bww}. 
\par
The author conjectured that  the moduli space of 
figure eight bubble gives a bounding cochain 
in the sense of \cite{fooobook}
of the filtered $A_{\infty}$ algebra 
 associated by \cite{AJ} to the immersed Lagrangian submanifold obtained by 
the Lagrangian correspondence.
\par
If the author's conjecture is correct, 
then it is also very likely that this bounding cochain 
(up to gauge equivalence) coincides with one we obtained in 
Theorem \ref{WWfunc}.
We emphasize that as far as analytic results concern we need only  
well-established or standard results to prove Theorem \ref{WWfunc}.
Especially we do not need to study  strip shrinking or figure eight bubble.
In other words, we replace difficult analysis by an algebraic lemma\footnote{Studying figure eight bubble is certainly interesting for its own sake
and can be applied in various places.} (Proposition \ref{thm35}).
This is similar to Lekili-Lipyanskiy's argument 
which replaces strip shrinking by a cobordism argument.
Actually the starting point of author's research, which leads to this paper, 
was to try to generalize  Lekili-Lipyanskiy's method and combine it 
with the obstruction-deformation theory of \cite{fooobook} and 
immersed Lagrangian Floer theory of \cite{AJ}.
\par\medskip
\noindent
{\bf Acknowledgement.} The research of the author is supported partially by JSPS Grant-in-Aid for Scientific Research
No. 23224002 and NSF Grant No. 1406423.
\bibliographystyle{amsalpha}

\end{document}